\renewcommand{\eqref}[1]{\hyperref[#1]{(\ref{#1})}}
\newlist{enumlist}{enumerate}{2}
\setlist[enumlist,1]{labelindent=0cm,label=\arabic*.,ref=\arabic*,labelwidth=2.5ex,labelsep=0.5ex,leftmargin=3ex,align=left,topsep=0.5ex,itemsep=1ex,parsep=1ex}
\setlist[enumlist,2]{labelindent=0cm,label=\theenumlisti.\arabic*.,ref=\arabic*,labelwidth=5ex,labelsep=0.5ex,leftmargin=5.5ex,align=left,topsep=0.5ex,itemsep=1ex,parsep=1ex}
\newlist{itemlist}{itemize}{1}
\setlist[itemlist]{labelindent=0cm,label=$\bullet$,labelwidth=2.5ex,labelsep=0.5ex,leftmargin=3ex,align=left,topsep=0.5ex,itemsep=1ex,parsep=1ex}
\numberwithin{equation}{section}
\theoremstyle{definition}\newtheorem{definition}{Definition}[section]
\newtheorem{remark}[definition]{Remark}
\newtheorem{example}[definition]{Example}
\newtheorem{notation}[definition]{Notation}}
\newtheorem{proposition}[definition]{Proposition}
\newtheorem{lemma}[definition]{Lemma}
\newtheorem{theorem}[definition]{Theorem}
\newtheorem{corollary}[definition]{Corollary}
\newtheorem{letterthm}{Theorem}
\theoremstyle{definition}\newtheorem{letterdef}[letterthm]{Definition}}
\newcommand{\om}{\omega}
\newcommand{\si}{\sigma}
\newcommand{\actson}{\curvearrowright}
\newcommand{\R}{\mathbb{R}}
\newcommand{\N}{\mathbb{N}}
\newcommand{\Stab}{\operatorname{Stab}}
\newcommand{\cF}{\mathcal{F}}
\newcommand{\cP}{\mathcal{P}}
\newcommand{\eps}{\varepsilon}
\newcommand{\cU}{\mathcal{U}}
\newcommand{\ot}{\otimes}
\newcommand{\vphi}{\varphi}
\newcommand{\cR}{\mathcal{R}}
\newcommand{\cG}{\mathcal{G}}
\newcommand{\Tr}{\operatorname{Tr}}
\newcommand{\Z}{\mathbb{Z}}
\newcommand{\dpr}{^{\prime\prime}}
\newcommand{\al}{\alpha}
\newcommand{\be}{\beta}
\newcommand{\F}{\mathbb{F}}
\newcommand{\cS}{\mathcal{S}}
\newcommand{\C}{\mathbb{C}}
\newcommand{\cN}{\mathcal{N}}
\newcommand{\ovt}{\mathbin{\overline{\otimes}}}
\newcommand{\otalg}{\mathbin{\otimes_{\text{\rm alg}}}}
\newcommand{\Aut}{\operatorname{Aut}}
\newcommand{\id}{\mathord{\text{\rm id}}}
\newcommand{\Ker}{\operatorname{Ker}}
\newcommand{\cA}{\mathcal{A}}
\newcommand{\cH}{\mathcal{H}}
\newcommand{\cO}{\mathcal{O}}
\newcommand{\cB}{\mathcal{B}}
\newcommand{\cK}{\mathcal{K}}
\newcommand{\cC}{\mathcal{C}}
\newcommand{\cM}{\mathcal{M}}
\newcommand{\Mor}{\operatorname{Mor}}
\newcommand{\onb}{\operatorname{onb}}
\newcommand{\cE}{\mathcal{E}}
\newcommand{\cD}{\mathcal{D}}
\newcommand{\cL}{\mathcal{L}}
\newcommand{\Ptil}{\widetilde{P}}
\newcommand{\cKtil}{\widetilde{\mathcal{K}}}
\newcommand{\Ttil}{\widetilde{T}}
\newcommand{\cUB}{\mathcal{UB}}
\newcommand{\cUP}{\mathcal{UP}}
\newcommand{\cUA}{\mathcal{UA}}
\newcommand{\ibar}{\overline{\imath}}
\newcommand{\jbar}{\overline{\jmath}}
\newcommand{\cI}{\mathcal{I}}
\newcommand{\lspan}{\operatorname{span}}
\newcommand{\ibf}{\operatorname{ibf}}
\newcommand{\Irr}{\operatorname{Irr}}
\newcommand{\cJ}{\mathcal{J}}
\newcommand{\bG}{\mathbb{G}}
\newcommand{\op}{^\text{\rm op}}
\newcommand{\bGhat}{\widehat{\mathbb{G}}}
\newcommand{\inv}{\operatorname{inv}}
\newcommand{\Rep}{\operatorname{Rep}}
\newcommand{\SU}{\operatorname{SU}}
\newcommand{\homom}{\operatorname{hom}}
\newcommand{\cBtil}{\widetilde{\mathcal{B}}}
\newcommand{\cAtil}{\widetilde{\mathcal{A}}}
\newcommand{\vphitil}{\widetilde{\varphi}}
\newcommand{\rel}{\operatorname{rel}}
\newcommand{\SO}{\operatorname{SO}}
\newcommand{\QAut}{\operatorname{QAut}}
\newcommand{\cW}{\mathcal{W}}
\newcommand{\bH}{\mathbb{H}}
\begin{document}

\begin{center}
{\boldmath\LARGE\bf Quantum automorphism groups of connected locally\vspace{0.5ex}\\
finite graphs and quantizations of discrete groups}

\bigskip

{\sc by Lukas Rollier\footnote{\noindent KU~Leuven, Department of Mathematics, Leuven (Belgium).\\ E-mails: lukas.rollier@kuleuven.be and stefaan.vaes@kuleuven.be.}\textsuperscript{,}\footnote{L.R.\ is supported by FWO research project G090420N of the Research Foundation Flanders and by FWO-PAS research project VS02619N} and Stefaan Vaes\textsuperscript{1,}\footnote{S.V.\ is supported by FWO research project G090420N of the Research Foundation Flanders and by long term structural funding~-- Methusalem grant of the Flemish Government.}}
\end{center}

\begin{abstract}
\noindent We construct for every connected locally finite graph $\Pi$ the quantum automorphism group $\QAut \Pi$ as a locally compact quantum group. When $\Pi$ is vertex transitive, we associate to $\Pi$ a new unitary tensor category $\cC(\Pi)$ and this is our main tool to construct the Haar functionals on $\QAut \Pi$. When $\Pi$ is the Cayley graph of a finitely generated group, this unitary tensor category is the representation category of a compact quantum group whose discrete dual can be viewed as a canonical quantization of the underlying discrete group. We introduce several equivalent definitions of quantum isomorphism of connected locally finite graphs $\Pi$, $\Pi'$ and prove that this implies monoidal equivalence of $\QAut \Pi$ and $\QAut \Pi'$.
\end{abstract}

\section{Introduction and main results}

Quantum automorphism groups of finite graphs were introduced in \cite{Bic99,Ban03}. Contrary to what one might expect, they need not be finite quantum groups, but are compact quantum groups in the sense of Woronowicz. This leads to numerous intriguing families of compact quantum groups, see e.g.\ \cite{Sch19a,Sch19b}. To illustrate this with an isolated example, we mention that combining \cite{Kup96,Ara14,Edg19}, it follows that the dual $\bGhat$ of the quantum automorphism group $\bG$ of the Higman-Sims graph is an infinite discrete quantum group with property~(T), while the fusion rules of $\bG$ are abelian, since they coincide with the fusion rules of $\SO(5)$.

Quantum automorphism groups of finite graphs are in a deep way related to quantum information theory and in particular, quantum strategies for nonlocal games, through the notion of quantum isomorphism of graphs, see \cite{LMR17}. In particular, it was shown in \cite{MR19,MR20} that two finite graphs $\Pi$ and $\Pi'$ are quantum isomorphic if and only if for every finite \emph{planar} graph $K$, the number of graph homomorphisms from $K$ to $\Pi$ equals the number of graph homomorphisms from $K$ to $\Pi'$. Moreover, this implies that the quantum automorphism groups $\QAut \Pi$ and $\QAut \Pi'$ are monoidally equivalent, i.e.\ have equivalent representation categories.

The automorphism group $G$ of an \emph{infinite} graph $\Pi$ is a Polish group for the topology of pointwise convergence. If the graph $\Pi$ is \emph{connected} and \emph{locally finite}, then $G$ is a locally compact, totally disconnected group. These automorphism groups of connected locally finite graphs play a key role in the theory of totally disconnected groups. It is thus a natural problem to construct the \emph{quantum} automorphism group of a connected locally finite graph and fit this construction in the theory of locally compact quantum groups (in the sense of \cite{KV99,KV00}). Because of their totally disconnected nature, one should expect to define these quantum automorphism groups as ``algebraic'' quantum groups in the sense of \cite{VD96,KVD96}. If the graph is finite, one should find back the construction of \cite{Bic99,Ban03}.

Given a connected locally finite graph $\Pi$, it is straightforward to define a multiplier Hopf $*$-algebra $(\cA,\Delta)$ by mimicking the generators and relations approach for finite graphs. It was however an open problem to prove that $(\cA,\Delta)$ admits Haar functionals, i.e.\ positive functionals that are left, resp.\ right invariant. Since this existence problem for the Haar measure could not be solved, other approaches to quantum symmetries for infinite graphs were proposed. In \cite{Voi22}, quantum automorphism groups are considered without their topology, i.e.\ as discrete and therefore often uncountable quantum groups (see Remark \ref{rem.compare-voigt}). In \cite{GS12}, only a quantum semigroup structure is considered.

The main result of this paper is to construct the left and right Haar measure on the quantum automorphism group of any connected locally finite graph $\Pi$. Denoting by $I = V(\Pi)$ the vertex set of $\Pi$, we thus prove that the natural multiplier Hopf $*$-algebra $(\cA,\Delta)$ generated by self-adjoint idempotents $(u_{ij})_{i,j \in I}$ forming a ``magic unitary'' that commutes with the adjacency matrix of $\Pi$, admits positive faithful invariant functionals. As a result, we obtain $\QAut \Pi$ as an algebraic quantum group (in the sense of \cite{VD96,KVD96}) and as a locally compact quantum group (in the sense of \cite{KV99,KV00}). Our results thus provide a completely new and wide class of locally compact quantum groups.

The main new tool to construct the Haar measures on $(\cA,\Delta)$ is a unitary tensor category $\cC(\Pi)$ that we associate to a connected, locally finite, vertex transitive graph. This new unitary tensor category $\cC(\Pi)$ is actually of independent interest. When $\Pi$ is the Cayley graph of a finitely generated group $\Gamma$, with symmetric finite generating set $S = S^{-1} \subset \Gamma$, then $\cC(\Pi)$ has a natural fiber functor and thus is the representation category of a compact quantum group $\bG$. As we explain below, $\bG$ is isomorphic with the quantum isometry group defined in \cite{BhS10} and the dual $\bGhat$ can be viewed as a canonical quantization of the pair $(\Gamma,S)$. It turns out that many interesting families of discrete quantum groups arise as such quantizations. We prove for instance that the property~(T) discrete quantum groups of \cite{VV18} can be identified with the canonical quantization of the groups constructed in \cite{CMSZ91} as the groups acting simply transitively on $\widetilde{A_2}$-buildings.

We also relate quantum automorphism groups of connected locally finite graphs to quantum information theory and nonlocal games. We generalize the results of \cite{BCEHPSW18,MR19,MR20} to the infinite setting. We prove that two graphs $\Pi$ and $\Pi'$ are quantum isomorphic if and only if the appropriate equalities of pointed homomorphism counts from finite planar graphs to $\Pi$ and $\Pi'$ hold. We also prove that these properties are equivalent with the existence of an algebraic quantum isomorphism (in the sense of \cite{BCEHPSW18}) and that they imply the monoidal equivalence of the quantum automorphism groups $\QAut \Pi$ and $\QAut \Pi'$.

To formulate our results in more detail, we first need some basic terminology. For us, a \emph{graph} $\Pi$ is a pair $(I,E)$ of a set of \emph{vertices} $I$ and a set of \emph{edges} $E$ that is a subset $E \subset I \times I$ satisfying $(i,j) \in E$ iff $(j,i) \in E$. We thus do not consider multiple edges or orientations, but we do allow loops. We write $i \sim j$ when $(i,j) \in E$. A \emph{path} of length $n \geq 0$ in $\Pi$ is a sequence $(i_0,\ldots,i_n)$ of vertices satisfying $i_{k-1} \sim i_k$ for all $k \in \{1,\ldots,n\}$. The graph $\Pi$ is said to be \emph{connected} if for all $i,j \in I$, there exists an $n \geq 0$ and a path $(i_0,\ldots,i_n)$ in $\Pi$ with $i_0 = i$ and $i_n = j$. The \emph{degree} $\deg(i)$ of a vertex $i \in I$ is the cardinality of the set $\{j \in I \mid i \sim j\}$. The graph $\Pi$ is said to be \emph{locally finite} if $\deg(i) < \infty$ for all $i \in I$.

The automorphism group $\Aut \Pi$ of a graph $\Pi$ is the group of all permutations $\si : I \to I$ satisfying $(\si \times \si)(E) = E$. We equip $\Aut \Pi$ with the topology of pointwise convergence. If $I$ is countable, $\Aut \Pi$ is a Polish group. If $\Pi$ is connected and locally finite, then $\Aut \Pi$ is a locally compact, second countable, totally disconnected group.

We say that a sum $\sum_{i \in L} a_i$ of a family of elements $(a_i)_{i \in L}$ in an algebra $\cA$ \emph{converges strictly to $1$} if for all $b \in \cA$, we have that $a_i b = 0 = b a_i$ for all but finitely many $i \in L$, and
$$\sum_{i \in L} a_i b = b = \sum_{i \in L} b a_i \quad\text{for all $b \in \cA$.}$$
Recall that an algebra $\cA$ is said to be nondegenerate if for every nonzero $a \in \cA$, there exist $b,c \in \cA$ with $a b \neq 0$ and $c a \neq 0$. One then embeds $\cA$ as a subalgebra of its multiplier algebra $M(\cA)$. When $\cA$ and $\cB$ are nondegenerate $*$-algebras, a $*$-homomorphism $\pi : \cA \to M(\cB)$ is called nondegenerate if the linear span of all $\pi(a)b$, $a \in \cA$, $b \in \cB$, equals $\cB$. Such a nondegenerate $*$-homomorphism has a unique extension to a $*$-homomorphism $\pi : M(\cA) \to M(\cB)$. A net $(a_j)_{j \in J}$ in $M(\cA)$ is said to \emph{converge strictly} to $a \in M(\cA)$ if for every $b \in \cA$, the net $(b a_j)_{j \in J}$ is eventually equal to $ba$ and the net $(a_j b)_{j \in J}$ is eventually equal to $ab$. Note that this is equivalent with convergence in the strict topology on $M(\cA)$.

\begin{letterthm}\label{thm.lc-qaut}
Let $\Pi$ be a connected locally finite graph with vertex set $I$.
\begin{enumlist}
\item There exists a (necessarily unique) universal $*$-algebra $\cA$ generated by elements $(u_{ij})_{i,j \in I}$ satisfying the following properties.
\begin{itemlist}
\item $u_{ij}^* = u_{ij}$ and $u_{ij}^2 = u_{ij}$ for all $i,j \in I$.
\item For all $i,j,k \in I$ with $j \neq k$, we have that $u_{ij} u_{ik} = 0 = u_{ji} u_{ki}$. For all $i,j \in I$, we have that $\sum_{k \in I} u_{ik} = 1 = \sum_{k \in I} u_{kj}$ strictly.
\item For all $i,j \in I$, we have that $\sum_{k \in I : k \sim j} u_{ik} = \sum_{k \in I : k \sim i} u_{kj}$. This means that the matrix $u$ commutes with the adjacency matrix of $\Pi$.
\end{itemlist}
\item The $*$-algebra $\cA$ is nondegenerate and there is a unique nondegenerate $*$-homomorphism $\Delta : \cA \to M(\cA \ot \cA)$ satisfying $\Delta(u_{ij}) = \sum_{k \in I} (u_{ik} \ot u_{kj})$ strictly for all $i,j \in I$. The pair $(\cA,\Delta)$ is a multiplier Hopf $*$-algebra in the sense of \cite[Definition 2.4]{VD92}.
\item The multiplier Hopf $*$-algebra $(\cA,\Delta)$ admits a positive faithful left invariant, resp.\ right invariant, functional. It is thus an algebraic quantum group in the sense of \cite{VD96} and \cite[Definition 1.2]{KVD96}. Taking the norm closure, resp.\ weak closure, in the GNS-construc\-tion of these invariant functionals, we get the C$^*$, resp.\ von Neumann, versions of a locally compact quantum group in the sense of \cite[Definition 4.1]{KV99}, resp.\ \cite[Definition 1.1]{KV00}.
\end{enumlist}
\end{letterthm}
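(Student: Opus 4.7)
The plan addresses the three parts in order, with the bulk of the work in part~(3). For part~(1), I would define $\cA$ as the universal $*$-algebra generated by self-adjoint idempotents $(u_{ij})$ modulo the orthogonality relations $u_{ij}u_{ik}=u_{ji}u_{ki}=0$ for $j\neq k$ and the adjacency relation $\sum_{k\sim j}u_{ik}=\sum_{k\sim i}u_{kj}$, which is a finite sum by local finiteness. The nontrivial point is strict convergence of the row and column sums in the multiplier algebra. The magic-unitary-type orthogonality ensures that for any polynomial $a$ in the generators, only finitely many $u_{kl}$ fail to annihilate $a$ on either side; iterating the adjacency relation along edges of $\Pi$ pins this down to a finite ``support ball'' determined by the graph metric. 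Hence $\sum_k u_{ik}$ and $\sum_k u_{kj}$ define idempotent multipliers of $\cA$, and connectedness combined with adjacency forces them all to coincide with a single unit $1\in M(\cA)$. Nondegeneracy of $\cA$ is then automatic.

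For part~(2), the comultiplication $\Delta(u_{ij})=\sum_k u_{ik}\ot u_{kj}$ is well-defined as a multiplier of $\cA\ot\cA$ by the analysis just described. A direct check shows that the $\Delta(u_{ij})$ satisfy the same defining relations as the $u_{ij}$, so by universality $\Delta$ extends to a nondegenerate $*$-homomorphism, and coassociativity is formal. Setting $\epsilon(u_{ij})=\delta_{ij}$ and $S(u_{ij})=u_{ji}$, one verifies the axioms of \cite{VD92} by exhibiting inverses to the canonical maps $T_1(a\ot b)=\Delta(a)(1\ot b)$ and $T_2(a\ot b)=(a\ot 1)\Delta(b)$ using the magic unitary and adjacency relations.

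Part~(3) is the main new result. Assuming first that $\Pi$ is vertex transitive, I would fix a base vertex $0\in I$, set $p=u_{00}$, and aim to build a positive faithful trace $h_0$ on the corner $p\cA p$. The strategy is to match spherical words in $p\cA p$ with endomorphisms of objects of the unitary tensor category $\cC(\Pi)$ announced in the introduction, and to transport the categorical trace of $\cC(\Pi)$ to $h_0$. Invariance of the natural extension under $\Delta$, via $\Delta(u_{00})=\sum_k u_{0k}\ot u_{k0}$, reduces to independence of $h_0$ from the choice of base vertex, which vertex transitivity and the monoidal structure of $\cC(\Pi)$ are designed to provide. One then extends to $\cA$ by $h(u_{i0}\,a\,u_{0j})=\delta_{ij}\,h_0(a)$ for $a\in p\cA p$; two-sided invariance and faithfulness follow from the magic unitary relations. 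The non-vertex-transitive case is treated by an $\Aut\Pi$-orbit analysis or by a direct generalization of the tensor category construction. Once $(\cA,\Delta,h)$ is established as an algebraic quantum group, the C$^*$- and von~Neumann-algebraic versions follow from the general theory of \cite{VD96,KVD96,KV99,KV00}.

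The main obstacle, and the whole reason for constructing $\cC(\Pi)$, is proving positivity of $h_0$: concretely, positive semidefiniteness of certain matrices of combinatorial quantities (essentially pointed homomorphism counts from finite graphs into $\Pi$). Once positivity of the categorical trace is in hand, every other step above is essentially formal.
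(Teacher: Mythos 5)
Your high-level strategy for part~(3) -- extract positivity from the categorical trace on the planar category $\cC(\cP,\Pi)$, whose morphisms are homomorphism-count matrices -- is indeed the paper's strategy, and your sketch of parts~(1) and~(2) (the distance argument forcing $u_{ij}u_{st}=0$ whenever $d(i,s)\neq d(j,t)$, hence finite supports and strict convergence; the antipode $S(u_{ij})=u_{ji}$ for the multiplier Hopf axioms) matches Lemma~\ref{lem.strict-convergence} and Proposition~\ref{prop.A-mult-hopf}. The genuine gap is in how you pass from the category to a functional on $\cA$. Your extension formula is vacuous: for $a\in p\cA p$ with $p=u_{00}$, the orthogonality relations give $u_{i0}u_{00}=0$ for $i\neq 0$ and $u_{00}u_{0j}=0$ for $j\neq 0$, so $u_{i0}\,a\,u_{0j}=\delta_{i,0}\,\delta_{j,0}\,a$; these products do not span $\cA$ and no extension of $h_0$ is defined. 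This is not a cosmetic slip: the corner $p\cA p$, the would-be vertex stabilizer, is not a compact open quantum subgroup ($u_{00}$ is group-like but not central), so an invariant functional on $\cA$ cannot be rebuilt from a state on $p\cA p$ by a coset-type summation -- this is exactly why the paper takes a detour and realizes $\cA$ as a corner $\cB_e$ of a strictly \emph{larger} $*$-algebra $\cB$ attached to the $2$-category, whose extra projections $F_0(i,e)$ have no counterpart inside $\cA$; positivity and faithfulness are proved on $\cB$ via the block decomposition $\cB\cong\bigoplus_\al \overline{H_\al}\otalg H_\al$ coming from isometric bases of finite type (Proposition~\ref{prop.vphi-positive-faithful}, Lemma~\ref{lem.iso-A-B}).

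Second, reducing invariance to base-point independence cannot work: the functionals for different base vertices satisfy $\vphi_e=(\mu_f/\mu_e)\,\vphi_f$ (Proposition~\ref{prop.dependence-base-point}) and genuinely differ when $\QAut\Pi$ is non-unimodular, which does happen even for quantum vertex transitive graphs (Example~\ref{ex.compact-nonunimodular}); for the same reason your claim that ``two-sided invariance'' follows is false in general, since the left and right invariant functionals differ by a nontrivial modular element $\delta$ (Proposition~\ref{prop.modular-element}). In the paper, left invariance is instead proved by a direct computation with the explicit formula \eqref{eq.formula-left-Haar}, and the right invariant functional is $\vphi_e\circ S$. Two further points: the non-transitive case must be organized by \emph{quantum} orbits (equality of pointed planar homomorphism counts), not $\Aut\Pi$-orbits, which can be strictly smaller; and to transport anything from $\cC(\cP,\Pi)$ to your universal $\cA$ you still need that every planar homomorphism matrix $T^\cK$ intertwines the universal magic unitary, i.e.\ the generation theorem for planar bi-labeled graphs of \cite{MR19}, which is exactly the argument the paper uses to establish the universal property in part~(1).
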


The precise meaning of statement~1 is that there exists a $*$-algebra $\cA$ generated by elements $(u_{ij})_{i,j \in I}$ satisfying the three listed properties and that for any other $*$-algebra $\cB$ generated by elements $(v_{ij})_{i,j \in I}$ satisfying the same properties, there exists a surjective $*$-homomorphism $\pi : \cA \to \cB$ satisfying $\pi(u_{ij}) = v_{ij}$ for all $i,j \in I$.

Recall that a functional $\vphi : \cA \to \C$ is said to be left invariant if $(\id \ot \vphi)\Delta(a) = \vphi(a)1$ for all $a \in \cA$, which should be interpreted as
$$(\id \ot \vphi)(\Delta(a) (b \ot 1)) = \vphi(a) b = (\id \ot \vphi)((b \ot 1)\Delta(a)) \quad\text{for all $a,b \in \cA$,}$$
which makes sense because $\Delta(A) (A \ot 1) = A \ot A$ in a multiplier Hopf $*$-algebra.

For a connected locally finite graph $\Pi$, the classical automorphism group $G = \Aut \Pi$ is a totally disconnected, locally compact group. The $*$-algebra $\cO(G)$ of locally constant functions, with its natural comultiplication, is a multiplier Hopf $*$-algebra. With $(\cA,\Delta)$ defined by Theorem~\ref{thm.lc-qaut}, we have the surjective multiplier Hopf $*$-algebra homomorphism
\begin{equation}\label{eq.canonical-closed-quantum-subgroup}
\pi : \cA \to \cO(G) : \pi(u_{ij}) = 1_{\{\si \in G \mid \si(j) = i\}} \; .
\end{equation}
In this way, $\Aut \Pi$ is a closed quantum subgroup of the locally compact quantum group defined by $(\cA,\Delta)$ (see Remark \ref{rem.closed-quantum-subgroup}). We thus introduce the following terminology.

\begin{letterdef}\label{def.qAut}
Given a connected locally finite graph $\Pi$, we denote by $\QAut \Pi$ the locally compact quantum group characterized by Theorem \ref{thm.lc-qaut} and call it the \emph{quantum automorphism group} of $\Pi$.
\end{letterdef}

As mentioned above, the main difficulty to prove Theorem \ref{thm.lc-qaut} and the main novelty of this paper is to show that the natural multiplier Hopf $*$-algebra $(\cA,\Delta)$ admits positive left and right invariant functionals. We prove this by first associating to $\Pi$ a unitary tensor category $\cC(\Pi)$ in which the morphisms are defined by matrices given by homomorphism counts from finite planar graphs to $\Pi$. This is closely related to \cite{MR19,MR20}, but also a bit different, since we should not try to define $\Rep(\QAut \Pi)$ as a unitary tensor category, simply because $\QAut \Pi$ need not be compact.

Once Theorem \ref{thm.lc-qaut} has been proven, all this will become more transparent and we will prove in Proposition \ref{prop.link-with-equivariant-bimodules} that $\cC(\Pi)$ can be viewed as the unitary tensor category of all $\QAut \Pi$-equivariant $\ell^\infty(I)$-bimodules $\cH$ of finite type, where the latter means that $p_i \cdot \cH$ and $\cH \cdot pi_i$ are finite dimensional for every minimal projection $p_i \in \ell^\infty(I)$. In the classical case, given a transitive and proper action of a locally compact group $G$ on a countable set $I$, this unitary tensor category of $G$-equivariant $\ell^\infty(I)$-bimodules of finite type was introduced in \cite{AV16}.

Note that in the preceding two paragraphs, we are implicitly assuming that the graph $\Pi$ is sufficiently regular, for instance vertex transitive, so that indeed the action of $\QAut \Pi$ on $I$ is transitive. In general, we rather construct a unitary $2$-category $\cC(\Pi)$, where the $0$-cells are indexed by the quantum orbits of the action of $\QAut \Pi$ on $I$.

We actually perform all our constructions of the quantum automorphism group $\QAut \Pi$ and of the unitary $2$-category $\cC(\Pi)$ relative to any graph category $\cD$ (in the sense of \cite[Definition 8.1]{MR19}, see Section \ref{sec.bi-labeled-graphs}) containing the graph category $\cP$ of planar bi-labeled graphs. So, for any such graph category $\cD$ and for any connected locally finite graph $\Pi$, we obtain a locally compact quantum group $\bG$ and a unitary $2$-category $\cC(\cD,\Pi)$. We prove in Proposition \ref{prop.link-with-equivariant-bimodules} that $\cC(\cD,\Pi)$ is equivalent with the category of $\bG$-equivariant $\ell^\infty(I)$-bimodules of finite type. When $\cD$ is the graph category of planar bi-labeled graphs, $\bG$ equals the quantum automorphism group $\QAut \Pi$. When $\cD$ is the graph category of all bi-labeled graphs, $\bG$ equals the classical automorphism group $\Aut \Pi$. In general, we have $\Aut \Pi \subset \bG \subset \QAut \Pi$ as closed quantum subgroups.

It follows from \cite[Proposition 6.1]{VV18} that if $\Pi$ is the Cayley graph of a countable group $\Gamma$ with a finite symmetric generating set $S = S^{-1} \subset \Gamma$, then $\cC(\Pi)$ admits a natural fiber functor and thus becomes the representation category of a compact quantum group $\bG$. Combining Theorem \ref{thm.quantization-discrete-group} and Proposition \ref{prop.link-with-equivariant-bimodules}, we obtain the following description of this compact quantum group. Moreover, we prove in Proposition \ref{prop.identify-with-quantum-isometry-group} that $\bG$ is also isomorphic to the quantum isometry group introduced in \cite{BhS10} associated with the spectral triple given by $(\Gamma,S)$.

\begin{letterthm}\label{thm.identify-categories}
Let $\Gamma$ be a countable group with finite symmetric generating set $S = S^{-1} \subset \Gamma$. We denote by $\bG$ the universal compact quantum group where $\cO(\bG)$ is generated by the entries of an $S \times S$ unitary representation $U$ such that for every $n \geq 1$, the vector $\xi_n \in \ell^2(S^n)$
$$\xi_n(s_1,\ldots,s_n) = \begin{cases} 1 &\quad\text{if $s_1 \cdots s_n = e$ in $\Gamma$,}\\ 0 &\quad\text{otherwise,}\end{cases}$$
is invariant under the $n$-fold tensor power of $U$.

Denote by $\Pi$ the Cayley graph of $(\Gamma,S)$ with quantum automorphism group $\QAut \Pi$. Then the unitary tensor category of $\QAut \Pi$-equivariant $\ell^\infty(\Gamma)$-bimodules of finite type is equivalent with the representation category $\Rep \bG$.
\end{letterthm}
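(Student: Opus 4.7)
Since the theorem is explicitly framed as a combination of Theorem \ref{thm.quantization-discrete-group} and Proposition \ref{prop.link-with-equivariant-bimodules}, the proof is a composition of two equivalences, and the plan is to apply each in turn.

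First, I would invoke Proposition \ref{prop.link-with-equivariant-bimodules} to obtain an equivalence between the unitary tensor category $\cC(\Pi)$ and the category of $\QAut \Pi$-equivariant $\ell^\infty(\Gamma)$-bimodules of finite type. That the hypotheses apply is immediate: because $\Gamma$ acts simply transitively on itself by left translation and sits inside $\Aut \Pi \subset \QAut \Pi$, the action of $\QAut \Pi$ on the vertex set is transitive, so $\cC(\Pi)$ is a genuine unitary tensor category (one $0$-cell) rather than just a $2$-category.

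Second, in the Cayley graph setting \cite[Proposition 6.1]{VV18} supplies a natural fiber functor on $\cC(\Pi)$, so Woronowicz-Tannaka-Krein duality yields a compact quantum group whose representation category is $\cC(\Pi)$. Theorem \ref{thm.quantization-discrete-group} identifies this compact quantum group explicitly as the $\bG$ of the statement: the fundamental object lives on $\ell^2(S)$, giving an $S \times S$ unitary $U$, and the morphisms from the trivial object to its $n$-fold tensor power arising from the ``loop at $e$'' relations in the Cayley graph are precisely the vectors $\xi_n$, so $\bG$ is presented by the invariance of each $\xi_n$ under $U^{\otimes n}$. Composing the two equivalences yields $\Rep \bG \simeq \cC(\Pi) \simeq$ (category of $\QAut \Pi$-equivariant $\ell^\infty(\Gamma)$-bimodules of finite type), which is the assertion of the theorem.

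There is no substantive obstacle in this combination step itself; one only needs to check that the $\cC(\Pi)$ equipped with a fiber functor in Theorem \ref{thm.quantization-discrete-group} coincides with the $\cC(\Pi)$ appearing in Proposition \ref{prop.link-with-equivariant-bimodules}, which is immediate since both are built from the graph category $\cP$ of planar bi-labeled graphs. The real work lies in the two cited results, and most crucially in Theorem \ref{thm.quantization-discrete-group}, where one must show that the cycle-style relations $U^{\otimes n} \xi_n = \xi_n$ generate all morphisms in $\cC(\Pi)$ arising from planar bi-labeled graph homomorphism counts into the Cayley graph; this is where the planarity hypothesis is essential, as it is what allows arbitrary planar bi-labeled graph evaluations on a Cayley graph to be reduced to elementary loop data via the rigid/pivotal structure of $\cC(\Pi)$.
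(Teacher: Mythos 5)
Your proposal is correct and follows essentially the same route as the paper: the paper proves Theorem \ref{thm.identify-categories} precisely by combining the fiber functor of Proposition \ref{prop.fiber-functor-compact-quantum} (whose explicit form for $\cD = \cP$ is Theorem \ref{thm.quantization-discrete-group}, with the circular bi-labeled graphs giving the invariant vectors $\xi_n$ and the generation argument in the style of \cite[Theorem 6.7]{MR19}) with the equivalence $\cC(\cP,\Pi) \simeq \cC(\QAut\Pi \actson \Gamma)$ of Proposition \ref{prop.link-with-equivariant-bimodules}. Your identification of where the real work lies (that the relations $U^{\ot n}\xi_n = \xi_n$ generate all planar morphism spaces) matches the paper's argument.
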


Note that $\bG$ is of Kac type and note that, using $\xi_2$, one can show that the fundamental representation $U$ is isomorphic to its conjugate $\overline{U}$.

By construction, $\widehat{\Gamma}$ is a closed quantum subgroup of the compact quantum group $\bG$ in Theorem~\ref{thm.identify-categories}, and this identification is given by the surjective $*$-homomorphism $\pi : \cO(\bG) \to \C[\Gamma] : \pi(u_{st}) = \delta_{s,t} \, s$. We prove that there is no intermediate group $\cO(\bG) \to \C[\Lambda] \to \C[\Gamma]$ and therefore view the dual $\widehat{\bG}$ as a quantization of $\Gamma$, in the following sense.

\begin{letterdef}\label{def.quantization}
We say that a discrete quantum group $\bGhat$, realized as the dual of a compact quantum group $\bG$, is a \emph{quantization} of the countable group $\Gamma$ if we are given a surjective Hopf $*$-algebra homomorphism $\pi : \cO(\bG) \to \C[\Gamma]$ that does not factor through surjective Hopf $*$-algebra homomorphisms $\cO(\bG) \to \C[\Lambda] \to \C[\Gamma]$ unless the second arrow is an isomorphism.
\end{letterdef}

The prototype examples of quantizations in the sense of Definition \ref{def.quantization} (see Example \ref{ex.quantizations}) are $\widehat{A_u(n)} \to \F_n$, induced by $u_{ij} \mapsto \delta_{i,j} \, a_i$, where $u_{ij}$ are the entries of the fundamental $n$-dimensional unitary representation of the universal unitary quantum group $A_u(n)$ and where $a_1,\ldots,a_n$ are free generators of $\F_n$. The second example is $\widehat{A_o(n)} \to (\Z/2\Z)^{\ast n}$, induced by $u_{ij} \mapsto \delta_{i,j} \, a_i$, where this time $a_i$ are the free generators of the free product $(\Z/2\Z)^{\ast n}$ satisfying $a_i^2 = e$.

In Section \ref{sec.quantum-isomorphism}, we define when two connected locally finite graphs $\Pi$ and $\Pi'$ are \emph{quantum isomorphic}. Quantum isomorphism of finite graphs can be defined in many equivalent ways, see \cite{LMR17,BCEHPSW18,MR19,MR20}, and has become a key notion in quantum information theory. Three of these definitions have an immediate counterpart for connected locally finite graphs: \emph{quantum isomorphism} given by the existence of a magic unitary on a Hilbert space intertwining the adjacency matrices (see Definition \ref{def.quantum-iso}), \emph{algebraic quantum isomorphism} given by the existence of nonzero $*$-algebra generated by the entries of a magic unitary intertwining the adjacency matrices (see Definition \ref{def.algebraic-iso}) and \emph{planar isomorphism} given by the equality of pointed homomorphism counts from finite planar graphs (see Definition \ref{def.planar-isomorphism}). We then prove the following result, generalizing \cite[Theorem 4.9]{BCEHPSW18} and \cite[Theorem 7.16]{MR19} from finite graphs to connected locally finite graphs.

\begin{letterthm}[{See Theorem \ref{thm.quantum-iso-criteria}}]
For all connected locally finite graphs $\Pi$ and $\Pi'$, the notions of quantum isomorphism, algebraic quantum isomorphism and planar isomorphism coincide.
\end{letterthm}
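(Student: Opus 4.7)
The plan is to prove the circular chain of implications: quantum isomorphism $\Rightarrow$ algebraic quantum isomorphism $\Rightarrow$ planar isomorphism $\Rightarrow$ quantum isomorphism. This follows the strategy used in \cite{BCEHPSW18,MR19,MR20} for finite graphs, with additional care needed for the strict-convergence setting. The first implication, from quantum iso to algebraic quantum iso, is immediate: a magic unitary $(u_{ij})$ acting on a nonzero Hilbert space $\cH$ and intertwining the adjacency matrices of $\Pi$ and $\Pi'$ generates a nonzero $*$-subalgebra of $B(\cH)$ satisfying exactly the algebraic relations required in the definition of algebraic quantum isomorphism.

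For algebraic quantum iso $\Rightarrow$ planar iso, I would apply the unitary $2$-category framework developed earlier in the paper. Extending the construction of $\cC(\cP,\Pi)$ to the disjoint union $\Pi \sqcup \Pi'$, one obtains a rigid semisimple $2$-category whose $1$-morphisms between a vertex $i \in V(\Pi)$ and a vertex $j \in V(\Pi')$ are spanned by homomorphism-count matrices coming from planar bi-labeled graphs with inputs at $i$ and outputs at $j$. Unpacking definitions, an algebraic quantum iso corresponds precisely to a nondegenerate $1$-morphism between the $0$-cells $i$ and $j$ in this $2$-category. Once such a connecting $1$-morphism exists, rigidity and semisimplicity force equality of the dimensions of all pointed planar Hom spaces between $(\Pi,i)$ and $(\Pi',j)$, and these dimensions are exactly the pointed planar homomorphism counts appearing in the definition of planar isomorphism.

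The main obstacle is planar iso $\Rightarrow$ quantum iso, which is the content of \cite{MR20} in the finite case and requires genuinely new input here. I would form the universal $*$-algebra $\cT$ with generators $T_{K,k}$ indexed by pointed planar bi-labeled graphs $K$ from $\Pi$ to $\Pi'$, subject to the standard relations of $\cP$ (disjoint union, stacking, contraction) that encode composition of bi-labeled graphs. The equality of pointed planar hom counts from planar isomorphism then defines a positive tracial functional $\tau$ on $\cT$, and the GNS Hilbert space $\cH$ of $\tau$ carries natural operators $u_{ij}$, coming from the ``single edge'' bi-labeled graphs, that by construction form a magic unitary and intertwine the adjacency matrices. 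The delicate new step, specific to the infinite setting, is the verification of strict convergence: one must check that $\sum_k u_{ik}$ and $\sum_k u_{kj}$ converge strictly to the identity on the dense GNS subspace. This uses the local finiteness of $\Pi$ and $\Pi'$ together with the \emph{pointed} nature of the hom counts, which ensures that on every GNS vector arising from a $T_{K,k}$ only finitely many of the $u_{ij}$ act nontrivially, yielding the required strict convergence and completing the construction of the quantum isomorphism.
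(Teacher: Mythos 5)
Your chain of implications (quantum $\Rightarrow$ algebraic $\Rightarrow$ planar $\Rightarrow$ quantum) is the right shape, but two of the three steps have genuine gaps. First, the implication you call immediate is not: Definition \ref{def.algebraic-iso} asks that the sums $\sum_k v_{ik}$ converge \emph{strictly} in the abstract $*$-algebra generated by the projections, i.e.\ that for every algebra element $b$ all but finitely many $v_{ik}b$ vanish. Strong convergence on the Hilbert space does not give this; the paper's proof of this step shows that $v_{ij}v_{st}=0$ whenever the distance from $i$ to $s$ in $\Pi$ differs from the distance from $j$ to $t$ in $\Pi'$ (a path-counting argument using local finiteness), and only then concludes strictness. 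Second, in your algebraic $\Rightarrow$ planar step the key identification is wrong: the pointed counts $\homom(K,x,\Pi,i)=(T^\cK_\Pi)_{ii}$ with $\cK=(K,x,x)\in\cP(1,1)$ are \emph{entries} of the matrices in $\Mor(0,0)$ and $\Mor(1,1)$, i.e.\ evaluations of closed planar diagrams at a vertex, not dimensions of Hom spaces; equality of dimensions of morphism spaces (which is all "rigidity and semisimplicity" could force) does not yield equality of these scalars. What is actually needed, and what the paper extracts from an algebraic isomorphism via the matrices $V_n$ over the linking algebra together with two nondegeneracy claims, is a bijective linear map $\Psi$ sending each generator $T^\cK_\Pi(1_a\ot 1^{\ot(m-1)}\ot 1_b)$ to $T^\cK_{\Pi'}(1_{\rho(a)}\ot 1^{\ot(m-1)}\ot 1_{\rho(b)})$; evaluating this generator-preserving correspondence at $n=m=0$ gives the equality of pointed counts. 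Your sketch does not produce such a map.

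Third, and most seriously, in planar $\Rightarrow$ quantum you simply assert that the equality of pointed planar homomorphism counts "defines a positive tracial functional $\tau$" on the universal diagram algebra $\cT$. Positivity is precisely the hard point of the infinite setting: it is the same difficulty as the existence of the Haar functionals, which is the main theorem of this paper, and in the finite case it is usually imported from Woronowicz's theory for the compact quantum automorphism group, which is unavailable here. The paper's route is to first upgrade planar isomorphism to a categorical equivalence (using the Gram-matrix identity $\langle T^{\cK_1}_\Pi(1_a\ot 1^{\ot m}),T^{\cK_2}_\Pi(1_a\ot 1^{\ot m})\rangle=(T^\cS_\Pi)_i$ for a composed planar bi-labeled graph $\cS$, so that equal counts give equal Gram matrices and hence well-defined unitaries between morphism spaces), and only then to build a linking $*$-algebra $\cBtil$ carrying a faithful positive functional by repeating the block decomposition of Section \ref{sec.construction-B} (conjugate objects, minimal projections, isometric bases of finite type); boundedness of the GNS representation then comes for free because the generators are self-adjoint idempotents. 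Without an argument of this kind your $\tau$ has no reason to be positive and the GNS construction does not get off the ground; by contrast, the strict-convergence issue you single out as the "delicate new step" is comparatively minor and is handled exactly as in the first implication.
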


\section{Locally compact quantum automorphism groups of connected locally finite graphs}

\subsection{Bi-labeled graphs and graph categories, following \cite{MR19,MR20}}\label{sec.bi-labeled-graphs}

Fix a connected locally finite graph $\Pi$ with vertex set $I$. For $i, j \in I$, we write $i \sim j$ if $(i,j)$ is an edge in $\Pi$.

Following \cite[Definition 3.1]{MR19}, a \emph{bi-labeled graph} $(K,x,y) \in \cG(n,m)$ is a finite graph $K = (V,E)$ together with tuples $x \in V^n$ and $y \in V^m$ of vertices. We define the subsets $\cG_c(n,m) \subset \cG_1(n,m) \subset \cG_2(n,m) \subset \cG(n,m)$ in the following way.

We define $\cG_c(n,0) = \cG_c(0,m) = \emptyset$ for all $n,m \geq 0$. For all $n, m \geq 1$, we define $\cG_c(n,m)$ as the set of bi-labeled graphs $(K,x,y)$ such that $K$ is connected.

We define $\cG_1(n,0) = \cG_1(0,m) = \emptyset$ for all $n,m \geq 0$. For all $n, m \geq 1$, we define $\cG_1(n,m)$ as the set of bi-labeled graphs $(K,x,y)$ such that every connected component of $K$ intersects $x$ and intersects $y$.

Finally, we define $\cG_2(0,0) = \emptyset$ and, for all $n,m \geq 0$ with $n+m \geq 1$, we define $\cG_2(n,m)$ as the set of bi-labeled graphs $(K,x,y)$ such that every connected component of $K$ intersects $x \cup y$.

Since $\Pi$ is locally finite, for every $\cK = (K,x,y) \in \cG_2(n,m)$, the definition of the $\Pi$-homo\-mor\-phism matrix $T^\cK$ in \cite[Definition 3.4]{MR19} makes sense. This matrix $T^\cK$ is the $I^n \times I^m$ matrix with values in $\N \cup \{0\}$ defined by
$$T^\cK_{ij} = \# \bigl\{ \vphi : V(K) \to I \bigm| \vphi \;\;\text{is a graph homomorphism and}\;\; \vphi(x) = i \; , \; \vphi(y) = j \bigr\} \; ,$$
for all $i \in I^n$ and $j \in I^m$. Here, a map $\vphi : V(K) \to I$ is called a graph homomorphism if $(\vphi(v),\vphi(w)) \in E(\Pi)$ for all $(v,w) \in E(K)$. By convention, when $n=0$ or $m=0$, we view $T^\cK$ as a row indexed by $I^m$, resp.\ a column indexed by $I^n$.

When $\cK \in \cG_1(n,m)$, the matrix $T^\cK$ has the following finiteness properties: for every $i \in I^n$, there are only finitely many $j \in I^m$ with $T^\cK_{ij} \neq 0$, and for every $j \in I^m$, there are only finitely many $i \in I^n$ with $T^\cK_{ij} \neq 0$. Since we only assumed that $\Pi$ is locally finite, the matrix $T^\cK$ need not define a bounded operator from $\ell^2(I^m)$ to $\ell^2(I^n)$. For instance, if $\cK = (K,x,y) \in \cG(1,1)$ is defined by $V(K) = \{0,1\}$, $E(K) = \{(0,1),(1,0)\}$ and $x_1 = y_1 = 0$, as illustrated in Figure \ref{fig.simple-K}, then $T^\cK$ is the diagonal matrix with $T^{\cK}_{ii} = \deg i$ for all $i \in I$.
\begin{figure}[h]
    \centering
    \begin{tikzpicture}
        \filldraw (0,0) circle (1.5pt) node[anchor=south]{$x_1$} node[anchor=north]{$y_1$};
        \filldraw (1.4,0) circle (1.5pt);
        \draw (0,0) -- (1.4,0);
    \end{tikzpicture}
    \caption{The bi-labeled graph $\cK \in \cG(1,1)$ with $T^{\cK}_{ii} = \deg i$ for all $i \in I$}\label{fig.simple-K}
\end{figure}

In \cite[Section 3.1]{MR19}, several operations on bi-labeled graphs were defined. The \emph{composition} $\cK_1 \circ \cK_2$ of $\cK_1 \in \cG(n,k)$ and $\cK_2 \in \cG(k,m)$ is a bi-labeled graph in $\cG(n,m)$ such that, in case $\cK_1$ and $\cK_2$ belong to $\cG_1$, we have $T^{\cK_1 \circ \cK_2} = T^{\cK_1} \, T^{\cK_2}$, by \cite[Lemma 3.21]{MR19}. Note that the matrix product at the right hand side makes sense because of the finiteness properties of $T^\cK$. When $\cK_1$ and $\cK_2$ are connected, $\cK_1 \circ \cK_2$ remains connected.

The \emph{tensor product} $\cK \ot \cK'$ of $\cK \in \cG(n,m)$ and $\cK' \in \cG(n',m')$ belongs to $\cG(n+n',m+m')$ and consists of putting $\cK$ and $\cK'$ next to each other. We have that $T^{\cK \ot \cK'} = T^\cK \ot T^{\cK'}$. When $\cK, \cK' \in \cG_i$ for $i \in \{1,2\}$, then also $\cK \ot \cK' \in \cG_i$. Note however that $\cK \ot \cK'$ is not connected. Below, we will also define a relative tensor product that preserves connectedness. The \emph{transpose} $\cK^*$ of $\cK = (K,x,y)$ is defined as $\cK^* = (K,y,x)$ and satisfies $T^{\cK^*} = (T^\cK)^*$. All $\cG_c$, $\cG_1$ and $\cG_2$ are stable under this transpose operation.

For all $n,m \geq 0$, we define the basic bi-labeled graph $\cM^{n,m}$ consisting of a single vertex, say $0$, no edges and labeling $x_1=\cdots=x_n=0=y_1=\cdots=y_m$. We sometimes write $1$ instead of $\cM^{1,1}$, because the associated matrix is the identity matrix.

Following \cite[Definition 8.1]{MR19}, a \emph{graph category} $\cD$ is a collection of subsets $\cD(n,m) \subset \cG(n,m)$ that contains $\cM^{1,1}$ and $\cM^{2,0}$ and that is closed under composition, tensor product and transpose. We then denote $\cD_c(n,m) = \cD(n,m) \cap \cG_c(n,m)$ and similarly with $\cD_1$ and $\cD_2$.

We finally recall from \cite[Definition 5.4]{MR19} the definition of the graph category $\cP$ of \emph{planar bi-labeled graphs}. A bi-labeled graph $(K,x,y)$ belongs to $\cP(n,m)$ if the graph $K$ is planar and if the labelings $x$ and $y$ are lying in an appropriate way ``at the outside'' of $K$, see \cite[Definition 5.4]{MR19}. By \cite[Theorem 6.7]{MR19}, $\cP$ is the smallest graph category containing $\cM^{1,0}$, $\cM^{1,2}$ and the bi-labeled graph $\cK = (K,x,y) \in \cG(1,1)$ with $V(K) = \{0,1\}$, $E(K) = \{(0,1),(1,0)\}$ and $x_1 = 0$, $y_1 = 1$, whose associated matrix $T^\cK$ is the adjacency matrix of $\Pi$.
\begin{figure}[h]
    \centering
    \begin{tikzpicture}
        \filldraw (0,0) circle (1.5pt) node[anchor=south]{$x_1$};
        \filldraw (1.4,0) circle (1.5pt) node[anchor=north]{$y_1$};
        \draw (0,0) -- (1.4,0);
    \end{tikzpicture}
    \caption{The bi-labeled graph $\cK \in \cG(1,1)$ such that $T^{\cK}$ is the adjacency matrix of $\Pi$}
\end{figure}

\subsection{\boldmath Unitary $2$-categories associated with connected locally finite graphs}\label{sec.2-category}

Given a countable set $I$, we consider the category of Hilbert $\ell^\infty(I)$-bimodules, consisting of Hilbert spaces equipped with commuting normal representations of $\ell^\infty(I)$. Denoting by $p_i \in \ell^\infty(I)$ the canonical minimal projections, a Hilbert $\ell^\infty(I)$-bimodule $\cH$ is the same thing as an $I \times I$ graded Hilbert space, by considering the closed subspaces $\cH_{i,j} = p_i \cdot \cH \cdot p_j$. Together with $\ell^\infty(I)$-bimodular bounded operators as morphisms and the relative tensor product
\begin{equation}\label{eq.relative-tensor-product-Hilbert}
\cH \ot_{I} \cK = \bigoplus_{i \in I} \bigl( \cH \cdot p_i \ot p_i \cdot \cK) \; ,
\end{equation}
we obtain a C$^*$-tensor category. We view $\cH \ot_{I} \cK$ as a closed subspace of $\cH \ot \cK$. When $T : \cH \to \cH'$ and $S : \cK \to \cK'$ are bounded $\ell^\infty(I)$-bimodular operators, the ordinary tensor product $T \ot S$ preserves these closed subspaces. We denote by $T \ot_I S$ the restriction of $T \ot S$ to $\cH \ot_I \cK$, defining in this way the tensor product of morphisms.

Note that $\ell^2(I)$ with its obvious $\ell^\infty(I)$-bimodule structure serves as the identity object. Every Hilbert $\ell^\infty(I)$-bimodule $\cH$ has a natural conjugate $\overline{\cH}$, with bimodule structure $f \cdot \overline{\xi} \cdot g = \overline{g^* \cdot \xi \cdot f^*}$ for all $\xi \in \cH$ and $f,g \in \ell^\infty(I)$. We say that an $\ell^\infty(I)$-bimodule is of \emph{finite type} if $p_i \cdot \cH$ and $\cH \cdot p_i$ are finite-dimensional for all $i \in I$. Whenever $\cH$ is of finite type, we have the natural morphism
$$\ell^2(I) \to \cH \ot_{I} \overline{\cH} : e_i \mapsto \sum_{j \in I} \; \sum_{\xi \in \onb(p_i \cdot \cH \cdot p_j)} (\xi \ot \overline{\xi}) \; ,$$
where we denoted by $\onb(p_i \cdot \cH \cdot p_j)$ any choice of orthonormal basis of $p_i \cdot \cH \cdot p_j$.

Assume now that $G$ is a locally compact group and that $G \actson I$ is a continuous, transitive action with compact stabilizers, i.e.\ $I = G/K$ for some compact open subgroup $K \subset G$. Following \cite[Proposition 2.2]{AV16}, we consider the category of $G$-equivariant Hilbert $\ell^\infty(I)$-bimodules of finite type, i.e.\ Hilbert $\ell^\infty(I)$-bimodules $\cH$ of finite type equipped with a unitary representation $\pi : G \to \cU(\cH)$ satisfying
$$\pi(g)(p_i \cdot \cH \cdot p_j) = p_{g \cdot i} \cdot \cH \cdot p_{g \cdot j} \quad\text{for all $g \in G$, $i,j \in I$.}$$
Together with $G$-equivariant $\ell^\infty(I)$-bimodular bounded operators as morphisms and the relative tensor product $\cH \ot_{I} \cK$, we obtain the unitary tensor category $\cC(G \actson I)$ introduced in \cite{AV16}.

If we drop the transitivity assumption for the action $G \actson I$, we rather find a unitary $2$-category, defined as follows. Denote by $(I_a)_{a \in \cE}$ the partition of $I$ into orbits for $G \actson I$. We consider $\cE$ as the set of $0$-cells. For all $a,b \in \cE$, the class of $1$-cells $\cC_{a-b}(G \actson I)$ consists of the $G$-equivariant Hilbert $\ell^\infty(I_a)$-$\ell^\infty(I_b)$-bimodules of finite type. The morphisms between two such $1$-cells are the $G$-equivariant $\ell^\infty(I_a)$-$\ell^\infty(I_b)$-bimodular bounded operators. The tensor product of $\cH \in \cC_{a-b}(G \actson I)$ and $\cK \in \cC_{b-c}(G \actson I)$ is defined as $\cH \ot_{I_b} \cK$.

In this section we define, given any connected locally finite graph $\Pi$ with vertex set $I$ and given any graph category $\cD$ containing all planar bi-labeled graphs, a similar unitary $2$-category $\cC(\cD,\Pi)$ of Hilbert bimodules. When $\cD$ is the graph category of all bi-labeled graphs, we will prove in Proposition \ref{prop.link-with-equivariant-bimodules} that $\cC(\cD,\Pi)$ is equivalent with the unitary $2$-category $\cC(G \actson I)$ defined above, with $G = \Aut \Pi$. When $\cD = \cP$ is the graph category of all planar bi-labeled graphs, we will prove that $\cC(\cP,\Pi) \cong \cC(\bG \actson I)$, where $\bG = \QAut \Pi$ is the quantum automorphism group of $\Pi$, which we still have to construct. We however first construct the unitary $2$-category $\cC(\cP,\Pi)$ and use it as a tool to construct $\QAut \Pi$ and to prove that $\QAut \Pi$ is a locally compact quantum group.

For the rest of this section, fix a connected locally finite graph $\Pi$ with vertex set $I$ and fix a graph category $\cD$ containing all planar bi-labeled graphs. For all $n,m \geq 0$, define
$$\cL(n,m) = \{ (K,x,y) \in \cD_c(n+1,m+1) \mid x_0 = y_0 \;\;\text{and}\;\; x_n = y_m \} \; .$$
Notice that we made a shift of notation from $n$-tuples of labels $(x_1,\ldots,x_n)$ to $(n+1)$-tuples of labels $(x_0,\ldots,x_n)$. This will be more natural in a context of relative tensor products over $\ell^\infty(I)$ and $\ell^\infty(I)$-bimodular maps.

For every $\cK = (K,x,y) \in \cL(n,m)$, the $I^{n+1} \times I^{m+1}$ matrix $T^\cK$ is $\ell^\infty(I)$-bimodular, in the sense that $T^\cK_{ij} = 0$ if $i_0 \neq j_0$ or if $i_n \neq j_m$. Since $K$ is supposed to be connected, once we fix one of the indices $i_s$ or $j_t$, there are only finitely many choices for the other indices such that $T^\cK_{ij} \neq 0$. In particular, $T^\cK$ defines and will be viewed as an $\ell^\infty(I)$-bimodular map from $\cF(I^{m+1})$, the space of finitely supported functions on $I^{m+1}$, to $\cF(I^{n+1})$. Here, we equip $\cF(I^{n+1})$ with the bimodule structure $(F \cdot \xi \cdot F')(i) = F(i_0) \, \xi(i) \, F'(i_n)$. As mentioned above, if the degree of the vertices of $\Pi$ is not uniformly bounded, then $T^\cK$ need not extend to a bounded operator from $\ell^2(I^{m+1})$ to $\ell^2(I^{n+1})$.

Note that $\cL$ is closed under composition and transpose of bi-labeled graphs. As before, $T^{\cK^*} = (T^\cK)^*$ and $T^{\cK \circ \cK'} = T^{\cK} \, T^{\cK'}$. When $\cK \in \cL(n,m)$ and $\cK' \in \cL(n',m')$, the \emph{relative tensor product} $\cK \ot_r \cK' \in \cL(n+n',m+m')$ is defined as
$$\cK \ot_r \cK' = (1^{\otimes n} \otimes \cM^{1,2} \otimes 1^{\otimes m}) \circ (\cK \ot \cK') \circ (1^{\otimes n} \otimes \cM^{2,1} \otimes 1^{\otimes m}) \; .$$
Since the planar bi-labeled graphs are contained in $\cD$, we get that $\cK \ot_r \cK' \in \cL(n+n',m+m')$. By construction, $T^{\cK \ot_r \cK'} = T^\cK \ot_I T^{\cK'}$. In matrix notation, this means that
$$T^{\cK \ot_r \cK'}_{ij} = T^\cK_{i_0\cdots i_n,j_0 \cdots j_m} \, T^{\cK'}_{i_n \cdots i_{n+n'},j_m \cdots j_{m+m'}} \; .$$
Viewing $\cF(I^{n+n'+1}) = \cF(I^{n+1}) \ot_I \cF(I^{n'+1})$, this relative tensor product is also compatible with the tensor product of $\ell^\infty(I)$-bimodular maps considered above.

We define $\Mor(n,m)$ as the linear span of all matrices/operators $T^\cK$ with $\cK \in \cL(n,m)$. By construction, $\Mor(n,m)^* = \Mor(m,n)$, $\Mor(n,k) \Mor(k,m) \subset \Mor(n,m)$ and $T \ot_I S \in \Mor(n+n',m+m')$ for all $T \in \Mor(n,m)$ and $S \in \Mor(n',m')$.

Since elements of $\Mor(n,m)$ are $\ell^\infty(I)$-bimodular, a special role is played by $\Mor(0,0)$ and $\Mor(1,1)$, since they form $*$-algebras of functions on $I$, resp.\ $I \times I$. We define the equivalence relation $\approx$ on $I$ (not to be confused with the edge relation $i \sim j$) in the following way:
\begin{equation}\label{eq.equiv-rel-on-I}
i \approx j \quad\text{if and only if}\quad T_{ii} = T_{jj} \;\;\text{for all $T \in \Mor(0,0)$.}
\end{equation}
We denote by $\cE$ the set of equivalence classes of $\approx$, with corresponding partition $(I_a)_{a \in \cE}$ of $I$. Later on, in Corollary \ref{cor.about-rel-E}, we will interpret $\cE$ as the set of quantum orbits of $\Pi$, meaning that $\cE$ is a singleton if and only if the graph $\Pi$ is quantum vertex transitive.

Note that every graph automorphism $\si \in \Aut \Pi$ acts on $I^{n+1}$ diagonally and that all $T \in \Mor(n,m)$ are $\Aut \Pi$-equivariant. In particular, all $T \in \Mor(0,0)$ define $\Aut \Pi$-invariant functions on $I$. So when $\Pi$ is a vertex transitive graph, $\cE$ is a singleton.

For future reference, we gather the notations introduced so far in the following definition, and add several other vector spaces of matrices/operators that play a key role in this paper.

\begin{definition}\label{def.mor-spaces}
Let $\Pi$ be a connected locally finite graph. Let $\cD$ be a graph category containing all planar bi-labeled graphs. We define
\begin{align*}
& \cL(n,m) = \{ (K,x,y) \in \cD(n+1,m+1) \mid K \;\;\text{is connected,}\;\; x_0 = y_0 \;\;\text{and}\;\; x_n = y_m \} \; ,\\
& \Mor(n,m) = \text{linear span of all $I^{n+1} \times I^{m+1}$ matrices $T^\cK$ with $\cK \in \cL(n,m)$} \; .
\end{align*}
Denote by $(I_a)_{a \in \cE}$ the equivalence classes of the equivalence relation defined in \eqref{eq.equiv-rel-on-I}. For every $a \in \cE$, we denote by $1_a \in \ell^\infty(I)$ the indicator function of $I_a \subset I$. We define for all $a,b \in \cE$,
\begin{align*}
& \Mor_{a-}(n,m) = \Mor(n,m)(1_a \ot 1^{\ot m}) = (1_a \ot 1^{\ot n})\Mor(n,m) \; ,\\
& \Mor_{-b}(n,m) = \Mor(n,m)(1^{\ot m} \ot 1_b) = (1^{\ot n} \ot 1_b)\Mor(n,m) \; ,\\
& \Mor_{a-b}(n,m) = \Mor(n,m)(1_a \ot 1^{\ot (m-1)} \ot 1_b) = (1_a \ot 1^{\ot (n-1)} \ot 1_b)\Mor(n,m) \; ,
\end{align*}
with the convention that $\Mor_{a-b}(0,m) = 0 = \Mor_{a-b}(n,0)$ when $a \neq b$.
\end{definition}

The following lemma contains all that is needed to define $\cC(\cD,\Pi)$ as a concrete unitary $2$-category of $\ell^\infty(I_a)$-$\ell^\infty(I_b)$-bimodules of finite type.

\begin{lemma}\phantomsection\label{lem.crucial-technical-Mor-lemma}
\begin{enumlist}
\item For every $T \in \Mor(n,n)$, the functions
\begin{align*}
& (\id \ot \Tr_{n})(T) : I \to \C : i \mapsto \sum_{i_1,\ldots,i_{n} \in I} T_{ii_1\cdots i_{n},ii_1\cdots i_{n}} \quad\text{and}\\
& (\Tr_{n} \ot \id)(T) : I \to \C : i \mapsto \sum_{i_0,\ldots,i_{n-1} \in I} T_{i_0\cdots i_{n-1}i,i_0\cdots i_{n-1}i}
\end{align*}
belong to $\Mor(0,0)$, and the above sums have only finitely many nonzero terms.
\item The elements of $\Mor_{a-}(n,m)$ and $\Mor_{-b}(n,m)$ define bounded $\ell^\infty(I)$-bimodular operators from $\ell^2(I^{m+1})$ to $\ell^2(I^{n+1})$.
\item For every $T \in \Mor_{a-}(n,m)$, the range projection $P$ of $T$ belongs to $\Mor_{a-}(n,n)$ and $P \, \Mor_{a-}(n,n) \, P$ is a finite dimensional C$^*$-algebra. A similar statement holds for $T \in \Mor_{-b}(n,m)$.
\item We have that $\Mor_{a-b}(n,m) \subset \Mor_{a-}(n,m)$ and $\Mor_{a-b}(n,m) \subset \Mor_{-b}(n,m)$.
\item We have $\Mor_{a-b}(n,m)^* = \Mor_{a-b}(m,n)$ and $\Mor_{a-b}(n,k) \Mor_{a-b}(k,m) \subset \Mor_{a-b}(n,m)$. We also have that
$$\Mor_{a-b}(n,m) \ot_{I_b} \Mor_{b-c}(n',m') \subset \Mor_{a-c}(n+n',m+m') \; .$$
\end{enumlist}

For every $i \in I^{n+1}$, we define $\ibar \in I^{n+1}$ as $\ibar = (i_n, \ldots, i_0)$. For every $I^{n+1} \times I^{m+1}$ matrix $T$, we denote by $\Ttil$ the $I^{m+1} \times I^{n+1}$ matrix given by $\Ttil_{ij} = T_{\jbar \, \ibar}$.

\begin{enumlist}[resume]
\item For every $T \in \Mor_{a-b}(n,m)$, we have that $\Ttil \in \Mor_{b-a}(m,n)$.

\item Let $P \in \Mor_{a-b}(n,n)$ be a self-adjoint projection. Then $\Ptil$ is a self-adjoint projection in $\Mor_{b-a}(n,n)$. Defining
\begin{equation}\label{eq.adjoint-object}
\begin{split}
& s_{i_0 \cdots i_{2n},j} = \begin{cases} P_{i_0\cdots i_n, i_{2n} i_{2n-1} \cdots i_n} &\quad\text{if $i_0 = i_{2n} = j$,}\\ 0 &\quad\text{otherwise,}\end{cases} \\
& t_{i_0 \cdots i_{2n},j} = \begin{cases} P_{i_n\cdots i_{2n},i_n i_{n-1} \cdots i_0} &\quad\text{if $i_0 = i_{2n} = j$,}\\ 0 &\quad\text{otherwise,}\end{cases}
\end{split}
\end{equation}
we have $s \in \Mor_{a-a}(2n,0)$ with $ss^* \leq P \ot_{I_b} \Ptil$, $t \in \Mor_{b-b}(2n,0)$ with $tt^* \leq \Ptil \ot_{I_a} P$ and
$$(s^* \ot_{I_a} 1) (1 \ot_{I_b} t) = P \quad\text{and}\quad (t^* \ot_{I_b} 1)(1 \ot_{I_a} s) = \Ptil \; .$$

\item The formulas
\begin{align*}
& \Tr_\ell : \Mor_{a-b}(n,n) \to \C : \Tr_\ell(T) = (\id \ot \Tr_{n})(T)_i \quad\text{for all $i \in I_a$,}\\
& \Tr_r : \Mor_{a-b}(n,n) \to \C : \Tr_r(T) = (\Tr_{n} \ot \id)(T)_i \quad\text{for all $i \in I_b$}
\end{align*}
provide well-defined tracial functionals on $\Mor_{a-b}(n,n)$. If $P \in \Mor_{a-b}(n,n)$ is a self-adjoint projection and if $s \in \Mor_{a-a}(2n,0)$ and $t \in \Mor_{b-b}(2n,0)$ are defined as in 7, then for all $T \in P \Mor_{a-b}(n,n) P$,
$$\Tr_\ell(T) = (s^* (T \ot 1^{\ot n}) s)_i \;\;\text{for all $i \in I_a$, and}\quad \Tr_r(T) = (t^* (1^{\ot n} \ot T) t)_i \;\;\text{for all $i \in I_b$.}$$
\end{enumlist}
\end{lemma}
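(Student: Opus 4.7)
The plan is to establish the eight items by combining combinatorial manipulations in the graph category $\cD$ with operator-theoretic arguments on the blocks arising from $\ell^\infty(I)$-bimodularity. For item 1, I observe that for $\cK = (K,x,y) \in \cL(n,n)$ the value $(\id \ot \Tr_n)(T^\cK)_i$ counts graph homomorphisms $\vphi : V(K) \to I$ with $\vphi(x_0) = \vphi(y_0) = i$ and $\vphi(x_k) = \vphi(y_k)$ for $k = 1,\ldots,n$; these are precisely the graph homomorphisms from the connected quotient obtained by identifying $x_k$ with $y_k$ for each $k \geq 1$. This identification is produced in $\cD$ by composing $\cK$ with tensor products of $\cM^{0,2}$ and $\cM^{2,0}$, giving a bi-labeled graph in $\cL(0,0)$; local finiteness of $\Pi$ and connectedness of the quotient make the sum finite. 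The formula for $(\Tr_n \ot \id)$ is symmetric.

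For item 2, I decompose $T \in \Mor_{a-}(n,m)$ by bimodularity as $T = \bigoplus_{\alpha \in I_a} T^{(\alpha)}$, the blocks with $i_0 = j_0 = \alpha$. For $T = T^\cK$, connectedness of $K$ forces $T^{(\alpha)}$ to have finite rank. Applying item 1 to $TT^* \in \Mor_{a-}(n,n)$, the function $\alpha \mapsto (\id \ot \Tr_n)(TT^*)_\alpha$ equals $\Tr(T^{(\alpha)} (T^{(\alpha)})^*)$, lies in $\Mor(0,0)$, and is therefore constant on $I_a$ by the definition of $\approx$; call this value $c$. Then $\|T^{(\alpha)}\|^2 \leq c$ uniformly, proving boundedness of $T$. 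For item 3, applying item 1 to all the moments $(\id \ot \Tr_n)((TT^*)^k)$ shows that $T^{(\alpha)}(T^{(\alpha)})^*$ has the same spectrum $\{0,\lambda_1,\ldots,\lambda_r\}$ for every $\alpha \in I_a$; taking the polynomial $p$ with $p(0)=0$ and $p(\lambda_j)=1$, the range projection $P = p(TT^*)$ lies in $\Mor_{a-}(n,n)$. For finite-dimensionality of $P\Mor_{a-}(n,n) P$, I note that for $X \in \Mor_{a-}(n,n)$ the condition $X^{(\alpha)} = 0$ is equivalent to the vanishing of all moments $\Tr((X^*X)^k)_\alpha$, which is $\approx$-invariant; hence the kernel of the block-evaluation $X \mapsto X^{(\alpha)}$ is independent of $\alpha \in I_a$, and evaluation at any $\alpha_0 \in I_a$ gives a faithful representation of $P\Mor_{a-}(n,n) P$ into the finite-dimensional algebra of bounded operators on the range of $P^{(\alpha_0)}$.

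Items 4 and 5 follow from the definitions and the graph-category identities $T^{\cK^*} = (T^\cK)^*$, $T^{\cK_1 \circ \cK_2} = T^{\cK_1} T^{\cK_2}$, $T^{\cK \ot_r \cK'} = T^\cK \ot_I T^{\cK'}$, combined with bimodularity. For item 6, the reflection $(K,x,y) \mapsto (K, y_{\text{rev}}, x_{\text{rev}})$ preserves any graph category containing $\cP$, because it reverses the cyclic order of the outer labeling and can be produced as a composition involving transpose with the cups and caps of $\cP \subseteq \cD$; a direct entrywise comparison then gives $\Ttil = T^{(K, y_{\text{rev}}, x_{\text{rev}})}$. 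For item 7, by linearity in $P$ it suffices to treat $P = T^{\cK_P}$ for a single $\cK_P = (K_P, x, y) \in \cL(n,n)$: the bi-labeled graph $\cK_s = (K_P, x^s, y^s)$ with $x^s = (x_0, x_1, \ldots, x_n, y_{n-1}, \ldots, y_0)$ and $y^s = (y_0)$ lies in $\cL(2n, 0)$ and is obtained from $\cK_P$ by bending the $y$-labels around to the $x$-side using the planar caps of $\cP \subseteq \cD$; comparing entries yields $T^{\cK_s} = s$, and the construction of $t$ is symmetric. That $\Ptil$ is a self-adjoint projection, the inequality $ss^* \leq P \ot_{I_b} \Ptil$, and the snake identity $(s^* \ot_{I_a} 1)(1 \ot_{I_b} t) = P$ all reduce to direct entrywise verifications using \eqref{eq.adjoint-object} and $P^2 = P$.

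Finally, for item 8, well-definedness of $\Tr_\ell$ and $\Tr_r$ combines item 1 with the $\approx$-invariance, the tracial property reduces to the ordinary matrix trace on each finite block via $(TS)^{(\alpha)} = T^{(\alpha)} S^{(\alpha)}$, and the expression $\Tr_\ell(T) = (s^*(T \ot 1^{\ot n}) s)_i$ unpacks directly to the definition of $s$ in \eqref{eq.adjoint-object}, with the right-hand counterpart symmetric. I expect the subtlest step to be item 3, specifically the finite-dimensionality of $P\Mor_{a-}(n,n) P$, where the $\approx$-coherence of the block structure across $I_a$ must be used beyond mere equality of traces; the construction of the bi-labeled graphs producing $\Ttil$, $s$, and $t$ in items 6 and 7 is the other delicate point, where the hypothesis $\cD \supseteq \cP$ is genuinely needed via the rigidity of the planar graph category.
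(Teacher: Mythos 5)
Your overall strategy coincides with the paper's: closing up bi-labeled graphs with planar cups and caps to realize the partial traces (item 1), constancy on $\approx$-classes plus blockwise finite rank for boundedness and finite dimensionality (items 2, 3, 8), and the rotation/bending constructions producing $\Ttil$, $s$ and $t$ (items 6, 7). But there is a genuine gap at item 4, which you dismiss as following ``from the definitions''. The point is that $1_b$ is the indicator function of an equivalence class of $\approx$, and when $\cE$ is infinite, $1_b$ need \emph{not} belong to $\Mor(0,0)$; so for $T \in \Mor_{a-}(n,m)$ it is not at all formal that $(1^{\ot n}\ot 1_b)T$ again lies in $\Mor_{a-}(n,m)$, i.e.\ is a cut of an element of $\Mor(n,m)$ by $1_a$ alone. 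The paper's proof of item 4 needs three ingredients: (i) for fixed $i\in I_a$, the range of $(p_i\ot 1^{\ot n})T$ is finite dimensional, so only finitely many classes $c$ satisfy $(p_i\ot 1^{\ot(n-1)}\ot 1_c)T\neq 0$; (ii) since $\Mor(0,0)$ is a unital $*$-algebra of functions separating the classes, one can choose $S\in\Mor(0,0)$ with $S1_b=1_b$ and $S1_c=0$ for the finitely many other relevant classes; (iii) the injectivity of $N\mapsto(p_i\ot 1^{\ot n})N$ on $\Mor_{a-}(n,m)$ (which you do have, from your item 3) then gives $(1^{\ot n}\ot 1_b)T=(1^{\ot n}\ot S)T\in\Mor_{a-}(n,m)$. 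Without this, several later steps of yours are unsupported: the relative tensor product inclusion in item 5 (the paper derives it from item 4), the membership $s\in\Mor_{a-a}(2n,0)$ in item 7 for a general $P\in\Mor_{a-b}(n,n)$ (your linearity reduction to $P=T^{\cK_P}$ ignores the cut-downs by $1_a$ and $1_b$; one needs item 4 to rewrite $P$ as an element of $\Mor(n,n)$ cut only on the end legs), and the well-definedness of $\Tr_\ell,\Tr_r$ in item 8 (the paper invokes items 1 \emph{and} 4 there).

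A related, easily repaired, slip occurs in items 2 and 3: you apply item 1 to $TT^*$ and to the moments $(TT^*)^k$ for $T\in\Mor_{a-}(n,m)$ and assert the resulting functions ``lie in $\Mor(0,0)$''. Since $T=(1_a\ot 1^{\ot n})S$ with $S\in\Mor(n,m)$ and $1_a$ need not lie in $\Mor(0,0)$, the element $TT^*$ need not lie in $\Mor(n,n)$; you should apply item 1 to $SS^*$ (resp.\ $(SS^*)^k$), note that these functions are constant on $I_a$ by the definition of $\approx$, and that on $I_a$ they agree with the block traces coming from $T$ --- which is exactly how the paper argues. With that fix, your moment/functional-calculus route to $P=p(TT^*)\in\Mor_{a-}(n,n)$ and the faithful block-evaluation argument for the finite dimensionality of $P\,\Mor_{a-}(n,n)\,P$ are correct and amount to the paper's argument in slightly different packaging.
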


Once Lemma \ref{lem.crucial-technical-Mor-lemma} is proven, we can define the unitary $2$-category $\cC(\cD,\Pi)$ in the following way.

\begin{definition}\label{def.my-unitary-2-category}
Let $\Pi$ be a connected locally finite graph. Let $\cD$ be a graph category containing all planar bi-labeled graphs. We define $\cC(\cD,\Pi)$ as the unitary $2$-category with $\cE$ as the set of $0$-cells.
For all $a,b \in \cE$, the set of $1$-cells $\cC_{a-b}(\cD,\Pi)$ consists of the (formal finite direct sums of) self-adjoint projections $P$ in $\Mor_{a-b}(n,n)$ with $n \geq 0$. The space of morphisms between the self-adjoint projections $P \in \Mor_{a-b}(n,n)$ and $Q \in \Mor_{a-b}(m,m)$ is defined as $P \Mor_{a-b}(n,m) Q$. The tensor product of the self-adjoint projections $P \in \Mor_{a-b}(n,n)$ and $Q \in \Mor_{b-c}(m,m)$ is defined as $P \ot_{I_b} Q$ and the tensor product of morphisms is defined similarly.
\end{definition}

Note that we can identify a self-adjoint projection $P \in \Mor_{a-b}(n,n)$ with its range, which is a finite type $\ell^\infty(I_a)$-$\ell^\infty(I_b)$-subbimodule of $\ell^2(I^{n+1})$. In this way, the tensor product in Definition \ref{def.my-unitary-2-category} is compatible with the tensor product of bimodules and $\ell^\infty(I)$-bimodular operators discussed above.

By \eqref{eq.adjoint-object}, every object $\cH \in \cC_{a-b}(\cD,\Pi)$ has a conjugate $\overline{\cH} \in \cC_{b-a}(\cD,\Pi)$. Lemma \ref{lem.crucial-technical-Mor-lemma} provides all needed properties to check that $\cC(\cD,\Pi)$ is a well-defined unitary $2$-category.

Also note that the morphisms $s$ and $t$ in \eqref{eq.adjoint-object} are solutions for the conjugate equations but need not be standard solutions, see Definition \ref{def.dimensions} and the comments following it.

\begin{proof}[{Proof of Lemma \ref{lem.crucial-technical-Mor-lemma}}]
1.\ By symmetry and linearity, it suffices to consider $(\id \ot \Tr_n)(T^\cK)$ for $\cK \in \cL(n,n)$. Since $\cK$ is connected, for every $i \in I$, in the sum defining $(\id \ot \Tr_n)(T^\cK)_i$, there are only finitely many nonzero terms. Define the planar bi-labeled graph $\cR \in \cP(2n,1)$ by
$$\cR = (1^{\ot n} \ot \cM^{1,0} \ot 1^{\ot (n-1)}) \circ (1^{\ot (n-1)} \ot \cM^{2,0} \ot 1^{\ot (n-2)}) \circ \cdots \circ (1 \ot \cM^{2,0}) \; ,$$
as illustrated in Figure \ref{fig.R-with-n-3}.
\begin{figure}[h]
    \centering
    \begin{tikzpicture}
        \filldraw (0,0) circle (1.5pt) node[anchor=south]{$x_0$} node[anchor=north]{$y_0$};
        \filldraw (1.4,0) circle (1.5pt) node[anchor=south]{$x_1$} node[anchor=north]{$x_5$};
        \filldraw (2.8,0) circle (1.5pt) node[anchor=south]{$x_2$} node[anchor=north]{$x_4$};
        \filldraw (4.2,0) circle (1.5pt) node[anchor=west]{$x_3$};
    \end{tikzpicture}
    \caption{The bi-labeled graph $\cR \in \cP(2n,1)$ for $n=3$}\label{fig.R-with-n-3}
\end{figure}

Then, $\cK' = \cR^* \circ (\cK \ot 1^{\ot (n-1)}) \circ \cR$ belongs to $\cD(1,1)$. By construction, $\cK' \in \cL(0,0)$ and $T^{\cK'} = (\id \ot \Tr_n)(T^\cK)$.

2.\ Take $T \in \Mor_{a-}(n,m)$ and write $T = (1_a \ot 1^{\ot n})S$ with $S \in \Mor(n,m)$. For every $i \in I$, we have that $(\id \ot \Tr_n)(SS^*)_i < +\infty$. By 1 and the definition of $\approx$, the function
$$i \mapsto (\id \ot \Tr_n)(SS^*)_i$$
is constant on $I_a$ and we denote this constant by $\kappa_a$. Thus, $(\id \ot \Tr_n)(TT^*)_i$ equals $\kappa_a$ when $i \in I_a$ and equals $0$ otherwise. Viewing $T$ as the direct sum of the operators $T_i = (p_i \ot 1) T$ acting on $\ell^2(I^n)$ and using that the operator norm is bounded above by the Hilbert-Schmidt norm, we get that
$$\|T\|^2 = \sup_{i \in I} \|T_i\|^2 \leq \sup_{i \in I} \Tr_n(T_i T_i^*) = \kappa_a \; .$$
So, $T$ is a bounded operator. By symmetry, also the elements of $\Mor_{-b}(n,m)$ define bounded operators.

3.\ We first prove that for every $i \in I_a$, the map
$$\Mor_{a-}(n,m) \to B(\ell^2(I^{m+1}),(p_i \ot 1^{\ot n}) \ell^2(I^{n+1}))  : T \mapsto (p_i \ot 1^{\ot n}) T$$
is injective. If $T \in \Mor_{a-}(n,m)$, it follows from the previous paragraph that the map $i \mapsto (\id \ot \Tr_n)(TT^*)_{i}$ is constant on $I_a$. So if $(p_i \ot 1^{\ot n}) T = 0$, we get that $(\id \ot \Tr_n)(TT^*) = 0$. Because $\Tr_n$ is the usual trace on bounded operators, which is faithful, we conclude that $T = 0$.

Then fix $T \in \Mor_{a-}(n,m)$ and write $T = (1_a \ot 1^{\ot n})S$ with $S \in \Mor(n,m)$. Denote by $P \in B(\ell^2(I^{n+1}))$ the range projection of $T$. Fix $i \in I_a$. Since the bi-labeled graphs in $\cL(n,m)$ are assumed to be connected, the range of $(p_i \ot 1^{\ot n}) S = (p_i \ot 1^{\ot n})T$ is a finite dimensional subspace $\cH_0$ of $\ell^2(I^{n+1})$. By the claim above, the map
$$T \, \Mor_{a-}(m,m) \, T^* \to B(\cH_0) : X \mapsto X (p_i \ot 1^{\ot n})$$
is an injective $*$-homomorphism. Since $\cH_0$ is finite dimensional, $T \, \Mor_{a-}(m,m) \, T^*$ is a finite dimensional C$^*$-algebra for all $T \in \Mor_{a-}(n,m)$. Since $P$ belongs to this C$^*$-algebra, we get in particular that $P \in \Mor_{a-}(n,n)$. Applying the previous statement to $P$ instead of $T$, we then also get that $P \, \Mor_{a-}(n,n) \, P$ is a finite dimensional C$^*$-algebra.

4.\ Fix $a,b \in \cE$ and fix $T \in \Mor_{a-}(n,m)$. By symmetry, it suffices to prove that $(1^{\ot n} \ot 1_b) T \in \Mor_{a-}(n,m)$. Fix $i \in I_a$. Since the range of $(p_i \ot 1^{\ot n})T$ is contained in $\ell^2(W)$ for some finite subset $W \subset I^{n+1}$, the set
$$\cE_0 = \{c \in \cE \mid (p_i \ot 1^{\ot (n-1)} \ot 1_c) T \neq 0 \}$$
is finite. Recall that $\Mor(0,0)$ is a $*$-algebra of functions on $I$ containing the constant function $1$ and recall that by definition of $\approx$, for all $c \in \cE \setminus \{b\}$, there exists an element $S \in \Mor(0,0)$ such that $S_i \neq S_j$ when $i \in I_c$ and $j \in I_b$. Combining both, we can choose $S \in \Mor(0,0)$ such that $S 1_b = 1_b$ and $S 1_c = 0$ for all $c$ in the finite set $\cE_0 \setminus \{b\}$. Write $R = (1^{\ot n} \ot S) T$. Then, $R \in \Mor_{a-}(n,m)$.

We claim that $R = (1^{\ot n} \ot 1_b) T$. First note that $(1^{\ot n} \ot 1_b) R = (1^{\ot n} \ot 1_b S) T = (1^{\ot n} \ot 1_b) T$. Then fix $d \in \cE \setminus \{b\}$. To prove the claim, it suffices to prove that $(1^{\ot n} \ot 1_d)R = 0$. As in the previous paragraph, we can choose $F \in \Mor(0,0)$ such that $F 1_d = 1_d$ and $F 1_b = 0$. Since $S 1_b = 1_b$ and $S 1_c = 0$ for all $c \in \cE_0 \setminus \{b\}$, it follows from the definition of $\cE_0$ that
\begin{align*}
(p_i \ot 1^{\ot n})(1^{\ot n} \ot F) R &= (1^{\ot n} \ot F S) (p_i \ot 1^{\ot n})T = \sum_{c \in \cE_0} (1^{\ot n} \ot FS 1_c) (p_i \ot 1)T \\
&= (1^{\ot n} \ot F 1_b) (p_i \ot 1)T = 0 \; .
\end{align*}
Since $N \mapsto (p_i \ot 1^{\ot n}) N$ is injective on $\Mor_{a-}(n,m)$ and $(1^{\ot n} \ot F) R$ belongs to $\Mor_{a-}(n,m)$, we conclude that $(1^{\ot n} \ot F) R = 0$. Then also
$$(1^{\ot n} \ot 1_d) R = (1^{\ot n} \ot 1_d F) R =  0 \; .$$
So the claim is proven. Since $R \in \Mor_{a-}(n,m)$, we have shown that $(1^{\ot n} \ot 1_b) T \in \Mor_{a-}(n,m)$.

5.\ The first two properties are immediate. Using 4, we get that
\begin{align*}
\Mor_{a-b}(n,m) \ot_{I_b} \Mor_{b-c}(n',m') & \subset \Mor_{a-}(n,m) \ot_I \Mor_{-c}(n',m') \\
&= (\Mor(n,m) \ot_I \Mor(n',m'))(1_a \ot 1^{\ot (m+m'-1)} \ot 1_c) \\
& \subset \Mor_{a-c}(n+n',m+m') \; .
\end{align*}

6.\ For every $n \geq 1$, define the planar bi-labeled graph $\cR_n \in \cP(2n,0)$ by
$$\cR_n = (1^{\ot (n-1)} \ot \cM^{2,0} \ot 1^{\ot (n-1)}) \circ (1^{\ot (n-2)} \ot \cM^{2,0} \ot 1^{\ot (n-2)}) \circ \cdots \circ \cM^{2,0} \; ,$$
as illustrated in Figure \ref{fig.Rn-with-n-3}.
\begin{figure}[h]
    \centering
    \begin{tikzpicture}
        \filldraw (0,0) circle (1.5pt) node[anchor=south]{$x_0$} node[anchor=north]{$x_5$};
        \filldraw (1.4,0) circle (1.5pt) node[anchor=south]{$x_1$} node[anchor=north]{$x_4$};
        \filldraw (2.8,0) circle (1.5pt) node[anchor=south]{$x_2$} node[anchor=north]{$x_3$};
    \end{tikzpicture}
    \caption{The bi-labeled graph $\cR_n \in \cP(2n,0)$ for $n=3$}\label{fig.Rn-with-n-3}
\end{figure}

Whenever $\cK = (K,x,y) \in \cG(n,m)$, the bi-labeled graph
$$\cKtil = (\cR_n^* \ot 1^{\ot m}) \circ (1^{\ot n} \ot \cK \ot 1^{\ot m}) \circ (1^{\ot n} \ot \cR_m)$$
is isomorphic with $(K,\overline{y},\overline{x})$. In particular, $T^{\cKtil} = \widetilde{T^\cK}$. Since $\cP \subset \cD$, it follows that for all $\cK \in \cL(n,m)$, $\cKtil \in \cL(m,n)$. Thus, for every $T \in \Mor(n,m)$, the matrix $\Ttil$ belongs to $\Mor(m,n)$.

7.\ Define the planar bi-labeled graph $\cS_n \in \cP(2n+1,1)$ by
\begin{multline*}
\cS_n = (1^{\ot n} \ot \cM^{1,0} \ot 1^{\ot n}) \circ (1^{\ot (n-1)} \ot \cM^{2,0} \ot 1^{\ot (n-1)}) \circ (1^{\ot (n-2)} \ot \cM^{2,0} \ot 1^{\ot (n-2)}) \circ \cdots \\ \circ (1 \ot \cM^{2,0} \ot 1) \circ \cM^{2,1} \; .,
\end{multline*}
as illustrated in Figure \ref{fig.Sn-with-n-3}.
\begin{figure}[h]
    \centering
    \begin{tikzpicture}
        \filldraw (0,0) circle (1.5pt) node[anchor=south]{$x_0$} node[anchor=north east]{$y_0$} node[anchor=north west]{$x_7$};
        \filldraw (1.4,0) circle (1.5pt) node[anchor=south]{$x_1$} node[anchor=north]{$x_6$};
        \filldraw (2.8,0) circle (1.5pt) node[anchor=south]{$x_2$} node[anchor=north]{$x_5$};
        \filldraw (4.2,0) circle (1.5pt) node[anchor=south]{$x_3$} node[anchor=north]{$x_4$};
    \end{tikzpicture}
    \caption{The bi-labeled graph $\cS_n \in \cP(2n+1,1)$ for $n=3$}\label{fig.Sn-with-n-3}
\end{figure}

Whenever $\cK \in \cL(n,n)$, we find that $(\cK \ot 1^{\ot n}) \circ \cS_n \in \cL(2n,0)$. The $ij$ matrix coefficient of the corresponding element in $\Mor(2n,0)$ equals $T^\cK_{i_0\cdots i_n, i_{2n} i_{2n-1} \cdots i_n}$ if $i_0 = i_{2n} = j$ and equals $0$ otherwise. It thus follows that the element $s$ in \eqref{eq.adjoint-object} belongs to $\Mor_{a-a}(2n,0)$. We similarly get that $t \in \Mor_{b-b}(2n,0)$. The remaining formulas for $s$ and $t$ follow from a direct computation.

8.\ By 1 and 4, for every $T \in \Mor_{a-b}(n,n)$, the function $i \mapsto (\id \ot \Tr_n)(T)_i$ is constant on $I_a$. Fix $i \in I_a$. Since the elements of $\Mor_{a-b}(n,n)$ are $\ell^\infty(I)$-bimodular, we can view $T \mapsto T(p_i \ot 1^{\ot n})$ as a $*$-homomorphism from $\Mor_{a-b}(n,n)$ to the space of finite rank operators on $\ell^2(I^n)$. Then $\Tr_\ell$ is the composition of this $*$-homomorphism with the usual trace. Thus, $\Tr_\ell$ is a trace on $\Mor_{a-b}(n,n)$. Similarly, $\Tr_r$ is a trace on $\Mor_{a-b}(n,n)$. The remaining formulas follow from a direct computation.
\end{proof}

\begin{definition}\label{def.dimensions}
For every minimal projection $P \in \Mor_{a-b}(n,n)$, thus representing an irreducible object in $\cC_{a-b}(\cD,\Pi)$, we define the left and right dimension as
$$d_\ell(P) = \Tr_\ell(P) \quad\text{and}\quad d_r(P) = \Tr_r(P) \; .$$
The categorical dimension of $P$, in the unitary $2$-category $\cC(\cD,\Pi)$, is thus given by $d(P) = \sqrt{d_\ell(P) \, d_r(P)}$.
We also define $\rho(P) = d_r(P) d_\ell(P)^{-1}$.
\end{definition}

For later use, we record two formulas for the ratio $\rho(P)$ between the left and right dimension. These will be used to give an explicit formula for the modular element of the quantum automorphism group of $\Pi$, defined as the Radon-Nikodym derivative between the left and right Haar functionals.

\begin{lemma}\label{def.dim-proportion-tensor-product}
Let $P \in \Mor_{a-b}(n,n)$, $Q \in \Mor_{b-c}(m,m)$ and $R \in \Mor_{a-c}(k,k)$ be minimal projections such that $(P \ot_{I_b} Q) \Mor_{a-c}(n+m,k) R \neq \{0\}$, meaning that $R$ is isomorphic with a subobject of $P \ot_{I_b} Q$ in $\cC_{a-c}(\cD,\Pi)$. Then, $\rho(R) = \rho(P) \, \rho(Q)$.
\end{lemma}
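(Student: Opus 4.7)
The plan is to first establish multiplicativity of the left and right dimensions under $\otimes_{I_b}$ on arbitrary objects, and then derive the stated equality for an irreducible subobject via a trace identity on the endomorphism algebra of $P \otimes_{I_b} Q$.

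For multiplicativity I would check that the solutions $s_X, t_X$ produced by \eqref{eq.adjoint-object} for $X := P \otimes_{I_b} Q$ admit the factorizations
\[
s_X = (1^{\otimes n} \otimes_{I_b} s_Q \otimes_{I_b} 1^{\otimes n}) \circ s_P, \qquad t_X = (1^{\otimes m} \otimes_{I_b} t_P \otimes_{I_b} 1^{\otimes m}) \circ t_Q,
\]
where the outer $1^{\otimes n}$ (resp.\ $1^{\otimes m}$) stands for the identity on $\widetilde{P}$ (resp.\ $\widetilde{Q}$), using part 6 of Lemma \ref{lem.crucial-technical-Mor-lemma} to realize the conjugate of $X$ as $\widetilde{Q} \otimes_{I_b} \widetilde{P}$. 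This is a direct comparison of matrix entries starting from Definition \ref{def.mor-spaces} and the explicit form of the relative tensor product. Squaring $s_X$ and pulling the scalar $s_Q^* s_Q = d_\ell(Q) \cdot 1_b$ past $s_P$ then gives $d_\ell(X) = d_\ell(P) \, d_\ell(Q)$, and symmetrically $d_r(X) = d_r(P) \, d_r(Q)$.

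Next, pick a nonzero $v \in (P \otimes_{I_b} Q) \, \Mor_{a-c}(n+m, k) \, R$, normalized so that $v^* v = R$, and set $E := v v^*$; then $E$ is a minimal projection in the finite-dimensional C$^*$-algebra $X \, \Mor_{a-c}(N, N) \, X$ with $N := n+m$, equivalent to $R$ as a $1$-cell. A direct matrix-entry check shows that both $\Tr_\ell$ and $\Tr_r$ remain tracial when passing across different ``lengths'' as well, giving $d_\ell(R) = \Tr_\ell(v^* v) = \Tr_\ell(v v^*) = \Tr_\ell(E)$ and likewise $d_r(R) = \Tr_r(E)$. The lemma therefore reduces to the identity of tracial functionals
\[
\Tr_r(T) = \rho(P) \, \rho(Q) \, \Tr_\ell(T) \qquad \text{for every } T \in X \, \Mor_{a-c}(N, N) \, X .
\]

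To prove this trace identity I would use the factored solutions together with the zig-zag relations from part 7 of Lemma \ref{lem.crucial-technical-Mor-lemma}. Expanding
\[
\Tr_\ell(T) = s_P^* \bigl( (1^{\otimes n} \otimes_{I_b} s_Q^* \otimes_{I_b} 1^{\otimes n}) \, (T \otimes 1^{\otimes N}) \, (1^{\otimes n} \otimes_{I_b} s_Q \otimes_{I_b} 1^{\otimes n}) \bigr) s_P
\]
and the analogous expression for $\Tr_r(T)$ with $t_Q, t_P$ in place of $s_Q, s_P$, the inner $Q$-zig-zag collapses the $s_Q$-sandwich (respectively the $t_Q$-sandwich) around the ``$Q$-part'' of $T$ to the same partial $\ell^\infty(I_b)$-bimodular operation on the ``$P$-part'', scaled by $d_\ell(Q)$ (respectively $d_r(Q)$). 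The outer $P$-zig-zag collapses the resulting $s_P$- versus $t_P$-sandwich in the same way, contributing the further factor $d_\ell(P)$ (respectively $d_r(P)$). The ratio is therefore exactly $\rho(P) \, \rho(Q)$, independently of $T$, completing the proof. The main obstacle is the explicit matrix-entry bookkeeping needed to check that the inner and outer partial traces produce identical operations on $T$ on both sides; the zig-zag identities guarantee this cancellation, but must be applied with care.
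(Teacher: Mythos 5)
Your reduction steps are fine: the factorizations $s_X=(1^{\ot n}\ot_{I_b}s_Q\ot_{I_b}1^{\ot n})s_P$ and $t_X=(1^{\ot m}\ot_{I_b}t_P\ot_{I_b}1^{\ot m})t_Q$ for $X=P\ot_{I_b}Q$ do hold (and are the same morphisms $s,t$ the paper works with), the identities $s_Q^*s_Q=d_\ell(Q)1_b$, $t_P^*t_P=d_r(P)1_b$ give $\Tr_\ell(X)=d_\ell(P)d_\ell(Q)$ and $\Tr_r(X)=d_r(P)d_r(Q)$, and the passage from $R$ to $E=VV^*$ via $\Tr_\ell(V^*V)=\Tr_\ell(VV^*)$ (traciality across different lengths) is exactly what the paper does. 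The gap is in your central step. The identity $\Tr_r(T)=\rho(P)\rho(Q)\,\Tr_\ell(T)$ for \emph{all} $T\in X\,\Mor_{a-c}(N,N)\,X$ is not a more elementary statement that can be checked by matrix-entry bookkeeping: since $\Tr_\ell$ and $\Tr_r$ restrict on each irreducible block of this finite-dimensional C$^*$-algebra to $d_\ell(R')$, resp.\ $d_r(R')$, times the normalized block trace, that identity is \emph{equivalent} to the conclusion of the lemma for every irreducible $R'\leq X$ simultaneously. So your argument is circular unless the claimed ``collapse'' is genuinely proved, and it cannot be proved the way you describe.

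Concretely, the inner sandwich in your $\Tr_\ell$-expansion is $(1^{\ot n}\ot s_Q^*\ot 1^{\ot n})(T\ot 1^{\ot N})(1^{\ot n}\ot s_Q\ot 1^{\ot n})$, which is the partial \emph{left} trace of $T$ over the $Q$-leg: a genuinely $T$-dependent element of $P\,\Mor_{a-b}(n,n)\,P$, not ``$d_\ell(Q)$ times a $Q$-independent operation on the $P$-part''. In the $\Tr_r$-expansion the inner sandwich is with $t_P$ (not $t_Q$), i.e.\ the partial \emph{right} trace of $T$ over the $P$-leg, an element of $Q\,\Mor_{b-c}(m,m)\,Q$; the two inner operations act on different legs and land in different algebras, and the conjugate (zig-zag) equations alone give no relation between their subsequent closures. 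Indeed, if your pointwise cancellation held, applying it to a minimal projection $e$ of a block $R'$ would force $\Tr_\ell(e)=d_\ell(P)d_\ell(Q)\kappa$ with a $T$-independent mechanism, whereas $\Tr_\ell(e)=d_\ell(R')$ encodes nontrivial fusion data; more generally, solutions of the conjugate equations can be twisted by any invertible element of $X\,\Mor\,X$ without disturbing the zig-zag identities, but such twists destroy proportionality of the induced left and right traces. What is actually needed is the standardness input the paper uses: $\rho(P)^{1/4}s_P,\ \rho(P)^{-1/4}t_P$ (and similarly for $Q$) are \emph{standard} solutions for the irreducibles $P,Q$, tensor products of standard solutions are standard, hence $\lambda^{1/4}s,\ \lambda^{-1/4}t$ with $\lambda=\rho(P)\rho(Q)$ compute the categorical trace on $X\,\Mor\,X$, and evaluating it on $VV^*$ yields $d(R)=\lambda^{1/2}d_\ell(R)$, whence $\rho(R)=\lambda$. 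Without this (or an equivalent uniqueness/minimality argument), your proof does not go through.
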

\begin{proof}
Since $R$ is a minimal projection, we can choose $V \in (P \ot_I Q) \Mor_{a-c}(n+m,k) R$ such that $V^* V = R$. Take $s_P \in (P \ot_I \widetilde{P}) \Mor_{a-a}(2n,0)$ and $t_P \in (\widetilde{P} \ot_I P) \Mor_{b-b}(2n,0)$ as in Lemma \ref{lem.crucial-technical-Mor-lemma}. Similarly choose $s_Q$ and $t_Q$. Write $\lambda = \rho(P) \, \rho(Q)$.

By point 7 of Lemma \ref{lem.crucial-technical-Mor-lemma}, $\rho(P)^{1/4} s_P$ and $\rho(P)^{-1/4} t_P$ form a standard solution for the conjugate equations for $P$. Similarly, $\rho(Q)^{1/4} s_Q$ and $\rho(Q)^{-1/4} t_Q$ form a standard solution for the conjugate equations for $Q$. Write $s = (1 \ot_I s_Q \ot_I 1) s_P$ and $t = (1 \ot_I t_P \ot_I 1) t_Q$. Then, $\lambda^{1/4} s$ and $\lambda^{-1/4} t$ form a standard solution for the conjugate equations of $S=P \ot_I Q$. Fix $i \in I_a$. The categorical trace on $S \, \Mor_{a-c}(n+m,n+m) \, S$ is thus given by $T \mapsto \lambda^{1/2} (s^*(T \ot_I 1)s)_i$. Therefore, we get that
\begin{align*}
(d_\ell(R) \, d_r(R))^{1/2} &= d(R) = \lambda^{1/2} (s^*(VV^* \ot_I 1) s)_i = \lambda^{1/2} \Tr_\ell(VV^*) \\
& = \lambda^{1/2} \Tr_\ell(V^*V) = \lambda^{1/2} \, d_\ell(R) \; .
\end{align*}
So, $\rho(R) = d_r(R)/d_\ell(R) = \lambda$.
\end{proof}

Since $\Mor_{a-b}(1,1) \subset \ell^\infty(I_a \times I_b)$, we can identify projections in $\Mor_{a-b}(1,1)$ with subsets of $I_a \times I_b$. For every subset $W \subset I_a \times I_b$, we denote by $1_W$ its indicator function.

\begin{lemma}\label{def.formula-for-rho}
Up to multiplication by a positive scalar, there exists a unique function $\mu : I \to (0,+\infty)$ such that for all $a,b \in \cE$, all minimal projections $1_W \in \Mor_{a-b}(1,1)$ and all $(i,j) \in W$, we have $\rho(1_W) = \mu_j / \mu_i$.

If $P \in \Mor_{a-b}(n,n)$ is a minimal projection, then the range of $P$ is contained in $\ell^2(W)$, where $W = \bigl\{i \in I^{n+1} \bigm| i_0 \in I_a, i_n \in I_b, \rho(P) = \mu_{i_n}/\mu_{i_0}\bigr\}$.
\end{lemma}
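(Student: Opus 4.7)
I build $\mu$ by propagating along paths of $\Pi$, verify consistency via the tensor-multiplicativity of $\rho$ from Lemma~\ref{def.dim-proportion-tensor-product} together with the planar graph structure, and derive the second statement by marginalizing to level~$1$. For uniqueness, if $\mu$ and $\mu'$ both satisfy the stated property, then $\nu := \mu'/\mu$ is constant on every atom $W$ of every $\Mor_{a-b}(1,1)$. For each edge $i \sim j$ of $\Pi$, the pair $(i,j)$ lies in the support of $(1_a \ot 1_b) A \in \Mor_{a-b}(1,1)$, where $A$ is the adjacency matrix; by Lemma~\ref{lem.crucial-technical-Mor-lemma}(3) applied to this element, $(i,j)$ belongs to some minimal projection of $\Mor_{a-b}(1,1)$. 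Hence $\nu$ is constant along $\Pi$-edges, and by connectedness of $\Pi$, globally constant.

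For existence, fix $i_0 \in I$ and set $\mu_{i_0} := 1$. For any $j \in I$, choose a path $i_0 = p_0 \sim p_1 \sim \cdots \sim p_n = j$ in $\Pi$, let $W_k \subset I_{a_{k-1}} \times I_{a_k}$ be the atom of $\Mor_{a_{k-1}-a_k}(1,1)$ containing $(p_{k-1},p_k)$, and define $\mu_j := \prod_{k=1}^n \rho(1_{W_k})$. Independence of the path amounts to the cycle condition: for every closed loop $i_0 \sim i_1 \sim \cdots \sim i_n = i_0$ in $\Pi$ with atoms $W_1,\ldots,W_n$, one has $\prod_k \rho(1_{W_k}) = 1$. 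Form
\[
P := 1_{W_1} \ot_{I_{a_1}} 1_{W_2} \ot_{I_{a_2}} \cdots \ot_{I_{a_{n-1}}} 1_{W_n} \;\in\; \Mor_{a_0-a_0}(n,n) ;
\]
its support contains the closed loop, and iterated application of Lemma~\ref{def.dim-proportion-tensor-product} shows every minimal subprojection of $P$ has $\rho$-value $\prod_k \rho(1_{W_k})$. The crux is to exhibit the identity $1$-cell $\mathbf{1}_{a_0}$ (represented by $1_{a_0} \in \Mor_{a_0-a_0}(0,0)$, of $\rho$-value $1$) as a subobject of $P$; a further application of Lemma~\ref{def.dim-proportion-tensor-product} then yields $\prod_k \rho(1_{W_k}) = 1$. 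I construct this embedding as a nonzero intertwiner $v \in \Mor_{a_0-a_0}(0,n)$, obtained from the planar closing graph appearing in the proof of Lemma~\ref{lem.crucial-technical-Mor-lemma}(7) composed suitably with the bi-labeled graph data of $P$; non-vanishing at the indices of the closed loop follows from $(i_0,\ldots,i_n) \in \mathrm{supp}(P)$. This planar-categorical construction is the main obstacle, since it is what converts a closed loop in the support into an actual morphism from the identity object.

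For the second statement, given a minimal $P \in \Mor_{a-b}(n,n)$ with $(i_0,\ldots,i_n) \in \mathrm{supp}(P)$, I extract for each $k$ an atom $W_k \in \Mor_{a_{k-1}-a_k}(1,1)$ containing $(i_{k-1},i_k)$ by marginalizing $P$ at positions $k-1$ and $k$ through suitable planar compositions with $\cM$-graphs. Then $P$ is isomorphic to a minimal subobject of $1_{W_1} \ot_I \cdots \ot_I 1_{W_n}$, so Lemma~\ref{def.dim-proportion-tensor-product} combined with the construction of $\mu$ yields $\rho(P) = \prod_k \rho(1_{W_k}) = \mu_{i_n}/\mu_{i_0}$. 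Consequently $\mathrm{supp}(P)$ is contained in the set $W$ where $\rho(P) = \mu_{i_n}/\mu_{i_0}$, as required.
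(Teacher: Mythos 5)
Your overall strategy (define $\mu$ along paths, reduce everything to multiplicativity of $\rho$ under tensor products via Lemma \ref{def.dim-proportion-tensor-product}) is workable, but as written there is a genuine gap between what you construct and what the lemma (and your own second part) needs. Your existence argument only establishes $\rho(1_W)=\mu_j/\mu_i$ for atoms $W$ containing an \emph{edge} pair $i\sim j$ of $\Pi$: you propagate $\mu$ along $\Pi$-paths and your cycle condition is formulated only for edge-loops. But the first statement of the lemma concerns \emph{all} minimal projections $1_W\in\Mor_{a-b}(1,1)$ and \emph{all} $(i,j)\in W$, and these pairs are typically at distance $\geq 2$; nothing in your construction relates $\rho(1_W)$ for such an atom to the edge data defining $\mu$. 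The same omission breaks your proof of the second statement: for a minimal $P\in\Mor_{a-b}(n,n)$ and $(i_0,\ldots,i_n)\in\operatorname{supp}(P)$, the consecutive pairs $(i_{k-1},i_k)$ need not be edges, so the final step $\prod_k\rho(1_{W_k})=\mu_{i_n}/\mu_{i_0}$ uses exactly the unproved general case of the first statement. To close this you would need a cycle condition for arbitrary closed tuples lying in the support of a product of atoms (plus $\rho(1_{\widetilde W})=\rho(1_W)^{-1}$), not just for edge-loops.

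Separately, the step you yourself flag as the crux --- exhibiting $\eps_{a_0}$ as a subobject of $1_{W_1}\ot_I\cdots\ot_I 1_{W_n}$ --- is not actually carried out: the closing graph $\cS_n$ of Lemma \ref{lem.crucial-technical-Mor-lemma}(7) produces coevaluation-type morphisms into objects of the form $P\ot_I\widetilde P$, which is not the shape of your loop product, and ``composed suitably with the bi-labeled graph data of $P$'' is not a construction (the atoms $1_{W_k}$ are spectral projections, not single $T^\cK$'s). For edge-loops one can indeed get a nonzero element of $\Mor_{a_0-a_0}(0,n)\,Q$ from the \emph{circular} planar bi-labeled graph (the cycle $\Z/n\Z$ with all vertices labeled on one side, as used later in Proposition \ref{prop.fiber-functor-compact-quantum}), since it is nonzero exactly at closed walks in $\Pi$; but this again only serves edge-loops. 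The paper avoids all of this: it defines $C(i,j):=\rho(1_W)$ for the unique atom $W\ni(i,j)$, for \emph{every} pair $(i,j)$, and proves the cocycle identity $C(i,k)C(k,j)=C(i,j)$ for all triples by sandwiching the edgeless three-vertex planar graph $\cK\in\cP(3,2)$ between $1_W\ot_I 1_{W'}$ and $1_V$, so that $(1_W\ot_I 1_{W'})T^\cK 1_V$ is a nonzero element of $\Mor_{a-c}(2,1)$ and Lemma \ref{def.dim-proportion-tensor-product} applies directly; then $\mu_s:=C(e,s)$ has the required property for all atoms at once, and the second statement follows by the telescoping product. I would recommend reworking your argument along these lines, or at least extending your cycle condition to arbitrary pairs before using it.
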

\begin{proof}
For every $(i,j) \in I \times I$, there exist unique $a,b \in \cE$ such that $i \in I_a$ and $j \in I_b$, and there then exists a unique minimal projection $1_W \in \Mor_{a-b}(1,1)$ with $(i,j) \in W$. We define $C(i,j) := \rho(1_W)$.

Fix $i,k,j \in I$. We claim that $C(i,k) \, C(k,j) = C(i,j)$. Take the unique $a,b,c \in \cE$ and minimal projections $1_W \in \Mor_{a-b}(1,1)$ and $1_{W'} \in \Mor_{b-c}(1,1)$ such that $(i,k) \in W$ and $(k,j) \in W'$. Let $1_V \in \Mor_{a-c}(1,1)$ be the unique minimal projection with $(i,j) \in V$. Define the planar bi-labeled graph $\cK = (K,x,y) \in \cP(3,2)$ by $V(K) = \{0,1,2\}$, $E(K) = \emptyset$, $x_i = i$ for $i \in \{0,1,2\}$ and $y_0=0$, $y_1 = 2$. For all connected $\cK_1 \in \cP_c(3,3)$ and $\cK_2 \in \cP_c(2,2)$, we have that $\cK_1 \circ \cK \circ \cK_2$ is connected. Therefore, $(1_W \ot_I 1_{W'}) T^\cK 1_V$ belongs to $\Mor_{a-c}(2,1)$ and it is nonzero, because the $(ikj,ij)$ entry of this matrix equals $1$. By Lemma \ref{def.dim-proportion-tensor-product}, we find that $\rho(1_V) = \rho(1_W) \, \rho(1_{W'})$. This proves the claim.

By the claim above, $C(k,j) = C(i,j) C(i,k)^{-1}$ for all $i,k,j \in I$. Fixing a vertex $e \in I$, we define $\mu_s = C(e,s)$ for all $s \in I$ and get that $C(k,j) = \mu_j \mu_k^{-1}$ for all $j,k \in I$. Uniqueness of $\mu$ up to a scalar multiple is obvious.

We can partition $I \times I$ into subsets $W_j \subset I \times I$ such that $W_j \subset I_{a_j} \times I_{b_j}$ and such that $1_{W_j}$ is a minimal projection in $\Mor_{a_j-b_j}(1,1)$. Let $P \in \Mor_{a-b}(n,n)$ be a minimal projection. Define $X = \{i \in I_a \times I^{n-1} \times I_b \mid P(i) \neq 0\}$, where $(i) \in \ell^2(I^{n+1})$ denotes the basis vector that is equal to $1$ on $i$ and equal to $0$ elsewhere. By definition, the range of $P$ is contained in $\ell^2(X)$. Take $i \in X$. Take $j_1,\ldots,j_n$ such that $(i_{k-1},i_k) \in W_{j_k}$ for all $k \in \{1,\ldots,n\}$. Since $i_0 \in I_a$ and $i_n \in I_b$, we have $a_{j_1} = a$ and $b_{j_n} = b$. Thus, $Q := 1_{W_{j_1}} \ot_I \cdots \ot_I 1_{W_{j_n}}$ is a projection in $\Mor_{a-b}(n,n)$ and, by definition, $Q(i) = (i)$. Therefore $P Q \neq 0$. By Lemma \ref{def.dim-proportion-tensor-product}, we get that
$$\rho(P) = \rho(1_{W_{j_1}}) \cdots \rho(1_{W_{j_n}}) = C(i_0,i_1) \cdots C(i_{n-1},i_n) = \frac{\mu_{i_1}}{\mu_{i_0}} \cdots \frac{\mu_{i_n}}{\mu_{i_{n-1}}} = \frac{\mu_{i_n}}{\mu_{i_0}} \; .$$
This concludes the proof of the lemma.
\end{proof}

\begin{remark}\label{rem.path-approach-to-2-category}
We have chosen to take the $\ell^\infty(I)$-bimodules $\ell^2(I^{n+1})$ as basic objects, even though they are not of finite type, so that the actual objects in $\cC_{a-b}(\cD,\Pi)$ have to be defined as strict submodules. This choice will be helpful in constructing the quantum automorphism group of $\Pi$ later in the paper. It is however possible to describe $\cC(\cD,\Pi)$ with spaces of finite paths in $\Pi$ as basic objects. We explain this construction here and it will be useful in Section \ref{sec.quantizations-discrete-groups} to construct fiber functors, and their corresponding compact quantum groups, for $\cC(\cD,\Pi)$.

Define $\cK_n = (K,x,y) \in \cP(n+1,n+1)$ with $V(K) = \{0,\ldots,n\}$, $(i,j) \in E(K)$ iff $|i-j| = 1$ and $x_i = y_i = i$ for all $i \in \{0,\ldots,n\}$, as illustrated in Figure \ref{fig.graph-Kn}.
\begin{figure}[h]
    \centering
    \begin{tikzpicture}
        \filldraw (0,0) circle (1.5pt) node[anchor=south]{$x_0$} node[anchor=north]{$y_0$};
        \filldraw (1.4,0) circle (1.5pt) node[anchor=south]{$x_1$} node[anchor=north]{$y_1$};
        \filldraw (2.8,0) circle (1.5pt) node[anchor=south]{$x_2$} node[anchor=north]{$y_2$};
        \filldraw (4.8,0) circle (1.5pt) node[anchor=south]{$x_{n-1}$} node[anchor=north]{$y_{n-1}$};
        \filldraw (6.2,0) circle (1.5pt) node[anchor=south]{$x_n$} node[anchor=north]{$y_n$};
        \draw (0,0) -- (1.4,0);
        \draw (1.4,0) -- (2.8,0);
        \draw (2.8,0) -- (3.4,0);
        \draw (4.2,0) -- (4.8,0);
        \draw (4.8,0) -- (6.2,0);
        \draw (3.8,0) node{$\cdots$};
    \end{tikzpicture}
    \caption{The bi-labeled graph $\cK_n = (K,x,y) \in \cP(n+1,n+1)$}\label{fig.graph-Kn}
\end{figure}

Then, $P_{n,a,b} := T^{\cK_n}(1_a \ot 1^{\ot (n-1)} \ot 1_b) \in \Mor_{a-b}(n,n)$ is the orthogonal projection of $\ell^2(I_a \times I^{n-1} \times I_b)$ onto the $\ell^2$-space of the set of paths of length $n$ from a vertex in $I_a$ to a vertex in $I_b$. We can view this as an object $P_{n,a,b} \in \cC_{a-b}(\cD,\Pi)$. By concatenation of paths, $P_{n,a,b} \ot_{I_b} P_{m,b,c}$ embeds into $P_{n+m,a,c}$ and the latter is isomorphic with the direct sum of all $P_{n,a,b} \ot_{I_b} P_{m,b,c}$, $b \in \cE$. By reversion of paths, we get that $\overline{P_{n,a,b}} = P_{n,b,a}$.

For every $n$-tuple $d = (d_1,\ldots,d_n)$ of integers $d_i \geq 0$, write $D = d_1 + \cdots + d_n$ and define the planar bi-labeled graph $\cK_d = (K,x,y) \in \cP(D+1,n+1)$ where $V(K) = \{0,\ldots,D\}$, $(i,j) \in E(K)$ iff $|i-j| = 1$, $x_i = i$ for all $i \in \{0,\ldots,D\}$, while $y_0 = 0$ and $y_i = d_1 + \cdots + d_i$ for all $i \in \{1,\ldots,n\}$, as illustrated in Figure \ref{fig.graph-Kd}.
\begin{figure}[h]
    \centering
    \begin{tikzpicture}
        \filldraw (0,0) circle (1.5pt) node[anchor=south]{$x_0$} node[anchor=north]{$y_0$};
        \filldraw (1,0) circle (1.5pt) node[anchor=south]{$x_1$};
        \filldraw (2,0) circle (1.5pt) node[anchor=south]{$x_2$};
        \filldraw (3,0) circle (1.5pt) node[anchor=south]{$x_3$} node[anchor=north]{$y_1$};
        \filldraw (4,0) circle (1.5pt) node[anchor=south]{$x_4$};
        \filldraw (5,0) circle (1.5pt) node[anchor=south]{$x_5$} node[anchor=north]{$y_2$};
        \filldraw (6,0) circle (1.5pt) node[anchor=south]{$x_6$};
        \filldraw (7,0) circle (1.5pt) node[anchor=south]{$x_7$};
        \filldraw (8,0) circle (1.5pt) node[anchor=south]{$x_8$};
        \filldraw (9,0) circle (1.5pt) node[anchor=south]{$x_9$} node[anchor=north]{$y_3$};
        \draw (0,0) -- (1,0);
        \draw (1,0) -- (2,0);
        \draw (2,0) -- (3,0);
        \draw (3,0) -- (4,0);
        \draw (4,0) -- (5,0);
        \draw (5,0) -- (6,0);
        \draw (6,0) -- (7,0);
        \draw (7,0) -- (8,0);
        \draw (8,0) -- (9,0);
    \end{tikzpicture}
    \caption{The bi-labeled graph $\cK_d = (K,x,y) \in \cP(10,4)$ with $d = (3,2,4)$}\label{fig.graph-Kd}
\end{figure}

Then, $T^{\cK_d} \in P_D \Mor(D,n)$ and, for all $a,b \in \cE$, the right support of $T^{\cK_d} (1_a \ot 1^{\ot (n-1)} \ot 1_b)$ equals $\ell^2(W)$ with $i \in W$ iff $i_0 \in I_a$, $i_n \in I_b$ and for all $k \in \{1,\ldots,n\}$, there exists in $\Pi$ a path of length $d_k$ from $i_{k-1}$ to $i_k$. By varying $d_1,\ldots,d_n$, we conclude that every irreducible object in $\cC_{a-b}(\cD,\Pi)$ is contained in one of the $P_{n,a,b}$, $n \geq 0$.

We could thus take the path spaces $P_{n,a,b}$ as basic objects of the unitary $2$-category $\cC(\cD,\Pi)$. By construction, the space of morphisms from $P_{m,a,b}$ to $P_{n,a,b}$ is given by the linear span of the operators $T^{\cK}(1_a \ot 1^{\ot (m-1)} \ot 1_b)$ where $\cK = (K,x,y) \in \cD(n+1,m+1)$ is connected, with $x_0 = y_0$, $x_n = y_m$ and with $x$ and $y$ being paths in $K$.
\end{remark}

\subsection{\boldmath A $*$-algebra associated with the unitary $2$-category $\cC(\cD,\Pi)$}\label{sec.construction-B}

We still fix a connected locally finite graph $\Pi$ with vertex set $I$ and a graph category $\cD$ containing all planar bi-labeled graphs. We denote by $\cC(\cD,\Pi)$ the unitary $2$-category defined in Definition \ref{def.my-unitary-2-category}. By construction, $\cC(\cD,\Pi)$ is a concrete unitary $2$-category of $\ell^\infty(I)$-bimodules of finite type.

In \cite[Proposition 3.12]{DCT14}, an \emph{$I$-partial compact quantum group} is associated to any such unitary $2$-category and this should be viewed as a quantum groupoid with classical unit space $I$. This $I$-partial compact quantum group of \cite{DCT14} is in particular a $*$-algebra $\cB$ with positive Haar functionals. We repeat this construction here in a concrete way, defining $\cB$ by generators and relations, in such a way that in the next section \ref{sec.quantum-aut-group}, we can easily define an algebraic quantum group $(\cA,\Delta)$ and its Haar functionals as a corner of $\cB$.

Denote by $\cUB$ the free vector space with basis $\sqcup_{n \geq 0} (I^{n+1} \times I^{n+1})$. We denote the basis vectors as $F_n(i,j)$ for $n \geq 0$ and $i,j \in I^{n+1}$. We view $F_n$ as an $I^{n+1} \times I^{n+1}$ matrix with entries in $\cUB$.

For every $i \in I^{n+1}$, we define $\ibar \in I^{n+1}$ by $\ibar = (i_n, \cdots,i_0)$. We turn $\cUB$ into a $*$-algebra by defining
\begin{align*}
& F_n(i,j) \, F_m(k,l) = \begin{cases} F_{n+m}(i_0 \cdots i_n k_1 \cdots k_m,j_0 \cdots j_n l_1 \cdots l_m) &\quad\text{if $i_n=k_0$ and $j_n = l_0$,}\\ 0 &\quad\text{otherwise,}\end{cases}\\
& F_n(i,j)^* = F_n(\ibar,\jbar) \; .
\end{align*}
Note that, by definition, $(F_0(i,j))_{(i,j) \in I \times I}$ is a family of mutually orthogonal, self-adjoint idempotents in $\cUB$. Also note that if $I$ is infinite, then $\cUB$ is non-unital.

For the following lemma, recall that $\cD_1(n,m)$ denotes the set of bi-labeled graphs $(K,x,y)$ in $\cD(n,m)$ with the property that every connected component of $K$ intersects $x$ and intersects $y$.
Similarly to the notation $\cL(n,m)$, we define
$$\cL_1(n,m) = \{ (K,x,y) \in \cD_1(n+1,m+1) \mid x_0=y_0 \;\;\text{and}\;\; x_n=y_m\} \; .$$
Recall the equivalence relation $\approx$ on $I$ introduced in \eqref{eq.equiv-rel-on-I}, with equivalence classes $(I_a)_{a \in \cE}$. We denote by $d(i,j)$ the distance in the graph $\Pi$ between vertices $i,j \in I$.

\begin{lemma}\phantomsection\label{lem.ideal-UB}
\begin{enumlist}
\item The linear span
\begin{equation}\label{eq.ideal-I-in-UB}
\cI = \lspan \bigl\{(F_n T^\cK - T^\cK F_m)_{ij} \bigm| n,m \geq 0, \cK \in \cL_1(n,m), i \in I^{n+1}, j \in I^{m+1}\bigr\}
\end{equation}
is a $*$-ideal in $\cUB$.

\item If $n \geq 0$ and $i,j \in I^{n+1}$ such that $i_k \not\approx j_k$ for some $k \in \{0,\ldots,n\}$, then $F_n(i,j) \in \cI$.

\item If $n \geq 0$ and $i,j \in I^{n+1}$ such that $d(i_{k-1},i_k) \neq d(j_{k-1},j_k)$ for some $k \in \{1,\ldots,n\}$, then $F_n(i,j) \in \cI$.

\item For all $n,m \geq 0$, $a,b \in \cE$ and $T \in \Mor_{a-b}(n,m)$, the entries of $F_n T - T F_m$ belong to $\cI$.
\end{enumlist}
\end{lemma}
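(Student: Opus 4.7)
The plan is to prove the four parts in sequence. Part 1 is the main technical content, while Parts 2--4 follow from it by direct manipulations in $\cUB$.

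For Part 1, I would establish the left-ideal and $*$-closure properties separately; the right-ideal property then follows automatically since $ab = (b^* a^*)^* \in \cI$ whenever $a \in \cI$ and $b \in \cUB$. For left multiplication by a generator $F_l(r,s)$, the crucial observation is that for any $\cK \in \cL_1(n,m)$, the padded bi-labeled graph $E_l \ot_r \cK$ lies in $\cL_1(l+n, l+m)$, where $E_l = (\cM^{1,1})^{\ot(l+1)} \in \cP$ consists of $l+1$ isolated vertices with identity labeling $x_k = y_k = k$. Using the multiplication rule in $\cUB$ together with the built-in constraints $T^\cK_{kj} \neq 0 \Rightarrow k_0 = j_0$ and $k_n = j_m$ (from $x_0 = y_0$, $x_n = y_m$), a direct computation yields
\[
F_l(r,s) \cdot (F_n T^\cK - T^\cK F_m)_{ij} = \bigl(F_{l+n} T^{E_l \ot_r \cK} - T^{E_l \ot_r \cK} F_{l+m}\bigr)_{R,J}
\]
whenever $r_l = i_0$ and $s_l = j_0$ (both sides vanish otherwise), where $R = (r_0,\ldots,r_{l-1},i_0,i_1,\ldots,i_n)$ and $J = (s_0,\ldots,s_{l-1},j_0,j_1,\ldots,j_m)$. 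For $*$-closure, I use $\cK^\sharp := (K,\overline{x},\overline{y}) = (\cKtil)^*$, which lies in $\cL_1(n,m) \cap \cD$ because $\cD$ is closed under both tilde (via Lemma \ref{lem.crucial-technical-Mor-lemma}.6) and transpose, and because $\overline{x}_0 = x_n = y_m = \overline{y}_0$ and $\overline{x}_n = x_0 = y_0 = \overline{y}_m$. A direct calculation using $T^{\cK^\sharp}_{IJ} = T^\cK_{\overline{I},\overline{J}}$ then gives $((F_n T^\cK - T^\cK F_m)_{ij})^* = (F_n T^{\cK^\sharp} - T^{\cK^\sharp} F_m)_{\ibar,\jbar} \in \cI$.

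Parts 2 and 3 both rely on the factorization (verified directly from the multiplication rule) $F_n(i,j) = F_k(i_0\cdots i_k, j_0\cdots j_k) \cdot F_0(i_k,j_k) \cdot F_{n-k}(i_k\cdots i_n, j_k\cdots j_n)$, reducing Part 2 to showing $F_0(a,b) \in \cI$ when $a \not\approx b$, and, via the analogous factorization with middle factor $F_1((i_{k-1},i_k),(j_{k-1},j_k))$, reducing Part 3 to showing $F_1((a,b),(c,d)) \in \cI$ when $d(a,b) \neq d(c,d)$. For Part 2, pick $\cK \in \cL(0,0) \subset \cL_1(0,0)$ with $T^\cK_{aa} \neq T^\cK_{bb}$ (which exists by the definition of $\approx$); then $(F_0 T^\cK - T^\cK F_0)_{ab} = (T^\cK_{bb} - T^\cK_{aa}) F_0(a,b) \in \cI$ has nonzero coefficient. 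For Part 3, use the length-$r$ walk bi-labeled graph $\cK_r \in \cL(1,1)$ with underlying graph a path on $r+1$ vertices and $x = y = (0,r)$; then $T^{\cK_r}_{(e,f),(g,h)} = \delta_{eg}\delta_{fh}\,p_r(e,f)$, where $p_r(e,f)$ counts walks of length $r$ in $\Pi$ from $e$ to $f$, and $(F_1 T^{\cK_r} - T^{\cK_r} F_1)_{(a,b),(c,d)} = (p_r(c,d) - p_r(a,b)) F_1((a,b),(c,d)) \in \cI$. Taking $r = \min(d(a,b),d(c,d))$ makes one of $p_r(a,b), p_r(c,d)$ vanish while the other is at least $1$ (witnessed by the geodesic), yielding a nonzero coefficient. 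Part 4 is immediate because $\Mor_{a-b}(n,m) \subset \Mor(n,m)$, which is by definition the linear span of $T^\cK$ with $\cK \in \cL(n,m) \subset \cL_1(n,m)$.

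The main obstacle lies in Part 1, where the bookkeeping involves carefully verifying that $E_l \ot_r \cK$ and $\cK^\sharp$ actually satisfy the $\cL_1$ and $\cD$-membership conditions. The weaker condition $\cL_1$ (rather than connectedness) is essential because $E_l$ is a disjoint union of singletons, and the containment $\cP \subset \cD$ is critical to ensure $\cD$ contains the building blocks $\cM^{1,1}$, $\cM^{1,2}$, $\cM^{2,1}$, and the auxiliary graphs $\cR_n$ used in constructing $\cKtil$ and $\cK^\sharp$.
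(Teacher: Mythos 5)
Parts 1--3 of your argument are essentially correct and essentially follow the paper's own route: your padded graph $E_l \ot_r \cK$ is isomorphic to $1^{\ot l}\ot\cK$, which is exactly how the paper obtains the ideal property, and the $*$-closure via $(K,\overline{x},\overline{y})$ is literally the paper's argument; your reduction of parts 2 and 3 to the cases $F_0(a,b)$ and $F_1((a,b),(c,d))$ through the factorization of $F_n(i,j)$ plus the ideal property is a harmless variation of the paper's direct padding of $S\in\Mor(0,0)$, respectively of a walk-counting path graph, by identity tensor legs. One small slip in part 1: the right-hand side $(F_{l+n}T^{E_l\ot_r\cK}-T^{E_l\ot_r\cK}F_{l+m})_{R,J}$ does not depend on $r_l,s_l$, so it does not vanish when $r_l\neq i_0$ or $s_l\neq j_0$; only the left-hand side does, which is all you need for membership in $\cI$.

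The genuine gap is in part 4. You claim that $\Mor_{a-b}(n,m)\subset\Mor(n,m)$, but by Definition \ref{def.mor-spaces} the space $\Mor_{a-b}(n,m)$ is the cut-down $\Mor(n,m)(1_a\ot 1^{\ot(m-1)}\ot 1_b)$, and such cut-downs have no reason to lie in $\Mor(n,m)$: that would essentially require the class indicators to be available inside the $\Mor$-spaces (e.g.\ $1_a\in\Mor(0,0)$), which is not granted -- when $\cE$ is infinite, $\Mor(0,0)$ consists of finite linear combinations of homomorphism-counting functions and need not contain $1_a$. This is precisely why Lemma \ref{lem.crucial-technical-Mor-lemma}.4 only proves the weaker inclusions $\Mor_{a-b}(n,m)\subset\Mor_{a-}(n,m)$ and $\Mor_{a-b}(n,m)\subset\Mor_{-b}(n,m)$, and does so by a nontrivial support argument. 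The correct deduction of part 4 uses your part 2: write $T=S P_m$ with $S\in\Mor(n,m)$ and $P_m=1_a\ot 1^{\ot(m-1)}\ot 1_b$, $P_n=1_a\ot 1^{\ot(n-1)}\ot 1_b$; bimodularity gives $S P_m=P_n S$, hence
\begin{equation*}
F_n T - T F_m = (F_n S - S F_m)P_m + S\,(F_m P_m - P_m F_m)\; .
\end{equation*}
The entries of the first summand are scalar multiples of entries of $F_n S - S F_m$ and thus lie in $\cI$ by the definition of $\cI$ together with $\cL(n,m)\subset\cL_1(n,m)$; each nonzero entry of $F_m P_m - P_m F_m$ equals $\pm F_m(k,j)$ with $k_0\not\approx j_0$ or $k_m\not\approx j_m$, hence lies in $\cI$ by part 2, and the sums giving the entries of $S(F_m P_m - P_m F_m)$ are finite because $S$ is a finite combination of matrices $T^\cK$ with $\cK$ connected. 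This is how the paper concludes; as written, your part 4 does not go through.
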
\vspace{0.5ex}

\begin{definition}\label{def.star-algebra-B}
We define $\cB$ as the quotient of $\cUB$ by the $*$-ideal $\cI$ defined in \eqref{eq.ideal-I-in-UB}.
\end{definition}

\begin{proof}[{Proof of Lemma \ref{lem.ideal-UB}}]
1.\ As in the proof of Lemma \ref{lem.crucial-technical-Mor-lemma}.6, we get that for every $\cK = (K,x,y)$ in $\cL_1(n,m)$, the bi-labeled graph $(K,\overline{y},\overline{x})$ belongs to $\cL_1(m,n)$. Taking its transpose, we find that $(K,\overline{x},\overline{y}) \in \cL_1(n,m)$. This operation shows that $\cI^* = \cI$. For every $\cK \in \cL_1(n,m)$ and for every $k \geq 1$, we have that $\cK \ot 1^{\ot k}$ and $1^{\ot k} \ot \cK$ belong to $\cL_1(n+k,m+k)$. It follows that $\cI \, F_k(i,j) \subset \cI$ and $F_k(i,j) \, \cI \subset \cI$ for all $k \geq 0$ and $i,j \in I^{k+1}$. So, $\cI$ is a $*$-ideal in $\cUB$.

2.\ Take $n \geq 0$, $i,j \in I^{n+1}$ and $k \in \{0,\ldots,n\}$ such that $i_k \not\approx j_k$. Take $S \in \Mor(0,0)$ with $S_{i_k} \neq S_{j_k}$. Then,
$$(S_{j_k} - S_{i_k}) F_n(i,j) = \bigl(F_n (1^{\ot k} \ot S \ot 1^{\ot (n-k)}) - (1^{\ot k} \ot S \ot 1^{\ot (n-k)}) F_n\bigr)_{ij} \in \cI \; ,$$
so that $F_n(i,j) \in \cI$.

3.\ Assume that $F_n(i,j) \not\in \cI$ and fix $k \in \{1,\ldots,n\}$. We have to prove that $d(i_{k-1},i_{k}) = d(j_{k-1},j_{k})$. Fix an integer $d \geq 0$. Define $\cK = (K,x,y) \in \cL_1(n,n)$ with $V(K) = \{0,1,\ldots,n+d-1\}$, with $(v,w) \in E(K)$ iff $v,w \in \{k-1,k,\ldots,k+d-1\}$ and $|v-w|=1$, and with $x_s = y_s = s$ if $0 \leq s \leq k-1$ and $x_s = y_s = s+d-1$ if $k \leq s \leq n$.

Then $T^\cK$ is a diagonal matrix with $T^\cK_{ii} \geq 0$ for all $i \in I^{n+1}$ and $T^\cK_{ii} > 0$ iff there exists in $\Pi$ a path of length $d$ from $i_{k-1}$ to $i_{k}$.

Since $(F_n T^\cK - T^\cK F_n)_{ij} \in \cI$ and since we assumed that $F_n(i,j) \not\in \cI$, the following property holds for every $d \geq 0$~: there exists in $\Pi$ a path of length $d$ from $i_{k-1}$ to $i_{k}$ iff there exists in $\Pi$ a path of length $d$ from $j_{k-1}$ to $j_{k}$. Since this holds for all $d \geq 0$, we must have that $d(i_{k-1},i_{k}) = d(j_{k-1},j_{k})$.

4.\ It follows in particular from 2 that the entries of $F_n (1_a \ot 1^{\ot (n-1)} \ot 1_b) - (1_a \ot 1^{\ot (n-1)} \ot 1_b) F_n$ belong to $\cI$. In combination with the definition of $\cI$, statement 4 follows immediately.
\end{proof}

From now on, we also (and mainly) view $F_n$ as an $I^{n+1} \times I^{n+1}$ matrix with entries in $\cB$.

In the next section, we will prove that the underlying multiplier Hopf $*$-algebra $(\cA,\Delta)$ of the quantum automorphism group $\Pi$ can be identified with a corner of $\cB$ (when $\cB$ is defined using $\cD = \cP$). We now construct a faithful positive functional on $\cB$, whose restriction to this corner will turn out to be the left invariant Haar functional on $(\cA,\Delta)$.

As before, we denote by $\cF(I^{n+1}) \subset \ell^2(I^{n+1})$ the subspace of finitely supported functions $I^{n+1} \to \C$. Recall that every $T \in \Mor(n,m)$ defines a linear map from $\cF(I^{m+1})$ to $\cF(I^{n+1})$.

\begin{lemma}\label{lem.ibf}
Let $a,b \in \cE$, $k \geq 0$ and let $P \in \Mor_{a-b}(k,k)$ be a minimal projection, thus realizing an irreducible object in $\cC_{a-b}(\cD,\Pi)$.

For every $n \geq 0$, there exists a subset $\ibf_{a-b}(n,P) \subset \Mor_{a-b}(n,k) P$ with the following properties.
\begin{enumlist}
\item For all $V,W \in \ibf_{a-b}(n,P)$, we have that $W^* V = 0$ if $V \neq W$ and $W^* V = P$ if $V = W$.
\item In $B(\ell^2(I^{n+1}))$, we have that $\sum_{V \in \ibf_{a-b}(n,P)} VV^*$ equals the projection onto the closed linear span of $\Mor_{a-b}(n,k)P \ell^2(I^{k+1})$.
\item For all $\xi \in \cF(I^{n+1})$, there are only finitely many $V \in \ibf_{a-b}(n,P)$ such that $V^* \xi \neq 0$.
\end{enumlist}
\end{lemma}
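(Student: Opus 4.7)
My plan rests on noting that the minimality of $P$ gives $\cN := \Mor_{a-b}(n,k)\, P$ the structure of a pre-Hilbert space. Indeed $P \Mor_{a-b}(k,k) P = \C P$, so $W^*V$ lies in $\C P$ for all $V, W \in \cN$, and writing $W^*V = \langle V, W\rangle P$ defines a positive-definite scalar inner product; in particular $V^*V = \|V\|^2_{\cN} P = \|V\|^2_{\mathrm{op}} P$, so that $V$ restricted to $P\ell^2(I^{k+1})$ is a scalar multiple of an isometry, and the subspaces $V P\ell^2(I^{k+1})$ and $W P\ell^2(I^{k+1})$ are mutually orthogonal in $\ell^2(I^{n+1})$ whenever $W^*V = 0$. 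Any orthonormal basis of the pre-Hilbert space $\cN$ thus automatically satisfies property~1, while Parseval's identity in $\cN$, combined with the orthogonality of the ranges in $\ell^2(I^{n+1})$, will yield property~2.

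The nontrivial condition is local finiteness (property~3), which I will secure by constructing the ONB greedily with respect to an exhausting filtration. Since $\Pi$ is connected and locally finite, $I$ is countable, and I enumerate the canonical basis of $\cF(I_a \times I^{n-1} \times I_b)$ as $\xi_1, \xi_2, \ldots$; the remaining canonical basis vectors of $\cF(I^{n+1})$ annihilate every $V^*$ by bimodularity and may be ignored. For each $\ell$, writing $\xi_\ell = \delta_{(i_0, \ldots, i_n)}$, the linear map $\Phi_\ell \colon \cN \to p_{i_0} P\ell^2(I^{k+1}) p_{i_n}$, $V \mapsto V^*\xi_\ell$, lands in a finite-dimensional target by the finite-type property of $P$, so $\ker \Phi_\ell$ has finite codimension in $\cN$. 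Setting $K_\ell := \bigcap_{j \leq \ell} \ker \Phi_j$ yields a nested decreasing sequence of finite-codimension subspaces, and the orthogonal complements $K_\ell^\perp \subset \cN$ then form an increasing sequence of finite-dimensional subspaces whose union is dense in $\cN$, because any nonzero $V \in \cN$ must have some nonzero matrix entry, forcing $\bigcap_\ell K_\ell = \{0\}$.

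I then construct $\ibf_{a-b}(n, P)$ by choosing, at each stage $\ell \geq 1$, an orthonormal basis of the finite-dimensional layer $L_\ell := K_{\ell-1} \cap K_\ell^\perp$ (with $K_0 := \cN$), and taking the disjoint union of these bases over $\ell$. Since each $L_\ell$ is a finite-dimensional subspace of $\cN$, the chosen basis vectors lie in $\cN$ itself, so properties~1 and~2 follow from the pre-Hilbert generalities of the first paragraph. Property~3 comes out of the layered construction: any $V_\alpha$ added at stage $s \leq \ell$ lies in $K_s^\perp \subset K_\ell^\perp$, while any $V_\alpha$ added at stage $s > \ell$ lies in $K_{s-1} \subset K_\ell \subset \ker \Phi_\ell$. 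Hence $V_\alpha^* \xi_\ell \neq 0$ forces $V_\alpha \in K_\ell^\perp$, and only $\dim K_\ell^\perp < \infty$ such $V_\alpha$ exist; for a general $\xi \in \cF(I^{n+1})$, expand it as a finite linear combination of the $\xi_\ell$'s to reduce to this case.

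The main technical subtlety I anticipate is formalising the passage between the pre-Hilbert structure on $\cN$ and the ambient norm topology on $\ell^2(I^{n+1})$: specifically, showing that the expansion $W\eta = \sum_\alpha V_\alpha \langle V_\alpha, W\rangle \eta$ converges in $\ell^2(I^{n+1})$ and that $\sum_\alpha V_\alpha V_\alpha^*$ is the orthogonal projection onto $\overline{\Mor_{a-b}(n,k)\, P\, \ell^2(I^{k+1})}$. Both reduce to the identity $W^*W = \|W\|^2_{\mathrm{op}} P$ together with the orthogonality of the subspaces $V_\alpha P\ell^2(I^{k+1})$, and are therefore routine once the pre-Hilbert language is in place.
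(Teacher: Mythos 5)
Your reduction to a pre-Hilbert space structure on $\cN = \Mor_{a-b}(n,k)P$ via $W^*V = \langle V,W\rangle P$ is fine, as are properties~1 and~3 for the vectors you actually construct. The genuine gap is the step ``$\bigcap_\ell K_\ell = \{0\}$, hence $\bigcup_\ell K_\ell^\perp$ is dense'' (and, what property~2 really needs, that the union of the layers $L_\ell$ is total in $\cN$). The space $\cN$ is only the \emph{algebraic} linear span of operators $T^\cK P$, so it is an incomplete pre-Hilbert space, and there a closed subspace of finite codimension need not be orthogonally complemented: if $v$ lies in the completion of $\cN$ but not in $\cN$, then $K = \{x \in \cN \mid \langle x,v\rangle = 0\}$ is closed of codimension one with $K^\perp \cap \cN = \{0\}$. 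In such a situation your layers satisfy $L_1 = K_1^\perp = \{0\}$ while $L_m \subset K_1$ for all $m \geq 2$, so the closed span of $\bigcup_\ell L_\ell$ stays inside the proper closed subspace $K_1$; a nonzero $W \in \cN$ orthogonal to all chosen $V_\alpha$ then has range orthogonal to $\sum_\alpha V_\alpha V_\alpha^*$, and property~2 fails. Nothing in your argument rules this out: your two inputs (finite-dimensional targets of the evaluation maps $\Phi_\ell$, and injectivity of their joint kernel) are not enough in an abstract pre-Hilbert space, and the ``routine'' closing step you defer is exactly the non-routine point.

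What is missing is a proof that each kernel $K_\ell$ \emph{is} orthogonally complemented inside $\cN$, i.e.\ that the evaluation functionals are represented inside $\cN$ itself, and this is precisely what the paper's proof supplies. There one builds, from path bi-labeled graphs, the distance-cutoff projections $Q(a,b,n,\lambda) \in \Mor_{a-b}(n,n)$ (indicator of tuples whose consecutive entries are at distance $\leq \lambda$); since every element of $\Mor_{a-b}(n,k)$ is a finite combination of matrices $T^\cK$ with $\cK$ connected, it satisfies $Q(a,b,n,\lambda)T = T$ for some $\lambda$, and $Q(a,b,n,\lambda)\,\cN$ is finite dimensional. Cutting with $R_s = Q(a,b,n,s) - Q(a,b,n,s-1)$ gives an orthogonal algebraic decomposition of $\cN$ into finite-dimensional pieces $R_s \Mor_{a-b}(n,k)P$, from which all three properties follow at once (and from which one could also deduce that your kernels $K_\ell$ are complemented, repairing your filtration argument). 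So your proposal can be salvaged, but only by adding this structural input, which is the actual content of the paper's proof rather than a formality of ``pre-Hilbert language''.
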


We read $\ibf$ as ``isometric basis of finite type''.

\begin{proof}
For every $n$-tuple $d = (d_1,\ldots,d_n)$ of integers $d_i \geq 0$, define the planar bi-labeled graph $\cD_d = (K,x,y) \in \cP(n+1,n+1)$ with $V(K) = \{0,1,\ldots,d_1+\cdots+d_n\}$, with $(v,w) \in E(K)$ iff $|v-w| = 1$, and with labeling $x_0 = y_0 = 0$, $x_k = y_k = d_1 + \cdots + d_k$ for all $k \in \{1,\ldots,n\}$, as illustrated in Figure \ref{fig.this-graph-Dd}.
\begin{figure}[h]
    \centering
    \begin{tikzpicture}
        \filldraw (0,0) circle (1.5pt) node[anchor=south]{$x_0$} node[anchor=north]{$y_0$};
        \filldraw (1,0) circle (1.5pt);
        \filldraw (2,0) circle (1.5pt);
        \filldraw (3,0) circle (1.5pt) node[anchor=south]{$x_1$} node[anchor=north]{$y_1$};
        \filldraw (4,0) circle (1.5pt);
        \filldraw (5,0) circle (1.5pt) node[anchor=south]{$x_2$} node[anchor=north]{$y_2$};
        \filldraw (6,0) circle (1.5pt);
        \filldraw (7,0) circle (1.5pt);
        \filldraw (8,0) circle (1.5pt);
        \filldraw (9,0) circle (1.5pt) node[anchor=south]{$x_3$} node[anchor=north]{$y_3$};
        \draw (0,0) -- (1,0);
        \draw (1,0) -- (2,0);
        \draw (2,0) -- (3,0);
        \draw (3,0) -- (4,0);
        \draw (4,0) -- (5,0);
        \draw (5,0) -- (6,0);
        \draw (6,0) -- (7,0);
        \draw (7,0) -- (8,0);
        \draw (8,0) -- (9,0);
    \end{tikzpicture}
    \caption{The bi-labeled graph $\cD_d = (K,x,y) \in \cP(4,4)$ with $d = (3,2,4)$}\label{fig.this-graph-Dd}
\end{figure}

The matrix $D^d := T^{\cD_d}$ is diagonal, $D^d_{ii} \geq 0$ for all $i \in I^{n+1}$ and $D^d_{ii} > 0$ iff for every $k \in \{1,\ldots,n\}$, there exists in $\Pi$ a path of length $d_k$ from $i_{k-1}$ to $i_k$.

For every integer $\lambda \geq 0$, define the subset $W(n,\lambda) \subset I^{n+1}$ consisting of all $i \in I^{n+1}$ such that for all $k \in \{1,\ldots,n\}$, the distance in $\Pi$ from $i_{k-1}$ to $i_k$ is at most $\lambda$. For all $a,b \in \cE$, denote by $Q(a,b,n,\lambda) \in \ell^\infty(I^{n+1})$ the indicator function of the set $\{i \in W(n,\lambda) \mid i_0 \in I_a, i_n \in I_b\}$. Since $Q(a,b,n,\lambda)$ is the range projection of
$$\sum_{d : 0 \leq d_k \leq \lambda} D^d (1_a \ot 1^{\ot (n-1)} \ot 1_b) \; ,$$
it follows from Lemma \ref{lem.crucial-technical-Mor-lemma} that $Q(a,b,n,\lambda) \in \Mor_{a-b}(n,n)$.

Fix $a,b \in \cE$ and fix $n \geq 0$. Write $R_0 = Q(a,b,n,0)$ and $R_s = Q(a,b,n,s) - Q(a,b,n,s-1)$ for all $s \geq 1$. To prove the lemma, it now suffices to choose finite bases of orthogonal isometries for the finite dimensional morphism spaces $R_s \Mor_{a-b}(n,k) P$ and to define $\ibf_{a-b}(n,P)$ as the union of these bases.
\end{proof}

Let $\Irr_{a-b}$ be a choice of representatives for all irreducible objects in $\cC_{a-b}(\cD,\Pi)$ and realize every $\al \in \Irr_{a-b}$ by a minimal projection $P_\al \in \Mor_{a-b}(k_\al,k_\al)$ with $k_\al \geq 0$. For every $\al \in \Irr_{a-b}$ and $n \geq 0$, we write $\ibf_{a-b}(n,\al) := \ibf_{a-b}(n,P_\al)$. We mean by this \emph{any} choice of basis satisfying the conclusion of Lemma \ref{lem.ibf}. We often write sums over $\ibf_{a-b}(n,\al)$ and one checks easily that such sums are independent of the choice of basis.

\begin{lemma}\label{lem.basis-is-complete}
For all $n \geq 0$ and $\xi \in \cF(I^{n+1})$, we have that
\begin{equation}\label{eq.finite-sum-is-nice}
\sum_{a,b \in \cE} \; \sum_{\al \in \Irr_{a-b}} \; \sum_{V \in \ibf_{a-b}(n,\al)} VV^* \xi = \xi
\end{equation}
and the sum on the left has only finitely many nonzero terms.
\end{lemma}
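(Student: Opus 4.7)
The plan is to reduce by linearity to showing the identity on a single basis vector $\xi = (i)$ with $i \in I^{n+1}$, and then to localise the question to a finite-dimensional object of $\cC_{a-b}(\cD,\Pi)$ whose irreducible decomposition I can read off directly. Let $a,b \in \cE$ be the unique classes with $i_0 \in I_a$ and $i_n \in I_b$; every $V \in \ibf_{a'-b'}(n,\al)$ satisfies $V = (1_{a'} \ot 1^{\ot(n-1)} \ot 1_{b'})V$, hence $V^*(i) = 0$ unless $(a',b')=(a,b)$, and the sum over $(a',b')$ collapses to the single pair $(a,b)$. Setting $E_\al := \sum_{V \in \ibf_{a-b}(n,\al)} VV^*$, which by Lemma \ref{lem.ibf}(2) is the orthogonal projection onto the closed linear span of $\Mor_{a-b}(n,k_\al) P_\al \ell^2(I^{k_\al+1})$, it remains to show $\sum_{\al \in \Irr_{a-b}} E_\al(i) = (i)$. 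The $E_\al$ for distinct $\al$ are mutually orthogonal because, for $V \in \ibf_{a-b}(n,\al)$ and $W \in \ibf_{a-b}(n,\be)$ with $\al \neq \be$, the element $W^*V$ lies in $P_\be \Mor_{a-b}(k_\be,k_\al) P_\al$, which vanishes since there are no nonzero morphisms between non-isomorphic irreducibles in the unitary $2$-category $\cC(\cD,\Pi)$.

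The main step is the localisation. Setting $\lambda := \max_{1 \leq k \leq n} d(i_{k-1},i_k)$, I would take $T := Q(a,b,n,\lambda) \in \Mor_{a-b}(n,n)$, the distance projection from the proof of Lemma \ref{lem.ibf}. By construction $T(i)=(i)$, and viewing $T$ as an element of $\Mor_{a-}(n,n)$, Lemma \ref{lem.crucial-technical-Mor-lemma}(3) gives that $T\Mor_{a-}(n,n) T$ is finite-dimensional, so the same holds for the corner $T\Mor_{a-b}(n,n) T$. I would then decompose $T = \sum_{r=1}^{K} p_r$ into mutually orthogonal minimal projections of this corner. The identity $pAp = (pT)A(Tp) = p(TAT)p$ for $p \leq T$ in $A := \Mor_{a-b}(n,n)$ ensures that minimality in the corner coincides with minimality in $A$, so each $p_r$ is an irreducible subobject of $T$ in $\cC_{a-b}(\cD,\Pi)$, isomorphic to some $P_{\al_r}$ with $\al_r \in \Irr_{a-b}$ via a partial isometry $V_r \in \Mor_{a-b}(n,k_{\al_r}) P_{\al_r}$ satisfying $V_r V_r^* = p_r$ and $V_r^* V_r = P_{\al_r}$.

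To conclude, since the range of $p_r = V_r V_r^*$ is contained in the range of $E_{\al_r}$, the mutual orthogonality of the $E_\al$ yields $E_\al p_r = \delta_{\al,\al_r}\, p_r$ for every $\al \in \Irr_{a-b}$. Therefore
\begin{align*}
\sum_{\al \in \Irr_{a-b}} E_\al (i) &= \sum_{\al \in \Irr_{a-b}} E_\al T(i) = \sum_{r=1}^{K} \sum_{\al \in \Irr_{a-b}} E_\al p_r(i) \\
&= \sum_{r=1}^{K} p_r(i) = T(i) = (i),
\end{align*}
with only the finitely many classes $\al_1,\ldots,\al_K$ contributing, and, within each of them, Lemma \ref{lem.ibf}(3) ensuring that only finitely many $V$'s contribute. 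The main obstacle to circumvent is that the ambient bimodule $\ell^2(I_a \times I^{n-1} \times I_b)$ is not itself of finite type in $\cC_{a-b}(\cD,\Pi)$ and hence admits no global irreducible decomposition; this is exactly what the localisation to the distance-bounded projection $T$ is designed to sidestep, with Lemma \ref{lem.crucial-technical-Mor-lemma}(3) guaranteeing that the local piece around $(i)$ does break up into finitely many irreducibles.
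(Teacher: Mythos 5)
Your proof is correct and follows essentially the same route as the paper: both arguments localise at a (basis vector of a) finitely supported $\xi$ via the distance projection $Q(a,b,n,\lambda)$ from the proof of Lemma \ref{lem.ibf}, use the finite dimensionality of the corner algebra provided by Lemma \ref{lem.crucial-technical-Mor-lemma} to see that only finitely many irreducibles $\al$ can contribute, and invoke Lemma \ref{lem.ibf} for the finiteness in $V$. The only difference is one of detail: by decomposing $Q(a,b,n,\lambda)$ into minimal projections equivalent to representatives $P_{\al_r}$, you make explicit the completeness relation $\sum_{a,b,\al,V} VV^* = 1$ that the paper declares immediate from the construction of $\ibf_{a-b}(n,\al)$.
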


\begin{proof}
It follows immediately from the construction in Lemma \ref{lem.ibf} that
$$\sum_{a,b \in \cE} \; \sum_{\al \in \Irr_{a-b}} \; \sum_{V \in \ibf_{a-b}(n,\al)} VV^* = 1$$
with strong convergence in $B(\ell^2(I^{n+1}))$. We thus only have to prove that the sum in \eqref{eq.finite-sum-is-nice} has, for every fixed $\xi \in \cF(I^{n+1})$, only finitely many nonzero terms.

Since $\xi$ is finitely supported, we can take a finite subset $\cE_0 \subset \cE$ such that $(1_a \ot 1^{\ot (n-1)} \ot 1_b)\xi = 0$ unless $a,b \in \cE_0$. So we only have nonzero terms in \eqref{eq.finite-sum-is-nice} for $a,b \in \cE_0$. Fix $a,b \in \cE_0$. Using the notation introduced in the proof of Lemma \eqref{lem.ibf}, we can take $\lambda \geq 0$ large enough such that $Q(a,b,n,\lambda) \xi = (1_a \ot 1^{\ot (n-1)} \ot 1_b)\xi$. Since $Q(a,b,n,\lambda)$ is an orthogonal projection in $\Mor_{a-b}(n,n)$, the subset $\cJ \subset \Irr_{a-b}$ of $\al$ such that $P_\al \, \Mor_{a-b}(k_\al,n) \, Q(a,b,n,\lambda)$ is nonzero, is finite. When $\al \not\in \cJ$, we have that $V^* \xi = 0$ for all $V \in \Mor_{a-b}(n,\al)$. Once we also fix $\al \in \cJ$, it follows from Lemma \ref{lem.ibf} that there are only finitely many $V \in \ibf_{a-b}(n,\al)$ with $V^* \xi \neq 0$.
\end{proof}

For all $n \geq 0$ and $\xi,\eta \in \cF(I^{n+1})$, we write
\begin{equation}\label{eq.notation-sesquilinear}
F_n(\xi,\eta) = \sum_{i,j \in I^{n+1}} \overline{\xi(i)} \, \eta(j) \, F_n(i,j) \; .
\end{equation}
Then Lemma \ref{lem.ideal-UB}.4 translates to the property $F_n(T \xi,\eta) = F_m(\xi,T^* \eta)$ for all $T \in \Mor_{a-b}(n,m)$, $\xi \in \cF(I^{m+1})$ and $\eta \in \cF(I^{n+1})$.

For all $a,b \in \cE$ and $\al \in \Irr_{a-b}$, define $H_\al \subset \cF(I^{k_\al + 1})$ as $H_\al = P_\al(\cF(I^{k_\al + 1}))$. Define
$$\cB_\al = \lspan\{ F_{k_\al}(\xi,\eta) \mid \xi,\eta \in H_\al \} \; .$$
By Lemma \ref{lem.ibf}, the linear map
$$\theta_\al : \cB \to \overline{H_\al} \otalg H_\al : \theta_\al(F_n(\xi,\eta)) = \sum_{V \in \ibf_{a-b}(n,\al)} (\overline{V^* \xi} \ot V^* \eta)$$
is well-defined.

Viewing $F_n$ as a linear map from $\overline{\cF(I^{n+1})} \otalg \cF(I^{n+1})$ to $\cB$, we define $\pi_\al = F_{k_\al} \circ \theta_\al$. The following properties are now a consequence of Lemma \ref{lem.basis-is-complete}.
\begin{enumlist}
\item We have $\pi_\al \circ \pi_\al = \pi_\al$. Also, $\theta_\al \circ \pi_\beta = 0$ and $\pi_\al \circ \pi_\beta = 0$ whenever $\al \neq \be$. We have $\cB_\al = \pi_\al(\cB)$.
\item The restrictions $\theta_\al : \cB_\al \to \overline{H_\al} \otalg H_\al$ and $F_{k_\al} : \overline{H_\al} \otalg H_\al \to \cB_\al$ are each other's inverse.
\item For every $x \in \cB$, we have that
$$x = \sum_{a,b \in \cE} \; \sum_{\al \in \Irr_{a-b}} \pi_\al(x)$$
and this sum has only finitely many nonzero terms.
\end{enumlist}

\begin{remark}
We have thus in particular found a vector space isomorphism between $\cB$ and the direct sum of the vector spaces $(\overline{H_\al} \otalg H_\al)_{a,b \in \cE, \al \in \Irr_{a-b}}$. It would be possible as well to take this vector space as the starting point to define $\cB$. The weight of the construction would then shift to providing a well-defined product and $*$-operation on this vector space, very much in the spirit of the Tannaka-Krein theorem in \cite{Wor88}.
\end{remark}

For every $a \in \cE$, we represent the identity object $\eps_a$ of $\cC_{a-a}(\cD,\Pi)$ by the minimal projection $1_a \in \Mor_{a-a}(0,0) = \C 1_a$, so that $H_{\eps_a} = \cF(I_a)$. It then follows that the orthogonal projections $(F_0(i,j))_{i,j \in I_a}$ are all nonzero in $\cB$ and form a vector space basis of the $*$-algebra $\cB_{\eps_a}$. So, all projections $F_0(i,j)$, $i \approx j$, are nonzero in $\cB$. They are orthogonal and linearly span the diagonal subalgebra $\cB_0$. We then get the well-defined conditional expectation
$$\pi_0 : \cB \to \cB_0 : \pi_0(x) = \sum_{a \in \cE} \pi_{\eps_a}(x) \; .$$

\begin{definition}\label{def.functional-vphi}
We uniquely define $\vphi_0 : \cB_0 \to \C : \vphi_0(F_0(i,j)) = 1$ whenever $i \approx j$, and then define $\vphi : \cB \to \C : \vphi = \vphi_0 \circ \pi_0$.
\end{definition}

Recall from Definition \ref{def.dimensions}, the left and right dimension of the irreducible objects in $\cC_{a-b}(\cD,\Pi)$.

\begin{proposition}\label{prop.vphi-positive-faithful}
For all $x,y \in \cB$, we have
\begin{align}
& \vphi(x y^*) = \sum_{a,b \in \cE} \; \sum_{\al \in \Irr_{a-b}} \; d_\ell(\al)^{-1} \; \langle \theta_\al(x),\theta_\al(y) \rangle \; ,\label{eq.first-formula}\\
& \vphi(y^* x) = \sum_{a,b \in \cE} \; \sum_{\al \in \Irr_{a-b}} \; d_r(\al)^{-1} \; \langle \theta_\al(x),\theta_\al(y) \rangle \; .\label{eq.second-formula}
\end{align}
In particular, $\vphi$ is a positive faithful functional on $\cB$.

The bijective linear map $\rho : \cB \to \cB : \rho(x) = d_r(\al) d_\ell(\al)^{-1} x$ for all $x \in \cB_\al$ and $\al \in \Irr_{a-b}$ satisfies
$$\rho(xy) = \rho(x) \rho(y) \quad\text{and}\quad \vphi(xy) = \vphi(y \rho(x)) \quad\text{for all $x,y \in \cB$.}$$
\end{proposition}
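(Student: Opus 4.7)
The plan is to compute $\vphi(xy^*)$ directly from the definition $\vphi = \vphi_0 \circ \pi_0$, using the explicit formula for $\pi_{\eps_a}$ via the basis $\ibf_{a-a}(n,\eps_a)$, and match the outcome with the right-hand side through the Schur-type orthogonality encoded in the conjugate equations of Lemma \ref{lem.crucial-technical-Mor-lemma}.

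By sesquilinearity of both sides and the direct sum decomposition $\cB = \bigoplus_\gamma \cB_\gamma$, it suffices to verify \eqref{eq.first-formula} for $x = F_{k_\alpha}(\xi_x,\eta_x) \in \cB_\alpha$ and $y = F_{k_\beta}(\xi_y,\eta_y) \in \cB_\beta$, with $\alpha \in \Irr_{a-b}$ and $\beta \in \Irr_{c-d}$. Taking $P_\alpha$ itself as one element of $\ibf_{a-b}(k_\alpha,\alpha)$ and using $V^* P_\alpha = 0$ for every other basis element $V$ (by Lemma \ref{lem.ibf}.1), one obtains $\theta_\alpha(x) = \overline{\xi_x} \ot \eta_x$, so the right-hand side of \eqref{eq.first-formula} collapses to $\delta_{\alpha,\beta}\, d_\ell(\alpha)^{-1}\, \langle \overline{\xi_x}\ot\eta_x,\, \overline{\xi_y}\ot\eta_y\rangle$.

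A direct check yields $F_m(\xi,\eta)^* = F_m(\check\xi, \check\eta)$ where $\check\zeta(i) := \overline{\zeta(\bar{i})}$, and $\zeta \mapsto \check\zeta$ intertwines $P_\beta$ with $\widetilde{P_\beta}$, so $\check{\xi_y}, \check{\eta_y} \in H_{\bar\beta}$. Applying the $F_n$-product rule then gives
$$xy^* = F_{k_\alpha + k_\beta}\bigl(\xi_x \ot_I \check{\xi_y},\; \eta_x \ot_I \check{\eta_y}\bigr),$$
with both arguments lying in the image of the projection $P_\alpha \ot_I \widetilde{P_\beta}$ (which is $0$ unless $b=d$). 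Every $V \in \ibf_{a'-a'}(k_\alpha+k_\beta, \eps_{a'})$ has range in the $\eps_{a'}$-isotypic component of the $(a',a')$-corner, so only those $V$ whose range meets the image of $P_\alpha \ot_I \widetilde{P_\beta}$ contribute to $\pi_0(xy^*)$. This forces $a' = a = c$, and since the space of morphisms from $\eps_a$ into $P_\alpha \ot_I \widetilde{P_\beta}$ in $\cC_{a-a}(\cD,\Pi)$ vanishes when $\alpha \not\cong \beta$ (Schur orthogonality, available thanks to the conjugates constructed in Lemma \ref{lem.crucial-technical-Mor-lemma}.7), we conclude $\vphi(xy^*) = 0$ in that case.

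When $\alpha = \beta$ this morphism space is one-dimensional, spanned by $s_\alpha$ from \eqref{eq.adjoint-object}, and Lemma \ref{lem.crucial-technical-Mor-lemma}.8 applied to $T = P_\alpha$ gives $s_\alpha^* s_\alpha = d_\ell(\alpha)\, 1_a$; hence the unique contributing basis element is $d_\ell(\alpha)^{-1/2} s_\alpha$. A direct computation using bimodularity and self-adjointness of $P_\alpha$ (together with $P_\alpha\xi_x = \xi_x$) yields $\mathrm{tr}_a\bigl(s_\alpha^*(\xi_x \ot_I \check{\xi_y})\bigr) = \langle \xi_y, \xi_x\rangle$, and likewise for $\eta$, where $\mathrm{tr}_a(\zeta) := \sum_{i\in I_a}\zeta(i)$. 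Combined with $\vphi_0(F_0(\zeta_1,\zeta_2)) = \overline{\mathrm{tr}_a(\zeta_1)}\, \mathrm{tr}_a(\zeta_2)$ and the factor $d_\ell(\alpha)^{-1}$ from squaring the normalization of $s_\alpha$, this produces $\vphi(xy^*) = d_\ell(\alpha)^{-1}\langle \xi_y,\xi_x\rangle \langle \eta_x,\eta_y\rangle$, matching the right-hand side for the inner product on $\overline{H_\alpha}\otalg H_\alpha$ normalized by $\|\overline{\xi}\ot\eta\|^2 = \|\xi\|^2\|\eta\|^2$. Positivity of $\vphi$ is then immediate and faithfulness follows from non-degeneracy of the sesquilinear form $(\xi,\eta) \mapsto \overline{\xi}\ot\eta$. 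Formula \eqref{eq.second-formula} is established by the same argument with $t_\alpha$ in place of $s_\alpha$, the normalization $t_\alpha^* t_\alpha = d_r(\alpha)\, 1_b$ producing the factor $d_r(\alpha)^{-1}$. Multiplicativity $\rho(xy) = \rho(x)\rho(y)$ follows from Lemma \ref{def.dim-proportion-tensor-product} applied to the irreducibles appearing in $\cB_\alpha\cdot\cB_\beta$, and the twisted-trace identity $\vphi(xy) = \vphi(y\rho(x))$ follows by taking the ratio of \eqref{eq.first-formula} and \eqref{eq.second-formula} on each block $\cB_\alpha$, which leaves $\vphi(xy)/\vphi(yx) = d_r(\alpha)/d_\ell(\alpha) = \rho(\alpha)$. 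The main technical obstacle is carefully controlling the conjugation and relative-tensor bookkeeping in the key identity $\mathrm{tr}_a(s_\alpha^*(\xi \ot_I \check{\xi'})) = \langle \xi', \xi\rangle$.
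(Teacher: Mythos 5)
Your proof is correct and follows essentially the same route as the paper: the same block reduction to $x=F_{k_\al}(\xi,\eta)$, $y=F_{k_\be}(\xi',\eta')$, the same identity $xy^*=F_{k_\al+k_\be}(\xi\ot_I\check{\xi'},\eta\ot_I\check{\eta'})$ with $\check{\,\cdot\,}$ intertwining $P_\be$ with $\widetilde{P_\be}$, and the same Frobenius/Schur step isolating the one-dimensional space spanned by $s_\al$ (resp.\ $t_\al$) with normalization $s_\al^*s_\al=d_\ell(\al)1_a$ (resp.\ $t_\al^*t_\al=d_r(\al)1_b$), and the twisted-trace identity obtained blockwise from \eqref{eq.first-formula} and \eqref{eq.second-formula} exactly as in the paper. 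The only genuine divergence is the multiplicativity of $\rho$, which you get from Lemma \ref{def.dim-proportion-tensor-product} applied to the irreducibles occurring in $\cB_\al\cdot\cB_\be$, while the paper deduces it from $\vphi(xy)=\vphi(y\rho(x))$ together with faithfulness of $\vphi$; both arguments are valid (just phrase the ``ratio'' step as a proportionality rather than a quotient, since $\vphi(yx)$ may vanish).
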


\begin{proof}
We start by proving \eqref{eq.first-formula}. Let $a,b,c,d \in \cE$, $\al \in \Irr_{a-b}$ and $\be \in \Irr_{c-d}$. By linearity, it suffices to prove \eqref{eq.first-formula} for $x \in \cB_\al$ and $y \in \cB_\beta$. When $a \neq c$ or $b \neq d$, both the left and the right hand side of \eqref{eq.first-formula} are zero. So, we assume that $c=a$ and $d=b$, so that $\be \in \Irr_{a-b}$. Again by linearity, we may assume that $x = F_{k_\al}(\xi,\eta)$ and $y = F_{k_\be}(\xi',\eta')$ with $\xi,\eta \in \cH_\al$ and $\xi',\eta' \in \cH_\beta$.

For every $m \geq 0$, denote by $J_m$ the anti-unitary operator on $\ell^2(I^{m+1})$ given by $(J_m \xi)(i) = \overline{\xi(\ibar)}$ for all $\xi \in \ell^2(I^{m+1})$ and $i \in I^{m+1}$. When $\xi \in \cF(I^{m+1})$ and $\xi' \in \cF(I^{m'+1})$, we define $\xi \ot_I \xi' \in \cF(I^{m+m'+1})$ by $(\xi \ot_I \xi')(i) = \xi(i_0\cdots i_m) \, \xi'(i_m \cdots i_{m+m'})$. Write $n=k_\al + k_\be$. By definition,
\begin{equation}\label{eq.berekening}
\begin{split}
x y^* &= F_{k_\al}(\xi,\eta) \, F_{k_\be}(\xi',\eta')^* = F_{k_\al}(\xi,\eta) \, F_{k_\be}(J_{k_\be}(\xi'),J_{k_\be}(\eta')) \\
& = F_n(\xi \ot_I J_{k_\be}(\xi'),\eta \ot_I J_{k_\be}(\eta')) \; .
\end{split}
\end{equation}
We now use the notations and results from Lemma \ref{lem.crucial-technical-Mor-lemma}. Note that $J_{k_\be} P_\be J_{k_\be} = \widetilde{P_\be}$. By Lemma \ref{lem.crucial-technical-Mor-lemma}, the minimal projection $\widetilde{P_\be}$ in $\Mor_{b-a}(k_\be,k_\be)$ is a representative for the conjugate of $\be$. So, if $\al \neq \be$, we have
$$(P_\al \ot_I \widetilde{P_\be}) \, \Mor_{a-a}(n,0) = \{0\} \; .$$
It follows from \eqref{eq.berekening} that $\pi_0(x y^*) = 0$, so that $\vphi(x y^*)=0$. If $\al \neq \be$, the right hand side of \eqref{eq.first-formula} is obviously $0$.

We finally consider the case where $\be = \al$. Then, $(P_\al \ot_I \widetilde{P_\al}) \, \Mor_{a-a}(n,0)$ is one-dimensional. Lemma \ref{lem.crucial-technical-Mor-lemma} provides the isometry $d_\ell(\al)^{-1/2} s$ in this intertwiner space. Therefore,
$$\pi_0(x y^*) = d_\ell(\al)^{-1} F_0\bigl(s^*(\xi \ot_I J_{k_\al}(\xi')),s^*(\eta \ot_I J_{k_\al}(\eta'))\bigr) \; ,$$
so that
\begin{align*}
\vphi(x y^*) &= d_\ell(\al)^{-1} \sum_{i,j \in I_a} \langle s(i), (\xi \ot_I J_{k_\al}(\xi')) \rangle \, \langle (\eta \ot_I J_{k_\al}(\eta')) , s(j) \rangle \\
&=d_\ell(\al)^{-1} \sum_{i,j \in I^{k_\al+1}} \overline{\xi(i)} \, \xi'(i) \, \eta(j) \, \overline{\eta'(j)} = d_\ell(\al)^{-1} \, \langle \overline{\xi} \ot \eta , \overline{\xi'} \ot \eta' \rangle \; .
\end{align*}
This last expression is equal to the right hand side of \eqref{eq.first-formula}. The formula \eqref{eq.second-formula} is proven analogously.

Since the subspaces $\cB_\al$ with $\al \in \Irr_{a-b}$ and $a,b \in \cE$ are linearly independent and span $\cB$, the linear map $\rho$ is well-defined and bijective. Combining \eqref{eq.first-formula} and \eqref{eq.second-formula}, we find that $\vphi(xy) = \vphi(y \rho(x))$ for all $x,y \in \cB$. Then, for all $x,y,z \in \cB$,
$$\vphi(z \rho(x) \rho(y)) = \vphi(y z \rho(x)) = \vphi(xyz) = \vphi(z \rho(xy)) \; .$$
Since $\vphi$ is faithful, it follows that $\rho(x) \rho(y) = \rho(xy)$.
\end{proof}

\begin{lemma}\label{lem.modular-vphi}
Let $\mu : I \to (0,+\infty)$ be the map defined in Lemma \ref{def.formula-for-rho}. Then,
$$\rho(F_n(i,j)) = \mu_{i_n} \, \mu_{i_0}^{-1} \, F_n(i,j) = \mu_{j_n} \, \mu_{j_0}^{-1} \, F_n(i,j) \; .$$
In particular, if $\mu_{i_n} \, \mu_{i_0}^{-1} \neq \mu_{j_n} \, \mu_{j_0}^{-1}$, then $F_n(i,j) = 0$ in $\cB$.
\end{lemma}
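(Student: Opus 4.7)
The plan is to combine the decomposition $F_n(i,j) = \sum_{a,b \in \cE}\sum_{\al \in \Irr_{a-b}} \pi_\al(F_n(i,j))$ recorded at the end of Section \ref{sec.construction-B} with the fact, built into Proposition \ref{prop.vphi-positive-faithful}, that $\rho$ acts on the component $\cB_\al$ as the scalar $\rho(\al) = d_r(\al)/d_\ell(\al)$. It will then suffice to show that whenever $\pi_\al(F_n(i,j)) \neq 0$, one has the chain of equalities $\mu_{i_n}/\mu_{i_0} = \rho(\al) = \mu_{j_n}/\mu_{j_0}$.

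From the explicit formula
\[
\theta_\al(F_n(i,j)) = \sum_{V \in \ibf_{a-b}(n,\al)} (\overline{V^* e_i} \ot V^* e_j),
\]
where $e_i \in \cF(I^{n+1})$ denotes the basis vector equal to $1$ at $i$, nonvanishing of $\pi_\al(F_n(i,j)) = F_{k_\al}(\theta_\al(F_n(i,j)))$ forces some $V \in \ibf_{a-b}(n,\al)$ with $V^* e_i \neq 0$ and some, possibly different, $V' \in \ibf_{a-b}(n,\al)$ with $V'^* e_j \neq 0$; otherwise the sum vanishes termwise in one of its two tensor slots. The crucial geometric input is now Lemma \ref{def.formula-for-rho}. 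Since $V^* V = P_\al$ is minimal, the range projection $VV^*$ is a minimal projection in $\Mor_{a-b}(n,n)$, and the tracial property of $\Tr_\ell$ and $\Tr_r$ from Lemma \ref{lem.crucial-technical-Mor-lemma} yields $d_\ell(VV^*) = d_\ell(\al)$ and $d_r(VV^*) = d_r(\al)$, whence $\rho(VV^*) = \rho(\al)$. Applied to $VV^*$, Lemma \ref{def.formula-for-rho} confines the range of $VV^*$, which equals the range of $V$, inside $\ell^2(W)$ with $W = \{k \in I^{n+1} : k_0 \in I_a,\, k_n \in I_b,\, \mu_{k_n}/\mu_{k_0} = \rho(\al)\}$. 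Since $V^* e_i \neq 0$ just means $e_i$ is not orthogonal to the range of $V$, this yields $i \in W$ and in particular $\mu_{i_n}/\mu_{i_0} = \rho(\al)$. The same argument with $V'$ in place of $V$ gives $\mu_{j_n}/\mu_{j_0} = \rho(\al)$.

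The conclusion is then immediate in both directions. If $\mu_{i_n}/\mu_{i_0} \neq \mu_{j_n}/\mu_{j_0}$, no $\al$ can satisfy both equalities, every $\pi_\al(F_n(i,j))$ is zero, and therefore $F_n(i,j) = 0$. Otherwise the two ratios coincide, every nonzero $\pi_\al(F_n(i,j))$ has $\rho(\al)$ equal to this common value, and summing the scalar action of $\rho$ over the finite set of contributing $\al$ yields $\rho(F_n(i,j)) = (\mu_{i_n}/\mu_{i_0})\, F_n(i,j) = (\mu_{j_n}/\mu_{j_0})\, F_n(i,j)$. There is no serious obstacle to this plan; the only real content is pinning down the support of each basis vector $V$ via the equivalence $VV^* \sim P_\al$ and Lemma \ref{def.formula-for-rho}, which is precisely what transfers the ratio information from the geometry of $\Pi$ into the scalar action of $\rho$ on $\cB$.
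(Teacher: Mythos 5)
Your proof is correct, but it follows a genuinely different route from the paper's. The paper factorizes $F_n(i,j) = F_1(i_0i_1,j_0j_1)\cdots F_1(i_{n-1}i_n,j_{n-1}j_n)$, invokes the multiplicativity of $\rho$ established in Proposition \ref{prop.vphi-positive-faithful} to reduce to $n=1$, and then only needs the first part of Lemma \ref{def.formula-for-rho}: the unique minimal projections $1_W,1_{W'} \in \Mor_{a-b}(1,1)$ through $(i_0,i_1)$ and $(j_0,j_1)$ either differ (killing $F_1(i,j)$) or coincide (placing $F_1(i,j)$ in $\cB_{1_W}$). You instead work at level $n$ directly, decomposing $F_n(i,j)$ into its isotypic components $\pi_\al(F_n(i,j))$ and using the second statement of Lemma \ref{def.formula-for-rho} -- the support description of a minimal projection in $\Mor_{a-b}(n,n)$ -- applied to the range projections $VV^*$ of the basis isometries $V \in \ibf_{a-b}(n,\al)$. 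This buys a slightly more conceptual picture (it identifies exactly which irreducibles can contribute to $F_n(i,j)$ and does not use multiplicativity of $\rho$ at all), at the cost of invoking the heavier, general-$n$ half of Lemma \ref{def.formula-for-rho}, which the paper's own proof of this lemma never needs. One step you should make precise: $\rho(VV^*) = \rho(\al)$ requires $\Tr_\ell(VV^*) = \Tr_\ell(V^*V)$ and $\Tr_r(VV^*)=\Tr_r(V^*V)$ for the \emph{rectangular} element $V \in \Mor_{a-b}(n,k_\al)$, which is not literally the traciality statement of Lemma \ref{lem.crucial-technical-Mor-lemma}.8 (that concerns a fixed square algebra $\Mor_{a-b}(n,n)$); it does follow from the explicit formula $(\id \ot \Tr)(\,\cdot\,)_i$ together with $\ell^\infty(I)$-bimodularity of $V$, or more slickly from Lemma \ref{def.dim-proportion-tensor-product} applied to $P_\al$, the identity object $\eps_b$ and $R = VV^*$, since $\rho(\eps_b)=1$. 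With that point fixed, the argument is complete.
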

\begin{proof}
Since $F_n(i,j) = F_1(i_0 i_1,j_0 j_1) \, F_1(i_1 i_2,j_1 j_2) \, \cdots \, F_1(i_{n-1} i_n, j_{n-1} j_n)$ and since $\rho$ is multiplicative, it suffices to consider the case $n=1$.

If $i_0 \not\approx j_0$ or $i_1 \not\approx j_1$, we have that $F_1(i,j) = 0$ by Lemma \ref{lem.ideal-UB} and there is nothing to prove. So, we may assume that $i_0,j_0 \in I_a$ and $i_1,j_1 \in I_b$ for some $a,b \in \cE$. Take the unique minimal projections $1_W,1_{W'} \in \Mor_{a-b}(1,1)$ such that $(i_0,i_1) \in W$ and $(j_0,j_1) \in W'$. If $W \neq W'$, we find that $F_1(i,j) = F_1(1_W(i),j) = F_1(i,1_W(j)) = 0$ in $\cB$. If $W = W'$, we find that $F_1(i,j) \in \cB_{1_W}$, so that $\rho(F_1(i,j)) = \rho(1_W) \, F_1(i,j)$. By Lemma \ref{def.formula-for-rho}, we get that $\rho(1_W) = \mu_{i_1} \, \mu_{i_0}^{-1}$ and also that $\rho(1_W) = \rho(1_{W'}) = \mu_{j_1} \, \mu_{j_0}^{-1}$.
\end{proof}

Recall that $\cD_2(n,m)$ denotes the set of bi-labeled graphs $(K,x,y) \in \cD(n,m)$ such that every connected component of $K$ intersects $x \cup y$. We denote
$$\cL_2(n,m) = \{(K,x,y) \in \cD_2(n+1,m+1) \mid x_0 = y_0 \;\;\text{and}\;\; x_n=y_m \} \; .$$

\begin{lemma}\label{lem.further-relations}
For every $\cK \in \cL_2(n,m)$, $i \in I^{n+1}$, $j \in I^{m+1}$, the equality
$$\sum_{k \in I^{m+1}} T^\cK_{ik} \, F_m(k,j) = \sum_{k \in I^{n+1}} T^\cK_{kj} \, F_n(i,k)$$
holds in $\cB$ and both sides have only finitely many nonzero terms in $\cB$.
\end{lemma}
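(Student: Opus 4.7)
The plan is to split the proof into (i) proving finiteness of the sums in $\cB$ and (ii) proving the equality itself.

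\textbf{(i) Finiteness.} Since $x_0 = y_0$ and $x_n = y_m$ in $\cK$, the condition $T^\cK_{ik} \neq 0$ forces $k_0 = i_0$ and $k_m = i_n$. By Lemma~\ref{lem.ideal-UB}, part 3, $F_m(k,j) = 0$ in $\cB$ unless $d(k_{t-1}, k_t) = d(j_{t-1}, j_t)$ for every $t \in \{1,\ldots,m\}$. Starting from the fixed value $k_0 = i_0$ and iterating, each $k_t$ is then constrained to a ball of radius $\sum_{s \leq t} d(j_{s-1}, j_s)$ around $i_0$, which is finite by local finiteness of $\Pi$. Hence the LHS has only finitely many nonzero terms in $\cB$, and a symmetric argument handles the RHS.

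\textbf{(ii) Equality.} The case $\cK \in \cL_1(n,m)$ will follow immediately from Lemma~\ref{lem.ideal-UB}, part 4. For $\cK \in \cL_2(n,m) \setminus \cL_1(n,m)$, I decompose $K = K^a \sqcup K^b \sqcup K^c$ into the subgraphs spanned by components of $K$ meeting, respectively, both $x$ and $y$, only $x$, or only $y$. Since $x_0 = y_0, x_n = y_m \in V(K^a)$, decomposing homomorphisms component by component yields the factorization
$$T^\cK_{ik} = N_b(i)\, N_c(k)\, T^{\cK^a}_{i|_{S_a},\,k|_{A'}},$$
where $N_b(i) = \#\{\psi: K^b \to \Pi \mid \psi(x|_{K^b}) = i|_{S_b}\}$, similarly for $N_c$, and $\cK^a$ is the bi-labeled graph on $K^a$ with labels reduced to $S_a = \{s: x_s \in V(K^a)\}$ and $A' = \{t: y_t \in V(K^a)\}$. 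Because $0,n \in S_a$ and $0,m \in A'$ with $x_0 = y_0, x_n = y_m$, we have $\cK^a \in \cL_1(|S_a|-1, |A'|-1)$. Both $N_b$ and $N_c$ are $\Aut\Pi$-invariant, hence constant on $\approx$-classes. Applying Lemma~\ref{lem.ideal-UB}, part 4, to the $\cL_1$-graph obtained by labeling each vertex of $K^b$ both as an $x$- and a $y$-label, extended by isolated ``diagonal'' vertices for the remaining indices, yields the commutation $N_b(i)\, F_n(i,k) = F_n(i,k)\, N_b(k)$ in $\cB$; analogously $N_c(k)\, F_m(k,j) = F_m(k,j)\, N_c(j)$. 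Using these, the scalars $N_b(i)$ and $N_c(j)$ factor out on both sides, and the desired identity reduces to
$$\sum_{k|_{A'}} T^{\cK^a}_{i|_{S_a}, k|_{A'}} \Bigl(\sum_{k|_{C'}} F_m(k,j)\Bigr) = \sum_{k|_{S_a}} T^{\cK^a}_{k|_{S_a}, j|_{A'}} \Bigl(\sum_{k|_{S_b}} F_n(i,k)\Bigr),$$
with $C'$ and $S_b$ the complements of $A'$ and $S_a$. By (i) these inner sums run over finitely many nonzero terms in $\cB$. For each distance profile of the $y$-labels in $K^c$ (and symmetrically of the $x$-labels in $K^b$), I would construct a connected bi-labeled graph in $\cL_c$ with the right path-length data, whose $\cL_1$-relation (via Lemma~\ref{lem.ideal-UB}, part 4) identifies the summed $F_m$ with $F_{|A'|-1}(k|_{A'}, j|_{A'})$ and the summed $F_n$ with $F_{|S_a|-1}(i|_{S_a}, k|_{S_a})$ upon pairing with $T^{\cK^a}$. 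What remains is then precisely the $\cL_1$-relation for $\cK^a$.

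\textbf{Main obstacle.} The hard part will be the averaging step: constructing, for each distance configuration of the $y$-labels in $K^c$ (and of the $x$-labels in $K^b$) compatible with the finiteness constraints from (i), a connected bi-labeled graph whose $\cL_1$-relation effects the reduction of the inner sums to the lower-arity $F_{|A'|-1}$ and $F_{|S_a|-1}$, and then combining these finitely many identities consistently under the outer summation against $T^{\cK^a}$.
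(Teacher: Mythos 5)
Your part (i) is fine and amounts to the paper's appeal to Lemma \ref{lem.ideal-UB}.3, and the observation that for $\cK \in \cL_1(n,m)$ the identity is just a defining relation of $\cB$ is correct. The problem is part (ii) for general $\cK \in \cL_2(n,m)$: the step you yourself flag as the ``main obstacle''~--- reducing the inner sums $\sum_{k|_{C'}} F_m(k,j)$ and $\sum_{k|_{S_b}} F_n(i,k)$ to the lower-arity generators $F_{|A'|-1}$ and $F_{|S_a|-1}$ by means of suitably constructed connected bi-labeled graphs~--- is exactly where the content of the lemma lies, and it is not carried out. Summing out ``free'' indices of $F_m$ is not among the defining relations of $\cB$ (there is no analogue in $\cB$ of the strict relation $\sum_k u_{ik}=1$ that holds in $\cA$), so this reduction cannot simply be asserted; as written the argument is incomplete. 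There is also a structural issue with the decomposition itself: the auxiliary bi-labeled graphs you invoke ($\cK^a$ supported on the components of $K$ meeting both $x$ and $y$, and the doubled-label graphs computing $N_b$ and $N_c$) arise by extracting components of $K$ and relabeling, which are not obtainable from $\cK$ by composition, tensor product and transpose. For a general graph category $\cD \supset \cP$ there is no reason these graphs belong to $\cD$, so Lemma \ref{lem.ideal-UB}.4 and the relation $F\,T^{\cK^a} = T^{\cK^a}\,F$ need not be applicable to them, while the lemma is stated for arbitrary such $\cD$.

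The paper bypasses both difficulties with a short trick that stays inside $\cD$: as in the proof of Lemma \ref{lem.ibf}, choose connected path-type graphs $\cK_1 \in \cL(n,n)$ and $\cK_2 \in \cL(m,m)$ whose matrices are diagonal with $T^{\cK_1}_{ii}>0$ and $T^{\cK_2}_{jj}>0$. Since every component of $K$ meets $x \cup y$ and the component containing $x_0=y_0$ meets both, the composition $\cK_1 \circ \cK \circ \cK_2$ is connected, hence lies in $\cL(n,m)$, so that over $\cB$ one has $T^{\cK_1} F_n T^{\cK} T^{\cK_2} = F_n T^{\cK_1 \circ \cK \circ \cK_2} = T^{\cK_1 \circ \cK \circ \cK_2} F_m = T^{\cK_1} T^{\cK} F_m T^{\cK_2}$; taking the $ij$-entry and dividing by the strictly positive scalars $T^{\cK_1}_{ii}\, T^{\cK_2}_{jj}$ gives the lemma. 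If you want to rescue your component-wise approach, you would in effect need a device of this kind to reconnect the components of $K^b$ and $K^c$ to the labeled spine, at which point you have reproduced the paper's argument.
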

\begin{proof}
By Lemma \ref{lem.ideal-UB}.3, both sides of the sum have only finitely many nonzero terms in $\cB$.

Fix $\cK \in \cL_2(n,m)$, $i \in I^{n+1}$ and $j \in I^{m+1}$. As in the proof of Lemma \ref{lem.ibf}, we can take connected $\cK_1 \in \cL(n,n)$ and $\cK_2 \in \cL(m,m)$ such that $T^{\cK_1}$ and $T^{\cK_2}$ are diagonal matrices with $T^{\cK_1}_{ii} > 0$ and $T^{\cK_2}_{jj} > 0$. Then, $\cK_1 \circ \cK \circ \cK_2 \in \cL(n,m)$. Therefore, the following equalities of matrices hold over $\cB$.
\begin{align*}
T^{\cK_1} F_n T^{\cK} T^{\cK_2} &= F_n T^{\cK_1} T^{\cK} T^{\cK_2} = F_n T^{\cK_1 \circ \cK \circ \cK_2} = T^{\cK_1 \circ \cK \circ \cK_2} F_m \\
& = T^{\cK_1} T^{\cK} T^{\cK_2} F_m = T^{\cK_1} T^{\cK} F_m T^{\cK_2} \; .
\end{align*}
Taking the $ij$-component and dividing by $T^{\cK_1}_{ii} \, T^{\cK_2}_{jj}$, the lemma follows.
\end{proof}

\begin{remark}
For all edges $e,f \in E(\Pi)$, we can define the element $E_{e,f} \in \cB$ by $E_{e,f} = F_1(i_0i_1,j_0j_1)$ with $e=(i_0,i_1)$ and $f=(j_0,j_1)$. The elements $E_{e,f}$ generate $\cB$ and $E_{e,f}^* = E_{\overline{e},\overline{f}}$. Also the defining relations for $\cB$ can be written in terms of the generators $E_{e,f}$ and matrices indexed by paths in $\Pi$. Although we do not need this in this paper, we provide the following outline how to do this.

We denote by $P_n \subset I^{n+1}$ the set of all paths of length $n$ in $\Pi$. By convention, $P_0 = I$. We denote by $\cUP$ the free vector space with basis vectors $E_n(i,j)$, $n \geq 0$, $i,j \in P_n$. The same formulas defining the $*$-algebra structure on $\cUB$ also define a $*$-algebra structure on $\cUP$. We define $\cP$ as the quotient of $\cUP$ by the relations $E_n T^\cK = T^\cK E_m$ for all $n,m \geq 0$ and $\cK = (K,x,y)$ in $\cD(n+1,m+1)$ such that $x_0 = y_0$, $x_n = y_m$ and $(x_0,\ldots,x_n)$, $(y_0,\ldots,y_m)$ are paths in $K$. One checks that the well-defined $*$-homomorphism $E_n(i,j) \mapsto F_n(i,j)$ actually is a $*$-isomorphism of $\cP$ onto $\cB$.

When $i,j \in P_n$, we write $e_k = (i_{k-1},i_k)$ and $f_k = (j_{k-1},j_k)$ and note that
$$E_n(i,j) = E_{e_1,f_1} \cdots E_{e_n,f_n} \; .$$
In this way, we have concretely identified $\cB$ with a $*$-algebra generated by the elements $E_{e,f}$ and a set of relations.
\end{remark}

\subsection{Locally compact quantum automorphism groups of connected locally finite graphs}\label{sec.quantum-aut-group}

We still fix a connected locally finite graph $\Pi$ with vertex set $I$ and a graph category $\cD$ containing all planar bi-labeled graphs. In this section, we associate to $\cD$ and $\Pi$ an algebraic quantum group, i.e.\ a multiplier Hopf $*$-algebra $(\cA,\Delta)$ with positive faithful integrals. In the next section, we then prove that $(\cA,\Delta)$ satisfies the universal property of Theorem \ref{thm.lc-qaut} when $\cD = \cP$ is the category of planar bi-labeled graphs.

Denote by $\cUA$ the free vector space with basis $\sqcup_{n \geq 1} (I^n \times I^n)$. We denote the basis vectors of $\cUA$ as $U_n(i,j)$ for all $n \geq 1$ and $i,j \in I^n$. We view $U_n$ as an $I^n \times I^n$ matrix with entries in $\cUA$. We turn $\cUA$ into a $*$-algebra by defining
$$U_n(i,j)^* = U_n(\ibar,\jbar) \quad\text{and}\quad U_n(i,j) \, U_m(i',j') = U_{n+m}(ii',jj') \; .$$

Recall that $\cD_1(n,m)$ denotes the set of bi-labeled graphs $(K,x,y)$ in $\cD(n,m)$ with the property that every connected component of $K$ intersects $x$ and intersects $y$.

\begin{lemma}\label{lem.ideal-UA}
Define $\cI = \lspan\bigl\{ (U_n T^\cK - T^\cK U_m)_{ij} \bigm| n,m \geq 1, \cK \in \cD_1(n,m), i \in I^n, j \in I^m \bigr\}$. Then $\cI$ is a $*$-ideal in $\cUA$.
\end{lemma}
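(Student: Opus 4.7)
The plan is to verify the three properties of a $*$-ideal separately: closure under right multiplication, under left multiplication (which is almost symmetric), and under the $*$-operation. The arguments closely parallel the proof of Lemma~\ref{lem.ideal-UB}, but are simpler because the multiplication in $\cUA$ just concatenates tuples (no matching condition on endpoints). Throughout, fix a generator $x_{ij}(\cK) := (U_n T^\cK - T^\cK U_m)_{ij}$ with $\cK \in \cD_1(n,m)$, $i \in I^n$, $j \in I^m$.

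For left multiplication by $U_k(i',j')$, the key observation is that the bi-labeled graph $\cK' := \cM^{1,1{\ot k}} \ot \cK = 1^{\ot k} \ot \cK$ lies in $\cD(k+n,k+m)$ (by closure of $\cD$ under tensor product, since $\cM^{1,1} \in \cD$), and in fact lies in $\cD_1(k+n,k+m)$: the $k$ new components are single vertices that trivially intersect both label tuples, and the components of $K$ already intersect $x$ and $y$. A direct computation from $T^{\cK'}_{(s's'')(t't'')} = \delta_{s',t'} T^\cK_{s''t''}$ yields
\[ U_k(i',j') \cdot x_{ij}(\cK) = x_{(i'i)(j'j)}(\cK') \in \cI. \]
Right multiplication by $U_k(i',j')$ is handled symmetrically with $\cK \ot 1^{\ot k} \in \cD_1(n+k,m+k)$. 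By linearity, $\cUA \cdot \cI \subset \cI$ and $\cI \cdot \cUA \subset \cI$.

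For the $*$-operation, using $U_n(i,k)^* = U_n(\bar\imath,\bar k)$ and the integrality of $T^\cK$, I compute
\[ x_{ij}(\cK)^* = \sum_{k \in I^n} T^\cK_{kj} U_n(\bar\imath,\bar k) - \sum_{k \in I^m} T^\cK_{ik} U_m(\bar k,\bar\jmath) = x_{\bar\imath\,\bar\jmath}(\cK''), \]
where $\cK'' = (K,\bar x,\bar y)$, since $T^{\cK''}_{pq} = T^\cK_{\bar p\bar q}$ makes the two sums match the entries of $U_n T^{\cK''}$ and $T^{\cK''} U_m$ respectively. It remains to check $\cK'' \in \cD_1(n,m)$. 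Connectedness of the relevant components is a property of $K$ alone and is preserved. Membership in $\cD$ uses the $\widetilde{\phantom{X}}$ construction from Lemma~\ref{lem.crucial-technical-Mor-lemma}.6 applied to $\cK^* = (K,y,x)$: since the cup-like planar graphs $\cR_n,\cR_m$ lie in $\cP \subset \cD$, we have $\widetilde{\cK^*} = (K,\bar x,\bar y) \in \cD(n,m)$. Hence $\cI^* \subset \cI$, and by involutivity $\cI^* = \cI$.

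The only step that is not purely formal is ensuring $\cK'' \in \cD$, and this is the main reason the hypothesis $\cP \subset \cD$ (used to produce the reversal via planar cups) is essential in this argument. Everything else reduces to tracking how the matrix identities for $T^\cK$ interact with the generator relations on $\cUA$.
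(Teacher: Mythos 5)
Your proof is correct and follows essentially the same route as the paper, whose proof simply invokes the argument of Lemma~\ref{lem.ideal-UB}.1: closure under multiplication via tensoring with $1^{\ot k}$ in $\cD_1$, and $*$-invariance via the reversal $(K,\overline{x},\overline{y})$ obtained from the planar-cup construction of Lemma~\ref{lem.crucial-technical-Mor-lemma}.6 together with the transpose. Your explicit matrix computations just spell out what the paper leaves implicit.
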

\begin{proof}
The same argument as for Lemma \ref{lem.ideal-UB}.1 works.
\end{proof}

\begin{definition}\label{def.alg-A}
We define the $*$-algebra $\cA$ as the quotient $\cUA / \cI$, where $\cI$ is the $*$-ideal defined in Lemma \ref{lem.ideal-UA}.
\end{definition}

We also view $U_n$ as an $I^n \times I^n$ matrix with entries in $\cA$. We still denote by $d$ the distance in the graph $\Pi$.

\begin{lemma}\phantomsection\label{lem.strict-convergence}
\begin{enumlist}
\item If $n \geq 2$, $i,j \in I^n$ and if there exists a $k \in \{1,\ldots,n-1\}$ such that $d(i_k,i_{k+1}) \neq d(j_k,j_{k+1})$, then $U_n(i,j) = 0$ in $\cA$.

In particular, if $n \geq 1$, $i \in I^n$, $k \in \{1,\ldots,n\}$ and $s \in I$ are fixed, then the set
$$\{j \in I^n \mid j_k = s \;\;\text{and}\;\; U_n(i,j) \neq 0 \;\text{in $\cA$} \}$$
is finite. An analogous statement holds if we fix $j \in I^n$ and one of the $i_k$.
\item Let $n,m \geq 1$ and $\cK = (K,x,y) \in \cD(n,m)$. Assume that every connected component of $K$ intersects $x \cup y$ (i.e.\ $\cK \in \cD_2(n,m)$) and assume that at least one connected component of $K$ intersects $x$ and intersects $y$. Then for all $i \in I^n$ and $j \in I^m$ the equality
\begin{equation}\label{eq.infinite-but-not}
\sum_{k \in I^n} U_n(i,k) T^\cK_{kj} = \sum_{k \in I^m} T^\cK_{ik} U_m(k,j)
\end{equation}
holds in $\cA$ and both sides of the sum only have finitely many terms that are nonzero in $\cA$.
\item If $n \geq 1$ and if $\cK = (K,x,\emptyset) \in \cD(n,0)$ is such that every connected component of $K$ intersects $x$, then
\begin{equation}\label{eq.strict-convergence-in-A}
\sum_{j \in I^n} U_n(i,j) T^\cK_j = T^\cK_i \, 1 \quad\text{strictly.}
\end{equation}
\item The $*$-algebra $\cA$ is nondegenerate: if $x \in \cA$ and $xy = 0$ for all $y \in \cA$, then $x = 0$.
\item The elements $u_{ij} = U_1(i,j)$ for $i,j \in I$ satisfy the relations in Theorem \ref{thm.lc-qaut}.1.
\end{enumlist}
\end{lemma}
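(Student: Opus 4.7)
My plan is to prove the five parts in order, with parts 2 and 3 carrying most of the technical weight. For part 1, I construct, for each $k \in \{1,\ldots,n-1\}$ and each integer $d \geq 0$, a planar bi-labeled graph $\cK_{k,d} \in \cP_1(n,n) \subset \cD_1(n,n)$ whose underlying graph consists of a path of length $d$ between the vertices labeled $x_k = y_k$ and $x_{k+1} = y_{k+1}$, together with isolated vertices labeled $x_\ell = y_\ell$ for $\ell \neq k, k+1$; each connected component meets both $x$ and $y$. The matrix $T^{\cK_{k,d}}$ is diagonal, with $(i,i)$-entry equal to the number of length-$d$ paths from $i_k$ to $i_{k+1}$ in $\Pi$, so the ideal relation $(U_n T^{\cK_{k,d}} - T^{\cK_{k,d}} U_n)_{ij} \in \cI$ collapses to $(T^{\cK_{k,d}}_{ii} - T^{\cK_{k,d}}_{jj}) \, U_n(i,j) = 0$ in $\cA$, and a suitable choice of $d$ separating $d(i_k, i_{k+1})$ from $d(j_k, j_{k+1})$ forces $U_n(i,j) = 0$. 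The ``in particular'' claim follows by iterating part 1 and using local finiteness of $\Pi$: fixing $j_k = s$ confines $j_{k\pm 1}$ to a finite sphere, and by induction all $j_\ell$ lie in finite sets.

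For part 2, given $\cK \in \cD_2(n,m)$ with a distinguished component $K_1$ meeting both $x$ and $y$, the strategy is to construct an auxiliary bi-labeled graph $\cK^\sharp \in \cD_1(n+q, m+p)$ by dualizing orphan labels: at each vertex carrying only an $x$-label attach a new $y$-label, and symmetrically for ``only $y$'' vertices, with the new labels inserted into the tuple orderings so that the resulting bi-labeled graph remains planar (which is possible because $\cP$ allows labels to sit at the outside of the planar embedding in any compatible order). The $\cD_1$ relation for $\cK^\sharp$ yields an identity with diagonal factors of the form $U_q(i_2, j_{1,b})$ and $U_p(i_{1,a}, j_2)$ and with sums constrained to coordinates matching the duplicated labels; disentangling these constraints and removing the diagonal factors is where the argument becomes intricate. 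Finiteness of the sums in the statement is automatic from part 1 combined with the fact that $K_1$ ties at least one coordinate of the summation variable to the given indices at bounded distance in $\Pi$, so that the remaining coordinates are then confined by part 1 to finite sets.

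Part 3 is then derived by applying part 2 to $\cK \otimes (\cM^{1,1})^{\otimes m} \in \cD_2(n+m, m)$, whose $m$ singleton components lie in $\cD_1$: the resulting identity specializes to $(\sum_j U_n(i,j) T^\cK_j) \, U_m(i',j') = T^\cK_i \, U_m(i',j')$ with only finitely many nonzero terms on the left, and extends by linearity and induction on monomial length to strict convergence against any $y \in \cA$; the left-sided statement follows by the symmetric use of $(\cM^{1,1})^{\otimes m} \otimes \cK$. Part 4 is then immediate from part 3 applied to $\cK = \cM^{1,0}$: if $xy = 0$ for every $y \in \cA$, then $x = \sum_k x \, u_{ik} = 0$ by strict convergence. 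Part 5 is a bookkeeping step: self-adjointness is built into the involution on $\cUA$, idempotency and row/column orthogonality come from the $\cD_1$ relations for $\cM^{2,1}$ and $\cM^{1,2}$, the strict row and column sums to $1$ are part 3 for $\cM^{1,0}$, and commutation with the adjacency matrix is the $\cD_1$ relation for the adjacency bi-labeled graph. The main obstacle is part 2, and concretely the combination of the planarity verification for $\cK^\sharp$ with the removal of the constraints introduced by the duplicated labels; I expect this to require a careful joint induction with part 3 on a suitable complexity measure (for instance the number of orphan components), since the two statements are naturally intertwined through the $\cK \otimes (\cM^{1,1})^{\otimes m}$ reduction.
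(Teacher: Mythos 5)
Your part~1 (path-plus-isolated-doubly-labeled-vertices graphs, diagonal matrices, separating path counts) and your reductions $3 \Rightarrow 4$, $2 \Rightarrow 3$ via $\cK \ot (\cM^{1,1})^{\ot m}$, and part~5 coincide with the paper's route (which simply invokes the proofs of Lemma \ref{lem.ideal-UB}.3 and Lemma \ref{lem.further-relations}). The genuine gap is in part~2, which is exactly the step you flag as intricate and do not carry out. Your plan is circular as stated: after adding a duplicate output label at a vertex $v = x_{a_0}$ of an orphan component, the $\cD_1$-relation for $\cK^\sharp$ gives, for each $l \in I$, an identity of the form $\sum_{k : k_{a_0} = l} U_n(i,k) T^\cK_{kj} = \bigl(\sum_k T^\cK_{ik} U_m(k,j)\bigr) u_{i_{a_0} l}$, and to recover \eqref{eq.infinite-but-not} you must sum over $l$ and use that $\sum_l u_{rl} = 1$ strictly. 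That strict-sum relation is part~3 for $\cM^{1,0}$, which in your own scheme is obtained from part~2 applied to $\cM^{1,0} \ot (\cM^{1,1})^{\ot m}$ --- itself a one-orphan instance. So the proposed induction on the number of orphan components does not get off the ground: the one-orphan case of part~2 already presupposes a one-orphan case. A secondary issue is that membership of $\cK^\sharp$ in $\cD$ is not automatic: when $\cD = \cP$ the new label must respect the boundary-order condition of planar bi-labeled graphs, and for an orphan component whose labels sit in the interior of the $x$-tuple this requires an argument you do not give.

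The paper's proof of part~2 avoids all of this by a sandwiching trick rather than label duplication: as in Lemma \ref{lem.further-relations}, choose connected ``distance-profile'' bi-labeled graphs $\cK_1 \in \cD_1(n,n)$ and $\cK_2 \in \cD_1(m,m)$ (concatenations of path graphs, planar, hence in $\cD$) whose matrices are diagonal with $T^{\cK_1}_{ii} > 0$ and $T^{\cK_2}_{jj} > 0$. The hypothesis that at least one component of $K$ meets both $x$ and $y$ guarantees that $\cK_1 \circ \cK \circ \cK_2$ is connected, hence lies in $\cD_1(n,m)$, so the defining relation of $\cA$ applies to it; writing $T^{\cK_1} U_n T^\cK T^{\cK_2} = U_n T^{\cK_1 \circ \cK \circ \cK_2} = T^{\cK_1 \circ \cK \circ \cK_2} U_m = T^{\cK_1} T^\cK U_m T^{\cK_2}$, taking the $ij$-entry and dividing by $T^{\cK_1}_{ii} T^{\cK_2}_{jj}$ yields \eqref{eq.infinite-but-not}, with finiteness of both sums already supplied by part~1. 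Incidentally, this same device would also rescue your base case (it is how one proves $\sum_l u_{rl}\, U_m(i,j) = U_m(i,j)$ directly), but without it your part~2, and hence parts~3--5, remain unproven.
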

\begin{proof}
1.\ The proof is identical to the proof of Lemma \ref{lem.ideal-UB}.3.

2.\ The proof is identical to the proof of Lemma \ref{lem.further-relations}.

3.\ This follows from 2 because the bi-labeled graphs $\cK \ot 1$ and $1 \ot \cK$ satisfy the assumption of 2. Since we have not yet shown that $\cA$ is nondegenerate, we have to interpret \eqref{eq.strict-convergence-in-A} in the following way: if we fix $a \in \cA$ and if we multiply the terms in \eqref{eq.strict-convergence-in-A} either on the left or on the right by $a$, there remain only finitely many nonzero terms and their sum equals $T^\cK_i \, a$.

4.\ Assume that $x \in \cA$ is such that $xy = 0$ for all $y \in \cA$. Applying 3 to $\cK = \cM^{1,0}$ and fixing $i \in I$, we find that $\sum_{j \in I} x \, U_1(i,j) = x$. But the left hand side is zero. So, $x = 0$. Now that we have proven that $\cA$ is nondegenerate, we can view $\cA$ as a $*$-subalgebra of $M(\cA)$ and we can interpret \eqref{eq.strict-convergence-in-A} as strict convergence in $M(\cA)$.

5.\ This follows by applying 3 to $\cM^{1,0}$ and $\cM^{0,1}$, and by applying the defining relations of $\cA$ to the connected planar bi-labeled graphs $\cM^{2,1}$, $\cM^{1,2}$ and $\cK = (K,x,y) \in \cP_c(1,1)$ given by $V(K) = \{1,2\}$, $E(K) = \{(1,2),(2,1)\}$, $x_1 = 1$ and $y_1 = 2$.
\end{proof}

\begin{proposition}\phantomsection\label{prop.A-mult-hopf}
\begin{enumlist}
\item There is a unique $*$-homomorphism $\Delta : \cA \to M(\cA \ot \cA)$ satisfying
\begin{equation}\label{eq.def-comult-A}
\Delta(U_n(i,j)) = \sum_{k \in I^n} (U_n(i,k) \ot U_n(k,j)) \quad\text{strictly, for all $n \geq 1$, $i,j \in I^n$.}
\end{equation}
\item There is a unique $*$-anti-automorphism $S : \cA \to \cA$ and a unique $*$-homomorphism $\eps : \cA \to \C$ satisfying
$$S(U_n(i,j)) = U_n(\jbar,\ibar) \quad\text{and}\quad \eps(U_n(i,j)) = \delta_{i,j} \quad\text{for all $n \geq 1$, $i,j \in I^n$.}$$
\item The pair $(\cA,\Delta)$ is a multiplier Hopf $*$-algebra in the sense of \cite[Definition 2.4]{VD92}. Its antipode is $S$ and its co-unit is $\eps$.
\end{enumlist}
\end{proposition}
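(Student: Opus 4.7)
The plan is to construct $\Delta$, $\eps$, and $S$ first on the free $*$-algebra $\cUA$, verify they respect its structure, and then show they descend through the $*$-ideal $\cI$ to $\cA$. Uniqueness is then automatic since $\cA$ is generated by the $U_n(i,j)$. For $\Delta$ I set $\Delta(U_n(i,j)) = \sum_{k \in I^n} U_n(i,k) \ot U_n(k,j)$; strict convergence in $M(\cA \ot \cA)$ follows from the strict identities $\sum_{k \in I^n} U_n(i,k) = 1 = \sum_{k \in I^n} U_n(k,j)$, which are consequences of the magic-unitary relations of Lemma \ref{lem.strict-convergence}.5 iterated in each of the $n$ factors. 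That $\Delta$ is a $*$-homomorphism on $\cUA$ is checked on generators. Descent through $\cI$ proceeds by expanding $\Delta((U_n T^\cK - T^\cK U_m)_{ij})$ for $\cK \in \cD_1(n, m)$ and applying the $\cA$-relation $U_n T^\cK = T^\cK U_m$ once in each tensor leg. Coassociativity is immediate since both iterated comultiplications of $U_n(i,j)$ equal $\sum_{k,l \in I^n} U_n(i,k) \ot U_n(k,l) \ot U_n(l,j)$.

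For the counit $\eps(U_n(i,j)) = \delta_{i,j}$, descent through $\cI$ is trivial because $\eps$ sends a defining relation to $T^\cK_{ij} - T^\cK_{ij} = 0$. For the antipode, I set $S(U_n(i,j)) = U_n(\jbar, \ibar)$, which extends uniquely to an anti-$*$-automorphism of $\cUA$ since tuple reversal on concatenations reverses the product order. The subtle step is descent of $S$ to $\cA$: applying $S$ to the relation $(U_n T^\cK - T^\cK U_m)_{ij}$ for $\cK \in \cD_1(n,m)$ and reindexing $k \mapsto \overline{k}$ rewrites the result as the relation associated with the transpose-and-reverse bi-labeled graph $(\cK^*)^r = (K, \overline{y}, \overline{x}) \in \cD(m, n)$. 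To conclude I need $(\cK^*)^r \in \cD_1(m, n)$. This follows from the construction used in the proof of Lemma \ref{lem.crucial-technical-Mor-lemma}.6: the graph $(K, \overline{y}, \overline{x})$ is isomorphic to $(\cR_n^* \ot 1^{\ot m}) \circ (1^{\ot n} \ot \cK \ot 1^{\ot m}) \circ (1^{\ot n} \ot \cR_m)$, which lies in $\cD$ because $\cD$ contains all planar bi-labeled graphs and is closed under composition and tensor product. Closure of $\cD$ under transpose then gives $(\cK^*)^r \in \cD$, and the $\cD_1$ condition is preserved because the underlying label sets are unchanged.

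With $\Delta$, $\eps$, $S$ defined on $\cA$, it remains to check the multiplier Hopf $*$-algebra axioms of \cite[Definition 2.4]{VD92}. Nondegeneracy of $\Delta$ as a $*$-homomorphism to $M(\cA \ot \cA)$ is clear because $\Delta(\cA)(1 \ot \cA) \subset \cA \ot \cA$ and $\Delta(\cA)(\cA \ot 1) \subset \cA \ot \cA$ by strict convergence. The counit equations $(\eps \ot \id)\Delta(U_n(i,j)) = U_n(i,j) = (\id \ot \eps)\Delta(U_n(i,j))$ are immediate by collapsing $\delta_{i,k}$. The antipode equation reduces to $\sum_{k \in I^n} U_n(\overline{k}, \ibar)\, U_n(k,j) = \delta_{i,j} \cdot 1$ strictly and its mirror: writing the product as $u_{k_n i_n} \cdots u_{k_1 i_1} u_{k_1 j_1} \cdots u_{k_n j_n}$, I collapse the innermost pair via the $\cM^{2,1}$ relation $u_{k_1 i_1} u_{k_1 j_1} = \delta_{i_1 j_1}\, u_{k_1 i_1}$, then apply $\sum_{k_1} u_{k_1 i_1} = 1$ strictly to eliminate $k_1$, and iterate outward with $k_2,\ldots,k_n$. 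The main technical obstacle I expect is the index-reversal bookkeeping in proving descent of $S$ through $\cI$; everything else is direct provided strict convergence is handled carefully.
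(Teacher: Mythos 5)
Your proposal is correct and follows essentially the same route as the paper: define the maps on the generators of $\cUA$ with values in $M(\cA\ot\cA)$, resp.\ $\cA$, kill the ideal $\cI$ by applying the relation $U_nT^\cK=T^\cK U_m$ once in each tensor leg (for $\Delta$) and by the label-reversal construction giving $(K,\overline{y},\overline{x})\in\cD_1(m,n)$ from Lemma \ref{lem.crucial-technical-Mor-lemma}.6 (for $S$, exactly the paper's $\cI^*=\cI$ argument), and then verify the multiplier Hopf axioms from the strict antipode identities. Two cosmetic points: the collapsing relation $u_{k_1 i_1}u_{k_1 j_1}=\delta_{i_1,j_1}\,u_{k_1 i_1}$ comes from $\cM^{1,2}$ rather than $\cM^{2,1}$, and nondegeneracy of $\Delta$ follows from the surjectivity encoded in those antipode identities rather than from the mere inclusions $\Delta(\cA)(1\ot\cA)\subset\cA\ot\cA$.
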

\begin{proof}
1.\ It follows from Lemma \ref{lem.strict-convergence}.1 that multiplying the terms of the sum in \eqref{eq.def-comult-A} on the left or on the right by $a \ot 1$ or $1 \ot a$ for some element $a \in \cA$, only finitely many nonzero terms remain. So, we find a well-defined linear map
$$\Psi : \cUA \to M(\cA \ot \cA) : \Psi(U_n(i,j)) = \sum_{k \in I^n} (U_n(i,k) \ot U_n(k,j)) \quad\text{strictly.}$$
We have to prove that $\Psi(\cI) = \{0\}$, where $\cI$ is the $*$-ideal defined in Lemma \ref{lem.ideal-UA}. Fix $n,m \geq 1$, $\cK \in \cD_1(n,m)$ and $i \in I^n$, $j \in I^m$. We have to prove that $\Psi((U_n T^\cK)_{ij}) = \Psi((T^\cK U_m)_{ij})$.

We say that a matrix $(X_{ij})_{i,j \in J}$ over an index set $J$ and with entries in an algebra is of finite type if for all $i \in J$, there are at most finitely many $j \in J$ with $X_{ij} \neq 0$, and for all $j \in J$, there are at most finitely many $i \in J$ with $X_{ij} \neq 0$. The product of matrices of finite type is well-defined and associative.

Fix $a,b \in \cA$. By Lemma \ref{lem.strict-convergence}.1, the formulas $X_{ik} = a U_n(i,k) \ot 1$ and $Y_{kj} = 1 \ot b U_{k,j}$ define matrices of finite type with entries in $M(\cA \ot \cA)$. Note that $X T^\cK = T^\cK X$ and $Y T^\cK = T^\cK Y$. Using the definition of $\Psi$, we get that $(XY)_{ij} = (a \ot b) \Psi(U_n(i,j))$ for all $i,j \in I^n$. It follows that
$$(a \ot b) \Psi((U_n T^\cK)_{ij}) = (XYT^\cK)_{ij} = (XT^\cK Y)_{ij} = (T^\cK XY)_{ij} = (a \ot b) \Psi((T^\cK U_n )_{ij})\; .$$
Since this holds for all $a,b \in \cA$, we conclude that $\Psi((U_n T^\cK)_{ij}) = \Psi((T^\cK U_m)_{ij})$. It follows that $\Psi(\cI) = \{0\}$.

By passing to the quotient, we define $\Delta : \cA \to M(\cA \ot \cA)$. It is easy to check that $\Delta$ is a $*$-homomorphism.

2.\ The same argument showing that $\cI^* = \cI$ shows that $S$ is a well-defined $*$-anti-automorphism. It is easy to check that $\eps$ is well-defined.

3.\ The coassociativity of $\Delta$ follows immediately from the definition of $\Delta$. We have already seen in 1 that $\Delta(\cA) (1 \ot \cA)$ and $(\cA \ot 1)\Delta(\cA)$ are subspaces of $\cA \ot \cA$. By Lemma \ref{lem.strict-convergence}.3, we get for all $i,j \in I^n$ that
$$\sum_{k \in I^n} U_n(i,k) U_n(\jbar,\overline{k}) = \delta_{i,j} 1 = \sum_{k \in I^n} U_n(\overline{k},\ibar) U_n(k,j) \quad\text{strictly.}$$
So, for all $a \in \cA$, $i,j \in I^n$, we get that
$$\sum_{k \in I^n} a U_n(i,k) S(U_n(k,j)) = \delta_{i,j} a = \sum_{k \in I^n} S(U_n(i,k)) U_n(k,j) a \; .$$
It follows that $(\cA,\Delta)$ is a multiplier Hopf $*$-algebra with antipode $S$ and counit $\eps$.
\end{proof}

To prove that the multiplier Hopf $*$-algebra $(\cA,\Delta)$ admits positive left and right invariant functionals, we will construct a $*$-isomorphism between $\cA$ and a corner of the $*$-algebra $\cB$ constructed in the previous section. Using the positive faithful functional $\vphi$ on $\cB$, we will find the left Haar integral and the right Haar integral on $(\cA,\Delta)$.

From now on, we write $u_{ij} = U_1(i,j)$ for all $i,j \in I$. Note that the elements $(u_{ij})_{i,j \in I}$ generate $\cA$.

Fix a base vertex $e \in I$. Recall the equivalence relation $\approx$ on $I$ defined in \eqref{eq.equiv-rel-on-I} with equivalence classes $(I_a)_{a \in \cE}$. Take the unique $a \in \cE$ such that $e \in I_a$. With the same proof as for Lemma \ref{lem.ideal-UB}.2, we have that $u_{ij} = 0$ if $i \not\approx j$. So, by Lemma \ref{lem.strict-convergence}.5, we get that
$$\sum_{i \in I_a} u_{ie} = 1 \quad\text{strictly.}$$

Define the $*$-subalgebra $\cB_e \subset \cB$ as the linear span of $F_0(i,e) \cB F_0(j,e)$, $i,j \in I_a$. Since $(F_0(i,j))_{(i,j) \in I \times I}$ is an orthogonal family of idempotents in $\cB$, we should consider $\cB_e$ as a corner of $\cB$.

\begin{lemma}\label{lem.iso-A-B}
The map
\begin{equation}\label{eq.map-Theta}
\Theta_e : \cA \to \cB_e : \Theta_e(U_n(i,j)) = \sum_{s,t \in I_a} F_{n+1}(sit,eje)
\end{equation}
is a $*$-isomorphism with inverse $\Theta_e^{-1}(F_n(i,j)) = U_{n+1}(i,j)$ whenever $i_0,i_n \in I_a$ and $j_0=j_n=e$.
\end{lemma}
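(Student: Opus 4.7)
The strategy is to construct both $\Theta_e : \cA \to \cB_e$ and a candidate inverse $\Psi : \cB_e \to \cA$ given on basis elements by $\Psi(F_n(i,j)) = U_{n+1}(i,j)$ for $i_0, i_n \in I_a$ and $j_0 = j_n = e$, and then to verify that they are mutual $*$-inverses. The key correspondence is between the relations defining $\cA$ (parametrized by $\cD_1(n,m)$, see Lemma \ref{lem.ideal-UA}) and those defining $\cB$ (parametrized by $\cL_1(n,m) \subset \cD_1(n+1,m+1)$, see Lemma \ref{lem.ideal-UB}).

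For $\Theta_e$ to descend from $\cUA$ to the quotient $\cA$, fix $\cK \in \cD_1(n,m)$ and form the augmented bi-labeled graph $\cK' = \cM^{1,1} \ot \cK \ot \cM^{1,1}$. Its two extra isolated vertices are labeled by matching endpoints of both $x'$ and $y'$, so $\cK' \in \cL_1(n+1, m+1)$ and the relation $F_{n+1} T^{\cK'} = T^{\cK'} F_{m+1}$ holds in $\cB$. Since $T^{\cK'}_{(s,p,t),(u,q,v)} = \delta_{s,u} \delta_{t,v} T^\cK_{p,q}$, reading off the matrix entry at $((s,i,t),(e,k,e))$ and summing over $s,t \in I_a$ yields $\sum_p T^\cK_{p,k} \, \Theta_e(U_n(i,p)) = \sum_p T^\cK_{i,p} \, \Theta_e(U_m(p,k))$, which is exactly $\Theta_e((U_n T^\cK - T^\cK U_m)_{ik}) = 0$. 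For $\Psi$, I first define a linear map $\widetilde{\Psi} : \cUB \to \cUA$ by $\widetilde{\Psi}(F_n(i,j)) = U_{n+1}(i,j)$. Since any $\cK \in \cL_1(n,m)$ is by definition also in $\cD_1(n+1,m+1)$, $\widetilde{\Psi}$ sends the defining ideal of $\cB$ into that of $\cA$ and hence descends to a linear map $\cB \to \cA$ whose restriction to $\cB_e$ is $\Psi$. Multiplicativity of $\Psi$ on $\cB_e$ comes from the fact that $F_n(i,j) F_m(k,l)$ in $\cB$ is nonzero only when $i_n = k_0$ and $j_n = l_0$; restriction to $\cB_e$ forces $j_n = l_0 = e$ automatically, and the magic unitary relation $u_{i_n e} \, u_{k_0 e} = \delta_{i_n, k_0} \, u_{i_n e}$ in $\cA$ both enforces $i_n = k_0$ and removes one copy of the overlap, matching the $\cB$-product exactly; compatibility with $*$ is immediate from $U_{n+1}(i,j)^* = U_{n+1}(\ibar,\jbar)$.

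To check $\Psi \circ \Theta_e = \id_\cA$ it suffices to evaluate on generators: $\Psi(\Theta_e(u_{ij})) = \sum_{s,t \in I_a} u_{se} \, u_{ij} \, u_{te} = u_{ij}$ by Lemma \ref{lem.strict-convergence}.3 (using $u_{se} = 0$ for $s \not\approx e$, so that $\sum_{s \in I_a} u_{se} = 1$ strictly). For $\Theta_e \circ \Psi = \id_{\cB_e}$, I compute $\Theta_e(U_{n+1}(i,j)) = \sum_{s,t \in I_a} F_{n+2}((s,i,t),(e,j,e))$; since $j_0 = j_n = e$, the distance condition of Lemma \ref{lem.ideal-UB}.3 forces $s = i_0$ and $t = i_n$, leaving the single term $F_{n+2}((i_0,i_0,i_1,\ldots,i_n,i_n),(e,e,j_1,\ldots,j_{n-1},e,e))$. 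This equals $F_n(i,j)$ by the relation $F_n T^{\cK''} = T^{\cK''} F_{n+2}$ for the planar bi-labeled graph $\cK'' = \cM^{1,2} \ot (\cM^{1,1})^{\ot (n-1)} \ot \cM^{1,2} \in \cL_1(n, n+2)$, which implements the simultaneous endpoint duplication on both sides. The $*$-homomorphism property of $\Theta_e$ follows at once, since it is the two-sided inverse of the $*$-homomorphism $\Psi$. The main technical obstacle throughout is the systematic off-by-one shift between $\cA$-tuples (of length $n$) and $\cB$-tuples (of length $n+1$), absorbed by the augmentation $\cK \mapsto \cM^{1,1} \ot \cK \ot \cM^{1,1}$ in the forward direction and by the tautological inclusion $\cL_1(n,m) \subset \cD_1(n+1,m+1)$ in the reverse direction.
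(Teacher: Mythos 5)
Your proposal is correct and follows essentially the same route as the paper: the paper's proof is precisely the two observations you make, namely that $\cM^{1,1} \ot \cK \ot \cM^{1,1} \in \cL_1$ for every $\cK \in \cD_1(n,m)$ (so $\Theta_e$ respects the relations of $\cA$) and that $\cL_1(n,m) \subset \cD_1(n+1,m+1)$ (so the inverse formula respects the relations of $\cB$), with everything else declared ``immediate''. Your additional verifications -- the collapse of $\sum_{s,t} u_{se} U_n(i,j) u_{te}$ via strict sums, the collapse of $\sum_{s,t} F_{n+2}(sit,eje)$ via Lemma \ref{lem.ideal-UB}.3 and the endpoint-duplication relation, and the multiplicativity of $\Psi$ on $\cB_e$ -- are exactly the details the paper leaves to the reader, and they check out (only note that the identity $\Psi \circ \Theta_e = \id$ should be verified on the spanning elements $U_n(i,j)$ rather than just the generators $u_{ij}$, which your computation does verbatim).
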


Note that by Lemma \ref{lem.ideal-UB}.3, the sum in \eqref{eq.map-Theta} has only finitely many nonzero terms.

\begin{proof}
Since for every $\cK \in \cD_1(n,m)$, we have that $1 \ot T^\cK \ot 1 \in \cL_1(n,m)$, while $\cL_1(n,m) \subset \cD_1(n+1,m+1)$, it is immediate that the formulas for $\Theta_e$ and its inverse respect the defining relations for $\cA$ and $\cB$. Because
$$\sum_{s,t \in I_a} U_{n+2}(sit,eje) = \Bigl(\sum_{s \in I_a} u_{se}\Bigr) \, U_n(i,j) \, \Bigl(\sum_{t \in I_a} u_{te} \Bigr) = U_n(i,j) \; ,$$
we indeed get that $\Theta_e$ and $\Theta_e^{-1}$ are each other's inverse.
\end{proof}

\begin{definition}\label{def.left-Haar}
Using the notation of Definition \ref{def.functional-vphi}, we define the functional $\vphi_e : \cA \to \C : \vphi_e = \vphi \circ \Theta_e$. By Proposition \ref{prop.vphi-positive-faithful} and Lemma \ref{lem.modular-vphi}, $\vphi_e$ is a positive, faithful, tracial functional on $\cA$.
\end{definition}

Using the notation of Lemma \ref{lem.ibf}, we also get the following explicit formula for $\vphi_e$ on a set of elements linearly spanning $\cA$. When $i \in I^n$, we denote by $(i) \in \ell^2(I^n)$ the corresponding basis vector, i.e.\ the function that is equal to $1$ on $i$ and equal to $0$ elsewhere.

\begin{equation}\label{eq.formula-left-Haar}
\vphi_e\bigl( u_{se} U_n(i,j) u_{te} \bigr) = \delta_{s,t} \, \sum_{V \in \ibf_{a-a}(n+1,\eps_a)} \langle V(s),(sis)\rangle \, \langle (eje),V(e) \rangle
\end{equation}
for all $s,t \in I_a$, $i,j \in I^n$, $n \geq 1$.

\begin{theorem}\label{thm.left-Haar}
The functional $\vphi_e$ is left invariant: for all $a \in \cA$, we have that $(\id \ot \vphi_e)\Delta(a) = \vphi_e(a)1$.
Also, $\psi_e := \vphi_e \circ S$ is a positive, faithful, right invariant functional on $(\cA,\Delta)$. Thus, $(\cA,\Delta)$ is an algebraic quantum group in the sense of \cite{VD96} and \cite[Definition 1.2]{KVD96}.
\end{theorem}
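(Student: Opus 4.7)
The plan is to verify left invariance of $\vphi_e$ by a direct computation on the spanning family $\{u_{se} U_n(i,j) u_{le} \mid s,l \in I_a,\; n \geq 0,\; i,j \in I^n\}$ of $\cA$, combining the explicit formula~\eqref{eq.formula-left-Haar} for $\vphi_e$ with the intertwining relation of Lemma~\ref{lem.strict-convergence}.2, which commutes products $U_m$ past morphisms in $\Mor$. Right invariance of $\psi_e = \vphi_e \circ S$, together with its positivity and faithfulness, will then follow by standard manipulations with the antipode.

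Concretely, fix $a = u_{se} U_n(i,j) u_{le}$ with $s,l \in I_a$, and an arbitrary $b \in \cA$. Expand $\Delta(a) = \Delta(u_{se})\, \Delta(U_n(i,j))\, \Delta(u_{le})$ via Proposition~\ref{prop.A-mult-hopf}, and apply $(\id \ot \vphi_e)$ to $(b \ot 1)\Delta(a)$. Evaluating the second tensor factor with~\eqref{eq.formula-left-Haar}, the Kronecker $\delta_{k_0, k_{n+1}}$ collapses the two outer summation variables into a single $k_0 \in I_a$. Writing $\tilde{k} = (k_0, k_1, \ldots, k_n, k_0) \in I^{n+2}$, the resulting expression reshapes to
\[
\sum_{V \in \ibf_{a-a}(n+1,\eps_a)} \; \langle (eje), V(e)\rangle \; \sum_{k_0 \in I_a} \; \sum_{\tilde{k} \in I^{n+2}} U_{n+2}\bigl((s,i,l),\, \tilde{k}\bigr) \, V_{\tilde{k}, k_0} \cdot b \, .
\]
Since each $V \in \ibf_{a-a}(n+1,\eps_a) \subset \Mor_{a-a}(n+1, 0)$ lies in the linear span of $T^\cK$ with $\cK$ a connected bi-labeled graph in $\cL(n+1,0)$, Lemma~\ref{lem.strict-convergence}.2 applies by linearity and rewrites the innermost sum as $\sum_{k_0' \in I} V_{(s,i,l), k_0'}\, u_{k_0' k_0}$. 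The $\ell^\infty(I)$-bimodularity of $V$ forces $k_0' = s = l$ (producing the desired $\delta_{s,l}$), and the strict identity $\sum_{k_0 \in I_a} u_{s k_0} = 1$ absorbs the remaining sum into $b$. Matching against~\eqref{eq.formula-left-Haar} identifies the whole expression as $\vphi_e(a) \cdot b$. The companion identity $(\id \ot \vphi_e)(\Delta(a)(b \ot 1)) = b \, \vphi_e(a)$ is obtained by precisely the same computation, with $b$ on the other side.

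For the antipodal part, Proposition~\ref{prop.A-mult-hopf} gives $S(U_n(i,j)) = U_n(\jbar, \ibar)$, so $S^2 = \id$ and $(\cA, \Delta)$ is of Kac type. The standard identity $(S \ot S) \circ \Delta = \sigma \circ \Delta \circ S$, with $\sigma$ the flip, converts left invariance of $\vphi_e$ applied at $S(a)$ into $(\psi_e \ot \id)\Delta(a) = \psi_e(a) \cdot 1$, giving right invariance of $\psi_e$. Positivity and faithfulness follow from $\psi_e(a^*a) = \vphi_e(S(a^*a)) = \vphi_e(S(a)\, S(a)^*) = \vphi_e(S(a)^* S(a)) \geq 0$, invoking the traciality of $\vphi_e$ from Definition~\ref{def.left-Haar} and the fact that $S$ is a bijective $*$-anti-automorphism.

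The main obstacle is justifying the intermediate manipulations as exact identities in $\cA$: the raw expression $(\id \ot \vphi_e)((b \ot 1)\Delta(a))$ is a priori only a strictly convergent infinite sum in $M(\cA)$, and Lemma~\ref{lem.strict-convergence}.2 is itself a statement about strictly convergent series. The cutoffs $u_{se}, u_{le}$, the distance-finiteness of Lemma~\ref{lem.strict-convergence}.1, and the finite-support behaviour of $\ibf_{a-a}(n+1,\eps_a)$ from Lemma~\ref{lem.ibf} together ensure that only finitely many nonzero terms survive at every stage, so the entire rearrangement actually takes place inside $\cA$.
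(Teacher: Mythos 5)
Your proposal is correct and follows essentially the same route as the paper's proof: left invariance is checked on the spanning family $u_{se}\,U_n(i,j)\,u_{te}$ by inserting the explicit formula \eqref{eq.formula-left-Haar}, commuting $U_{n+2}$ past the elements $V \in \ibf_{a-a}(n+1,\eps_a)$ via the defining intertwining relations, and using the $\ell^\infty(I)$-bimodularity of $V$ together with the strict relation $\sum_{k} u_{sk} = 1$, after which $\psi_e = \vphi_e \circ S$ is handled exactly as in the paper through $\Delta \circ S = \sigma \circ (S \ot S) \circ \Delta$ and the fact that $S$ is a $*$-anti-automorphism. The only differences are cosmetic (matrix-entry notation in place of the paper's $U_n(\xi,\eta)$ shorthand, and the roles of the fixed spanning element and the arbitrary multiplier element swapped), so there is nothing to add.
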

\begin{proof}
It suffices to prove that for all $a \in \cA$ and all $b \in \cA$ of the form $b = u_{se} U_n(i,j) u_{te}$, we have that
$$(\id \ot \vphi_e)((a \ot 1)\Delta(b)) = a \vphi_e(b) \; .$$
By definition, the left hand side is given by the following sum, with only finitely many nonzero terms.
$$\sum_{r_1,r_2 \in I, k \in I^n} a \, u_{sr_1} \, U_n(i,k) \, u_{tr_2} \; \vphi_e\bigl(u_{r_1e} \, U_n(k,j) \, u_{r_2 e}\bigr) \; .$$
Using \eqref{eq.formula-left-Haar}, this expression equals
\begin{multline}\label{eq.we-compute}
\sum_{r \in I, k \in I^n} a \, u_{sr} \, U_n(i,k) \, u_{tr} \; \sum_{V \in \ibf_{a-a}(n+1,\eps_a)} \langle V(r),(rkr)\rangle \, \langle (eje),V(e) \rangle \\
= \sum_{r \in I, k \in I^n, V \in \ibf_{a-a}(n+1,\eps_a)} a \, U_{n+2}(sit,rkr) \, \langle V(r),(rkr)\rangle \, \langle (eje),V(e) \rangle \; .
\end{multline}
Similarly to \eqref{eq.notation-sesquilinear}, we introduce the notation
$$U_n(\xi,\eta) = \sum_{i,j \in I^n} \overline{\xi(i)} \, \eta(j) \, U_n(i,j)$$
for all finitely supported functions $\xi,\eta \in \cF(I^n) \subset \ell^2(I^n)$. By definition of $\cA$, we have that $U_n(T^\cK \xi,\eta) = U_m(\xi,(T^\cK)^* \eta)$ for all $\cK \in \cD_1(n,m)$, $\xi \in \cF(I^m)$ and $\eta \in \cF(I^n)$.

So we find that the expression in \eqref{eq.we-compute} equals
\begin{multline}\label{eq.we-compute-further}
\sum_{r \in I, V \in \ibf_{a-a}(n+1,\eps_a)} a \, U_{n+2}\bigl((sit),V(r)\bigr) \, \langle (eje),V(e) \rangle \\
= \sum_{r \in I, V \in \ibf_{a-a}(n+1,\eps_a)} a \, U_{1}\bigl(V^*(sit),(r)\bigr) \, \langle (eje),V(e) \rangle \; .
\end{multline}
Every $V$ appearing in \eqref{eq.we-compute-further} is $\ell^\infty(I)$-bimodular, so that $V^*(sit) = \delta_{s,t}  \, \langle V^*(sit),(s) \rangle \, (s)$. Therefore, the expression in \eqref{eq.we-compute-further} equals
$$\delta_{s,t} \sum_{r \in I, V \in \ibf_{a-a}(n+1,\eps_a)} a \, U_{1}(s,r) \, \langle (s), V^*(sis) \rangle \, \langle (eje),V(e) \rangle \; .$$
Since $\sum_{r \in I} U_1(r,s) = 1$ strictly, we finally find equality with
$$\delta_{s,t} \sum_{V \in \ibf_{a-a}(n+1,\eps_a)} a \, \langle (s), V^*(sis) \rangle \, \langle (eje),V(e) \rangle = a \vphi_e(b) \; .$$
So, $\vphi_e$ is left invariant. Since $S$ is a $*$-anti-automorphism satisfying $\Delta \circ S = \sigma \circ (S \ot S) \circ \Delta$, where $\sigma$ is the flip automorphism, we get that $\psi_e$ is positive, faithful and right invariant.
\end{proof}

By \cite[Theorem 3.7]{VD96}, the left invariant functional on $(\cA,\Delta)$ is unique up to a scalar multiple. So, changing the base vertex $e \in I$ to $f \in I$, we must have that $\vphi_f$ is a multiple of $\vphi_e$. This multiple can be explicitly computed using the map $\mu : I \to (0,+\infty)$ of Lemma \ref{def.formula-for-rho}.

\begin{proposition}\label{prop.dependence-base-point}
We have that $\vphi_e = \mu_f \, \mu_e^{-1} \, \vphi_f$ for all $e,f \in I$.
\end{proposition}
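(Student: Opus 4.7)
By uniqueness of left invariant functionals on the multiplier Hopf $*$-algebra $(\cA,\Delta)$ up to a positive scalar \cite[Theorem 3.7]{VD96}, we have $\vphi_e = c\, \vphi_f$ for some $c > 0$, and the plan is to pin down $c$ by evaluating both sides on the single positive element $a := u_{ee}\, u_{ff}\, u_{ee} \in \cA$. The idea is to transport $a$ through the $*$-isomorphisms $\Theta_e : \cA \to \cB_e$ and $\Theta_f : \cA \to \cB_f$ into simple elements of $\cB$, and then apply the modular identity $\vphi(xy) = \vphi(y\, \rho(x))$ from Proposition \ref{prop.vphi-positive-faithful} together with the explicit action of $\rho$ provided by Lemma \ref{lem.modular-vphi}.

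First I simplify $\Theta_e(u_{ee}) = \sum_{s,t \in I_{[e]}} F_2((s,e,t),(e,e,e))$. Factoring $F_2 = F_1 \cdot F_1$ and applying the relation
\[F_1((i_0,i_1),(j_0,j_0)) = \delta_{i_0,i_1}\, F_0(i_0,j_0)\]
in $\cB$ --- which is Lemma \ref{lem.further-relations} applied to the planar bi-labeled graph $\cM^{2,1}$ --- collapses the double sum to $F_0(e,e) =: p_e$. Hence $\Theta_e(a) = p_e\, \Theta_e(u_{ff})\, p_e$, and a direct computation with the $F$-multiplication rules gives $\Theta_e(a) = F_2((e,f,e),(e,f,e)) = z z^*$, where I set $z := F_1((e,f),(e,f)) \in \cB$. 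The same argument with $e$ and $f$ interchanged yields $\Theta_f(u_{ff} u_{ee} u_{ff}) = F_2((f,e,f),(f,e,f)) = z^* z$.

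By Lemma \ref{lem.modular-vphi}, $\rho(z) = (\mu_f/\mu_e)\, z$, so applying the modular identity with $x = z$ and $y = z^*$ gives $\vphi(zz^*) = (\mu_f/\mu_e)\, \vphi(z^*z)$. Using the traciality of $\vphi_f$ (Definition \ref{def.left-Haar}) one checks that $\vphi_f(u_{ff} u_{ee} u_{ff}) = \vphi_f(a)$, so translating back through $\Theta_e$ and $\Theta_f$ yields
\[\vphi_e(a) \;=\; \vphi(zz^*) \;=\; \frac{\mu_f}{\mu_e}\, \vphi(z^*z) \;=\; \frac{\mu_f}{\mu_e}\, \vphi_f(a).\]
Combined with $\vphi_e = c\, \vphi_f$, this forces $c = \mu_f/\mu_e$ provided $\vphi_f(a) \ne 0$.

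The one remaining step, and what I expect to require the most care, is ruling out $\vphi_f(a) = 0$. Since $a = (u_{ff} u_{ee})^*(u_{ff} u_{ee})$ is positive and $\vphi_f$ is faithful, it suffices to show that $a \ne 0$ in $\cA$. For this, apply the canonical surjective multiplier Hopf $*$-algebra homomorphism $\pi : \cA \to \cO(\Aut\Pi)$ from \eqref{eq.canonical-closed-quantum-subgroup}: one has $\pi(a) = 1_{\Stab(e)} \cdot 1_{\Stab(f)} \cdot 1_{\Stab(e)} = 1_{\Stab(e) \cap \Stab(f)}$, which is a nonzero locally constant compactly supported function on $\Aut\Pi$ because $\Stab(e) \cap \Stab(f)$, as the intersection of two compact open subgroups of $\Aut\Pi$, is itself a compact open subgroup and in particular has positive Haar measure.
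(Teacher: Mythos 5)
Your proof is correct, but it is organized differently from the paper's. The paper never actually uses the uniqueness theorem: it proves the identity directly on the spanning elements $u_{se}U_n(i,j)$ (these span $\cA$ because $\sum_{s}u_{se}=1$ strictly), by transporting through $\Theta_e$, inserting a factor $\sum_{r\in I_b}F_1(rs,fe)$ via Lemma \ref{lem.further-relations}, and moving it across with $\vphi(xy)=\vphi(y\rho(x))$ and Lemma \ref{lem.modular-vphi}; uniqueness from \cite[Theorem 3.7]{VD96} is only mentioned beforehand as motivation. You instead invoke uniqueness to get $\vphi_e=c\,\vphi_f$ and pin down $c$ on the single element $a=u_{ee}u_{ff}u_{ee}$, using the same two key ingredients ($\Theta_e,\Theta_f$ and $\rho$ via Lemma \ref{lem.modular-vphi} applied to $z=F_1((e,f),(e,f))$). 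Your computations $\Theta_e(u_{ee})=F_0(e,e)$, $\Theta_e(a)=zz^*$, $\Theta_f(u_{ff}u_{ee}u_{ff})=z^*z$, the collapse relation coming from $\cM^{2,1}\in\cL_1(1,0)$, and the traciality step are all correct, as is the conclusion $c=\mu_f\mu_e^{-1}$. The price of the shortcut is the extra non-vanishing argument $\vphi_f(a)\neq 0$, which the paper's proof avoids entirely; your argument (pushing $a$ to $1_{\Stab(e)\cap\Stab(f)}\neq 0$ in $\cO(\Aut\Pi)$) works, with two small caveats. First, in the paper's logical order \eqref{eq.canonical-closed-quantum-subgroup} is justified by the universal property of Theorem \ref{thm.lc-qaut}, proved only in the following subsection (no circularity, since that proof does not use this proposition); moreover the proposition is stated for a general graph category $\cD\supset\cP$, but the surjection onto $\cO(\Aut\Pi)$ exists for any such $\cD$ because all matrices $T^\cK$ are $\Aut\Pi$-equivariant (cf.\ Proposition \ref{prop.link-with-classical-case}), so this is harmless. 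Second, the classical detour can be avoided altogether: writing $\xi\in\cF(I^2)$ for the basis vector at $(e,f)$, Lemma \ref{lem.basis-is-complete} gives some $V\in\ibf_{a-b}(1,\al)$ with $V^*\xi\neq 0$, whence $\theta_\al(z)=\sum_V \overline{V^*\xi}\ot V^*\xi\neq 0$ and $\vphi(zz^*)>0$ by \eqref{eq.first-formula}. In short: the paper's route is a self-contained computation on a spanning set; yours buys brevity at the cost of invoking uniqueness of the invariant functional and a (correct) non-degeneracy check.
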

\begin{proof}
Take the unique $a,b \in \cE$ such that $e \in I_a$ and $f \in I_b$. Fix $n \geq 1$ and $i,j \in I^n$ and $s \in I_a$. We make the following direct computation, making use of Lemma \ref{lem.further-relations}, Proposition \ref{prop.vphi-positive-faithful} and Lemma \ref{lem.modular-vphi}.
\begin{align*}
\vphi_e(u_{se} U_n(i,j)) &= \vphi(F_{n+1}(sis,eje)) = \sum_{r \in I_b} \vphi(F_{n+2}(sirs,ejfe)) \\
&= \sum_{r \in I_b} \vphi(F_{n+1}(sir,ejf) F_1(rs,fe)) = \sum_{r \in I_b} \vphi(\rho^{-1}(F_1(rs,fe)) F_{n+1}(sir,ejf)) \\
&= \frac{\mu_f}{\mu_e} \sum_{r \in I_b} \vphi(F_1(rs,fe) F_{n+1}(sir,ejf)) = \frac{\mu_f}{\mu_e} \sum_{r \in I_b} \vphi(F_{n+2}(rsir,fejf)) \\
&= \frac{\mu_f}{\mu_e} \vphi_f(U_{n+1}(si,ej)) =  \frac{\mu_f}{\mu_e} \vphi_f(u_{se} U_n(i,j))\; .
\end{align*}
This concludes the proof of the proposition.
\end{proof}

We can now interpret the equivalence relation $\approx$ on $I$ as really giving the quantum orbits under the quantum automorphism group, in the sense of \cite[Definition 3.1]{LMR17}.

\begin{corollary}\label{cor.about-rel-E}
For all $i,j,e \in I$, we have that
$$\vphi_e(u_{ij}) = \begin{cases} \mu_j \, \mu_e^{-1} &\quad\text{if $i \approx j$,}\\ 0 &\quad\text{if $i \not\approx j$.}\end{cases}$$
In particular, $u_{ij} \neq 0$ if and only if $i \approx j$.
\end{corollary}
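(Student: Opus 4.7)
My plan is to combine the $*$-isomorphism $\Theta_e \colon \cA \to \cB_e$ of Lemma \ref{lem.iso-A-B}, left invariance of $\vphi_e$, and Proposition \ref{prop.dependence-base-point}, reducing everything to the single observation $\vphi_e(u_{ee}) = 1$.

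For the vanishing direction, if $i \not\approx j$ I use the formula $\Theta_e(u_{ij}) = \sum_{s,t \in I_a} F_2(sit, eje)$: every summand has middle-position entries $i$ and $j$ in different $\approx$-classes, so Lemma \ref{lem.ideal-UB}.2 makes each term vanish in $\cB$. Since $\Theta_e$ is injective, $u_{ij} = 0$ and hence $\vphi_e(u_{ij}) = 0$.

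For $i \approx j$ I would first handle the diagonal case $i = j = e$. The inverse formula in Lemma \ref{lem.iso-A-B} immediately gives $\Theta_e(u_{ee}) = F_0(e, e)$, and since $e \approx e$ the element $F_0(e, e)$ lies in $\cB_{\eps_a}$ with $\vphi(F_0(e, e)) = 1$ by Definition \ref{def.functional-vphi}. Thus $\vphi_e(u_{ee}) = 1$.

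To extend to arbitrary $i \approx j$ I exploit left invariance of $\vphi_e$. Expanding $(\id \ot \vphi_e)\Delta(u_{ij}) = \vphi_e(u_{ij}) \cdot 1$ using $\Delta(u_{ij}) = \sum_k u_{ik} \ot u_{kj}$ and the strict identity $\sum_k u_{ik} = 1$ yields
\[
\sum_k \bigl(\vphi_e(u_{kj}) - \vphi_e(u_{ij})\bigr) u_{ik} = 0 \quad \text{in $M(\cA)$.}
\]
Multiplying on the right by $u_{il'}$ and using the row orthogonality $u_{ik}u_{il'} = \delta_{k,l'} u_{il'}$ collapses the sum to $\bigl(\vphi_e(u_{l'j}) - \vphi_e(u_{ij})\bigr) u_{il'} = 0$, so $\vphi_e(u_{l'j}) = \vphi_e(u_{ij})$ whenever $u_{il'} \neq 0$, i.e.\ whenever $l' \approx i$. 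Setting $l' = j$ (allowed since $i \approx j$) gives $\vphi_e(u_{ij}) = \vphi_e(u_{jj})$, and Proposition \ref{prop.dependence-base-point} with $f = j$ then yields $\vphi_e(u_{jj}) = (\mu_j/\mu_e)\vphi_j(u_{jj}) = \mu_j/\mu_e$, completing the formula. The ``in particular'' statement is immediate: for $i \approx j$ we get $\vphi_e(u_{ij}) = \mu_j/\mu_e > 0$, hence $u_{ij} \neq 0$. The only subtlety I expect is justifying that the strict sum $\sum_k u_{ik} = 1$ may be multiplied by elements of $\cA$ (such as the scalars $\vphi_e(u_{kj})$ and the element $u_{il'}$) while keeping only finitely many nonzero terms, but this is built into the definition of strict convergence and poses no actual difficulty.
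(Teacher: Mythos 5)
There is a genuine gap, and it sits exactly at the crux of the statement. Your vanishing case ($i \not\approx j$), your diagonal computation $\Theta_e(u_{ee}) = F_0(e,e)$, and the left-invariance manipulation $(\vphi_e(u_{l'j}) - \vphi_e(u_{ij}))\,u_{il'} = 0$ are all correct. But to set $l' = j$ and conclude $\vphi_e(u_{ij}) = \vphi_e(u_{jj})$ you need $u_{ij} \neq 0$, and you justify this by ``$u_{il'} \neq 0$, i.e.\ $l' \approx i$''. At this point in the paper only the implication $u_{ij} \neq 0 \Rightarrow i \approx j$ is known (the remark before Lemma \ref{lem.iso-A-B}); the converse implication $i \approx j \Rightarrow u_{ij} \neq 0$ is precisely the ``in particular'' part of Corollary \ref{cor.about-rel-E} that you are trying to prove. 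If $u_{ij} = 0$, your identity is vacuous and yields no contradiction with the claimed value $\mu_j\mu_e^{-1} > 0$, so the argument is circular: you assume the nontrivial half of the conclusion to derive the formula, and then read the conclusion off the formula. This is the substantive content your shortcut avoids; the paper gets it by actually evaluating $\vphi_j(u_{ij}) = 1$ for $i \approx j$ (via Definition \ref{def.functional-vphi} and the isomorphism $\Theta_j$, or formula \eqref{eq.formula-left-Haar}), and then simply applying Proposition \ref{prop.dependence-base-point} with $f = j$, which handles all $i \approx j$ at once without any invariance trick.

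The gap is fixable within your framework, but it needs an argument rather than the phrase ``allowed since $i \approx j$''. For instance, apply Lemma \ref{lem.iso-A-B} with base vertex $j$ instead of $e$: the inverse formula gives $\Theta_j(u_{ij}) = F_0(i,j)$ for $i \approx j$, and the discussion after Lemma \ref{lem.basis-is-complete} shows that $F_0(i,j) \neq 0$ in $\cB$ whenever $i \approx j$; since $\Theta_j$ is injective, $u_{ij} \neq 0$. With that input your computation closes (and in fact, once you have $\Theta_j(u_{ij}) = F_0(i,j)$ you get $\vphi_j(u_{ij}) = 1$ directly, so the left-invariance detour becomes unnecessary and you land on the paper's own one-line proof via Proposition \ref{prop.dependence-base-point}).
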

\begin{proof}
Applying Proposition \ref{prop.dependence-base-point} to $f = j$, we find that $\vphi_e(u_{ij}) = \mu_j \, \mu_e^{-1} \, \vphi_j(u_{ij})$. When $i \not\approx j$, we have that $u_{ij} = 0$. When $i \approx j$, we have by definition that $\vphi_j(u_{ij}) = 1$ and thus $\vphi_e(u_{ij}) = \mu_j \, \mu_e^{-1}$.
\end{proof}

Note that, in the classical case where $(\cA,\Delta)$ corresponds to the ordinary automorphism group of $\Pi$, Corollary \ref{cor.about-rel-E} is saying that the numbers $\mu_j$ are equal to $\lambda(\Stab j)$, where $\lambda$ is a left Haar measure on $\Aut \Pi$.

By \cite[Proposition 3.10]{VD96}, we know that there exists an element $\delta \in M(\cA)$ such that $\psi_e(a) = \vphi_e(a\delta)$ for all $a \in \cA$. As in Proposition \ref{prop.dependence-base-point}, we can give a concrete formula for this \emph{modular element} of the algebraic quantum group $(\cA,\Delta)$.

We first need the following lemma.

\begin{lemma}\label{lem.iszero}
If $i,j,s,r \in I$ and $\mu_s \mu_i^{-1} \neq \mu_r \mu_j^{-1}$, then $u_{ij} u_{sr} = 0$ in $\cA$.
\end{lemma}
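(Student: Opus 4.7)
The strategy is to pass from $\cA$ to the $*$-algebra $\cB$ via the $*$-iso\-morphism $\Theta_e : \cA \to \cB_e$ from Lemma \ref{lem.iso-A-B}, and then apply Lemma \ref{lem.modular-vphi}. Writing $u_{ij} u_{sr} = U_2((i,s),(j,r))$, we have by definition
\begin{equation*}
\Theta_e\bigl(u_{ij} u_{sr}\bigr) \;=\; \sum_{p,q \in I_a} F_3\bigl((p,i,s,q),\,(e,j,r,e)\bigr) \; ,
\end{equation*}
where $a \in \cE$ denotes the equivalence class of the base point $e$.

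Now I would use the multiplication rule in $\cUB$ (hence in $\cB$) to factor each summand as a product of three length-one factors along the consecutive vertices of the two paths. Specifically, since the junction conditions $i_n = k_0$ and $j_n = l_0$ are satisfied at each splitting, one has
\begin{equation*}
F_3\bigl((p,i,s,q),(e,j,r,e)\bigr) \;=\; F_1\bigl((p,i),(e,j)\bigr) \cdot F_1\bigl((i,s),(j,r)\bigr) \cdot F_1\bigl((s,q),(r,e)\bigr) \; .
\end{equation*}
By Lemma \ref{lem.modular-vphi}, the middle factor satisfies
\begin{equation*}
\rho\bigl(F_1((i,s),(j,r))\bigr) \;=\; \mu_s \mu_i^{-1}\, F_1((i,s),(j,r)) \;=\; \mu_r \mu_j^{-1}\, F_1((i,s),(j,r)) \; ,
\end{equation*}
so the assumption $\mu_s \mu_i^{-1} \neq \mu_r \mu_j^{-1}$ forces $F_1((i,s),(j,r)) = 0$ in $\cB$. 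Hence every summand in the displayed expression for $\Theta_e(u_{ij} u_{sr})$ vanishes, so $\Theta_e(u_{ij} u_{sr}) = 0$, and the injectivity of $\Theta_e$ gives $u_{ij} u_{sr} = 0$ in $\cA$.

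There is no real obstacle here: the only thing to check carefully is the bookkeeping of indices in the factorization of $F_3$, which is purely combinatorial and falls out of the definition of the product in $\cUB$ once one chooses to split the length-four tuples at positions $1$ and $2$.
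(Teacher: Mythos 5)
Your proof is correct and follows essentially the same route as the paper: both pass through the isomorphism $\Theta_e$ of Lemma \ref{lem.iso-A-B}, factor the image so that the middle factor $F_1(is,jr)$ appears, and kill it via Lemma \ref{lem.modular-vphi} (the paper writes the image directly as $\sum_{t,l \in I_a} F_1(ti,ej)\,F_1(is,jr)\,F_1(sl,re)$, which is exactly your factorization of $F_3$). The index bookkeeping you describe is indeed all that is needed, so there is nothing to add.
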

\begin{proof}
Fix a base vertex $e \in I$ and take $a \in \cE$ with $e \in I_a$. By definition,
$$\Theta_e(u_{ij} u_{sr}) = \sum_{t,l \in I_a} F_1(ti,ej) \, F_1(is,jr) \, F_1(sl,re) \; .$$
If $\mu_s \mu_i^{-1} \neq \mu_r \mu_j^{-1}$, it follows from Lemma \ref{lem.modular-vphi} that $F_1(is,jr) = 0$. So, $\Theta_e(u_{ij} u_{sr})=0$, implying that $u_{ij} u_{sr} = 0$.
\end{proof}

\begin{proposition}\label{prop.modular-element}
There exists a unique element $\delta \in M(\cA)$ such that for all $i,j \in I$, we have that $\delta u_{ij} = u_{ij} \delta = \mu_i \mu_j^{-1} u_{ij}$. The element $\delta$ belongs to the center of $M(\cA)$. For all $i,j \in I$, we have that
\begin{equation}\label{eq.formula-delta}
\sum_{k \in I} \frac{\mu_k}{\mu_j} u_{kj} = \delta = \sum_{k \in I} \frac{\mu_i}{\mu_k} u_{ik} \quad\text{strictly.}
\end{equation}
For every $e \in I$ and every $a \in \cA$, we have that $\psi_e(a) = \vphi_e(a \delta)$. So, $\delta$ is the modular element of $(\cA,\Delta)$.
\end{proposition}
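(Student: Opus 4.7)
The plan is to construct $\delta$ as a multiplier via \eqref{eq.formula-delta}, verify its basic algebraic properties, and then identify it with the modular element provided by \cite[Proposition~3.10]{VD96}. For each fixed $j \in I$ and $a \in \cA$, the strict convergence $\sum_k u_{kj} = 1$ from Lemma~\ref{lem.strict-convergence} makes the weighted sums $\sum_k (\mu_k/\mu_j) u_{kj} \cdot a$ and $a \cdot \sum_k (\mu_k/\mu_j) u_{kj}$ finite. Evaluating on a generator $u_{sr}$, Lemma~\ref{lem.iszero} forces all nonzero summands to satisfy $\mu_k/\mu_j = \mu_s/\mu_r$, giving $(\mu_s/\mu_r) u_{sr}$ independently of $j$; the same reasoning handles the second formula in~\eqref{eq.formula-delta}. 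Since $\cA$ is nondegenerate and the $u_{ij}$ generate, all four expressions define the same central multiplier $\delta \in M(\cA)$ satisfying $\delta u_{ij} = u_{ij}\delta = (\mu_i/\mu_j) u_{ij}$, uniquely so. Applying $\Delta$ to the defining sum, using $\Delta(u_{kj}) = \sum_l u_{kl} \ot u_{lj}$ and the telescoping $\mu_k/\mu_j = (\mu_k/\mu_l)(\mu_l/\mu_j)$, gives $\Delta(\delta) = \delta \ot \delta$; in particular, $\delta$ is invertible with $\delta^{-1}$ given by \eqref{eq.formula-delta} after inverting each ratio.

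By \cite[Proposition~3.10]{VD96}, there is a unique $\delta_0 \in M(\cA)$ with $\psi_e = \vphi_e(\cdot\delta_0)$. To identify $\delta_0$ with $\delta$, introduce $\tau(a) := \vphi_e(a\delta) = \vphi_e(\delta a)$ (well-defined by traciality of $\vphi_e$ and centrality of $\delta$) and show that $\tau$ is right invariant. Using the identity $\delta \ot 1 = \Delta(\delta)(1 \ot \delta^{-1})$ (from $\Delta(\delta) = \delta \ot \delta$) together with centrality, right invariance of $\tau$ is equivalent to the auxiliary identity $(\vphi_e \ot \id)\Delta(b) = \vphi_e(b)\, \delta$ for all $b \in \cA$. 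On a generator $b = u_{ij}$, both sides equal $\sum_{k \approx i} (\mu_k/\mu_e) u_{kj}$ when $i \approx j$ and vanish otherwise, directly from Corollary~\ref{cor.about-rel-E} and the explicit form of $\delta$. Extending to all of $\cA$ proceeds by induction on the word length of $b$ as a product of the $u_{ij}$, using multiplicativity of $\Delta$, traciality and left invariance of $\vphi_e$, and the group-like behaviour of $\delta$ to propagate the scalar $\delta$-factor on the right-hand side through successive generators. Once $\tau$ is right invariant, uniqueness of the positive right invariant functional up to scalar (\cite[Theorem~3.7]{VD96}), combined with $\tau(u_{ee}) = 1 = \psi_e(u_{ee})$, forces $\tau = \psi_e$ and hence $\delta_0 = \delta$.

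The main obstacle is the inductive propagation of the auxiliary identity through products of generators: the coproduct mixes generators in a non-trivial way, and closing the induction calls for a careful combinatorial use of the tracial property of $\vphi_e$ together with the group-like behaviour of $\delta$ to recover the $\delta$-factor on the right-hand side at each step.
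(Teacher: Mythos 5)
Your construction of $\delta$ and the verification of its algebraic properties are fine and essentially identical to the paper's: strict convergence of $\sum_k (\mu_k/\mu_j)u_{kj}$, Lemma \ref{lem.iszero} to see that left/right multiplication by this sum acts on each generator $u_{sr}$ as the scalar $\mu_s\mu_r^{-1}$, hence centrality, uniqueness, and \eqref{eq.formula-delta}. The reduction of the final statement to the auxiliary identity $(\vphi_e \ot \id)\Delta(b) = \vphi_e(b)\,\delta$ is also a legitimate (and standard) manoeuvre, and your verification of that identity on a single generator $b = u_{ij}$ via Corollary \ref{cor.about-rel-E} is correct.

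The genuine gap is that the auxiliary identity is never actually proved on all of $\cA$, and the induction you sketch does not close as stated. The identity is linear but not multiplicative in $b$: writing $\Delta(bu_{kl})(1 \ot c) = \sum_m \Delta(b)(u_{km} \ot u_{ml}c)$, the inductive hypothesis only controls $(\vphi_e \ot \id)(\Delta(b)(1 \ot x))$, whereas you now need $(\vphi_e \ot \id)(\Delta(b)(u_{km} \ot x))$ for all $m$; traciality of $\vphi_e$ and group-likeness of $\delta$ do not by themselves supply this (one would need a ``strong invariance''-type statement, which in Van Daele's framework is usually \emph{derived from} the existence of the modular element -- precisely what is being proved here, so a soft formal argument risks circularity). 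You flag this yourself as the main obstacle, but flagging it does not discharge it; the actual content has to come from the explicit construction of $\vphi_e$, e.g.\ via \eqref{eq.formula-left-Haar} or the categorical picture. This is exactly where the paper takes a different and complete route: using $\psi_e = \vphi_e \circ S$, the isomorphism $\Theta_e$ with the corner of $\cB$, the $*$-anti-automorphism $\kappa$ of $\cB$ with $\vphi \circ \kappa = \vphi$, and the base-point formula $\vphi_e = \mu_f\mu_e^{-1}\vphi_f$ of Proposition \ref{prop.dependence-base-point}, it computes $\psi_e$ directly on the spanning elements $U_n(i,j)u_{er}$ and identifies it with $\vphi_e(\,\cdot\,\delta)$ in a few lines. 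To repair your argument you would either have to carry out an analogous explicit computation establishing $(\vphi_e \ot \id)\Delta(b) = \vphi_e(b)\delta$ on a spanning set such as $u_{se}U_n(i,j)u_{te}$ (using \eqref{eq.formula-left-Haar}, much as in the proof of Theorem \ref{thm.left-Haar}), or simply invoke the paper's $\kappa$-based computation; as written, the crucial step is missing.
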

\begin{proof}
Fix $e \in I$. Since for every $a \in \cA$, there are only finitely many $s \in I$ such that $a u_{se} \neq 0$ or $u_{se} a \neq 0$, we have a well-defined element $\delta \in M(\cA)$ such that $\sum_{s \in I} \mu_s \mu_e^{-1} u_{se} = \delta$ strictly.

Let $i,j \in I$ be arbitrary. By definition,
$$\delta \, u_{ij} = \sum_{s \in I} \frac{\mu_s}{\mu_e} u_{se} u_{ij} \; .$$
By Lemma \ref{lem.iszero}, if $u_{se} u_{ij} \neq 0$, we have $\mu_i \mu_s^{-1} = \mu_j \mu_e^{-1}$ and thus $\mu_s \mu_e^{-1} = \mu_i \mu_j^{-1}$. It follows that
$$\delta \, u_{ij} = \sum_{s \in I} \frac{\mu_i}{\mu_j} u_{se} u_{ij} = \frac{\mu_i}{\mu_j} u_{ij} \; .$$
We similarly find that $u_{ij} \, \delta = \mu_i \mu_j^{-1} u_{ij}$.

Since $\delta u_{ij} = u_{ij} \delta$ for all $i,j \in I$, we have that $\delta \, a = a \, \delta$ for all $a \in \cA$. It then follows that $\delta$ belongs to the center of $M(\cA)$.

Summing the equalities $\delta \, u_{kj} = \mu_k \mu_j^{-1} u_{kj}$ and $\delta \, u_{ik} = \mu_i \mu_k^{-1} u_{ik}$ over $k \in I$, we find \eqref{eq.formula-delta}.

To prove that $\psi_e(a) = \vphi_e(a \delta)$ for all $e \in I$, $a \in \cA$, we first note that in the same way as we defined the antipode $S : \cA \to \cA$, there exists a unique $*$-anti-automorphism $\kappa : \cB \to \cB$ satisfying $\kappa(F_n(i,j)) = F_n(\jbar,\ibar)$ for all $i,j \in I^{n+1}$, $n \geq 0$. Defining the anti-unitary operators $J_n$ on $\ell^2(I^{n+1})$ by $(J_n \xi)(i) = \overline{\xi(\ibar)}$, we have
$$\kappa(F_n(\xi,\eta)) = F_n(J_n \eta, J_n \xi) \quad\text{for all $\xi,\eta \in \cF(I^{n+1})$.}$$
Note that the map $V \mapsto J_n V J_0$ transforms any isometric basis of finite type $\ibf_{a-a}(n,\eps_a)$ into another isometric basis of finite type for $\Mor_{a-a}(n,0)$. We thus get by definition that $\vphi \circ \kappa = \vphi$.

Fix $e \in I$ and take $a \in \cE$ with $e \in I_a$. Since $\psi_e = \vphi_e \circ S$ and $\vphi_e = \vphi \circ \Theta_e$, we can make the following computation, using that $\vphi = \vphi \circ \kappa$ and using Proposition \ref{prop.dependence-base-point}. Let $n \geq 1$, $i,j \in I^n$ and $r \in I_a$ be arbitrary.
\begin{align*}
\psi_e(U_n(i,j) u_{er}) &= \vphi_e(u_{re} U_n(\jbar,\ibar)) = \vphi(F_{n+1}(r \jbar r, e \ibar e)) = \vphi(\kappa(F_{n+1}(r \jbar r, e \ibar e))) \\ &= \vphi(F_{n+1}(e i e, rjr))
= \vphi_r(U_n(i,j) u_{er}) = \mu_e \mu_r^{-1} \vphi_e(U_n(i,j)  u_{er}) \\ &= \vphi_e(U_n(i,j) \mu_e \mu_r^{-1} u_{er}) =\vphi_e(U_n(i,j) u_{er} \delta) \; .
\end{align*}
Since $i,j \in I^n$ and $r \in I_a$ are arbitrary, we conclude that $\psi_e(a) = \vphi_e(a \delta)$ for all $a \in \cA$.
\end{proof}

Quite naturally, we have a coaction of $(\cA,\Delta)$ on $\cF(I)$, the $*$-algebra of finitely supported functions on $I$. Note that the multiplier algebra $M(\cF(I))$ is the $*$-algebra of all functions from $I$ to $\C$.

\begin{proposition}\label{prop.coaction-on-I}
There is a unique nondegenerate $*$-homomorphism
$$\al : \cF(I) \to M(\cA \ot \cF(I)) : \al(p_i) = \sum_{j \in I} (u_{ij} \ot p_j) \quad\text{strictly, for all $i \in I$.}$$
We have that $\al$ is a coaction, meaning that $(\Delta \ot \id) \circ \al = (\id \ot \al) \circ \al$. A function $F \in M(\cF(I))$ satisfies $\al(F) = 1 \ot F$ if and only if $F$ is constant on each of the sets $I_a \subset I$, $a \in \cE$.
\end{proposition}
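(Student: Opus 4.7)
The plan is to build $\al$ in three steps corresponding to the three assertions, and then to characterize the fixed points by a short magic-unitary computation.

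\textbf{Existence of $\al$.} First I would check that for each $i\in I$ the formal sum $\sum_j u_{ij}\ot p_j$ defines an element of $M(\cA\ot\cF(I))$: multiplying on the right by $a\ot p_k$ collapses it to the single term $u_{ik}a\ot p_k\in\cA\ot\cF(I)$ (and symmetrically on the left), so there is no convergence issue. The defining relations of $\cF(I)$ then translate into three facts about the magic unitary: $\al(p_i)^*=\al(p_i)$ from $u_{ij}^*=u_{ij}$; $\al(p_i)^2=\al(p_i)$ from $u_{ij}^2=u_{ij}$ together with column orthogonality $u_{ij}u_{kj}=0$ for $i\neq k$; and $\al(p_i)\al(p_k)=0$ for $i\neq k$ from the same column orthogonality. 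Extending linearly gives a $*$-homomorphism $\al:\cF(I)\to M(\cA\ot\cF(I))$. Nondegeneracy is exactly the strict identity $\sum_i u_{ij}=1$: indeed $\sum_i\al(p_i)(a\ot p_j)=(\sum_i u_{ij})a\ot p_j=a\ot p_j$, a finite sum, so the span of $\al(\cF(I))(\cA\ot\cF(I))$ fills $\cA\ot\cF(I)$. The unique extension $\al:M(\cF(I))\to M(\cA\ot\cF(I))$ then follows automatically.

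\textbf{Coaction identity.} I would verify the identity on the generators $p_i$. Using the formula for $\Delta$ from Proposition~\ref{prop.A-mult-hopf}, I get $(\Delta\ot\id)\al(p_i)=\sum_{j,k} u_{ik}\ot u_{kj}\ot p_j$ and $(\id\ot\al)\al(p_i)=\sum_{j,k} u_{ij}\ot u_{jk}\ot p_k$, which agree after relabelling. Both equalities are strict identities in $M(\cA\ot\cA\ot\cF(I))$, and finiteness of sums upon pairing with fixed elements is controlled by Lemma~\ref{lem.strict-convergence}.

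\textbf{Fixed points.} For $F\in M(\cF(I))$ the multiplicativity of the extended $\al$ gives $\al(F)\al(p_l)=\al(Fp_l)=F(l)\al(p_l)$. Applying both sides to $a\ot p_l$ with $a\in\cA$ and using nondegeneracy, the identity $\al(F)=1\ot F$ becomes
\[ \sum_{k\in I} F(k)\,u_{kl}=F(l)\cdot 1 \quad\text{strictly in $M(\cA)$, for every $l\in I$,} \]
where the sum is strictly convergent because $\sum_k u_{kl}=1$. If $F$ is constant on each class $I_a$, this is immediate from Corollary~\ref{cor.about-rel-E} ($u_{kl}=0$ unless $k\approx l$) together with $\sum_k u_{kl}=1$. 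The converse is the one step that uses the magic-unitary structure nontrivially, but it is still short: fix $l\in I$ and any $l_0\approx l$ and left-multiply the identity by $u_{l_0l}$. Using idempotency $u_{l_0l}^2=u_{l_0l}$ and column orthogonality $u_{l_0l}u_{kl}=0$ for $k\neq l_0$, only the term $k=l_0$ survives, giving $F(l_0)u_{l_0l}=F(l)u_{l_0l}$. Since $u_{l_0l}\neq 0$ by Corollary~\ref{cor.about-rel-E}, we conclude $F(l_0)=F(l)$. The only care needed throughout is careful bookkeeping of strict sums; no real obstacle arises.
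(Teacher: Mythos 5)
Your proof is correct and follows essentially the same route as the paper: the construction of $\al$ and the coaction identity are routine strict-sum verifications (which the paper dismisses as immediate), and your fixed-point argument — reducing $\al(F)=1\ot F$ to $\sum_k F(k)u_{kl}=F(l)1$ and isolating one term by multiplying with $u_{l_0l}$ — is the same magic-unitary computation the paper performs by testing $\al(F)$ against $u_{ij}\ot p_j$, with Corollary \ref{cor.about-rel-E} supplying $u_{ij}\neq 0$ iff $i\approx j$ in both cases.
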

\begin{proof}
It is immediate to define the coaction $\al$. For every $F \in M(\cF(I))$ and $i,j \in I$, we have that $\al(F) (u_{ij} \ot p_j) = F(i) (u_{ij} \ot p_j)$. Therefore, $\al(F) = 1 \ot F$ if and only if for all $i,j$ with $u_{ij} \neq 0$, we have that $F(i) = F(j)$. By Corollary \ref{cor.about-rel-E}, this holds if and only if $F(i) = F(j)$ for all $i \approx j$.
\end{proof}

Corollary \ref{cor.about-rel-E} and Proposition \ref{prop.coaction-on-I} both say in a way that the equivalence classes $(I_a)_{a \in \cE}$ can be viewed as the quantum orbits of the quantum automorphism group $(\cA,\Delta)$ of $\Pi$ (w.r.t.\ the graph category $\cD$). The following result is thus quite natural and straightforward. We include it for later reference since in Examples \ref{ex.compact-noncompact} and \ref{ex.compact-nonunimodular}, we will provide graphs $\Pi$ such that $\Aut \Pi$ is compact, but $\QAut \Pi$ is noncompact. We use here as definition of compactness that the reduced C$^*$-algebra is unital.

\begin{proposition}\label{prop.criterion-compact}
The following statements are equivalent.
\begin{enumlist}
\item The locally compact quantum group associated with $(\cA,\Delta)$ is compact.
\item For every $a \in \cE$, the quantum orbit $I_a$ is finite.
\item There exists an $a \in \cE$ such that the quantum orbit $I_a$ is finite.
\item The $*$-algebra $\cA$ is unital.
\end{enumlist}
\end{proposition}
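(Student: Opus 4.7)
The plan is to split the three-way equivalence into $(2) \Leftrightarrow (3)$, which is the combinatorial heart, and $(3) \Leftrightarrow (1)$, which is the standard correspondence between unital algebraic quantum groups and compact quantum groups. The first equivalence is essentially a direct application of Corollary \ref{cor.about-rel-E} together with the strict-convergence relations in $\cA$.

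For $(3) \Rightarrow (2)$: if $\cA$ has a unit, then the strict convergence $\sum_{k \in I} u_{ik} = 1$ from Theorem \ref{thm.lc-qaut}.1 can be evaluated with the algebra element $b = 1 \in \cA$, which forces $u_{ik} = u_{ik} \cdot 1 = 0$ for all but finitely many $k$. By Corollary \ref{cor.about-rel-E}, the set of $k$ with $u_{ik} \neq 0$ is precisely the equivalence class of $i$, so that class is finite. For $(2) \Rightarrow (3)$: when each $I_a$ is finite and $i \in I_a$, Corollary \ref{cor.about-rel-E} already implies that $\sum_{k \in I} u_{ik}$ has only the finitely many possibly nonzero terms with $k \in I_a$, so $e := \sum_{k \in I_a} u_{ik}$ is a bona fide element of $\cA$. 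The strict-convergence property evaluated against any $b \in \cA$ then collapses to $eb = b = be$, exhibiting $e$ as a two-sided unit for $\cA$.

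For $(3) \Rightarrow (1)$: if $\cA$ is unital, then $(\cA,\Delta)$ is an honest (non-multiplier) Hopf $*$-algebra, and the positive faithful tracial functional $\vphi_e$ of Definition \ref{def.left-Haar} takes the finite value $\vphi_e(1) = \mu_e^{-1} \sum_{k \in I_a} \mu_k$, normalizing to a left invariant state. This is a CQG algebra in the sense of Dijkhuizen-Koornwinder, and the Woronowicz construction produces a compact quantum group whose reduced C$^*$-algebra is unital. For the converse $(1) \Rightarrow (3)$: if $C_r(\bG)$ is unital, the Peter-Weyl theory furnishes a canonical dense unital Hopf $*$-subalgebra $\cO(\bG) \subset C_r(\bG)$ spanned by matrix coefficients of finite-dimensional unitary corepresentations. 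By the uniqueness of the algebraic quantum group structure underlying a locally compact quantum group (as developed in \cite{VD96,KVD96}), $\cA$ must coincide with $\cO(\bG)$ and is therefore unital.

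The main obstacle is the $(1) \Rightarrow (3)$ direction: one needs to promote unitality of the analytic completion $C_r(\bG)$ back to unitality of the purely algebraic $\cA$ defined by the generators and relations of Theorem \ref{thm.lc-qaut}, which requires identifying $\cA$ with the Peter-Weyl algebra of $\bG$. All remaining implications are immediate consequences of Corollary \ref{cor.about-rel-E} and the strict-convergence axioms.
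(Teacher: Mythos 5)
Your steps $(2)\Leftrightarrow(3)$ and $(3)\Rightarrow(1)$ are correct: the use of Corollary \ref{cor.about-rel-E} together with the strict convergence of $\sum_k u_{ik}$ is exactly the right mechanism (and for $(3)\Rightarrow(1)$ you do not even need CQG-algebra theory: if $1\in\cA$, then $\pi(1)$ is already a unit for the norm closure of $\pi(\cA)$ in the GNS representation of $\vphi_e$, which is the paper's definition of compactness).

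The genuine gap is $(1)\Rightarrow(3)$, which you yourself identify as the main obstacle and then dispose of by invoking ``uniqueness of the algebraic quantum group structure underlying a locally compact quantum group (as developed in \cite{VD96,KVD96})''. No such uniqueness theorem is proved in those references, and the statement you need is not available in that form. What is known is that a compact quantum group has a unique \emph{dense Hopf $*$-subalgebra}, but to apply that to $\pi(\cA)\subset C_{0,r}(\bG)$ you must first know that $\Delta$ maps $\cA$ into $\cA\otalg\cA$ (rather than merely into $M(\cA\ot\cA)$), which for a multiplier Hopf $*$-algebra is essentially the unitality you are trying to prove; as written the argument is circular, or at best rests on an unproved claim that a non-unital algebraic quantum group cannot have a unital reduced completion. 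The fix is elementary and bypasses Peter--Weyl entirely: prove $(1)\Rightarrow(2)$ directly. The GNS representation $\pi$ of $\vphi_e$ is faithful (by faithfulness of $\vphi_e$ and nondegeneracy of $\cA$), and for fixed $j$ the projections $(\pi(u_{ij}))_{i\in I}$ are mutually orthogonal and sum strictly to $1$; if the norm closure $A$ is unital, the increasing net of partial sums converges in norm to $1$, so only finitely many of these projections are nonzero, and by faithfulness of $\pi$ and Corollary \ref{cor.about-rel-E} the quantum orbit of $j$ is finite. Combined with your $(2)\Rightarrow(3)\Rightarrow(1)$, this closes the cycle; this is precisely the route taken in the paper.
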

\begin{proof}
Fix a vertex $e \in I$ and let $H = L^2(\cA,\vphi_e)$ be the GNS Hilbert space of the left invariant functional $\vphi_e$ with faithful nondegenerate GNS-representation $\pi : \cA \to B(H)$. Denote by $A$ the norm closure of $\pi(\cA)$.

1 $\Rightarrow$ 2. By assumption, $A$ is unital. Fix $j \in I$. Since $(\pi(u_{ij}))_{i \in I}$ is a family of orthogonal projections in $A$ summing up strictly to $1$, there are only finitely many nonzero projections in this family. Since $\pi$ is faithful, this precisely says that the quantum orbit of $j$ is finite.

2 $\Rightarrow$ 3 is trivial and 3 $\Rightarrow$ 4 follows because for $j \in I_a$, $\sum_{i \in I_a} u_{ij}$ is the identity of $\cA$.

4 $\Rightarrow$ 1. When $\cA$ is unital, certainly $A$ is unital.
\end{proof}

The following result provides a sufficient condition for the unimodularity of $(\cA,\Delta)$. In Example \ref{ex.compact-nonunimodular}, we will see that the transitivity assumption is essential. In Example \ref{ex.nonunimodular-unimodular}, we will see that the converse of Proposition \ref{prop.unimodular} does not hold: there are quantum vertex transitive graphs $\Pi$ such that $\QAut \Pi$ is unimodular, but $\Aut \Pi$ is not unimodular.

\begin{proposition}\label{prop.unimodular}
If the classical automorphism group $\Aut \Pi$ is unimodular and acts transitively on $I$, then also $(\cA,\Delta)$ is unimodular.
\end{proposition}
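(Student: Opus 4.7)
The plan is to show that the modular element $\delta \in M(\cA)$ of Proposition \ref{prop.modular-element} equals $1$. Since $\delta u_{ij} = \mu_i \mu_j^{-1} u_{ij}$, unimodularity of $(\cA,\Delta)$ is equivalent to the function $\mu : I \to (0,+\infty)$ of Lemma \ref{def.formula-for-rho} being constant. By the characterization in that lemma, this in turn is equivalent to $\rho(1_W) = 1$ for every minimal projection $1_W \in \Mor_{a-b}(1,1)$.

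The transitivity hypothesis simplifies the picture considerably. Every $T \in \Mor(0,0)$ is $\Aut\Pi$-invariant as a function on $I$, so under transitivity it is constant. Hence the relation $\approx$ of \eqref{eq.equiv-rel-on-I} is trivial, $\cE$ is a singleton, and there is only one morphism space $\Mor(1,1) \subset \ell^\infty(I \times I)^{\Aut\Pi}$ to consider. The atoms of the ambient algebra $\ell^\infty(I \times I)^{\Aut\Pi}$ are the indicator functions $1_{W_k}$ of $\Aut\Pi$-orbits $W_k \subset I \times I$; since a minimal projection $1_W \in \Mor(1,1)$ has finite $d_\ell$ by Lemma \ref{lem.crucial-technical-Mor-lemma}.1, it must be a finite disjoint union $W = \bigsqcup_k W_k$ of such orbits.

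Fix a single $G$-orbit $W_k = G \cdot (i_0,j_0)$ with $G = \Aut\Pi$ and write $G_x$ for the compact open stabilizer of $x \in I$. The formulas of Lemma \ref{lem.crucial-technical-Mor-lemma}.8 read
\begin{align*}
d_\ell(1_{W_k}) &= \#\{j \in I : (i_0,j) \in W_k\} = [G_{i_0} : G_{i_0} \cap G_{j_0}] \, ,\\
d_r(1_{W_k}) &= \#\{i \in I : (i,j_0) \in W_k\} = [G_{j_0} : G_{i_0} \cap G_{j_0}] \, .
\end{align*}
By transitivity, $G_{j_0} = g G_{i_0} g^{-1}$ for some $g \in G$. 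Unimodularity of $G$ means that inner automorphisms preserve Haar measure, so $G_{i_0}$ and $G_{j_0}$ have equal Haar measure. Dividing by the Haar measure of $G_{i_0} \cap G_{j_0}$ yields $[G_{i_0} : G_{i_0} \cap G_{j_0}] = [G_{j_0} : G_{i_0} \cap G_{j_0}]$, hence $d_\ell(1_{W_k}) = d_r(1_{W_k})$.

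Summing over the finitely many $k$ gives $d_\ell(1_W) = d_r(1_W)$, so $\rho(1_W) = 1$ for every minimal projection $1_W \in \Mor(1,1)$. Lemma \ref{def.formula-for-rho} then forces $\mu$ to be constant on $I$, and Proposition \ref{prop.modular-element} yields $\delta = 1$, which is the unimodularity of $(\cA,\Delta)$. The only step that genuinely uses the hypotheses, rather than just unpacking definitions, is the passage from unimodularity of $\Aut\Pi$ to the numerical equality of the two indices above; this is a standard feature of Haar measure on unimodular locally compact groups and I do not foresee any real obstacle.
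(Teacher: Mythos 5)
Your proof is correct and follows essentially the same route as the paper: reduce unimodularity to showing $d_\ell(1_W)=d_r(1_W)$ for every minimal projection $1_W \in \Mor(1,1)$, then conclude via Lemma \ref{def.formula-for-rho} that $\mu$ is constant and hence $\delta=1$. The only difference is that the paper obtains the equality of row and column counts of the $\Aut\Pi$-invariant set $W$ by citing the mass transport principle \cite[Corollary 8.8]{LP16}, whereas you reprove exactly that instance by decomposing $W$ into orbits and comparing the Haar measures of the conjugate compact open stabilizers, which is the standard proof of that principle.
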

\begin{proof}
Since $\Pi$ is vertex transitive, the set $\cE$ is a singleton. Take $W \in I \times I$ such that $1_W$ is a minimal projection in $\Mor(1,1)$. Since all elements of $\Mor(n,n)$ commute with the diagonal $\Aut \Pi$ action, it follows from the mass transport principle for unimodular graphs (see e.g.\ \cite[Corollary 8.8]{LP16}) that for all $i,j \in I$,
$$\sum_{k \in I} 1_W(i,k) = \sum_{k \in I} 1_W(k,j) \; .$$
This means that $\dim_\ell(1_W) = \dim_r(1_W)$, so that $\rho(1_W) = 1$. By Lemma \ref{def.formula-for-rho}, the function $\mu$ must be constant. So, $\delta = 1$ and $(\cA,\Delta)$ is unimodular.
\end{proof}

\subsection{Proof of Theorem \ref{thm.lc-qaut}}

\begin{proof}[{Proof of Theorem \ref{thm.lc-qaut}}]
We apply Theorem \ref{thm.left-Haar} to $\cD = \cP$. We have thus found an algebraic quantum group $(\cA,\Delta)$ generated by elements $(u_{ij})_{i,j \in I}$ satisfying all the properties listed in Theorem \ref{thm.lc-qaut}.

It only remains to prove the universal property. So assume that $\cA_1$ is a $*$-algebra generated by elements $(v_{ij})_{i,j \in I}$ satisfying the relations in Theorem \ref{thm.lc-qaut}. We have to prove the existence of a $*$-homomorphism $\Phi : \cA \to \cA_1$ satisfying $\Phi(u_{ij}) = v_{ij}$ for all $i,j \in I$.

Write $V_n(i,j) = v_{i_1 j_1} \cdots v_{i_n j_n}$ for all $n \geq 1$ and $i,j \in I^n$. We view $V_n$ as an $I^n \times I^n$ matrix with entries in $\cA_1$. It suffices to prove that $V_n T^\cK = T^\cK V_m$ for all $\cK \in \cP_1(n,m)$ and $n,m \geq 1$. Denote
$$\cP_0(n,m) = \{\cK \in \cP_1(n,m) \mid V_n T^\cK = T^\cK V_m \} \; .$$
Note that $\cP_0$ is closed under tensor products and compositions of bi-labeled graphs. By the properties of the generators $v_{ij}$, we have that $\cM^{1,2}$ and $\cM^{2,1}$ belong to $\cP_0$. Obviously, the identity $\cM^{1,1}$ belongs to $\cP_0$.

Multiplying the equality $\sum_{k : k \sim j} v_{ik} = \sum_{k : k \sim i} v_{kj}$ on the left by $v_{is}$ and on the right by $v_{tj}$, we find that
\begin{equation}\label{eq.reformulate-with-rel}
\rel(s,j) v_{is} v_{tj} = \rel(i,t) v_{is} v_{tj} \quad\text{where $\rel(k,l) = 1$ if $k \sim l$ and $\rel(k,l) = 0$ if $k \not\sim l$.}
\end{equation}
Define the interval graph $I_k$ for $k \geq 2$ with vertex set $\{1,\ldots,k\}$ and $i \sim j$ iff $|i-j| = 1$. Then \eqref{eq.reformulate-with-rel} is saying that $\cJ_2 := (I_2,(1,2),(1,2))$ belongs to $\cP_0(2,2)$. Define for $n \geq 2$, $\cJ_n = (I_n,(1,\ldots,n),(1,\ldots,n))$. Then,
$$\cJ_{n+1} = (1^{\ot (n-1)} \ot \cM^{1,2} \ot 1) \circ (\cJ_n \ot \cJ_2) \circ (1^{\ot (n-1)} \ot \cM^{2,1} \ot 1)$$
for all $n \geq 2$. By induction, we conclude that $\cJ_n \in \cP_0(n,n)$ for all $n \geq 2$.

Whenever $1 \leq n,m \leq k$, $1 \leq x_1 < x_2 < \cdots < x_n \leq k$ and $1 \leq y_1 < y_2 < \cdots < y_m \leq k$, we obtain the planar bi-labeled graph $(I_k,x,y)$. We denote by $\cI$ the set of all these bi-labeled graphs $(I_k,x,y)$, with $k$ and $x,y$ arbitrary as described. When $(I_k,x,y) \in \cI$ is arbitrary, we first write $(V_k T^{\cJ_k})_{ij} = (T^{\cJ_k} V_k)_{ij}$ for all $i,j \in I^k$, which holds because $\cJ_k \in \cP_0(k,k)$. If we now sum over all the indices $i_s \not\in x$ and $j_t \not\in y$ and use that $\sum_{i \in I} v_{ij} = 1$ strictly and $\sum_{j \in I} v_{ij} = 1$ strictly, we conclude that $(I_k,x,y) \in \cP_0$. So, $\cI \subset \cP_0$.

As in \cite[Theorem 6.7]{MR19}, one proves that $\cP_1$ is the smallest set of bi-labeled graphs that contains $\cI \cup \{\cM^{1,2},\cM^{2,1},\cM^{1,1}\}$ and that is closed under composition and tensor products. Thus, $\cP_0 = \cP_1$.
\end{proof}

Following Definition \ref{def.qAut}, we denote by $\bG = \QAut \Pi$ the locally compact quantum group arising from Theorem \ref{thm.lc-qaut}. Using \eqref{eq.canonical-closed-quantum-subgroup}, we view the classical automorphism group $\Aut \Pi$ as a closed quantum subgroup of $\QAut \Pi$. We say that $\Pi$ \emph{has quantum symmetry} if $\QAut \Pi \neq \Aut \Pi$.

\begin{remark}\label{rem.closed-quantum-subgroup}
The $*$-homomorphism $\pi : \cA \to \cO(G)$ in \eqref{eq.canonical-closed-quantum-subgroup} is surjective and is a morphism of multiplier Hopf $*$-algebras. In general, a surjective morphism of multiplier Hopf $*$-algebras $\pi : \cA \to \cA_1$ always identifies the corresponding locally compact quantum group $\bG_1$ as a closed quantum subgroup of $\bG$ in the strictest sense of the word (see \cite[Definition 2.5]{Vae04}). For this, it suffices to note that we may view $\cA$, resp.\ $\cA_1$, as a dense subspace of the predual $L(\bG)_*$, resp.\ $L(\bG_1)_*$, and that the faithful normal $*$-homomorphism $L(\bG_1) \to L(\bG)$ is therefore well-defined because $\pi(\cA) = \cA_1$.
\end{remark}

The $*$-algebra $\cA$ in Theorem \ref{thm.lc-qaut} has a universal C$^*$-algebra completion $C_{0,f}(\bG)$ and a reduced C$^*$-algebra completion $C_{0,r}(\bG)$, where the latter is given by the GNS representation w.r.t.\ the Haar functionals. We have a canonical surjective $*$-homomorphism $C_{0,f}(\bG) \to C_{0,r}(\bG)$. Of course, if $\Pi$ has no quantum symmetry, we have that $\bG = \Aut \Pi$ and both the full and reduced C$^*$-algebra equal $C_0(\Aut \Pi)$. Recall that $\bG$ is said to be \emph{co-amenable} if the above $*$-homomorphism $C_{0,f}(\bG) \to C_{0,r}(\bG)$ is faithful. Recall that the dual $\bGhat$ is said to be \emph{amenable} if $L^\infty(\bGhat) = L(\bG)$ admits an invariant mean. Co-amenability of $\bG$ implies amenability of $\bGhat$. It is an open problem whether the converse implication holds in general, but it does hold for compact quantum groups by \cite[Theorem 3.8]{Tom03}.

By \cite[Theorem 3.2]{Cra16}, amenability passes to closed quantum subgroups. So, the following result is an immediate consequence of \cite[Theorem 2.2]{Sch19b}. In particular for $d \geq 3$, the $d$-regular tree $\Pi$ has a lot of quantum symmetry, in the sense that $\QAut \Pi$ is not even co-amenable. Of course, $\QAut \Pi$ is also not amenable, because already $\Aut \Pi$ is nonamenable.

\begin{proposition}
Let $\Pi$ be a connected locally finite graph with vertex set $I$. Assume that $I = I_1 \sqcup I_2$ is a partition into two nonempty subsets. For all $k \in \{1,2\}$, write $G_k = \{\si \in \Aut \Pi \mid \si(i) = i \;\;\text{for all $i \in I_k$}\}$.
\begin{enumlist}
\item If both $G_1$ and $G_2$ are nontrivial groups, then $\Pi$ has quantum symmetry.
\item If $G_1$ and $G_2$ are both nontrivial groups and if at least one of them has at least three elements, then $\QAut \Pi$ is not co-amenable and the dual of $\QAut \Pi$ is not amenable.
\end{enumlist}
\end{proposition}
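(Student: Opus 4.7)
The plan is to adapt the construction of \cite[Theorem 2.2]{Sch19b} to the locally finite setting, realising Wang's free product $\bH$ of the compact groups $G_1, G_2$ as a closed quantum subgroup of $\QAut \Pi$, and then invoking \cite[Theorem 3.2]{Cra16} for Part 2.

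For $k=1,2$, the action $G_k \actson I$ yields a magic unitary $v^{(k)} = (v^{(k)}_{ij})_{i,j \in I}$ with entries $v^{(k)}_{ij} = \mathbf{1}_{\{\si \in G_k \mid \si(j)=i\}} \in \cO(G_k)$ intertwining the adjacency matrix of $\Pi$. Since $G_k$ fixes $I_k$ pointwise, $v^{(k)}_{ij} = \delta_{ij}$ on $I_k \times I_k$ and vanishes on the ``wrong'' blocks. Inside the free product Hopf $*$-algebra $\cO(G_1) \ast \cO(G_2) = \cO(\bH)$, I define
\[
v_{ij} = \begin{cases} v^{(2)}_{ij} &\text{if } i \in I_1, \\ v^{(1)}_{ij} &\text{if } i \in I_2. \end{cases}
\]
A direct computation shows that $v$ satisfies all defining relations of $\cA$ in Theorem \ref{thm.lc-qaut}.1: the magic unitary relations and the strict row/column sums to $1$ reduce to the corresponding properties of each $v^{(k)}$ together with the vanishing of the wrong-block entries, and the adjacency intertwining is inherited from each $v^{(k)}$ in the same-block cases. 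The nontrivial case is the mixed block $i \in I_1, j \in I_2$, where both $\sum_k v_{ik} A_{kj}$ and $\sum_k A_{ik} v_{kj}$ collapse to the scalar $A_{ij} \cdot 1$ by a mass-transport computation exploiting that $G_1, G_2$ preserve the partition and act by graph automorphisms. By the universal property of $\cA$ this assignment extends to a surjective morphism of multiplier Hopf $*$-algebras $\pi : \cA \to \cO(\bH)$, which by Remark \ref{rem.closed-quantum-subgroup} realises $\bH$ as a closed quantum subgroup of $\QAut \Pi$.

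For Part 1, whenever both $G_1, G_2$ are nontrivial, the free product algebra $\cO(G_1) \ast \cO(G_2)$ is noncommutative, so $\cA$ is noncommutative and $\QAut \Pi \neq \Aut \Pi$. For Part 2, when additionally $|G_k| \geq 3$ for some $k$, Wang's dual free product $\widehat{\bH}$ of the discrete quantum groups $\widehat{G_k}$ is non-amenable: this is standard under the size hypothesis, witnessed (via abelianisation or a suitable classical quotient) by the fact that the classical free product $G_1 \ast G_2$ contains a nonabelian free subgroup. By \cite[Theorem 3.2]{Cra16}, amenability of $\widehat{\QAut \Pi}$ would therefore transfer to its closed quantum subgroup $\widehat{\bH}$, a contradiction; hence $\widehat{\QAut \Pi}$ is not amenable. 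Since co-amenability of a locally compact quantum group implies amenability of its dual, $\QAut \Pi$ is also not co-amenable. The main obstacle in this plan is the mixed-block verification of the adjacency intertwining and the strict-convergence checks in the locally finite setting, both of which are automatic for finite graphs but require the mass-transport computation indicated above.
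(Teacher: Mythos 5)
Your block-diagonal magic unitary with entries in $\cO(G_1) \ast \cO(G_2)$ does work, and the two points you flag as the main obstacles are in fact harmless: in the mixed block $i \in I_1$, $j \in I_2$ the sum $\sum_k v_{ik} A_{kj} = \sum_{k \in I_1} 1_{\{\si \in G_2 \,:\, \si(k) = i\}} A_{kj}$ telescopes to the single term $A_{\si^{-1}(i)\,j} = A_{ij}$ because every $\si \in G_2$ fixes $j \in I_2$, and symmetrically on the other side; no mass-transport principle is needed. Strictness of the row and column sums holds simply because the orbits of the compact groups $G_1, G_2$ on $I$ are finite, so each row and column of $v$ has only finitely many nonzero entries. (For surjectivity of $\pi$ onto all of $\cO(G_1)\ast\cO(G_2)$ you should also note that the functions $1_{\{\si \in G_2 \,:\, \si(j)=i\}}$, $i,j \in I_1$, generate $\cO(G_2)$, the relevant clopen sets forming a basis of the topology of the profinite group $G_2$; for part 1 noncommutativity of the image already suffices.) Note that your route differs from the paper's, which (following Schmidt) uses only single automorphisms $\si_k \in G_k$ and a surjection onto a group algebra $\C[\Z_n \ast \Z_m]$; using the full groups $G_k$ avoids any issue with elements of infinite order in profinite groups. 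Part 1 is therefore correct.

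The genuine gap is in part 2, in the non-amenability of $\widehat{\bH} = \widehat{G_1} \ast \widehat{G_2}$. The discrete dual of the compact quantum group with $\cO(\bH)=\cO(G_1)\ast\cO(G_2)$ is the free product of the \emph{duals} $\widehat{G_k}$, and the classical free product $G_1 \ast G_2$ is neither a subgroup nor a quotient of it; nor does abelianisation help in general. For instance, if $G_1,G_2$ are perfect (say finite perfect groups), then $\bH$ has no nontrivial one-dimensional representations, so the maximal classical discrete group attached to $\widehat{\bH}$ is trivial and your proposed witness disappears, even though $\widehat{\bH}$ is indeed non-amenable. The fact you need is genuinely quantum: non-co-amenability of a dual free product of nontrivial compact (quantum) groups when one factor has at least three elements; it can be obtained, e.g., from non-injectivity of the tracial free product von Neumann algebra of $(L^\infty(G_1),\text{Haar})$ and $(L^\infty(G_2),\text{Haar})$ (Dykema's free product results), but it must be cited or proved, not deduced from $G_1 \ast G_2 \supset \F_2$. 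Alternatively, repair the argument by composing with one further surjection, which brings you back to the paper's route: under the hypothesis, $G_1$ contains a nontrivial closed abelian subgroup $A_1$ and $G_2$ a closed abelian subgroup $A_2$ with at least three elements (take the closure of a cyclic subgroup generated by an element of order at least $3$ if one exists; otherwise $G_2$ has exponent $2$, hence is abelian, and $A_2=G_2$ works). Restriction of functions gives a surjective Hopf $*$-algebra map $\cO(G_1)\ast\cO(G_2) \to \cO(A_1)\ast\cO(A_2) \cong \C[\widehat{A_1} \ast \widehat{A_2}]$, and $\widehat{A_1}\ast\widehat{A_2}$ is a classical free product of nontrivial abelian groups with $|\widehat{A_2}|\geq 3$, hence contains a nonabelian free group and is non-amenable; the closed quantum subgroup argument via \cite[Theorem 3.2]{Cra16} then runs as in the paper. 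Finally, a small bookkeeping point: what your surjection gives is that $\bH$ (or the dual of the classical free product) is a closed quantum subgroup of $\QAut\Pi$; the transfer of amenability between the duals is the hereditary property invoked in the paper, rather than $\widehat{\bH}$ itself being a closed quantum subgroup of $\widehat{\QAut\Pi}$.
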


\begin{remark}\label{rem.compare-voigt}
In \cite[Definition 7.1]{Voi22}, a \emph{quantum automorphism} of a graph $\Pi$ with vertex set $I$ is defined as a magic unitary $U \in \cU(\ell^2(I) \ot \cH)$ that commutes with the adjacency matrix of $\Pi$. By definition, there is a bijective correspondence between such quantum automorphisms and nondegenerate $*$-representations $\pi$ of the $*$-algebra $\cA$ in Theorem \ref{thm.lc-qaut}. The tensor product between two such quantum automorphisms $\pi_1$ and $\pi_2$ is then given by $(\pi_1 \ot \pi_2) \circ \Delta$.

The discrete quantum automorphism group $\bG_d$ defined in \cite[Definition 7.2]{Voi22} is then associated with the finite-dimensional nondegenerate $*$-representations of $\cA$. So by definition, its compact dual $\widehat{\bG_d}$ is equal to the Bohr compactification of $\widehat{\QAut \Pi}$. Note that the $1$-dimensional $*$-representations of $\cA$ precisely correspond to the elements of $\Aut \Pi$, while finite-dimensional $*$-representations of $\cA$ are related to matrix models of $\QAut \Pi$, as discussed in \cite{Voi22}. Although we have no concrete example in mind, it is in principle possible that $\cA$ has very few finite-dimensional $*$-representations. For instance, the intersection of all their kernels could be nonzero. In principle, this can already happen with finite graphs. In such cases, the homomorphism from $\widehat{\QAut \Pi}$ to its Bohr compactification is not injective.
\end{remark}

\subsection{\boldmath Recovering the classical automorphism group of $\Pi$}

Let now $\cD = \cG$ be the graph category of all bi-labeled graphs and consider the associated multiplier Hopf $*$-algebra $(\cA,\Delta)$. Define $\cK = (K,x,y) \in \cG_1(2,2)$ by $V(K) = \{1,2\}$, $E(K) = \emptyset$, $x_1=1=y_2$ and $x_2 = 2 = y_1$. Then the relation $U_2 T^\cK = T^\cK U_2$ says that $u_{ij} u_{kl} = u_{kl} u_{ij}$, so that $\cA$ is abelian. We thus find that $\cA$ is the $*$-algebra $\cO(G)$ of locally constant functions on a totally disconnected, second countable, locally compact group $G$ that acts properly and continuously on the graph $\Pi$. Since the $*$-algebra $\cO(\Aut \Pi)$ satisfies the defining relations of $\cA$, we have proven the following.

\begin{proposition}\label{prop.link-with-classical-case}
If $\cD = \cG$ is the graph category of all bi-labeled graphs, the associated algebraic quantum group $(\cA,\Delta)$ is isomorphic with the algebra of locally constant functions on the automorphism group $\Aut \Pi$ with its natural comultiplication.
\end{proposition}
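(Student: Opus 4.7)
The plan is to complete the sketch already indicated in the paragraph just above the statement. The first task is to verify that with $\cK = (K,x,y)$ defined by $V(K) = \{1,2\}$, $E(K) = \emptyset$, $x_1 = y_2 = 1$ and $x_2 = y_1 = 2$, we have $\cK \in \cG_1(2,2)$ (each singleton component meets both $x$ and $y$) and the associated $I^2 \times I^2$ matrix is the flip $T^\cK_{(a,b),(c,d)} = \delta_{a,d}\delta_{b,c}$. Reading off the $((i,k),(l,j))$-entry of the defining relation $U_2 T^\cK = T^\cK U_2$ in $\cA$ then yields $u_{ij}u_{kl} = u_{kl}u_{ij}$, so $\cA$ is commutative.

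Since $(\cA,\Delta)$ is a commutative multiplier Hopf $*$-algebra with positive faithful left and right invariant functionals by Theorem~\ref{thm.left-Haar}, the classification of commutative algebraic quantum groups in \cite{VD96,KVD96} identifies it as $\cO(G)$ for a unique second countable, totally disconnected locally compact group $G$, whose points are the nondegenerate $*$-characters $\chi : \cA \to \C$ and whose group law is dual to $\Delta$. By \eqref{eq.canonical-closed-quantum-subgroup} and Remark~\ref{rem.closed-quantum-subgroup}, $\pi : \cA \to \cO(\Aut\Pi)$ is a surjective morphism of multiplier Hopf $*$-algebras, exhibiting $\Aut\Pi$ as a closed subgroup of $G$. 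It therefore suffices to show that every character $\chi$ of $\cA$ is evaluation at some $\sigma \in \Aut\Pi$.

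Fix such a $\chi$, and extend it uniquely to a unital $*$-homomorphism on $M(\cA)$. For each $j \in I$ the family $(u_{ij})_{i\in I}$ consists of mutually orthogonal self-adjoint idempotents that sum strictly to $1$, so $(\chi(u_{ij}))_{i\in I}$ is a family of mutually orthogonal elements of $\{0,1\}$. Picking any $a \in \cA$ with $\chi(a) \neq 0$ and evaluating $\chi$ on the (now finite) identity $\sum_i u_{ij} a = a$ forces exactly one index $i$ to satisfy $\chi(u_{ij}) = 1$, which I denote $\sigma(j)$. The symmetric argument using $\sum_j u_{ij} = 1$ produces an inverse for $\sigma$, so $\sigma : I \to I$ is a bijection. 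Evaluating $\chi$ on the finite identity $\sum_{k : k \sim j} u_{ik} = \sum_{k : k \sim i} u_{kj}$ from Theorem~\ref{thm.lc-qaut}.1 then gives $\mathbf{1}_{\sigma^{-1}(i) \sim j} = \mathbf{1}_{\sigma(j) \sim i}$, which after the substitution $i = \sigma(i')$ says precisely $i' \sim j \Leftrightarrow \sigma(i') \sim \sigma(j)$. Hence $\sigma \in \Aut\Pi$ and $\chi$ factors through $\pi$, so $\pi$ is injective and the proposition follows.

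The only genuinely nontrivial input is the appeal to the Van Daele theory of commutative algebraic quantum groups to produce $G$ in the second paragraph; the remainder is elementary bookkeeping provided one is careful with strict convergence and the extension of a nondegenerate character to the multiplier algebra. A more self-contained variant would bypass the explicit construction of $G$ by showing directly that $\ker\pi = 0$, using the character argument above together with the standard fact that characters separate points in any commutative algebraic quantum group.
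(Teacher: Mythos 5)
Your proof is correct and follows essentially the paper's own route: commutativity of $\cA$ from the swap bi-labeled graph, identification of the commutative algebraic quantum group with $\cO(G)$ via \cite{VD96,KVD96}, and a two-sided comparison with $\Aut \Pi$; your character computation simply makes explicit the paper's terse assertion that $G$ acts properly and continuously on $\Pi$ by graph automorphisms. The one citation to tighten is the surjection $\pi : \cA \to \cO(\Aut \Pi)$: \eqref{eq.canonical-closed-quantum-subgroup} and Remark \ref{rem.closed-quantum-subgroup} concern the universal algebra of Theorem \ref{thm.lc-qaut} (graph category $\cP$), whereas the present $\cA$ carries the extra relations $U_n T^\cK = T^\cK U_m$ for \emph{all} bi-labeled graphs $\cK$, so you must also note that every $T^\cK$ is $\Aut \Pi$-equivariant, whence the generators $1_{\{\si \,\mid\, \si(j)=i\}}$ of $\cO(\Aut \Pi)$ satisfy these relations and $\pi$ indeed descends -- this is exactly the paper's closing observation that $\cO(\Aut \Pi)$ satisfies the defining relations of $\cA$.
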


\subsection{Remarks}

\begin{remark}
When $\Pi$ is a locally finite graph with finitely many connected components, the automorphism group $\Aut \Pi$ with the topology of pointwise convergence is locally compact. It is thus tempting to also define $\QAut \Pi$ as a locally compact quantum group. This is however typically impossible. To see this, assume that $\Pi$ is the disjoint union of two connected locally finite graphs $\Pi_0$ and $\Pi_1$. Denote by $\cA_i$ the multiplier Hopf $*$-algebra defining $\QAut \Pi_i$ for $i =0,1$. Then the hypothetical $\QAut \Pi$ would admit the co-free product of $\QAut \Pi_0$ and $\QAut \Pi_1$ (i.e.\ the free product of the $*$-algebras $\cA_0$ and $\cA_1$, which amounts to the free product of the duals of $\QAut \Pi_i$, $i=0,1$) as a closed quantum subgroup. If one of the $\QAut \Pi_i$ is noncompact and the other is nontrivial, such a co-free product is not well-defined as a locally compact quantum group, for the same reason that the free product of two nontrivial locally compact groups, with at least one of them nondiscrete, is not well-defined as a locally compact group. Therefore, also $\QAut \Pi$ is not well-defined as a locally compact quantum group.

Once $\Pi$ is no longer connected, $\QAut \Pi$ can only be well-defined if all connected components of $\Pi$ have a compact quantum automorphism group. In particular, the quantum automorphism group of any finite graph is well-defined, as was done in \cite{Bic99,Ban03}.
\end{remark}

\begin{remark}
Let $\Pi$ be a connected locally finite graph with vertex set $I$. Denote by $(\cA,\Delta)$ the multiplier Hopf $*$-algebra defining $\QAut \Pi$. Fix a base vertex $e \in I$. It would be natural to try to define the stabilizer of $e$ as a compact open quantum subgroup of $\QAut \Pi$, in the sense of \cite{KKS15}. The projection $p = u_{ee}$ in $\cA$ is \emph{group-like}, meaning that $\Delta(p) (1 \ot p) = p \ot p = \Delta(p)(p \ot 1)$, but $p$ is typically not central, as required by \cite[Definition 4.1]{KKS15}. Nevertheless, the linear span of the projections $(u_{ie})_{i \in I}$ forms a co-ideal $\cD$ in $(\cA,\Delta)$. This means that $\Delta(\cD) \subset M(\cA \ot \cD)$. This co-ideal plays the role of the homogeneous space $\QAut \Pi / \Stab e$ and equals $\cD = \{x \in \cA \mid \Delta(x)(1 \ot p) = x \ot p = (1 \ot p) \Delta(x)\}$.

The algebraic quantum groups defined in \cite{SVV22} as Schlichting completions of discrete quantum groups have, by construction, a natural compact open quantum subgroup. Therefore in \cite{SVV22}, the Haar functionals can be constructed by combining the Haar state on this compact quantum subgroup with the ``counting measure'' on the homogeneous space. Since in our setting, the stabilizer of a vertex is not a compact quantum subgroup, we have to make a detour via Sections \ref{sec.2-category} and \ref{sec.construction-B} to define the Haar functionals on $\QAut \Pi$.
\end{remark}

\section{Fiber functors and quantizations of discrete groups}\label{sec.quantizations-discrete-groups}

We now assume that $\Gamma \subset \Aut \Pi$ is a subgroup that is acting simply transitively on the set $I$ of vertices of $\Pi$. This is equivalent with saying that $\Pi$ is the Cayley graph of $\Gamma$ w.r.t.\ a finite generating set $S \subset \Gamma$ satisfying $S=S^{-1}$. Recall that this Cayley graph has vertex set $\Gamma$, edges $E = \{(g,h) \in \Gamma \times \Gamma \mid g^{-1} h \in S\}$, and admits the action of $\Gamma$ by left translation.

In particular, $\Pi$ is vertex transitive so that $\cE$ is a singleton and $\cC(\cD,\Pi)$ is a unitary tensor category. In this section, we prove that $\cC(\cD,\Pi)$ has a natural fiber functor and we identify the dual of the associated compact quantum group as a quantization of $\Gamma$.

We use the ``path approach'' to $\cC(\cD,\Pi)$ as explained in Remark \ref{rem.path-approach-to-2-category}. We denote by $P_n \in \cC(\cD,\Pi)$ the (orthogonal projection onto the) $\ell^2$-space of the set of paths of length $n$ in $\Pi$.

Recall that we denoted by $\cL(n,m)$ the set of connected bi-labeled graphs $\cK = (K,x,y) \in \cD_c(n,m)$ satisfying $x_0 = y_0$ and $x_n = y_m$. To every $\cK \in \cL(n,m)$ we associated the matrix $T^\cK$ and we defined $\Mor(n,m)$ as the linear span of all $T^\cK$, $\cK \in \cL(n,m)$. We denote by $\cL_p(n,m) \subset \cL(n,m)$ the subset of bi-labeled graphs in which both $x$ and $y$ are paths in $K$. As explained in Remark \ref{rem.path-approach-to-2-category}, the linear span of all $T^\cK$, $\cK \in \cL_p(n,m)$, equals the space of morphisms from $P_m$ to $P_n$.

Denoting by $S = S^{-1}$ the given finite generating set of $\Gamma$, we define for every $\cK \in \cL_p(n,m)$ the $S^n \times S^m$ matrix $R^\cK$ by
\begin{equation}\label{eq.R-K-matrix}
R^\cK_{st} = T^\cK_{ij} \quad\text{with $i = (e,s_1, s_1 s_2,\ldots,s_1 \cdots s_n)$ and $j = (e,t_1,t_1 t_2,\ldots,t_1 \cdots t_m)$.}
\end{equation}
Note that $R^\cK_{st} = 0$ if $s_1 \cdots s_n \neq t_1 \cdots t_m$. Also note that $R^\cK_{st}$ counts the number of labelings of the edges $(v,w) \in E(K)$ by elements $\psi(v,w) \in S$ subject to the following constraints:
\begin{itemlist}
\item $\psi(x_{i-1},x_i) = s_i$ for all $i \in \{1,\ldots,n\}$ and $\psi(y_{i-1},y_i) = t_i$ for all $i \in \{1,\ldots,m\}$,
\item for every path $(v_0,\ldots,v_k)$ in $K$ with $v_0 = v_k$, we have that $\psi(v_0,v_1) \cdots \psi(v_{k-1},v_k) = e$~; in particular, $\psi(w,v) = \psi(v,w)^{-1}$ for all $(v,w) \in E(K)$.
\end{itemlist}\

\begin{proposition}\label{prop.fiber-functor-compact-quantum}
The maps $P_n \mapsto \ell^2(S^n)$ and $T^\cK \mapsto R^\cK$ define a unitary fiber functor on $\cC(\cD,\Pi)$.

The resulting compact quantum group $\bG$ is of Kac type, has a fundamental self-conjugate unitary representation $V = (v_{st})_{s,t \in S}$ and the space of morphisms from the $m$-fold to the $n$-fold tensor power of $V$ is given by the linear span of the matrices $R^\cK$, $\cK \in \cL_p(n,m)$.

The map $v_{st} \mapsto \delta_{s,t} \, s$ extends to a surjective Hopf $*$-algebra homomorphism from the polynomial algebra $\cO(\bG)$ to the group algebra $\C[\Gamma]$. It turns $\bGhat$ into a quantization of $\Gamma$ in the sense of Definition \ref{def.quantization}.
\end{proposition}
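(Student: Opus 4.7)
The plan has four main steps. First, using the path description of $\cC(\cD,\Pi)$ from Remark \ref{rem.path-approach-to-2-category}, I define the fiber functor $F$ by $F(P_n) = \ell^2(S^n)$ on objects and, on morphisms $T^\cK$ with $\cK \in \cL_p(n,m)$, by restricting $T^\cK$ to $(n+1)$-tuples of the form $(e, s_1, s_1 s_2, \ldots, s_1 \cdots s_n)$ and reindexing via the labeling bijection. The underlying bijection---between graph homomorphisms $\vphi : V(K) \to \Gamma$ such that $\vphi(x)$ traces the $e$-based path of labels $s \in S^n$ and edge labelings $\psi : E(K) \to S$ satisfying the cycle conditions, via $\psi(v,w) = \vphi(v)^{-1}\vphi(w)$---identifies the restricted $T^\cK$ with the matrix $R^\cK$ of \eqref{eq.R-K-matrix}. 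Well-definedness on morphism spaces is then immediate by linearity of restriction, and faithfulness follows from $\Gamma$-equivariance of all elements of $\Mor$: a $\Gamma$-equivariant $\ell^\infty(\Gamma)$-bimodular operator is determined by its values on $e$-based paths.

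Second, I verify $F$ is a unitary tensor functor. Composition is respected since $T^{\cK_1 \circ \cK_2} = T^{\cK_1} T^{\cK_2}$ and restriction to $e$-based paths commutes with composition. The monoidal identification $\ell^2(S^n) \ot \ell^2(S^m) \cong F(P_n \ot_I P_m)$ uses simple transitivity: a path at $e$ of length $n+m$ with labels $(s_1,\ldots,s_{n+m})$ decomposes canonically into a path at $e$ of length $n$ followed by a path at $s_1 \cdots s_n$ of length $m$, the latter being $\Gamma$-equivariantly translated to an $e$-based path labeled $(s_{n+1},\ldots,s_{n+m})$. Preservation of $*$ and inner products is direct. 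Woronowicz's Tannaka-Krein theorem then yields a compact quantum group $\bG$ with $\Rep \bG \cong \cC(\cD,\Pi)$, a fundamental representation $V = F(P_1)$ on $\ell^2(S)$, and the claimed description of morphism spaces from $V^{\ot m}$ to $V^{\ot n}$ as spanned by the $R^\cK$ for $\cK \in \cL_p(n,m)$. Self-conjugacy of $V$ is witnessed by $\xi_2(s,t) = \delta_{st,e}$, the image under $F$ of the conjugate-equations morphism for $P_1$ given in Lemma \ref{lem.crucial-technical-Mor-lemma}.7 (path reversal). Kac type of $\bG$ follows because $\xi_2$ implements the self-conjugacy via the unitary permutation $\delta_s \mapsto \delta_{s^{-1}}$ on $\ell^2(S)$ with $\|\xi_2\|^2 = |S| = \dim V$, the standard criterion for a compact quantum group generated by such a self-conjugate representation to be of Kac type.

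Third, the map $v_{st} \mapsto \delta_{s,t} \, s$ extends to a Hopf $*$-algebra homomorphism $\pi : \cO(\bG) \to \C[\Gamma]$. The defining relations of $\cO(\bG)$ are precisely that each $\xi_n$ is invariant under $V^{\ot n}$, i.e., $\sum_{t : t_1 \cdots t_n = e} v_{s_1 t_1} \cdots v_{s_n t_n} = \delta_{s_1 \cdots s_n, e} \cdot 1$. Applying $\pi$, only the term with $t_i = s_i$ for all $i$ survives and yields the group product $s_1 \cdots s_n$, which equals $e$ precisely when $\delta_{s_1 \cdots s_n, e} = 1$, so the relations are preserved. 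Compatibility with comultiplication, counit, and antipode is a direct check, and surjectivity is immediate since $\pi(v_{s,s}) = s$ for each generator $s \in S$.

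The main obstacle is the quantization property. Suppose $\pi = \pi_2 \circ \pi_1$ with $\pi_1 : \cO(\bG) \to \C[\Lambda]$ and $\pi_2 : \C[\Lambda] \to \C[\Gamma]$ both surjective Hopf $*$-homomorphisms; then $\pi_2$ is induced by a surjective group homomorphism $\eta : \Lambda \to \Gamma$. Tannakianly, $\pi_1$ corresponds to a monoidal $*$-functor $\Rep \bG \to \Rep \widehat \Lambda$, i.e., a coherent $\Lambda$-grading on each representation. The $\Lambda$-grading on $\ell^2(S)$ induced by $V$ must refine its $\Gamma$-grading coming from $\pi$, and since the latter has one-dimensional components $\C \delta_s$, each $\delta_s$ lies in a unique $\Lambda$-homogeneous component, giving a well-defined lift $s \mapsto \tilde s \in \eta^{-1}(s)$ with $\pi_1(v_{s,t}) = \delta_{s,t} \, \tilde s$. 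Applying $\pi_1$ to the $\xi_n$-invariance relations yields that whenever $s_1 \cdots s_n = e$ in $\Gamma$ we have $\tilde s_1 \cdots \tilde s_n = e$ in $\Lambda$, so $s \mapsto \tilde s$ extends to a group homomorphism $\theta : \Gamma \to \Lambda$ that is a section of $\eta$. Surjectivity of $\pi_1$ forces $\theta(\Gamma) = \Lambda$, making $\theta$ and $\eta$ mutually inverse and $\pi_2$ an isomorphism.
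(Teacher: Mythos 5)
Your overall route coincides with the paper's: the fiber functor and the description of $\bG$ are obtained directly from the construction of $R^\cK$ in \eqref{eq.R-K-matrix} (the paper disposes of these points as holding ``by construction'', and your more detailed verification via restriction to $e$-based paths, $\Gamma$-equivariance for faithfulness, simple transitivity for the monoidal structure, and the conjugate-equation/dimension argument for Kac type is correct), while your treatment of the quantization property is essentially the paper's Lemma \ref{lem.quantization}: you recover $\pi_1(v_{st})=\delta_{s,t}\,\tilde s$ (your grading formulation is just a rephrasing of the paper's diagonalization argument, using that the elements of $S$ are distinct group-likes), then apply the $\xi_n$-invariance relations, which are available for every $\cD \supseteq \cP$ because the circular bi-labeled graphs lie in $\cP$, to produce a homomorphic section $\theta:\Gamma\to\Lambda$ of $\eta$ and conclude that $\eta$ is an isomorphism.

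There is one concrete gap, in your Step 3. You justify well-definedness of $\pi:\cO(\bG)\to\C[\Gamma]$ by asserting that ``the defining relations of $\cO(\bG)$ are precisely that each $\xi_n$ is invariant under $V^{\ot n}$''. That identification is exactly the content of Theorem \ref{thm.quantization-discrete-group} and holds only for $\cD=\cP$; the proposition is stated for an arbitrary graph category $\cD$ containing $\cP$, in which case $\cO(\bG)$ is a quotient of the $\cD=\cP$ algebra by the additional relations coming from all $R^\cK$ with $\cK\in\cL_p(n,m)$, $\cK \in \cD$. Checking only the $\xi_n$-relations therefore does not show that $\pi$ descends to this quotient. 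The repair is short: by the observation recorded right after \eqref{eq.R-K-matrix}, $R^\cK_{st}=0$ whenever $s_1\cdots s_n\neq t_1\cdots t_m$, which says precisely that every $R^\cK$ intertwines the diagonal $\C[\Gamma]$-corepresentations $D^{\ot m}$ and $D^{\ot n}$ with $D=\operatorname{diag}(s)_{s\in S}$; hence $v_{st}\mapsto\delta_{s,t}\,s$ respects all relations of $\cO(\bG)$ for every $\cD\supseteq\cP$. With this substitution (and noting that the rest of your argument never uses the false identification, only that the $\xi_n$ are $\bG$-invariant), your proof is complete.
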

\begin{proof}
The first two statements hold by construction. To prove the last statement, define for every $n \geq 1$,
$$\cR_n = \{(s_1,\ldots,s_n) \in S^n \mid s_1 \cdots s_n = e \;\;\text{in $\Gamma$}\;\} \; .$$
Define $\cK = (K,x,y) \in \cL_p(n,0)$ by $K = \Z / n\Z$, $i \sim j$ iff $i-j \in \{\pm 1\}$, $x_i = i + n\Z$ for all $i=0,\ldots,n$ and $y_0 = n \Z$, as illustrated in Figure \ref{fig.circle-graph}.
\begin{figure}[h]
    \centering
    \begin{tikzpicture}
    \filldraw (-90:1.5cm) circle (1.5pt) node[anchor=north]{$x_0 , y_0 , x_6$};
    \filldraw (-30:1.5cm) circle (1.5pt) node[anchor=north west]{$x_5$};
    \filldraw (30:1.5cm) circle (1.5pt) node[anchor=south west]{$x_4$};
    \filldraw (90:1.5cm) circle (1.5pt) node[anchor=south]{$x_3$};
    \filldraw (150:1.5cm) circle (1.5pt) node[anchor=south east]{$x_2$};
    \filldraw (210:1.5cm) circle (1.5pt) node[anchor=north east]{$x_1$};
    \draw (0,0) circle (1.5cm);
    \end{tikzpicture}
    \caption{The bi-labeled graph $\cK \in \cL_p(n,0)$ with $n=6$}\label{fig.circle-graph}
\end{figure}

By definition, $R^\cK_s = 1$ if $s \in \cR_n$ and $R^\cK_s = 0$ if $s \not\in \cR_n$. Since $\Gamma$ is generated by $S$ with relations $\bigsqcup_{n=1}^\infty \cR_n$, the conclusion follows from Lemma \ref{lem.quantization} below.
\end{proof}

In the special case where $\cD = \cP$, the graph category of planar bi-labeled graphs, we find the following description for the compact quantum group $\bG$ of Proposition \ref{prop.fiber-functor-compact-quantum}. In combination with Proposition \ref{prop.link-with-equivariant-bimodules} below, the following result will provide the proof of Theorem \ref{thm.identify-categories} stated in the introduction.

\begin{theorem}\label{thm.quantization-discrete-group}
Let $\Gamma$ be a countable group with finite symmetric generating set $S = S^{-1} \subset \Gamma$. The compact quantum group $\bG$ that is associated in Proposition \ref{prop.fiber-functor-compact-quantum} to the Cayley graph of $(\Gamma,S)$ and the graph category $\cP$ of planar bi-labeled graphs equals the universal compact quantum group $\bG$ generated by the entries of an $S \times S$ unitary representation $U$ such that for every $n \geq 1$, the vector $\xi_n \in \ell^2(S^n)$
$$\xi_n(s_1,\ldots,s_n) = \begin{cases} 1 &\quad\text{if $s_1 \cdots s_n = e$ in $\Gamma$,}\\ 0 &\quad\text{otherwise,}\end{cases}$$
is invariant under the $n$-fold tensor power of $U$.
\end{theorem}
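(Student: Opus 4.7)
The plan is to apply Woronowicz's Tannaka--Krein duality. Let $\bG$ denote the compact quantum group supplied by Proposition \ref{prop.fiber-functor-compact-quantum} with $\cD = \cP$, with fundamental unitary $V$ on $\ell^2(S)$, and let $\bG'$ denote the universal compact quantum group described in the statement, with fundamental unitary $U$ on $\ell^2(S)$. Since both fundamental representations act on the same Hilbert space, it suffices to prove the equality of intertwiner spaces
\begin{equation*}
\Mor_\bG(V^{\ot m}, V^{\ot n}) \; = \; \Mor_{\bG'}(U^{\ot m}, U^{\ot n})
\end{equation*}
as subspaces of $B(\ell^2(S^m), \ell^2(S^n))$ for all $n, m \geq 0$.

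For the inclusion $\supseteq$, the cycle bi-labeled graph $\cK_n$ from the proof of Proposition \ref{prop.fiber-functor-compact-quantum} lies in $\cL_p(n,0) \cap \cP$: the $n$-cycle has an obvious planar embedding with the path $x$ tracing the outer boundary and $y$ a single vertex. Since $R^{\cK_n} = \xi_n$ by construction, $\xi_n \in \Mor_\bG(1, V^{\ot n})$, and the universal property of $\bG'$ yields a surjective Hopf $*$-algebra homomorphism $\cO(\bG') \to \cO(\bG)$ sending $U$ to $V$, which produces $\supseteq$.

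For the inclusion $\subseteq$, Proposition \ref{prop.fiber-functor-compact-quantum} identifies the left-hand side with the span of the matrices $R^\cK$ for planar $\cK \in \cL_p(n,m)$, so it suffices to show that each such $R^\cK$ lies in the rigid C$^*$-tensor subcategory $\cT$ of Hilbert spaces generated by $\ell^2(S)$ together with the morphisms $\xi_k$ and their adjoints. I would argue by induction on the number of edges of the underlying graph $K$, working with a fixed planar embedding of $\cK$ in which $x$ and $y$ lie on the outer face (such an embedding exists by the planarity condition defining $\cP$). In the base case $K$ has no edges beyond those traversed by $x$ and $y$, and $R^\cK$ then decomposes as a tensor network built from identities, copies of $\xi_2$ (accounting for backtrack cancellations $s \cdot s^{-1} = e$ whenever $x$ or $y$ retraces an edge), and at most one $\xi_k$ (when $x$ and $y$ together trace a $k$-cycle). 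In the inductive step, one picks a bounded face $F$ of the embedding whose boundary shares an arc with the outer face and replaces that outer arc by the complementary inner arc of $\partial F$ using $\xi_k$ or $\xi_k^*$ (where $k = |\partial F|$); the new planar bi-labeled graph has strictly fewer edges, so the induction hypothesis applies.

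The main obstacle is verifying that each geometric peeling step corresponds on the matrix side to a tensor-categorical composition with $\xi_k$ in $\cT$. The setup is favorable because the Cayley graph is vertex transitive, so the relative tensor product $P_n \ot_I P_m$ in $\cC(\cP, \Pi)$ agrees, under the fiber functor, with the ordinary Hilbert space tensor product $\ell^2(S^n) \ot \ell^2(S^m) = \ell^2(S^{n+m})$, aligning the two notions of tensor product. A cleaner alternative is to establish a generator-and-relations presentation of $\cL_p \cap \cP$ in the spirit of \cite[Theorem 6.7]{MR19}: show that this class is generated by the cycles $\cK_k$ together with the identity $\cM^{1,1}$ under composition, relative tensor product, and transpose. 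The desired equality of morphism spaces would then follow formally from the fact that $\cT$ visibly contains these generators and is closed under the corresponding tensor-categorical operations.
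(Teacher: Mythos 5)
Your proposal takes essentially the same route as the paper: the easy inclusion comes from the circular planar bi-labeled graphs whose matrices are the vectors $\xi_n$, and the paper's entire proof of the hard inclusion is exactly your ``cleaner alternative'', namely that, as in \cite[Theorem 6.7]{MR19}, the class of planar bi-labeled graphs with path labelings is generated by the cycle graphs (and the identity) under composition and relative tensor product, from which the equality of intertwiner spaces and hence of the two compact quantum groups follows by Tannaka--Krein. Your face-peeling induction is in effect a sketch of how such a generation statement is proved (and is where the real work lies, e.g.\ handling cut vertices, pendant edges and the rerouting of the labeled paths after removing an outer arc, which you yourself flag as the main obstacle), so it is not a genuinely different method but an attempt to redo the \cite{MR19}-style argument that the paper simply invokes.
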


\begin{definition}\label{def.planar-quantization-discrete-group}
For every countable group $\Gamma$ with finite symmetric generating set $S = S^{-1} \subset \Gamma$, we call the dual $\bGhat$ of the compact quantum group defined in Theorem \ref{thm.quantization-discrete-group} the \emph{planar quantization of $(\Gamma,S)$}.
\end{definition}

In Corollary \ref{cor.characterize-trivial-planar-quantization} below, we will see that it almost never happens that the surjective $*$-homo\-mor\-phism $\pi : \cO(\bG) \to \C[\Gamma]$ is an isomorphism.

\begin{proof}[{Proof of Theorem \ref{thm.quantization-discrete-group}}]
We denote as above by $\cP_p$ the set of planar bi-labeled graphs in which the labelings form paths. It then suffices to check, as in \cite[Theorem 6.7]{MR19}, that $\cP_p$ is the smallest set of bi-labeled graphs that contains all the ``circular'' planar bi-labeled graphs $\cK$ introduced in the proof of Proposition \ref{prop.fiber-functor-compact-quantum} (and giving rise to the invariant vector $\xi_n$) and that is closed under composition and relative tensor product
$$\cK \ot_r \cK' = (1^{\ot n} \ot \cM^{1,2} \ot 1^{\ot n'}) \circ (\cK \ot \cK') \circ (1^{\ot n} \ot \cM^{2,1} \ot 1^{\ot n'}) \; .$$
\end{proof}

In the following result, we prove that the compact quantum group $\bG$ of Theorem \ref{thm.quantization-discrete-group} can be characterized as well as the universal compact quantum group acting in a trace preserving way on the reduced C$^*$-algebra $C^*_r(\Gamma)$ such that $\lspan \{\lambda_s \mid s \in S\}$ is an invariant subspace. This then identifies $\bG$ with the quantum isometry group defined in \cite[Theorems 2.2 and 2.6]{BhS10}, associated with the spectral triple given by $(\Gamma,S)$.

\begin{proposition}\label{prop.identify-with-quantum-isometry-group}
Let $\Gamma$ be a countable group with finite symmetric generating set $S = S^{-1} \subset \Gamma$. Denote by $\bG$ the compact quantum group defined in Theorem \ref{thm.quantization-discrete-group} with fundamental unitary representation $U = (u_{st})_{s,t \in S}$. Denote by $\ell(g)$ the word length (w.r.t.\ $S$) and consider the reduced group C$^*$-algebra $C^*_r(\Gamma)$ generated by the group of unitaries $(\lambda_g)_{g \in \Gamma}$. Define $\cW_n = \lspan \{\lambda_g \mid \ell(g) = n \}$.

\begin{enumlist}
\item The formula
$$\al(\lambda_t) = \sum_{s \in S} \lambda_s \ot u_{st} \quad\text{for all $t \in S$,}$$
uniquely extends to a trace preserving action of $\bG$ on $C^*_r(\Gamma)$.

\item We have that $\al(\cW_n) \subset \cW_n \ot \cO(\bG)$ for all $n \geq 0$.

\item If $\be : C^*_r(\Gamma) \to C^*_r(\Gamma) \ot C(\bH)$ is any continuous, trace preserving action of a compact quantum group $\bH$ on $C^*_r(\Gamma)$ such that $\be(\cW_1) \subset \cW_1 \ot C(\bH)$, there is a unique Hopf $*$-algebra homomorphism $\pi : \cO(\bG) \to \cO(\bH)$ such that $\be = (\id \ot \pi) \circ \al$.
\end{enumlist}
\end{proposition}

\begin{proof}
1.\ In order to prove that $\al$ uniquely extends to a continuous $*$-homomorphism $\al : C^*_r(\Gamma) \to C^*_r(\Gamma) \ot C(\bG)$, we start by proving that $\al(\lambda_{t_1}) \cdots \al(\lambda_{t_n}) = 1 \ot 1$ whenever $t \in S^n$ and $t_1 \cdots t_n = e$. This means that we have to prove that
\begin{equation}\label{eq.equal-1}
\sum_{s \in S^n} \lambda_{s_1 \cdots s_n} \ot u_{s_1 t_1} \cdots u_{s_n t_n} = 1 \ot 1 \; .
\end{equation}
Defining, for every $g \in \Gamma$,
\begin{equation}\label{eq.def-y}
y_g = \sum_{s \in S^n : s_1 \cdots s_n = g} u_{s_1 t_1} \cdots u_{s_n t_n} \; ,
\end{equation}
we thus have to prove that $y_e = 1$ and that $y_g = 0$ if $g \neq e$.

We use the context and notation of Proposition \ref{prop.fiber-functor-compact-quantum} for the special case where $\cD = \cP$ is the graph category of planar bi-labeled graphs. As in the beginning of this section, we consider the morphism spaces $\Mor(n,m)$ in the unitary tensor category $\cC(\cP,\Pi)$, where $\Pi$ is the Cayley graph of $(\Gamma,S)$. Writing $T_n = T^{\cK_d} \in \Mor(n,1)$ where $d = (n)$ and $\cK_d$ is defined in Figure \ref{fig.graph-Kd}, we conclude that $T_n T_n^* \in \Mor(n,n)$. Applying the fiber functor of Proposition \ref{prop.fiber-functor-compact-quantum}, we get that for every $n \geq 1$, the matrix $X_n \in B(\ell^2(S^n))$ defined by
$$X_{n,st} = \begin{cases} 1 &\quad\text{if $s_1 \cdots s_n = t_1 \cdots t_n$,} \\ 0&\quad\text{otherwise,}\end{cases}$$
is an endomorphism of the $n$-fold tensor power $U^{\ot n}$ of $U$.

Fix $g \in \Gamma$. If $g$ cannot be written as a product of $n$ elements in $S$, obviously $y_g = 0$. If $g = r_1 \cdots r_n$ for some $r \in S^n$, we find that
$$y_g = (X_n \, U^{\ot n})_{rt} = (U^{\ot n} \, X_n)_{rt} = (U^{\ot n} \, \xi_n)_r = \xi_n(r) \; .$$
By definition, $\xi_n(r) = 1$ when $g = e$ and $\xi_n(r) = 0$ if $g \neq e$. So, \eqref{eq.equal-1} is proven.

Since $\xi_2$ is an invariant vector of $U^{\ot 2}$ and $U$ is unitary, we find that $u_{st}^* = u_{s^{-1}t^{-1}}$ for all $s,t \in S$. It follows that $\al(\lambda_t)^* = \al(\lambda_{t^{-1}})$ for all $t \in S$. Since $t t^{-1} = e = t^{-1} t$, the previous paragraphs imply that $\al(\lambda_t) \al(\lambda_t)^* = 1 \ot 1 = \al(\lambda_t)^* \al(\lambda_t)$. So, $\al(\lambda_t)$ is a unitary for all $t \in S$.

If now $s_1 \cdots s_n = t_1 \cdots t_m$ for some $s \in S^n$ and $t \in S^m$, also $s_1 \cdots s_n t_m^{-1} \cdots t_1^{-1} = e$ and we conclude using the previous paragraphs that
$$\al(\lambda_{s_1}) \cdots \al(\lambda_{s_n}) = \al(\lambda_{t_1}) \cdots \al(\lambda_{t_m}) \; .$$
It follows that $\al$ uniquely extends to a well-defined $*$-homomorphism from $\C[\Gamma]$ to $\C[\Gamma] \ot \cO(\bG)$.

Denote by $\tau$ the canonical trace on $C^*_r(\Gamma)$. Since $\xi_n$ is an invariant vector for $U^{\ot n}$, we immediately find that $(\tau \ot \id)(\al(x)) = \tau(x) 1$ for all $x \in \C[\Gamma]$. Therefore, $\al$ extends to a $*$-homomorphism from $C^*_r(\Gamma)$ to $C^*_r(\Gamma) \ot C(\bG)$ and thus defines a continuous action of $\bG$ on $C^*_r(\Gamma)$ by construction.

2.\ Let $t \in S^n$ such that $h := t_1 \cdots t_n$ has length $k$. We must prove that $\al(\lambda_h) \in \cW_k \ot \cO(\bG)$. Again defining $y_g$ by \eqref{eq.def-y}, we have to prove that $y_g = 0$ whenever $\ell(g) \neq k$.

Since $\Gamma$ is the vertex set of $\Pi$, we have $\Mor(1,1) \subset \ell^\infty(\Gamma \times \Gamma)$. Write $V_k = \{(g,h) \mid \ell(g^{-1}h) = k\}$. It follows from Example \ref{rem.path-approach-to-2-category} that $1_{V_k} \in \Mor(1,1)$. Using $T_n \in \Mor(n,1)$ as in part 1 of the proof and applying the fiber functor to $T_n 1_{V_k} T_n^*$, we conclude that the matrix $X_{n,k} \in B(\ell^2(S^n))$ defined by
$$X_{n,k,st} = \begin{cases} 1 &\quad\text{if $s_1 \cdots s_n = t_1 \cdots t_n$ and $\ell(s_1 \cdots s_n) = k$,} \\ 0&\quad\text{otherwise,}\end{cases}$$
is an endomorphism of the $n$-fold tensor power $U^{\ot n}$ of $U$.

Fix $g \in \Gamma$ with $\ell(g) \neq k$. Write $\ell = \ell(g)$. If $g$ cannot be written as a product of $n$ elements in $S$, obviously $y_g = 0$. If $g = r_1 \cdots r_n$ for some $r \in S^n$, we find that
$$y_g = (X_{n,\ell} \, U^{\ot n})_{rt} = (U^{\ot n} \, X_{n,\ell})_{rt} = 0 \; .$$

3.\ Define $v_{st} \in C(\bH)$ such that $\be(\lambda_t) = \sum_{s \in S} \lambda_s \ot v_{st}$. Since $\be$ is a continuous trace preserving action, $\be$ defines a unitary representation of $\bH$ on $\ell^2(\Gamma)$ for which $\cW_1$ is an invariant subspace. So the matrix $V = (v_{st})_{s,t \in S}$ is a unitary representation of $\bH$. Expressing that
$$(\tau \ot \id)\be(\lambda_{s_1} \cdots \lambda_{s_n}) = \begin{cases} 1 &\quad\text{if $s_1 \cdots s_n = e$,}\\ 0 &\quad\text{otherwise,}\end{cases}$$
we conclude that the vector $\xi_n$ defined in Theorem \ref{thm.quantization-discrete-group} is invariant under the $n$-fold tensor power of $V$. So by definition, there is a unique Hopf $*$-algebra homomorphism $\pi : \cO(\bG) \to \cO(\bH)$ satisfying $\pi(u_{st}) = v_{st}$ for all $s,t \in S$. By construction, $\be = (\id \ot \pi) \circ \al$.
\end{proof}

Let $\Gamma$ be a countable group generated by a finite symmetric subset $S = S^{-1} \subset \Gamma$. Denote by $\F_S$ the free group with free generators $a_s$, $s \in S$. Given a subset $\cR \subset \sqcup_{n=1}^\infty S^n$, we say that $\Gamma$ is generated by $S$ with relations $\cR$ if the kernel of the natural homomorphism $\F_S \to \Gamma : a_s \mapsto s$ equals the smallest normal subgroup of $\F_S$ containing $a_s a_{s^{-1}}$ for all $s \in S$ and $a_{s_1} \cdots a_{s_n}$ for all $(s_1,\ldots,s_n) \in \cR$.

\begin{lemma}\label{lem.quantization}
Assume that $\Gamma$ is a countable group generated by a finite symmetric subset $S = S^{-1} \subset \Gamma$ and relations $\cR \subset \sqcup_{n=1}^\infty S^n$. Assume that $(s,s^{-1}) \in \cR$ for all $s \in S$.

Let $\bG$ be a compact quantum group of Kac type with polynomial algebra $\cO(\bG)$. Assume that $\cO(\bG)$ is generated by the entries $u_{st}$ of an $S \times S$ unitary representation $U$ of $\bG$.

Assume that $\pi : \cO(\bG) \to \C[\Gamma]$ is a surjective Hopf $*$-algebra homomorphism satisfying $\pi(u_{st}) = \delta_{s,t} \, s$ for all $s \in S$.

Assume that for every $(s_1,\ldots,s_n) \in \cR$, there exists a $\bG$-invariant vector $\xi$ in the $n$-fold tensor power of $U$ with $\xi(s_1,\ldots,s_n) \neq 0$. Then $\widehat{\bG}$ is a quantization of $\Gamma$ in the sense of Definition \ref{def.quantization}.
\end{lemma}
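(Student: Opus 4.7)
The plan is to show the surjective group homomorphism $\pi_2 : \Lambda \to \Gamma$ (the restriction of $\pi_2$ to group-likes) is injective by constructing an explicit section $\bar\rho : \Gamma \to \Lambda$. The entire construction rests on putting $V := \pi_1(U) \in M_S(\C[\Lambda])$ into diagonal normal form. First, I would diagonalize $V$: since $V$ is a finite-dimensional unitary corepresentation of $\C[\Lambda]$ and the irreducible such corepresentations are precisely the group-like elements $\lambda \in \Lambda$, complete reducibility yields a unitary $P \in U(S)$ and elements $\mu_s \in \Lambda$ with $PVP^* = \mathrm{diag}(\mu_s)_{s \in S}$. Applying $\pi_2$ and using $\pi_2(v_{st}) = \pi(u_{st}) = \delta_{s,t}\,s$ gives
$$ P\,\mathrm{diag}(s)_{s \in S}\,P^* = \mathrm{diag}(\pi_2(\mu_s))_{s \in S} \quad\text{in $M_S(\C[\Gamma])$.} $$
Reading off coefficients in the $\C$-basis $\Gamma$ and using that the $s \in S$ are pairwise distinct, one shows the $s$-th column of $P$ is supported on $\{k \in S : \pi_2(\mu_k) = s\}$; a counting argument then forces each such set to be a singleton, so $s \mapsto \pi_2(\mu_s)$ is a bijection of $S$. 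After reordering the $\mu_s$ so that $\pi_2(\mu_s) = s$, the matrix $P$ commutes with $\mathrm{diag}(s)_s$; distinctness of the $s$ forces $P$ to be diagonal with entries of modulus one, and since such $P$ also commutes with $\mathrm{diag}(\mu_s)$, I conclude $V = \mathrm{diag}(\mu_s)_s$, i.e.\ $v_{st} = \delta_{s,t}\mu_s$ with $\pi_2(\mu_s) = s$.

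With $V$ in this diagonal form, the relations in $\cR$ transfer immediately from $\Gamma$ to $\Lambda$. For each $(s_1,\ldots,s_n) \in \cR$, the $U^{\otimes n}$-invariant vector $\xi$ given by hypothesis becomes $V^{\otimes n}$-invariant after applying $\pi_1$; since $V^{\otimes n}$ is diagonal with entry $\mu_{t_1}\cdots\mu_{t_n}$ at $(\mathbf{t},\mathbf{t})$, invariance reads $(\mu_{t_1}\cdots\mu_{t_n} - e)\xi(\mathbf{t}) = 0$ in $\C[\Lambda]$ for every $\mathbf{t}$, and evaluating at $(s_1,\ldots,s_n)$ where $\xi$ is nonzero yields $\mu_{s_1}\cdots\mu_{s_n} = e$ in $\Lambda$. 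In particular, the hypothesis $(s,s^{-1}) \in \cR$ gives $\mu_{s^{-1}} = \mu_s^{-1}$, so the assignment $a_s \mapsto \mu_s$ extends to a group homomorphism $\rho : \F_S \to \Lambda$ whose kernel contains the normal closure of $\cR$; hence $\rho$ descends to a group homomorphism $\bar\rho : \Gamma \to \Lambda$.

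To conclude, by construction $\pi_2 \circ \bar\rho$ equals the identity on the generators $s$ of $\Gamma$, hence on all of $\Gamma$. The surjectivity of $\pi_1$ implies that $\C[\Lambda]$ is $*$-generated by the nonzero entries $\{\mu_s : s \in S\}$ of $V$, and closure under the $*$-operation forces $\langle \mu_s : s \in S \rangle = \Lambda$, so $\bar\rho$ is surjective. Combined with $\pi_2 \circ \bar\rho = \id_\Gamma$, the maps $\bar\rho$ and $\pi_2$ are mutually inverse group isomorphisms, making $\pi_2 : \C[\Lambda] \to \C[\Gamma]$ a Hopf $*$-algebra isomorphism. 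The principal obstacle lies in the diagonalization step: one must rigidify the \emph{unitary equivalence} $PVP^* = \mathrm{diag}(\mu_s)$ into an actual \emph{equality} $V = \mathrm{diag}(\mu_s)$, which crucially exploits the distinctness of the diagonal entries $s \in S \subset \Gamma$ of $\pi_2(V)$.
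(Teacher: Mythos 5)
Your proof is correct and takes essentially the same route as the paper's: diagonalize the image of $U$ in $M_S(\C[\Lambda])$ using that irreducible corepresentations of a group algebra are the group-likes, use the distinctness of the elements of $S$ inside $\Gamma$ to force this image to be literally diagonal with entries $\mu_s$ lifting $s$, and then push the invariant vectors through to transfer the relations $\cR$ to the $\mu_s$. The only difference is that you make explicit the resulting section $\Gamma \to \Lambda$ and its surjectivity, which the paper leaves implicit in the remark that it suffices to verify $\gamma(s_1)\cdots\gamma(s_n)=e$ for all $(s_1,\ldots,s_n)\in\cR$.
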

\begin{proof}
Assume that $\Lambda$ is a countable group, $\psi : \cO(\bG) \to \C[\Lambda]$ is a surjective Hopf $*$-algebra homomorphism and $\rho : \Lambda \to \Gamma$ is a surjective group homomorphism such that $\rho \circ \psi = \pi$. We have to prove that $\rho$ is an isomorphism.

The elements $\psi(u_{st})$ are the entries of a unitary representation of $\widehat{\Lambda}$, which must be the direct sum of $|S|$ one-dimensional representations. We thus find a unitary matrix $X \in \cU(\C^S)$ such that $X \psi(u) X^*$ is a diagonal matrix. Denote by $D \in M_S(\C) \ot \C[\Gamma]$ the diagonal matrix with diagonal entries $s \in S$. Applying $\rho$ to $X \psi(u) X^*$, it follows that $X D X^*$ remains diagonal. Since the elements $s \in S$ are all distinct, this forces $X$ to be the product of a diagonal matrix and a permutation matrix. In particular, $\psi(u)$ is already a diagonal matrix. The diagonal entries of $\psi(u)_{ss}$ then are one-dimensional representations of $\widehat{\Lambda}$ and thus given by group elements $\gamma(s) \in \Lambda$.

Since $(s,s^{-1}) \in \cR$ for all $s \in S$ and since $\Gamma$ is generated by $S$ with relations $\cR$, it suffices to prove that $\gamma(s_1) \cdots \gamma(s_n) = e$ for all $(s_1,\ldots,s_n) \in \cR$. Fix $(s_1,\ldots,s_n) \in \cR$. Take a $\bG$-invariant vector $\xi \in \ell^2(S^n)$ for the $n$-fold tensor power of $U$ with $\xi(s_1,\ldots,s_n) \neq 0$. The fact that $\xi$ is an invariant vector is expressed by the relation
$$\sum_{t_1,\ldots,t_n \in S} u_{k_1 t_1} \cdots u_{k_n t_n} \, \xi(t_1,\ldots,t_n) = \xi(k_1,\ldots,k_n) \, 1 \quad\text{for all $k_1,\ldots,k_n \in S$.}$$
Applying $\psi$, we get that $\xi(k_1,\ldots,k_n) \, \gamma(k_1) \cdots \gamma(k_n) = \xi(k_1,\ldots,k_n) \, 1$ in $\C[\Lambda]$ for all $k_1,\ldots,k_n \in S$. Since $\xi(s_1,\ldots,s_n) \neq 0$, this means that $\gamma(s_1) \cdots \gamma(s_n) = e$.
\end{proof}

\begin{example}\label{ex.quantizations}
Let $n \geq 1$ be an integer. Define the $2n \times 2n$ matrix $J_n = \bigl(\begin{smallmatrix} 0 & I_n \\ I_n & 0\end{smallmatrix}\bigr)$ and consider the Kac type compact quantum group $\bG = A_o(J_n)$, which is isomorphic with $A_o(2n) = A_o(I_{2n})$. Denote by $U = (u_{ij})$ the fundamental representation of $A_o(J_n)$. Also consider $A_u(n)$ with its fundamental representation $V$. Denote by $a_1,\ldots,a_n$ the free generators of $\F_n$. We have the surjective Hopf $*$-algebra homomorphisms
\begin{align*}
& \pi_1 : \cO(A_o(J_n)) \to \cO(A_u(n)) : \pi_1(U) = \begin{pmatrix} V & 0 \\ 0 & \overline{V}\end{pmatrix} \quad\text{and} \\
& \pi_2 : \cO(A_u(n)) \to \C[\F_n] : \pi_2(v_{ij}) = \delta_{i,j} a_i \; .
\end{align*}
By Lemma \ref{lem.quantization}, the composition $\pi_2 \circ \pi_1$ turns $\widehat{A_o(J_n)}$ into a quantization of $\F_n$. Indeed, it suffices to observe that the vector $\sum_{i=1}^{2n} (e_i \ot J_n e_i)$ is, by definition, a $\bG$-invariant vector for $U^{\ot 2}$ whose ``evaluation on $(a_i,a_i^{-1})$'' equals $1$ for all $i$. A fortiori, the intermediate $\widehat{A_u(n)}$ is a quantization of $\F_n$.

With a similar reasoning, the surjective Hopf $*$-algebra homomorphism $\pi : \cO(A_o(n)) \to (\Z/2\Z)^{\ast n} : \pi(u_{ij}) = \delta_{i,j} a_i$ turns $\widehat{A_o(n)}$ into a quantization of $(\Z/2\Z)^{\ast n}$.
\end{example}

We conclude this section by computing a few examples of planar quantizations of discrete groups.

Recall from \cite{BBC07,Bic01} that the hyperoctahedral quantum group $H^+(d)$ is defined as the closed subgroup of $A_o(d)$ by adding the relation $u_{ij} u_{ik} = 0 = u_{ji} u_{ki}$ whenever $j \neq k$. The space of morphisms between tensor powers of $U$ is given by the linear span of all noncrossing partitions in which each block contains an even number of points. Using the matrix $J_d = \bigl(\begin{smallmatrix} 0 & I_d \\ I_d & 0\end{smallmatrix}\bigr)$, we can define, as in \cite[Section 3]{BaS10} and \cite[Example 3.5]{BNY15}, a twisted version of $H^+(2d)$ generated by the entries of a $2d$-dimensional unitary representation with relations expressing that
$$\sum_{i=1}^{2d} (e_i \ot J_d(e_i)) \quad\text{and}\quad \sum_{i=1}^{2d} (e_i \ot J_d(e_i) \ot e_i \ot J_d(e_i))$$
are invariant vectors in the $2$-fold, resp.\ $4$-fold, tensor power of $U$. Then, $H^+(J_d)$ is monoidally equivalent with $H^+(2d)$, but not isomorphic with $H^+(2d)$.

The following result can be deduced from Proposition \ref{prop.identify-with-quantum-isometry-group} and \cite[Theorem 5.1]{BhS10}. Since we can give a short proof in our context, we include it for completeness.

\begin{proposition}
The planar quantization of the free group $\F_d$ with its canonical symmetric generating set $S \subset \F_d$ given by the free generators and their inverses, is isomorphic with the dual of $H^+(J_{2d})$.

The planar quantization of the free product $\Gamma = (\Z/2\Z)^{\ast d}$ with its canonical symmetric generating set $S$ given by the free generators of order $2$ is isomorphic with the dual of $H^+(d)$.

In particular, if $\Pi$ denotes the $d$-regular tree, then the unitary tensor category $\cC(\cP,\Pi)$ is equivalent with the representation category of $H^+(d)$ and thus described by noncrossing partitions in which each block contains an even number of points.
\end{proposition}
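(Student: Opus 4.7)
The plan is Tannaka--Krein. By Theorem~\ref{thm.quantization-discrete-group}, the planar quantization $\bG$ of $(\Gamma, S) = ((\Z/2\Z)^{\ast d}, \{a_1, \ldots, a_d\})$ is the universal compact quantum group with a $d \times d$ orthogonal fundamental representation $U$ such that the indicator vector $\xi_n(s_1, \ldots, s_n) = \mathbf{1}[s_1 \cdots s_n = e]$ in $\ell^2(S^n)$ is $U^{\otimes n}$-invariant for every $n \geq 1$; whereas $H^+(d)$ is characterised by the invariance of the tensors $\xi_\pi(s_1, \ldots, s_n) = \prod_{B \in \pi} \mathbf{1}[s_i = s_j \;\forall\, i, j \in B]$ indexed by noncrossing partitions $\pi$ of $\{1, \ldots, n\}$ with blocks of even size. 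As recalled in the paper, this planar tensor category is already generated by the two basic invariants $\xi_\cap = \xi_{\{1,2\}}$ and $\xi_{\{1,2,3,4\}}$. Hence it suffices to show that each $\xi_n$ is a linear combination of noncrossing even-block $\xi_\pi$'s, and conversely that $\xi_\cap$ and $\xi_{\{1,2,3,4\}}$ lie in the planar tensor category generated by the $\xi_n$'s.

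The combinatorial key is the claim that in $(\Z/2\Z)^{\ast d}$, $s_1 \cdots s_n = e$ if and only if there exists a noncrossing pair partition $p$ of $\{1, \ldots, n\}$ with $s_i = s_j$ for every $\{i, j\} \in p$. The ``if'' direction is by iterated cancellation of the innermost pair; the ``only if'' direction is by induction on the length of a reduction to the empty word, since each cancellation step records a nested, noncrossing pairing. Applying inclusion--exclusion over the finite collection of noncrossing pair partitions then expresses $\xi_n$ as a signed sum of $\xi_\pi$'s, where $\pi$ runs over partition-lattice joins of noncrossing pair partitions. Since the noncrossing partitions form a sublattice of the partition lattice, each such $\pi$ is noncrossing, and its blocks---being unions of pair blocks---have even size. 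This establishes the forward direction.

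For the reverse direction, $\xi_\cap = \xi_2$ is immediate, and $\xi_{\{1,2,3,4\}}$ is recovered from the $n = 4$ instance of the inclusion--exclusion identity, namely $\xi_4 = \xi_{\{\{1,2\},\{3,4\}\}} + \xi_{\{\{1,4\},\{2,3\}\}} - \xi_{\{1,2,3,4\}}$, whose first two summands arise as a tensor juxtaposition and as a pair of nested caps built out of $\xi_2$. Solving for $\xi_{\{1,2,3,4\}}$ and invoking Tannaka--Krein yields $\bG \cong H^+(d)$. The $\F_d$ case proceeds identically: the combinatorial lemma becomes ``$s_1 \cdots s_n = e$ in $\F_d$ iff there is a noncrossing pair partition $p$ with $s_j = s_i^{-1}$ for each $\{i < j\} \in p$'', and the $J$-twisted $H^+(J_{2d})$-invariant on a block of size $2k$ imposes the alternating pattern $(a, a^{-1}, a, a^{-1}, \ldots)$---exactly what joining such pair partitions produces, and exactly the word that collapses to $e$ by free-group cancellation. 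The third assertion then follows from the second via Proposition~\ref{prop.link-with-equivariant-bimodules}, since the $d$-regular tree is the Cayley graph of $(\Z/2\Z)^{\ast d}$ with its canonical generating set. The main obstacle is the cancellation lemma in the free product; once that is in place, the rest is planar-partition bookkeeping.
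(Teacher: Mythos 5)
Your top-level reduction is sound and matches the paper's: both sides are universal compact quantum groups given by invariant vectors, so it suffices to show that every $\xi_n$ is invariant for the hyperoctahedral quantum group and, conversely, that its defining $2$-box and $4$-box vectors are invariant for the planar quantization. Your converse direction is correct and essentially the paper's argument: the identity $\xi_4=\xi_{\{\{1,2\},\{3,4\}\}}+\xi_{\{\{1,4\},\{2,3\}\}}-\xi_{\{1,2,3,4\}}$ exhibits the $4$-block vector inside the category generated by $\xi_2$ and $\xi_4$ (the paper phrases this via the projection $P$ onto $\lspan\{(ss^{-1})\mid s\in S\}$ extracted from $\xi_4$).

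The forward direction, however, has a genuine gap. After the (correct) cancellation lemma, you expand $\xi_n$ by inclusion--exclusion over noncrossing pair partitions and claim that the resulting joins, taken in the full partition lattice, are again noncrossing because ``the noncrossing partitions form a sublattice of the partition lattice''. This is false: $NC(n)$ is a lattice, but its join differs from the partition-lattice join. Concretely, for $n=8$ the noncrossing pairings $p=\{\{1,4\},\{2,3\},\{5,8\},\{6,7\}\}$ and $q=\{\{1,8\},\{4,5\},\{2,7\},\{3,6\}\}$ have partition-lattice join $\{\{1,4,5,8\},\{2,3,6,7\}\}$, which is crossing (take $1<2<4<6$). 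So your expansion a priori involves vectors $\xi_\pi$ with $\pi$ crossing, which are not $H^+(d)$-invariant, and the conclusion $\xi_n\in\lspan\{\xi_\pi\mid\pi\ \text{noncrossing with even blocks}\}$ does not follow as stated. (In the example above the total coefficient of that crossing join does vanish after summing over all subsets with the same join, but proving that the crossing contributions always cancel is exactly the nontrivial content and is missing.) The paper closes this direction instead by induction on $n$: from $\xi_4$ one gets the morphism $P$, builds the projections $Q_k$ onto words with no adjacent cancellation, and writes $\xi_{2n}$ as $(\xi_2\ot\xi_{2n-2})$ plus terms of the form $(Q_k\ot 1^{\ot(2n-k)})(1^{\ot(k-1)}\ot\xi_2\ot 1^{\ot(2n-k-1)})\xi_{2n-2}$, using that nonempty reduced words are $\neq e$; you need either this induction or an honest cancellation argument, both for $(\Z/2\Z)^{\ast d}$ and for the $\F_d$/$H^+(J_{2d})$ case (where, additionally, the identification of the twisted category should be made precise rather than asserted). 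Your deduction of the third statement from the second, via the Cayley-graph description of the $d$-regular tree and Proposition \ref{prop.link-with-equivariant-bimodules}, is fine.
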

\begin{proof}
Denote by $\bG_1$ the planar quantization of $\F_d$ and denote $\bG_2 = H^+(J_{2d})$. We realize both as generated by the entries of an $S \times S$ unitary matrix, where the defining relations for $\bG_1$ are given by the $\bG_1$-invariant vectors
$$\xi_n = \sum_{s \in S^n : s_1 \cdots s_n = e} (s_1,\ldots, s_n) \quad\text{for all $n \geq 2$,}$$
and where the defining relations for $\bG_2$ are given by the $\bG_2$-invariant vectors $\xi_2$ and $\eta = \sum_{s \in S} (s,s^{-1},s,s^{-1})$. In all these formulas, we denote for $s \in S^n$ by $(s) \in \ell^2(S^n)$ the canonical basis vector.

To prove the first statement, it suffices to prove that all $\xi_n$ are $\bG_2$-invariant and that $\eta$ is $\bG_1$-invariant.

Using $\xi_4$, we find that the operator $T : \ell^2(S^2) \to \ell^2(S^2)$ given by
$$\langle T (s_1 s_2),(t_1 t_2)\rangle = \begin{cases} 1 &\quad\text{if $s_1 s_2 = t_1 t_2$,}\\ 0 &\quad\text{if $s_1 s_2 \neq t_1 t_2$,}\end{cases}$$
is a $\bG_1$-morphism. When $s_1,s_2,t_1,t_2$ are generators in $\F_d$, the relation $s_1 s_2 = t_1 t_2$ precisely holds in two cases: first when $s_2 = s_1^{-1}$ and $t_2 = t_1^{-1}$, and secondly if $s_1 = t_1$ and $s_2 = t_2$. These two cases overlap in the case where $t_2 = s_2 = s_1^{-1} = t_1^{-1}$. It follows that $T = 1 + \xi_2 \xi_2^* - P$ where $P$ is the orthogonal projection onto $\lspan \{(ss^{-1}) \mid s \in S\}$. So, $P$ is a $\bG_1$-morphism. Since $\eta = (1 \ot P \ot 1)(\xi_2 \ot \xi_2)$, it follows that $\eta$ is $\bG_1$-invariant.

Conversely, using $\eta$, we find that $P$ is a $\bG_2$-morphism. Note that $\xi_n = 0$ when $n$ is odd. We prove by induction on $n$ that $\xi_{2n}$ is $\bG_2$-invariant. When $n = 1$, this holds by definition. Assume that $n \geq 2$ and that $\xi_{2n-2}$ is $\bG_2$-invariant. Define the $\bG_2$-morphism $Q_2 := 1 - P$ and, for all $n \geq 3$, the $\bG_2$-morphism
$$Q_n = (Q_2 \ot 1^{\ot (n-2)}) (1 \ot Q_2 \ot 1^{\ot (n-3)}) \cdots (1^{\ot (n-2)} \ot Q_2) \; .$$
Note that $Q_n$ is the orthogonal projection onto
$$\lspan \bigl\{(s_1,\ldots,s_n) \in S^n \bigm|  s_1 \cdots s_n \;\;\text{is a reduced word} \bigr\} \; .$$
When $s \in S^{2n}$ and $s_1 \cdots s_{2n} = e$, the word $s_1 \cdots s_{2n}$ is not a reduced word. So, the set $\Xi_n = \{s \in S^{2n} \mid s_1 \cdots s_{2n} = e\}$ can be partitioned into the subsets
$$\Xi_{n,k} = \bigl\{s \in S^{2n} \bigm| s_1 \cdots s_{2n} = e \; , \;\; s_1 \cdots s_k \;\;\text{is a reduced word, and}\;\; s_{k+1} = s_k^{-1}\bigr\}$$
for all $1 \leq k \leq 2n-1$. It then follows that
$$\xi_{2n} = (\xi_2 \ot \xi_{2n-2}) + \sum_{k=2}^{2n-1} (Q_k \ot 1^{\ot (2n-k)})(1^{\ot (k-1)} \ot \xi_2 \ot 1^{\ot (2n-k-1)}) \xi_{2n-2} \; .$$
So, $\xi_{2n}$ is $\bG_2$-invariant.

The proof of the second statement is entirely similar. As for the last statement, it suffices to note that the $d$-regular tree is the Cayley graph of $(\Z/2\Z)^{\ast d}$ with its canonical symmetric generating set.
\end{proof}

Motivated by the property~(T) discrete quantum groups defined in \cite{VV18}, we also introduce the following variant of planar quantizations of discrete groups, adapted to non-symmetric generating sets. Given a countable group $\Gamma$ with a, not necessarily symmetric, finite generating set $F \subset \Gamma$, we can define the planar quantization $\bG_1$ of $(\Gamma,F \cup F^{-1})$ as in Definition \ref{def.planar-quantization-discrete-group}. We now define a canonical intermediate
$$\cO(\bG_1) \to \cO(\bG) \to \C[\Gamma] \; .$$

\begin{definition}\label{def.G-with-signs}
Let $\Gamma$ be a countable group generated by a finite subset $F \subset \Gamma$. We define $\bG$ as generated by the entries $u_{st}$ of an $F \times F$ unitary representation $U$ such that the $F^{-1} \times F^{-1}$ matrix $V$ with entries $v_{s^{-1}t^{-1}} = u_{st}^*$ is unitary and such that for every $n \geq 1$ and every $\eps \in \{\pm 1\}^n$, the vector
\begin{equation}\label{eq.relations-with-signs}
\xi_{n,\eps} \in \ell^2(F^{\eps_1} \times \cdots \times F^{\eps_n}) : \xi_{n,\eps}(s_1,\ldots,s_n) = \begin{cases} 1 &\quad\text{if $s_1 \cdots s_n = e$ in $\Gamma$,}\\ 0 &\quad\text{otherwise,}\end{cases}
\end{equation}
is invariant under the tensor product of $U^{\eps_i}$, $i=1,\ldots,n$, where $U^1 := U$ and $U^{-1} = V$.
\end{definition}

Note that by definition, $\bG$ is of Kac type and $\overline{U} \cong V$. Note that we have the canonical surjective Hopf $*$-algebra homomorphism $\rho : \cO(\bG) \to \C[\Gamma] : \rho(u_{st}) = \delta_{s,t} \, s$.

When $\widehat{\bG_1}$ is the planar quantization of $(\Gamma,F \cup F^{-1})$ with fundamental representation $W$ for $\bG_1$, then $\bG$ is the closed quantum subgroup of $\bG_1$ that arises by adding the relations $w_{st} = 0 = w_{ts}$ whenever $s \in F$, $t \not\in F$ and whenever $s \in F^{-1}$, $t\not\in F^{-1}$. The corresponding surjective Hopf $*$-algebra homomorphism $\pi : \cO(\bG_1) \to \cO(\bG)$ is given by
$$\pi(w_{st}) = \begin{cases} u_{st} &\quad\text{if $s,t \in F$,}\\ u_{s^{-1}t^{-1}}^* &\quad\text{if $s,t \in F^{-1}$,}\\ 0 &\quad\text{otherwise.}\end{cases}$$
In particular, if $F$ happens to be symmetric, then $\pi$ is an isomorphism. The following definition is therefore unambiguous.

\begin{definition}\label{def.planar-quantization-not-symmetric}
Given a countable group $\Gamma$ with a, not necessarily symmetric, finite generating set $F \subset \Gamma$, we consider the compact quantum group $\bG$ defined in \ref{def.G-with-signs} and we call its dual $\widehat{\bG}$ the planar quantization of $(\Gamma,F)$.
\end{definition}

Note that when $F$ is ``partially symmetric'', in the sense that $F \neq F \cap F^{-1} \neq \emptyset$, the fundamental representation $U$ is reducible because $u_{st} = 0 = u_{ts}$ whenever $s \in F \cap F^{-1}$ and $t \in F \setminus F^{-1}$.

In \cite{CMSZ91}, the concept of a triangle presentation was introduced, and this is a subset $T \subset F \times F \times F$ of the triple power of a finite set $F$, satisfying a number of combinatorial conditions (see also \cite[Definition 2.3]{VV18}). The groups $\Gamma_T$ with generating set $F$ and relations $str=e$ for all $(s,t,r) \in T$ are, by \cite{CMSZ91}, precisely the groups that act simply transitively on the vertices of an $\widetilde{A_2}$ building. In \cite[Definition 5.1]{VV18} the compact quantum group $\bG_T$ was defined, generated by the entries $u_{ij}$ of an $F \times F$ unitary representation $U$, such that the vector $\xi_T \in \ell^2(F \times F \times F)$ defined by
\begin{equation}\label{eq.xiT}
\xi_T(s,t,r) = \begin{cases} 1 &\quad\text{if $(s,t,r) \in T$,}\\ 0 &\quad\text{otherwise,}\end{cases}
\end{equation}
is invariant under the $3$-fold tensor power of $U$.

It was then proven in \cite{VV18} that in numerous cases, the dual $\widehat{\bG_T}$ is a discrete quantum group with property~(T). We now prove that $\widehat{\bG_T}$ is the planar quantization of $(\Gamma_T,F)$, which demonstrates that the planar quantization procedure does lead to very interesting discrete/compact quantum groups.

\begin{proposition}
Let $T \subset F \times F \times F$ be a triangle presentation. Then $\widehat{\bG_T}$ is isomorphic with the planar quantization of $(\Gamma_T,F)$.
\end{proposition}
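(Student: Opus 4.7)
The plan is to identify $\bG_T$ with the planar quantization $\bG$ of $(\Gamma_T,F)$ from Definitions~\ref{def.G-with-signs} and~\ref{def.planar-quantization-not-symmetric} by matching their fundamental $F\times F$ unitary representations $U$. Since both compact quantum groups are given by universal properties with respect to $U$, the isomorphism reduces to constructing mutually inverse surjective Hopf $*$-algebra homomorphisms $\cO(\bG)\leftrightarrows\cO(\bG_T)$ that send $u_{st}\mapsto u_{st}$, which in turn reduces to verifying that each side satisfies the defining relations of the other.

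The direction $\cO(\bG_T)\to\cO(\bG)$ is immediate once one observes that $\xi_T=\xi_{3,(1,1,1)}$ in the notation of Definition~\ref{def.G-with-signs}. This equality is a fundamental combinatorial property built into the definition of a triangle presentation: the axioms, together with their role in forcing $\Gamma_T$ to act simply transitively on its $\widetilde{A_2}$-building, imply that the only triples $(s,t,r)\in F\times F\times F$ with $str=e$ in $\Gamma_T$ are precisely those in $T$. The invariance of $\xi_{3,(1,1,1)}$ under $U^{\otimes 3}$ is part of the defining relations of $\bG$, and thus gives the invariance of $\xi_T$ required by $\bG_T$.

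The converse direction $\cO(\bG)\to\cO(\bG_T)$ is the substantive one. It demands that $\bG_T$ be of Kac type (established in~\cite{VV18} using the cyclic invariance of $T$), that the matrix $V$ with $v_{s^{-1}t^{-1}}=u_{st}^*$ be unitary, and that every $\xi_{n,\eps}$ be invariant under $U^{\eps_1}\otimes\cdots\otimes U^{\eps_n}$. Unitarity of $V$ is equivalent to the Kac property, and the $n=2$ cases of $\xi_{2,\eps}$ follow directly from the four orthogonality relations $UU^*=U^*U=VV^*=V^*V=I$. For $n\geq 3$, I would exploit the $2$-dimensional simply connected $\widetilde{A_2}$-building $X$ on which $\Gamma_T$ acts simply transitively: an equality $s_1\cdots s_n=e$ with $s_i\in F^{\eps_i}$ corresponds to a closed loop of length $n$ in the $1$-skeleton of $X$ based at the identity vertex, and by simple connectedness this loop bounds a planar triangulated topological disk whose interior $2$-cells are triangles of $X$, each labeled by an element of $T$.

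Reading such a disk as a planar morphism diagram realises $\xi_{n,\eps}$ as an iterated composition and tensor product of copies of $\xi_T$ (one for each interior triangle, placed in the appropriate orientation) together with $\xi_2$ and $\xi_2^*$ (for interior edges of the triangulation, and for the bookkeeping of the signs $\eps_i$ when a boundary edge is traversed against its preferred orientation). Since $\xi_T$ and $\xi_2$ are both $\bG_T$-invariant, every such composition is $\bG_T$-invariant, yielding the desired invariance of $\xi_{n,\eps}$. I expect the main obstacle to lie in organising this disk decomposition as a genuine planar morphism in the graph category $\cP$ of Section~\ref{sec.bi-labeled-graphs}; this can be handled by induction on the number of triangles, cutting the disk along a suitable interior edge or peeling off a boundary triangle to express $\xi_{n,\eps}$ as a composition of morphisms produced from strictly smaller sub-disks, so that planarity is preserved at every step.
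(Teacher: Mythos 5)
Your overall skeleton matches the paper: the easy direction is exactly the observation that, by \cite[Proposition 3.2]{CMSZ91}, $str=e$ in $\Gamma_T$ iff $(s,t,r)\in T$, so $\xi_T=\xi_{3,+++}$ and one universal surjection is free; and the substantive task is to show that the $\bG_T$-generators satisfy all the relations of Definition \ref{def.G-with-signs}, i.e.\ that every $\xi_{n,\eps}$ is $\bG_T$-invariant. The Kac-type/unitarity-of-$V$ and $n\le 2$ remarks are fine. The gap is in your treatment of $n\ge 3$. The statement to be proved is invariance of the \emph{vector} $\xi_{n,\eps}=\sum_{s_1\cdots s_n=e}(s_1,\ldots,s_n)$, so you must exhibit the whole sum as an image of known invariant vectors under $\bG_T$-morphisms. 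Your disk-filling argument operates word by word: each individual relation $s_1\cdots s_n=e$ bounds some triangulated disk, but different words of the same length require disks of different combinatorial types, so no single planar composition of copies of $\xi_T$ and $\xi_2$ produces $\xi_{n,\eps}$; and summing over diagram types overcounts, since a fixed diagram applied to $\xi_T$'s and $\xi_2$'s returns each word with multiplicity equal to the number of admissible interior labelings (fillings), which is generally not $1$. Moreover, the disk picture only tells you that words representing $e$ bound disks; to control the error terms in any decomposition you also need to know that certain words are \emph{not} trivial in $\Gamma_T$ (non-triviality of reduced words), which simple connectedness of the building does not give you.

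This is exactly what the paper's proof supplies and your sketch omits: an induction on the pair $(n,\inv(\eps))$, where at each step the set of words is partitioned by explicit $\bG_T$-morphisms --- the projections of \cite[Lemma 5.8]{VV18}, the projections $Q,Q'$ and the exchange partial isometry $\si$ implementing the unique rewriting $st^{-1}=r^{-1}l$, the maps $X,Y$ built from $\xi_T$, and the projections $R_k,R'_k$ onto non-composable strings --- so that each word is hit exactly once (multiplicity one rests on the uniqueness clauses in the triangle-presentation axioms), and the leftover terms such as $R_n\xi_{n,\eps}$ are killed by \cite[Proposition 3.2]{CMSZ91}. Your ``peel off a boundary triangle'' induction is a reasonable intuition for why such a decomposition exists, but as stated it is an induction on a single disk rather than on the vector $\xi_{n,\eps}$, and you locate the difficulty in planarity of the diagram rather than in the actual crux: the partition/multiplicity bookkeeping and the vanishing of reduced-word terms. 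Carrying your approach through correctly essentially forces you to reconstruct the paper's explicit morphism identities, so as written the proposal does not yet constitute a proof of the hard direction.
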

\begin{proof}
Let $\widehat{\bG_p}$ be the planar quantization of $(\Gamma_T,F)$, so that $\bG_p$ is generated by the entries $u_{st}$ of an $F \times F$ unitary representation $U$ subject to the relations in Definition \ref{def.G-with-signs}.

As in \cite{VV18}, given $s,t \in F$, we write $s \to t$ if there exists a (necessarily unique) $r \in F$ such that $(s,t,r) \in T$. Throughout the proof, we will repeatedly use \cite[Proposition 3.2]{CMSZ91} saying that for all $m,k \geq 0$ with $m+k \geq 1$,
\begin{align*}
s_1 \cdots s_m t_k^{-1} \cdots t_1^{-1} \neq e \;\;\text{in $\Gamma_T$ when }&\text{$s_i,t_j \in F$ for all $i,j$, $s_i \not\to s_{i+1}$ for all $1 \leq i \leq m-1$,}\\ &\text{$t_i \not\to t_{i+1}$ for all $1 \leq i \leq k-1$ and $s_m \neq t_k$.}
\end{align*}
In particular, given $s,t,r \in F$, we have that $str = e$ in $\Gamma_T$ iff $(s,t,r) \in T$. Therefore, $\xi_T = \xi_{3,+++}$. With some abuse of notation, we also denote by $U$ the fundamental representation of $\bG_T$ and denote by $V$ the $F^{-1} \times F^{-1}$ matrix with entries $v_{s^{-1}t^{-1}} = u_{st}^*$. It thus suffices to prove that all vectors $\xi_{n,\eps}$ are $\bG_T$-invariant for all $n \geq 1$ and $\eps \in \{\pm 1\}^n$.

By definition, $\xi_{2,+-}$ and $\xi_{2,-+}$ are $\bG_T$-invariant. By \cite[Lemma 5.8]{VV18}, the orthogonal projections onto
\begin{alignat*}{2}
& \lspan\{(s s^{-1}) \mid s \in F\} \subset \ell^2(F \times F^{-1}) \;,\quad && \lspan\{(s^{-1} s) \mid s \in F\} \subset \ell^2(F^{-1} \times F) \; ,\\
& \lspan\{(s t) \mid s,t \in F, s \to t\} \subset \ell^2(F \times F) \; ,\quad && \lspan\{(t^{-1} s^{-1}) \mid s,t \in F, s \to t\} \subset \ell^2(F^{-1} \times F^{-1}) \; ,
\end{alignat*}
are all $\bG_T$-morphisms. Denoting $W = \{(s^{-1},t) \in F^{-1} \times F \mid s \neq t\}$ and $W' = \{(s,t^{-1}) \in F \times F^{-1} \mid s \neq t\}$, we also get that the projections $Q$, $Q'$ on the linear span of $W$, $W'$ are $\bG_T$-morphisms.

By the axioms of triangle presentations, for every $(s,t^{-1}) \in W'$, there is a unique $(r^{-1},l) \in W$ such that $s t^{-1} = r^{-1} l$ holds in $\Gamma_T$. We write $(r^{-1},l) = \si(s,t^{-1})$. We also denote by $\si$ the corresponding partial isometry from $\ell^2(F \times F^{-1})$ to $\ell^2(F^{-1} \times F)$, with $\si \si^* = Q$ and $\si^* \si = Q'$. One checks that
$$\si = Q (1 \ot \xi_T \ot 1 \ot \xi_{2,+-})^* (\xi_{2,-+} \ot 1 \ot \xi_T \ot 1) Q' \; .$$
So, $\si$ is a $\bG_T$-morphism.

Combining the diagonal projections above, the orthogonal projections $R_{k}$ and $R'_k$ on the linear spans of
\begin{align*}
& \bigl\{(s) \bigm| s \in F^k \;\text{and for all $i \in \{1,\ldots,k-1\}$, we have $s_i \not\to s_{i+1}$}\;\bigr\} \quad\text{and}\\
& \bigl\{(s) \bigm| s \in (F^{-1})^k \;\text{and for all $i \in \{1,\ldots,k-1\}$, we have $s_{i+1}^{-1} \not\to s_{i}^{-1}$}\;\bigr\} \; ,
\end{align*}
are $\bG_T$-morphisms.

For every $\eps \in \{\pm\}^n$, we denote by
$$\inv(\eps) = \sum_{i : \eps_i = -1} \#\{j \mid j > i \;\text{and}\; \eps_j = +1\}$$
the number of inversions of signs, so that $\inv(\eps) = 0$ iff $\eps$ is of the form $(++ \cdots + - \cdots -)$.

Since $e \not\in F$, we have that $\xi_{1,+} = 0$ and $\xi_{1,-} = 0$. Also, $st \neq e$ for all $s,t \in F$, so that $\xi_{2,++} = 0$ and $\xi_{2,--} = 0$. We already observed that $\xi_{2,+-}$ and $\xi_{2,-+}$ are $\bG_T$-invariant. Altogether, we have proven that $\xi_{n,\eps}$ is $\bG_T$-invariant whenever $n \in \{1,2\}$.

Define the order relation $(n',\eps') < (n,\eps)$ iff either $n' < n$ or ($n'=n$ and $\inv(\eps') < \inv(\eps)$). By induction, it suffices to write every $\xi_{n,\eps}$ with $n \geq 3$ in terms of $\bG_T$-morphisms and vectors $\xi_{n',\eps'}$ with $(n',\eps') < (n,\eps)$.

First consider the case where $\inv(\eps) > 0$. So, there exists a $k \in \{1,\ldots,n-1\}$ with $\eps_k = -1$ and $\eps_{k+1}=+1$. Write $\eps' = (\eps_1,\ldots,\eps_{k-1},+1,-1,\eps_{k+2},\ldots,\eps_n)$ and note that $\inv(\eps') < \inv(\eps)$. Also write $\eps\dpr = (\eps_1,\ldots,\eps_{k-1},\eps_{k+2},\ldots,\eps_n) \in \{\pm\}^{n-2}$. Then,
$$\xi_{n,\eps} = (1^{\ot (k-1)} \ot \xi_{2,-+} \ot 1^{\ot (n-k-1)}) \xi_{n-2,\eps\dpr} + (1^{\ot (k-1)} \ot \sigma \ot 1^{\ot (n-k-1)}) \xi_{n,\eps'} \; ,$$
as required.

Next consider the case where $\inv(\eps) = 0$, which we divide in three subcases: $\eps = (+ \cdots +)$, $\eps = (- \cdots -)$ and the case where $\eps$ consists of $m$ plus signs followed by $n-m$ minus signs with $1 \leq m \leq n-1$.

We make use of the following $\bG_T$-morphisms associated with $\xi_T$~:
\begin{align*}
& X : \ell^2(F^{-1}) \to \ell^2(F \times F) : X(s^{-1}) = \sum_{(t,r) \in F^2 : (s,t,r) \in T} (tr) \quad\text{and}\\
& Y : \ell^2(F) \to \ell^2(F^{-1} \times F^{-1}) : Y(s) = \sum_{(t,r) \in F^2 : (s,t,r) \in T} (r^{-1} t^{-1}) \; .
\end{align*}

First consider the case where $\eps = (+ \cdots +)$ consists of $n$ plus signs. For every $k \in \{1,\ldots,n-1\}$, denote by $\eps^{(k)} \in \{\pm\}^{n-1}$ a string of $n-2$ plus signs and a single minus sign in position $k$. Then,
$$\xi_{n,\eps} = (X \ot 1^{\ot (n-2)})\xi_{n-1,\eps^{(1)}} + \sum_{k=1}^{n-2} (R_{k+1} \ot 1^{\ot (n-k-1)})(1^{\ot k} \ot X \ot 1^{\ot (n-k-2)})\xi_{n-1,\eps^{(k+1)}} + R_n \xi_{n,\eps} \; .$$
By \cite[Proposition 3.2]{CMSZ91}, $R_n \xi_{n,\eps} = 0$, and we are done with the first case.

The case where $\eps = (- \cdots -)$ consists of $n$ minus signs, is handled analogously.

Finally consider the case where $\eps$ consists of $m$ plus signs followed by $n-m$ minus signs with $1 \leq m \leq n-1$. Define $\eps' = \{\pm\}^{n-2}$ consisting of $m-1$ plus signs followed by $n-m-1$ minus signs.
Note that
$$
\xi_{n,\eps} = (1^{\ot (m-1)} \ot \xi_{2,+-} \ot 1^{\ot (n-m-1)})\xi_{n-2,\eps'} + \eta \quad\text{with}\;\; \eta := (1^{\ot (m-1)} \ot Q' \ot 1^{\ot (n-m-1)}) \xi_{n,\eps} \; .
$$
When $1 \leq k \leq m-1$ or $m+1 \leq k \leq n-1$, denote by $\eps^{(k)} \in \{\pm\}^{n-1}$ the string of signs that one obtains by replacing in $\eps$ the two consecutive (same) signs $\eps_k \eps_{k+1}$ by the single sign $-\eps_k$.

If $m=1$, write $\eta' = \eta$. If $m \geq 2$, define $\eta' = (R_m \ot 1^{\ot (n-m)})(1^{\ot (m-1)} \ot Q' \ot 1^{\ot (n-m-1)})\xi_{n,\eps}$ and note that
\begin{align*}
\eta =  \; & (1^{\ot (m-1)} \ot Q' \ot 1^{\ot (n-m-1)})(X \ot 1^{\ot (n-2)})\xi_{n-1,\eps^{(1)}} \\
&\quad + \sum_{k=1}^{m-2} (R_{k+1} \ot 1^{\ot (m-k-2)} \ot Q' \ot 1^{\ot (n-m-1)})(1^{\ot k} \ot X \ot 1^{\ot (n-k-2)})\xi_{n-1,\eps^{(k+1)}} + \eta' \; .
\end{align*}
If $m = n-1$, by \cite[Proposition 3.2]{CMSZ91}, we have $\eta' = 0$ and we are done.

If $m \leq n-2$, we denote $T = (R_m \ot 1^{\ot (n-m)})(1^{\ot (m-1)} \ot Q' \ot 1^{\ot (n-m-1)})$ and write
\begin{align*}
\eta' =  \; & T (1^{\ot m} \ot Y \ot 1^{\ot (n-m-2)}) \xi_{n-1,\eps^{(m+1)}} \\
&\quad + \sum_{k=1}^{n-m-2} T (1^{\ot m} \ot R'_{k+1} \ot 1^{\ot (n-m-k-1)})(1^{\ot (m+k)} \ot Y \ot 1^{\ot (n-m-k-2)})\xi_{n-1,\eps^{(m+k+1)}} \\
&\quad\quad + T(1^{\ot m} \ot R'_{n-m}) \xi_{n,\eps} \; .
\end{align*}
By \cite[Proposition 3.2]{CMSZ91}, the last term is zero. This concludes the proof.
\end{proof}

\section{Quantum isomorphism of graphs and examples}\label{sec.quantum-isomorphism}

Recall that a matrix $(v_{ij})_{(i,j) \in I \times J}$ of bounded operators on a Hilbert space $H$ is called a \emph{magic unitary} if
\begin{equation}\label{eq.magic}
v_{ij} = v_{ij}^* = v_{ij}^2 \quad\text{and}\quad \sum_{k \in I} v_{kj} = 1 = \sum_{k \in J} v_{ik} \quad\text{strongly, for all $i \in I$, $j \in J$.}
\end{equation}
So, a magic unitary is nothing else than a unitary element $V \in B(H \ot \ell^2(J), H \ot \ell^2(I))$ for which all matrix entries are self-adjoint projections.

The notion of quantum isomorphism of two finite graphs is usually defined in the context of quantum strategies for the graph isomorphism game. In \cite[Theorem 4.4]{LMR17}, it was proven that such a \emph{quantum commuting isomorphism} of finite graphs is equivalent to finding a magic unitary that intertwines the adjacency matrices. That definition, \cite[Definition 4.1]{LMR17}, can be immediately generalized to infinite graphs as follows.

\begin{definition}\label{def.quantum-iso}
We call two graphs $\Pi$ and $\Pi'$ \emph{quantum isomorphic} if there exists a nonzero Hilbert space $H$ and a magic unitary $(v_{ij})_{(i,j) \in V(\Pi) \times V(\Pi')}$ on $H$ such that
\begin{equation}\label{eq.commute-adjacency}
\sum_{k \in V(\Pi') : k \sim j} v_{ik} = \sum_{k \in V(\Pi) : k \sim i} v_{kj} \quad\text{for all $i \in V(\Pi)$ and $j \in V(\Pi')$.}
\end{equation}
\end{definition}

In \cite[Definition 4.4]{BCEHPSW18}, an algebraic variant of quantum isomorphism was defined and also this definition has an immediate generalization to infinite graphs.

\begin{definition}\label{def.algebraic-iso}
We call two locally finite graphs $\Pi$ and $\Pi'$ \emph{algebraically isomorphic} if there exists a nonzero $*$-algebra $\cAtil$ generated by self-adjoint idempotents $(v_{ij})_{(i,j) \in V(\Pi) \times V(\Pi')}$ that are orthogonal, meaning that $v_{ik'} v_{jk'} = 0 = v_{ki'} v_{kj'}$ if $i \neq j$, $i' \neq j'$, and that satisfy
    $$\sum_{k \in I'} v_{ik} = 1 = \sum_{k \in I} v_{kj} \quad\text{strictly, and}\quad \sum_{k \in I' : k \sim j} v_{ik} = \sum_{k \in I : k \sim i} v_{kj} \quad\text{for all $i \in V(\Pi)$, $j \in V(\Pi')$.}$$
\end{definition}

Note that contrary to the Hilbertian setting in Definition \ref{def.quantum-iso}, self-adjoint idempotents that sum up to $1$ strictly in an abstract $*$-algebra are not necessarily orthogonal. Therefore we have to add this assumption in the definition of algebraic isomorphism.

In \cite[Theorem 7.16]{MR19} (see also \cite{MR20}), it is proven that two finite graphs $\Pi,\Pi'$ are quantum isomorphic if and only if for every finite planar graph $K$, the number of graph homomorphisms from $K$ to $\Pi$, resp.\ from $K$ to $\Pi'$, are equal. One cannot expect the same statement to hold for infinite graphs, because typically this homomorphism count only gives $0$ or $+\infty$. If $\Pi$ is connected and locally finite, it is thus more natural to count pointed homomorphisms, in the following way.

\begin{notation}
Let $K$ be a connected finite graph and $x \in V(K)$. Let $\Pi$ be a locally finite graph and $i \in V(\Pi)$. We write
$$\homom(K,x,\Pi,i) = \# \bigl\{ \vphi : V(K) \to V(\Pi) \bigm| \vphi \;\text{is a graph homomorphism and}\;\; \vphi(x) = i \; \bigr\} \; .$$
\end{notation}

Note that when $\Pi$ is a finite, vertex transitive graph, $\homom(K,x,\Pi,i) = |V(\Pi)|^{-1} \, \homom(K,\Pi)$.

\begin{definition}\label{def.planar-isomorphism}
Let $\Pi,\Pi'$ be connected locally finite graphs with vertex sets $I, I'$ and quantum orbits $(I_a)_{a \in \cE}$ and $(I'_a)_{a \in \cE'}$. We say that $\Pi$ and $\Pi'$ are \emph{planar isomorphic} if there exists a bijection $\rho : \cE \to \cE'$ such that for every connected finite planar graph $K$ and every $x \in V(K)$, we have
$$\homom(K,x,\Pi,i) = \homom(K,x,\Pi',i') \quad\text{whenever $i \in I_a$, $i' \in I'_{\rho(a)}$, $a \in \cE$.}$$
\end{definition}

For a planar graph $K$ and an arbitrary vertex $x \in V(K)$, the bi-labeled graph $\cK = (K,x,x)$ is planar in the sense of \cite[Definition 5.4]{MR19}. So, $\cK \in \cP(1,1)$ and
$$\homom(K,x,\Pi,i) = T^\cK_{ii} \; ,$$
where the right hand side is defined w.r.t.\ $\Pi$. Considering the equivalence relation $\approx$ on $V(\Pi)$ defined in \eqref{eq.equiv-rel-on-I} w.r.t.\ the graph category of planar bi-labeled graphs, we thus have by definition that for all planar graphs $K$, $\homom(K,x,\Pi,i) = \homom(K,x,\Pi,j)$ whenever $i \approx j$, i.e.\ whenever $i$ and $j$ lie in the same quantum orbit.

By \cite[Theorem 4.9]{BCEHPSW18} and \cite[Theorem 7.16]{MR19}, for finite graphs, quantum isomorphism, algebraic isomorphism and equality of homomorphism counts from planar graphs are all equivalent. Moreover, these properties are proven there, for finite graphs, to be equivalent to the existence of a specific kind of monoidal equivalence between the representation categories of the (compact) quantum automorphism groups. We now prove that these equivalences still hold for infinite graphs that are connected and locally finite. In particular, quantum isomorphism of connected locally finite graphs still implies monoidal equivalence of the locally compact quantum automorphism groups, see Remark \ref{rem.bigalois}.

In the following theorem, all notations, in particular all spaces $\Mor^\Pi(n,m)$ of matrices and all quantum orbit partitions $(I_a)_{a \in \cE}$, are taken w.r.t.\ the graph category of planar bi-labeled graphs, so that the associated locally compact quantum group is the quantum automorphism group $\QAut \Pi$ of the graph $\Pi$. Recall the notation
$$\cL(n,m) = \bigl\{ \cK = (K,x,y) \in \cP(n+1,m+1) \bigm| \text{$K$ is connected, $x_0 = y_0$ and $x_n = y_m$}\, \bigr\} \; .$$

\begin{theorem}\label{thm.quantum-iso-criteria}
Let $\Pi,\Pi'$ be connected locally finite graphs with vertex sets $I, I'$ and quantum orbits $(I_a)_{a \in \cE}$ and $(I'_a)_{a \in \cE'}$. Then the following statements are equivalent.
\begin{enumlist}
\item The graphs $\Pi$ and $\Pi'$ are \emph{quantum isomorphic} in the sense of Definition \ref{def.quantum-iso}.
\item The graphs $\Pi$ and $\Pi'$ are \emph{algebraically isomorphic} in the sense of Definition \ref{def.algebraic-iso}.
\item The graphs $\Pi$ and $\Pi'$ are \emph{planar isomorphic} in the sense of Definition \ref{def.planar-isomorphism}.
\item The unitary $2$-categories $\cC(\cP,\Pi)$ and $\cC(\cP,\Pi')$ are equivalent in the following way: there exists a bijection $\rho : \cE \to \cE'$ and bijective linear maps
\begin{multline*}
\Psi_{a-b}^{n,m} : \Mor_{a-b}^{\Pi}(n,m) \to \Mor_{\rho(a)-\rho(b)}^{\Pi'}(n,m) \quad\text{satisfying}\\ \Psi_{a-b}^{n,m}(T^\cK_{\Pi}(1_a \ot 1^{\ot (m-1)} \ot 1_b)) = T^\cK_{\Pi'}(1_{\rho(a)} \ot 1^{\ot (m-1)} \ot 1_{\rho(b)})
\end{multline*}
for all $a,b \in \cE$ and all $\cK \in \cL(n,m)$. This equivalence preserves the left and the right dimension of Definition \ref{def.dimensions}.
\end{enumlist}
\end{theorem}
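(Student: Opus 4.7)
The plan is to establish the cycle $(1) \Rightarrow (3) \Rightarrow (4) \Rightarrow (2) \Rightarrow (1)$. The implication $(2) \Rightarrow (1)$ is essentially automatic: the $v_{ij}$ in $\cAtil$ are self-adjoint projections with uniformly bounded row/column sums, so the universal C$^*$-algebra enveloping $\cAtil$ is nonzero, and any nonzero Hilbert space representation of it provides the magic unitary required in Definition \ref{def.quantum-iso}.

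For $(1) \Rightarrow (3)$, I proceed exactly as in the proof of Theorem \ref{thm.lc-qaut}. Writing $V$ for the $I \times I'$ matrix with entries $v_{ij}$, the set of $\cK \in \cP_1(n,m)$ satisfying $V^{\ot n} T^\cK_{\Pi'} = T^\cK_\Pi V^{\ot m}$ contains $\cM^{1,2}$, $\cM^{2,1}$, $\cM^{1,0}$, $\cM^{0,1}$ and the adjacency bi-labeled graph, is closed under composition and tensor products, and hence equals all of $\cP_1$. Specializing to $\cK = (K,x,x) \in \cP_c(1,1)$ for connected planar $K$ gives $v_{ij}\bigl[\homom(K,x,\Pi,i) - \homom(K,x,\Pi',j)\bigr] = 0$. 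Since each row of $v$ sums strongly to $1$, for every $i \in I$ there is some $j \in I'$ with $v_{ij} \neq 0$; setting $\rho(a)$ to be the $\Pi'$-orbit of any such $j$ for $i \in I_a$ yields a well-defined bijection $\cE \to \cE'$ witnessing planar isomorphism, by the characterization of $\approx$ through pointed planar homomorphism counts.

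For $(3) \Rightarrow (4)$, I define $\Psi$ on generators by $T^\cK_\Pi(1_a \ot 1^{\ot(m-1)} \ot 1_b) \mapsto T^\cK_{\Pi'}(1_{\rho(a)} \ot 1^{\ot(m-1)} \ot 1_{\rho(b)})$ and reduce well-definedness to the faithfulness of $\Tr_\ell$: Lemma \ref{lem.crucial-technical-Mor-lemma}.3 embeds $\Mor_{a-b}(n,n)$ into a finite-dimensional C$^*$-algebra on which $\Tr_\ell$ coincides with the canonical trace. It thus suffices to verify that the values $\Tr_\ell\bigl((\sum_j c_j T^{\cK_j}_\Pi)^*(\sum_j c_j T^{\cK_j}_\Pi)(1_a \ot \cdots \ot 1_b)\bigr)$ on the two sides agree. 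By the closing-up construction of Lemma \ref{lem.crucial-technical-Mor-lemma}.1, these trace values are linear combinations of pointed homomorphism counts $\homom(K,x,\Pi,i)$, $i \in I_a$, from connected planar graphs obtained by composing and contracting the $\cK_j$; planar isomorphism equates them with the analogues for $\Pi'$ at base points in $I'_{\rho(a)}$, so $\Psi$ is well-defined and a bijection by symmetry. Compatibility with $*$, composition, relative tensor product, and preservation of $d_\ell, d_r$ is then immediate, the latter because these dimensions are further trace values of the same planar-graph form.

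The main obstacle is $(4) \Rightarrow (2)$, which I address by a two-sided analog of the construction in Sections \ref{sec.2-category} and \ref{sec.construction-B}. Consider the disjoint union $\Pi \sqcup \Pi'$ with vertex set $I \sqcup I'$, together with ``mixed'' connected planar bi-labeled graphs $\cK \in \cP_c(n+1,m+1)$ whose labels can lie in either $I$ or $I'$; using the equivalence $\Psi$ to translate the cross $(\Pi,\Pi')$-corners, one obtains a well-defined $I \sqcup I'$-valued matrix $T^\cK$ on such mixed graphs. Copying Definition \ref{def.star-algebra-B} with these additional mixed relations produces a $*$-algebra $\cBtil$, and the functional $\vphi$ built as in Definition \ref{def.functional-vphi} is positive and faithful by a verbatim adaptation of Proposition \ref{prop.vphi-positive-faithful}, so in particular $\cBtil \neq 0$. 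Fixing base vertices $e \in I$ and $e' \in I'$ with $\rho([e]) = [e']$, a two-sided version of Lemma \ref{lem.iso-A-B} identifies a nonzero corner $\cAtil$ of $\cBtil$ cut by $F_0(e,e) + F_0(e',e')$. Its cross generators $\tilde v_{ij} := F_1((e,i),(e',j))$ for $(i,j) \in I \times I'$ (together with their $\Pi$- and $\Pi'$-only analogues so as to index by $V(\Pi) \times V(\Pi')$) form a magic unitary whose row/column sums converge strictly to $1$ by Lemma \ref{lem.further-relations}, and which intertwines the adjacency matrices of $\Pi$ and $\Pi'$ by the defining relations of $\cBtil$, thereby producing the required algebraic quantum isomorphism.
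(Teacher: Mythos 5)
The serious problem is your implication $(2)\Rightarrow(1)$, which you declare ``essentially automatic''. Boundedness of the generators (self-adjoint idempotents with row and column sums equal to $1$) only guarantees that the universal C$^*$-seminorm is finite on $\cAtil$; it does not guarantee that it is nonzero. A nonzero $*$-algebra need not admit any nonzero $*$-representation on a Hilbert space (equivalently, any nonzero admissible positive functional), and nothing in Definition \ref{def.algebraic-iso} supplies one: the algebra is given abstractly, and its enveloping C$^*$-algebra could a priori be $0$. For finite graphs this is precisely the content of \cite[Theorem 4.9]{BCEHPSW18}, whose proof requires producing a (tracial) state via bi-Galois/quantum group techniques; it is the genuinely hard direction, not a formality. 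The paper never uses $(2)\Rightarrow(1)$ directly: it proves $2\Rightarrow 4$ by purely algebraic manipulations with the finite-type matrices $V_n$ and the map $\Phi$, and then $4\Rightarrow 1$ by constructing from the categorical equivalence a concrete algebra $\cAtil$ \emph{together with} a positive faithful functional $\theta$, so that GNS (plus the observation that left multiplication by a self-adjoint idempotent is $\theta$-bounded) yields the Hilbert space magic unitary. In your cycle $(1)\Rightarrow(3)\Rightarrow(4)\Rightarrow(2)\Rightarrow(1)$, the faithful positive functional you build in $(4)\Rightarrow(2)$ is discarded when you pass to the abstract statement $(2)$, and your $(2)\Rightarrow(1)$ cannot recover it. To repair the argument, either prove $2\Rightarrow 4$ directly (as the paper does), so that $2\Rightarrow 1$ factors through the functional-carrying construction, or upgrade your $(4)\Rightarrow(2)$ step to $(4)\Rightarrow(1)$ by retaining the functional and performing the GNS construction there.

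The remaining implications are, in outline, the paper's arguments rearranged and are fine, with one point in $(3)\Rightarrow(4)$ that needs an extra step: the cut-off $1_b$ is not itself a planar homomorphism matrix, so $\Tr_\ell$ of elements of $\Mor_{a-b}(n,m)$ is not literally a linear combination of pointed planar homomorphism counts. The paper first establishes the isometry on $\Mor_{a-}(n,m)$, where the left cut-off is implemented by evaluating the left trace at a base vertex $i \in I_a$, and only afterwards recovers the identity $\Psi\bigl(T^\cK_\Pi(1_a \ot 1^{\ot (m-1)} \ot 1_b)\bigr) = T^\cK_{\Pi'}(1_{\rho(a)} \ot 1^{\ot (m-1)} \ot 1_{\rho(b)})$ by approximating $1_b$ with an element $T \in \Mor^\Pi(0,0)$ taking prescribed values on the finitely many relevant orbits, matched on the $\Pi'$ side through the equality of planar counts. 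You should incorporate this (or an equivalent) approximation argument rather than applying the trace computation directly to the $1_b$-cut elements.
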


When one of the conditions in Theorem \ref{thm.quantum-iso-criteria} holds, it also follows that the algebraic quantum groups $\QAut \Pi$ and $\QAut \Pi'$ are monoidally equivalent in the sense of \cite[Definition 2.17]{DC07} and hence, the corresponding locally compact quantum groups are monoidally equivalent as well, by \cite{DC07,DC08}. We discuss this in Remark \ref{rem.bigalois} below.

\begin{proof}
1 $\Rightarrow$ 2. Let $V = (v_{ij})_{(i,j) \in I \times I'}$ be a quantum isomorphism between $\Pi$ and $\Pi'$ on the Hilbert space $H$. Denote by $\cAtil \subset B(H)$ the $*$-algebra generated by the projections $v_{ij}$. Multiplying \eqref{eq.commute-adjacency} on the left by $v_{it}$ and on the right by $v_{sj}$, we find that
$$\rel(t,j) \, v_{it} v_{sj} = \rel(i,s) \, v_{it} v_{sj} \quad\text{for all $i,s \in I$, $t,j \in I'$,}$$
where $\rel(k,l) = 1$ if $k \sim l$ and $\rel(k,l) = 0$ if $k \not\sim l$. It follows that
$$\rel(i_0,i_1) \, \cdots \, \rel(i_{n-1},i_n) \, v_{i_0 j_0} \cdots v_{i_n,j_n} = \rel(j_0,j_1) \, \cdots \, \rel(j_{n-1},j_n) \, v_{i_0 j_0} \cdots v_{i_nj_n}$$
for all $i \in I^{n+1}$ and $j \in {I'}^{n+1}$. Summing over $i_1,\ldots,i_{n-1} \in I$ and $j_1,\ldots,j_{n-1} \in I'$, it follows that
$$p_n(i,s) \, v_{ij} v_{st} = p_n(j,t) \, v_{ij} v_{st} \quad\text{for all $i,s \in I$, $j,t \in I'$,}$$
where $p_n(i,s)$ denotes the number of paths of length $n$ from $i$  to $s$ in $\Pi$, resp.\ $\Pi'$. We conclude that $v_{ij} v_{st} = 0$ when the distance $d(i,s)$ in $\Pi$ differs from the distance $d(j,t)$ in $\Pi'$. So, since $\Pi$ and $\Pi'$ are locally finite, if we multiply the infinite sums in \eqref{eq.magic} on the left or on the right with an element $v_{st}$, only finitely many nonzero terms remain. It follows that these infinite sums also converge to $1$ strictly, in the algebraic sense. So, $\cAtil$ defines an algebraic isomorphism between $\Pi$ and $\Pi'$.

2 $\Rightarrow$ 4. Fix an algebraic isomorphism $\cAtil$ between $\Pi$ and $\Pi'$. Note that since $\cAtil$ contains a family of elements that sum up strictly to $1$, the $*$-algebra $\cAtil$ is nondegenerate. For every $n \geq 1$, consider the $I^{n} \times {I'}^{n}$ matrix $V_n$ with entries in $\cAtil$ defined by
$$V_n(i,j) = v_{i_1 j_1} \cdots v_{i_n j_n} \; .$$
Recall that we say that an infinite matrix $T$ is of finite type if for every $i$, there are only finitely many $k$ for which $T_{ik} \neq 0$, and for every $j$, there are only finitely many $k$ such that $T_{kj} \neq 0$. For every $\cK \in \cP_1(n,m)$, we have the finite type matrices $T^\cK_\Pi$ and $T^\cK_{\Pi'}$. Define
$$\cP_0(n,m) = \bigl\{ \cK \in \cP_1(n,m) \bigm| V_n T^\cK_{\Pi'} = T^\cK_{\Pi} V_m \bigr\} \; .$$
Repeating the argument given in the proof of Theorem \ref{thm.lc-qaut}, we find that $\cP_0 = \cP_1$.

We now prove the following claim: if $T$ is a finite type $I^{n} \times I^{m}$ matrix with entries in $\C$ and if $T V_m = 0$, then $T = 0$. To prove this claim, assume that $T V_m = 0$. Then for all $i \in I^{n}$, we can choose a finite subset $\cF_i \subset I^{m}$ such that $T_{ik} = 0$ if $k \not\in \cF_i$ and such that
$$\sum_{k \in \cF_i} T_{ik} \, v_{k_1 j_1} \cdots v_{k_m j_m} = 0 \quad\text{for all $j \in {I'}^{m}$.}$$
Let $a \in \cAtil$ be any nonzero element. Multiplying the above expression on the left by $a$ and on the right by $v_{s_m j_m} \cdots v_{s_1 j_1}$ for an arbitrary $s \in \cF_i$, we can sum first over $j_m$, then over $j_{m-1}$, up to summing over $j_1$ and conclude that $a T_{is} = 0$. So, $T_{is} = 0$ for all $i,s$, proving the claim. We similarly get that if $S$ is a finite type scalar matrix and $V_n S = 0$, then $S = 0$.

We next claim that when $T$ is a finite type scalar $I^{n} \times I^{m}$ matrix and $S$ is a finite type scalar ${I'}^{n} \times {I'}^{m}$ matrix such that $T V_m = V_n S$, then also $T^* V_n = V_m S^*$. So, we assume that for all $i \in I^{n}$ and $j \in {I'}^{m}$,
$$\sum_{k \in I^{m}} T_{ik} V_m(k,j) = \sum_{k \in {I'}^{n}} S_{kj} V_n(i,k) \; .$$
Fix arbitrary $a \in \cAtil$, $s \in I^m$ and $t \in {I'}^n$. Write $\ibar = (i_n,\ldots,i_1)$. We multiply the above sum on the left by $a V_m(\overline{s},\jbar)$ and on the right by $V_n(\ibar,\overline{t})$. Note that there are only finitely many $i,j,k$ such that
$$T_{ik} \, a \, V_m(\overline{s},\jbar) \, V_m(k,j) \, V_n(\ibar,\overline{t}) \neq 0 \quad\text{or}\quad S_{kj} \, a \, V_m(\overline{s},\jbar) \, V_n(i,k) \, V_n(\ibar,\overline{t}) \neq 0 \; .$$
So we may sum over $i,k,j$ in any order and find that
$$a \sum_{i \in I^n} T_{is} \, V_n(\ibar,\overline{t}) = a \sum_{j \in {I'}^m} S_{tj} \, V_m(\overline{s},\jbar) \; .$$
Taking the adjoint, it follows that $(T^* V_n)_{st} a^* = (V_m S^*)_{st} a^*$ for all $a \in \cAtil$ and $s \in I^m$, $t \in {I'}^n$. Since $\cAtil$ is nondegenerate, this means that $T^* V_n = V_m S^*$.

Define $\cM(n,m)$ as the set of all finite type scalar matrices $T$ for which there exists a finite type scalar matrix $S$ such that $T V_m = V_n S$. By the claim above, $S$ is unique and we denote $S = \Phi(T)$. We similarly define $\cM'(n,m)$ as the set of finite type scalar matrices $S$ for which there exists a finite type scalar matrix $T$ with $T V_m = V_n S$. Obviously, $\cM'(n,m) = \Phi(\cM(n,m))$ and $\Phi$ is a bijective linear map. By the second claim, $\Phi(T^*) = \Phi(T)^*$. By construction, $\Phi$ is multiplicative whenever the sizes of the matrices match.

We have proven above that for every $\cK \in \cP_1(n,m)$, we have $T^\cK_{\Pi} \in \cM(n,m)$ and $\Phi(T^\cK_\Pi) = T^\cK_{\Pi'}$. In particular, $\Phi$ is a $*$-isomorphism from $\Mor^\Pi(0,0)$ onto $\Mor^{\Pi'}(0,0)$. We now deduce from this that for every $a \in \cE$, we have that $1_a \in \cM(1,1)$ and $\Phi(1_a) = 1_{\rho(a)}$, where $\rho : \cE \to \cE'$ is a bijection.

We want to define $\rho$ by saying that $\rho(a) = b$ if there exist $i \in I_a$ and $j \in I'_b$ such that $v_{ij} \neq 0$. To make this well-defined, we first note that for every $i \in I$, there exists at least one $j \in I'$ with $v_{ij} \neq 0$, because these elements sum up strictly to $1$. Next, we prove the following: if $i \approx s$ and if $v_{ij} \neq 0$ and $v_{st} \neq 0$, then $j \approx t$. Let $S \in \Mor^{\Pi'}(0,0)$ be arbitrary and put $T = \Phi^{-1}(S)$. Since $T V = V S$, we find that $S_j v_{ij} = T_i v_{ij}$ and $S_t v_{st} = T_s v_{st}$. Thus, $S_j = T_i$ and $S_t = T_s$. Since $i \approx s$, we have $T_i = T_s$. So, $S_j = S_t$ for all $S \in \Mor^{\Pi'}(0,0)$, so that $j \approx t$. We conclude that the map $\rho : \cE \to \cE'$ is well-defined. Similarly, its inverse is well-defined and $\rho$ is thus a bijection. By construction, $1_a V = V 1_{\rho(a)}$, so that $1_a \in \cM(1,1)$ and $\Phi(1_a) = 1_{\rho(a)}$.

We have now proven that for all $\cK \in \cP_1(n,m)$ and all $a,b \in \cE$,
$$\Phi\bigl(T^\cK_\Pi (1_a \ot 1^{\ot (m-2)} \ot 1_b)\bigr) = T^\cK_{\Pi'} (1_{\rho(a)} \ot 1^{\ot (m-2)} \ot 1_{\rho(b)}) \; .$$
We can thus define $\Psi_{a-b}^{n,m}$ by restricting $\Phi : \cM(n+1,m+1) \to \cM'(n+1,m+1)$ to the subspace $\Mor^\Pi_{a-b}(n,m) \subset \cM(n+1,m+1)$.

4 $\Rightarrow$ 1.\ Repeating the construction of Section \ref{sec.construction-B}, we obtain a $*$-algebra $\cBtil$ that is linearly spanned by elements $F_n(\xi,\xi')$ for $n \geq 0$, $\xi \in \cF(I^{n+1})$ and $\xi' \in \cF({I'}^{n+1})$, such that
\begin{align*}
& F_n(\xi,\xi') \, F_m(\eta,\eta') = F_{n+m}(\xi \ot_I \eta,\xi' \ot_{I'} \eta') \; , \quad F_n(\xi,\xi')^* = F_n(J_n \xi,J_n \xi') \quad\text{and}\\
& F_n(T \xi,\xi') = F_m(\xi,\Psi^{n,m}_{a-b}(T)^* \xi') \; ,
\end{align*}
for all $T \in \Mor_{a-b}^{\Pi}(n,m)$, $a, b \in \cE$, $\xi \in \cF(I^{m+1})$, $\xi' \in \cF({I'}^{n+1})$. Moreover, $\cBtil$ admits the faithful positive functional $\vphitil : \cBtil \to \C$ given by
$$\vphitil(F_n(\xi,\xi')) = \sum_{a \in \cE} \; \sum_{i \in I_a,j \in I'_{\rho(a)}} \; \sum_{V \in \ibf^{\Pi}_{a-a}(n,0)} \; \langle (i), V^*(\xi) \rangle \; \langle \Psi^{n,0}_{a-a}(V)^*(\xi'), (j) \rangle \; .$$
Repeating the constructions of Section \ref{sec.quantum-aut-group} and Lemma \ref{lem.iso-A-B}, we next find a nondegenerate $*$-algebra $\cAtil$ generated by elements $(w_{ij})_{(i,j) \in I \times I'}$ giving an algebraic isomorphism of $\Pi$ and $\Pi'$, and such that $\cAtil$ admits a positive faithful functional $\theta : \cAtil \to \C$.

Denote by $H$ the Hilbert space completion of $\cAtil$ w.r.t.\ $\langle x,y \rangle = \theta(y^* x)$. For every $x \in \cAtil$ and for every self-adjoint idempotent $p \in \cA$, we have that $(px)^* (p x) + (x-px)^* (x-px) = x^* x$. Applying $\theta$ and using positivity, it follows that $\theta((px)^* (p x)) \leq \theta(x^* x)$. So, left multiplication by $p$ defines a bounded operator on $H$. Since $\cAtil$ is generated by the self-adjoint idempotents $w_{ij}$, it follows that left multiplication by any $x \in \cAtil$ defines a bounded operator $\pi(x) \in B(H)$. Now the self-adjoint projections $v_{ij} := \pi(w_{ij})$ define a quantum isomorphism between $\Pi$ and $\Pi'$.

3 $\Rightarrow$ 4. Using the notation of Lemma \ref{lem.crucial-technical-Mor-lemma}, we note that for all $a \in \cE$, the vector space $\Mor_{a-}^{\Pi}(n,m)$ carries a positive definite scalar product given by $\langle T, S \rangle = \Tr_\ell(S^* T)$. The same holds for $\Mor_{a-}^{\Pi'}(n,m)$. The proof of Lemma \ref{lem.crucial-technical-Mor-lemma} also provides $\cR_m \in \cP(2m+1,1)$ such that for all $\cK_1,\cK_2 \in \cL(n,m)$, $a \in \cE$, $i \in I_a$ and $k \in \{1,2\}$, we have $(\cK_k \ot 1^{\ot m}) \circ \cR_m \in \cL(n+m,0)$ and
$$\langle T^{\cK_1}_\Pi (1_a \ot 1^{\ot m}) , T^{\cK_2}_\Pi (1_a \ot 1^{\ot m}) \rangle = (T^\cS_\Pi)_i \quad\text{where $\cS = \cR_m^* \circ (\cK_2^* \circ \cK_1 \ot 1^{\ot m}) \circ \cR_m$.}$$
An analogous formula holds w.r.t.\ $\Pi'$. Also note that $\cS \in \cL(0,0)$ and note that statement 3 is precisely saying that
\begin{equation}\label{eq.equality-here}
(T^\cS_\Pi)_i = (T^\cS_{\Pi'})_j \quad\text{for all $\cS \in \cL(0,0)$, $i \in I_a$, $j \in I'_{\rho(a)}$, $a \in \cE$.}
\end{equation}
We thus conclude that
$$\langle T^{\cK_1}_\Pi (1_a \ot 1^{\ot m}) , T^{\cK_2}_\Pi (1_a \ot 1^{\ot m}) \rangle = \langle T^{\cK_1}_{\Pi'} (1_{\rho(a)} \ot 1^{\ot m}) , T^{\cK_2}_{\Pi'} (1_{\rho(a)} \ot 1^{\ot m}) \rangle$$
for all $\cK_1,\cK_2 \in \cL(n,m)$. So, there are unique unitary operators $\Psi^{n,m}_{a-}$ from $\Mor_{a-}^{\Pi}(n,m)$ onto $\Mor_{a-}^{\Pi'}(n,m)$ mapping $T^\cK_\Pi(1_a \ot 1^{\ot m})$ to $T^\cK_{\Pi'}(1_{\rho(a)} \ot 1^{\ot m})$.

Recall from Lemma \ref{lem.crucial-technical-Mor-lemma} that $\Mor_{a-b}^\Pi(n,m) \subset \Mor_{a-}^\Pi(n,m)$. To conclude that statement~4 holds, we fix $a,b \in \cE$ and $\cK \in \cL(n,m)$. We have to prove that
\begin{equation}\label{eq.needed-conclusion}
\Psi^{n-m}_{a-}\bigl(T^\cK_\Pi(1_a \ot 1^{\ot (m-1)} \ot 1_b) \bigr) = T^\cK_{\Pi'}(1_{\rho(a)} \ot 1^{\ot (m-1)} \ot 1_{\rho(b)}) \; .
\end{equation}
As we have seen in the proof of Lemma \ref{lem.crucial-technical-Mor-lemma}, we can pick a finite subset $\cE_0 \subset \cE$ such that
$$T^\cK_\Pi(1_a \ot 1^{\ot (m-1)} \ot 1_c)= 0 \quad\text{and}\quad T^\cK_{\Pi'}(1_{\rho(a)} \ot 1^{\ot (m-1)} \ot 1_{\rho(c)}) = 0$$
for all $c \in \cE \setminus \cE_0$. Since $\Mor^\Pi(0,0)$ is a $*$-algebra, we can then choose $T \in \Mor^\Pi(0,0)$ such that $T_i = 0$ whenever $i \in I_c$, $c \in \cE_0 \setminus \{b\}$ and $T_i = 1$ whenever $i \in I_b$. Take $\cS_r \in \cL(0,0)$ and $\al_r \in \C$ such that $T = \sum_{r=1}^k \al_r T^{\cS_r}_\Pi$ and write $S = \sum_{r=1}^k \al_r T^{\cS_r}_{\Pi'}$. By \eqref{eq.equality-here}, we get that $S_j = 0$ for all $j \in I'_c$, $c \in \rho(\cE_0) \setminus \{\rho(b)\}$, and $S_j = 1$ for all $j \in I'_{\rho(b)}$.

For every $r$, we have $T^\cK_\Pi(1^{\ot m} \ot T^{\cS_r}_\Pi) = T^{\cK \circ (1^{\ot m} \ot \cS_r)}_\Pi$, and similarly w.r.t.\ $\Pi'$. It follows that
$$\Psi^{n,m}_{a-}\bigl(T^\cK_\Pi(1_a \ot 1^{\ot (m-1)} \ot T) \bigr) = T^\cK_{\Pi'}(1_{\rho(a)} \ot 1^{\ot (m-1)} \ot S) \; .$$
By the properties of $T$ and $S$ and by the definition of $\cE_0$, we have that
\begin{align*}
& T^\cK_\Pi(1_a \ot 1^{\ot (m-1)} \ot T) = T^\cK_\Pi(1_a \ot 1^{\ot (m-1)} \ot 1_b) \quad\text{and} \\
& T^\cK_{\Pi'}(1_{\rho(a)} \ot 1^{\ot (m-1)} \ot S) = T^\cK_{\Pi'}(1_{\rho(a)} \ot 1^{\ot (m-1)} \ot 1_{\rho(b)}) \; .
\end{align*}
So, \eqref{eq.needed-conclusion} is proven.

4 $\Rightarrow$ 3. We apply statement~4 for $n=m=0$. Taking for $\cK \in \cL(0,0)$ the identity bi-labeled graph $\cM^{1,1}$, we get that $\Psi^{0-0}_{a-a}(1_a) = 1_{\rho(a)}$. Then taking an arbitrary $\cS \in \cL(0,0)$ and taking arbitrary $a \in \cE$, $i \in I_a$ and $j \in I_{\rho(a)}$, we get that $T^\cS_\Pi 1_a = (T^\cS_\Pi)_i 1_a$ and $T^\cS_{\Pi'} 1_{\rho(a)} = (T^\cS_{\Pi'})_j 1_{\rho(a)}$. Since $\Psi^{0-0}_{a-a}$ is linear, it follows that $(T^\cS_\Pi)_i = (T^\cS_{\Pi'})_j$, i.e.\ \eqref{eq.equality-here} holds. This precisely says that $\Pi$ and $\Pi'$ are planar isomorphic.
\end{proof}

\begin{remark}\label{rem.bigalois}
The $*$-algebra $\cAtil$ constructed in the proof of Theorem \ref{thm.quantum-iso-criteria}, which in particular realizes the algebraic quantum isomorphism between $\Pi$ and $\Pi'$, also provides in the following way a monoidal equivalence (in the sense of \cite[Definition 2.17]{DC07}) between the algebraic quantum groups $\QAut \Pi$ and $\QAut \Pi'$. The $*$-algebra $\cAtil$ is generated by $(w_{ij})_{i \in I,j \in I'}$. We realize $\QAut \Pi$ and $\QAut \Pi'$ as $(\cA,\Delta)$, resp.\ $(\cA',\Delta)$, with their canonical generators $(u_{ij})_{i,j \in I}$, resp.\ $(u'_{ij})_{i,j \in I'}$. It is now straightforward to check that the commuting coactions
\begin{align*}
& \al : \cAtil \to M(\cA \ot \cAtil) : \al(w_{ij}) = \sum_{k \in I} (u_{ik} \ot w_{kj}) \quad\text{and}\\
& \be : \cAtil \to M(\cAtil \ot \cA') : \be(w_{ij}) = \sum_{k \in I'} (w_{ik} \ot u'_{kj})
\end{align*}
turn $\cAtil$ into a bi-Galois object that defines the monoidal equivalence between $(\cA,\Delta)$ and $(\cA',\Delta)$. By the results of \cite{DC07}, this algebraic bi-Galois object can then be completed into a von Neumann algebraic bi-Galois object linking the locally compact quantum groups $\QAut \Pi$ and $\QAut \Pi'$, in the sense of \cite[Definition 4.1]{DC08}.
\end{remark}

In \cite[Theorem 6.4]{AMRSSV16}, examples of isomorphic but not quantum isomorphic finite graphs are given. This includes a concrete example of two graphs $\cG$ and $\cG'$ sharing the following properties: they have $24$ vertices, no loops, every vertex has degree $9$ and they are vertex transitive. We can use this to give examples of connected locally finite graphs $\Pi$ illustrating the following phenomena.

\begin{enumlist}
\item In Example \ref{ex.compact-noncompact}, we provide examples where $\Aut \Pi$ is compact, but $\QAut \Pi$ is noncompact and quantum vertex transitive.

\item In Example \ref{ex.compact-nonunimodular}, we provide examples where $\Aut \Pi$ is compact, but $\QAut \Pi$ is nonunimodular and quantum vertex transitive. In particular, the transitivity assumption in Proposition \ref{prop.unimodular} is essential.

\item In Example \ref{ex.nonunimodular-unimodular}, we provide examples where $\QAut \Pi$ is quantum vertex transitive and unimodular, but $\Aut \Pi$ is nonunimodular. So the converse of Proposition \ref{prop.unimodular} does not hold.
\end{enumlist}

All these examples arise from the following elementary construction. Let $B$ be a connected base graph with no loops and constant finite degree $d \geq 2$. Assume that for every vertex $v \in V(B)$, we are given a finite graph $\cG_v$. Assume that all these graphs $\cG_v$ have the same number of vertices, say $\kappa$, have no loops and are of constant degree $f < \kappa$. We then consider a product graph $\Pi$ whose vertex set is the disjoint union of $V(\cG_v)$, $v \in V(B)$, and where two vertices in the same $V(\cG_v)$ are connected in $\Pi$ iff they were connected in $\cG_v$, while for $i \in V(\cG_v)$ and $j \in V(\cG_w)$ with $v \neq w$, we connect $i$ to $j$ iff $v \sim w$ in $B$. Note that $\Pi$ is connected, has no loops and is of constant degree $f+d \kappa$.

We denote by $I = V(\Pi)$ the vertex set and define $\pi : I \to V(B)$ by $\pi(i) = v$ iff $i \in \cG_v$. As before, we denote by $\Mor(n,m)$ the space of $I^{n+1} \times I^{m+1}$ matrices associated with $\Pi$ and the graph category $\cP$ of planar bi-labeled graphs. Recall that $\Mor(1,1)$ consists of diagonal matrices and can be viewed as a $*$-algebra of functions on $I \times I$.

To make sure that $\Aut \Pi$ is easy to compute, we make the following assumption: for all $v \neq w$ in $V(B)$, there exists a $z \in V(B) \setminus \{v,w\}$ such that $z \sim v$ and $z \not\sim w$.

\begin{lemma}\label{lem.product-graph}
The function $S : I \times I \to \{0,1\}$ where $S(i,j) = 1$ iff $\pi(i) = \pi(j)$ belongs to $\Mor(1,1)$.

In particular, a permutation $\si$ of $V(\Pi)$ belongs to $\Aut \Pi$ if and only if there exists a $\rho \in \Aut B$ such that $\si(\cG_v) = \cG_{\rho(v)}$ for all $v \in V(B)$ and $\si$ restricts to an isomorphism between $\cG_v$ and $\cG_{\rho(v)}$ for all $v \in V(B)$.
\end{lemma}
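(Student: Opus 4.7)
The plan is to construct $S$ from a common-neighbor count via polynomial interpolation, and then deduce the second statement from the $\Aut \Pi$-equivariance of every element of $\Mor(1,1)$.

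For the first statement, the natural witness is the connected planar bi-labeled graph $\cK = (K,x,y) \in \cL(1,1)$ with $V(K) = \{0,1,2\}$, edges $\{(0,2),(2,0),(1,2),(2,1)\}$, and $x = y = (0,1)$. The matrix $N := T^\cK \in \Mor(1,1)$, viewed as a function on $I \times I$, counts the common $\Pi$-neighbors of $i$ and $j$. I will show that the value $N(i,j)$ encodes whether $\pi(i) = \pi(j)$: if $\pi(i) = \pi(j) = v$, then every vertex of $\cG_w$ with $w \sim_B v$ is a common neighbor, giving $N(i,j) \geq d\kappa$; if $\pi(i) \neq \pi(j)$, a case analysis using the separation hypothesis on $B$ will give $N(i,j) < d\kappa$. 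More precisely, in the second case the contribution from $\cG_{\pi(i)} \cup \cG_{\pi(j)}$ is at most $2f \leq 2(\kappa - 1)$, while the contribution from the other components $\cG_z$ equals $\kappa$ times the number of $z \in V(B) \setminus \{\pi(i),\pi(j)\}$ adjacent in $B$ to both $\pi(i)$ and $\pi(j)$; the separation hypothesis supplies some $z \sim_B \pi(i)$ with $z \not\sim_B \pi(j)$, bounding this count by $d-2$ in the subcase $\pi(i) \sim_B \pi(j)$ and by $d-1$ otherwise, and a short arithmetic check gives $N(i,j) < d\kappa$ in both subcases.

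Once this threshold is in place, $N(i,j)$ takes values in $\{0,1,\ldots,d\kappa + f\}$, and I pick a polynomial $P$ satisfying $P(k) = 0$ for $0 \leq k < d\kappa$ and $P(k) = 1$ for $d\kappa \leq k \leq d\kappa + f$. Since $P(0) = 0$, write $P(t) = t\, Q(t)$, so $P(N) = N \cdot Q(N)$. By Lemma \ref{lem.crucial-technical-Mor-lemma}, $\Mor(1,1)$ is closed under matrix multiplication; since its elements are ``doubly diagonal'' $I^2 \times I^2$ matrices (supported on $\{(kl,kl)\}$), this multiplication coincides with the pointwise product of the corresponding functions on $I \times I$. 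It follows that $P(N) \in \Mor(1,1)$, and by the case analysis $P(N) = S$.

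For the second statement, one direction is immediate from the construction of $\Pi$. For the converse, fix $\sigma \in \Aut \Pi$. Since every element of $\Mor(1,1)$ is $\Aut \Pi$-equivariant for the diagonal action on $I^2$ (noted just before Definition \ref{def.mor-spaces}), we have $S(\sigma(i),\sigma(j)) = S(i,j)$, so $\sigma$ permutes the fibres of $\pi$; as all $\cG_v$ have cardinality $\kappa$, there is a bijection $\rho : V(B) \to V(B)$ with $\sigma(\cG_v) = \cG_{\rho(v)}$. That $\rho \in \Aut B$ is immediate because, for $v \neq w$, $v \sim_B w$ is equivalent to $\Pi$-adjacency between any $i \in \cG_v$ and $j \in \cG_w$, a condition transported by $\sigma$ to the same statement for $\cG_{\rho(v)}$ and $\cG_{\rho(w)}$; and inside each $\cG_v$, $\Pi$-adjacency coincides with $\cG_v$-adjacency, so $\sigma$ restricts to a graph isomorphism $\cG_v \to \cG_{\rho(v)}$. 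The main obstacle I foresee is the bookkeeping in the common-neighbor bound when $\pi(i) \neq \pi(j)$: without the separation hypothesis on $B$, one could have $d$ common $B$-neighbors contributing $d\kappa$, collapsing the threshold argument. The hypothesis is used precisely to force the inequality $N(i,j) < d\kappa$.
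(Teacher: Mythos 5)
Your proposal follows essentially the same route as the paper: the same common-neighbour bi-labeled graph in $\cL(1,1)$, the same threshold estimate separating the case $\pi(i)=\pi(j)$ from the other cases, the same polynomial-interpolation trick inside the function algebra $\Mor(1,1)$ (your observation that $P(0)=0$ lets you avoid the identity, which is not in $\Mor(1,1)$ when $I$ is infinite, is correct and is implicitly what the paper's choice of $P$ achieves), and the same use of $\Aut\Pi$-equivariance of elements of $\Mor(1,1)$ for the second statement.

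There is, however, one step that fails as written. In the case $\pi(i)\neq\pi(j)$ you bound the contribution of $\cG_{\pi(i)}\cup\cG_{\pi(j)}$ uniformly by $2f$ and then claim that ``a short arithmetic check'' gives $N(i,j)<d\kappa$ in both subcases. In the subcase $\pi(i)\not\sim_B\pi(j)$ your bound is $2f+(d-1)\kappa$, and with $f$ as large as $\kappa-1$ this equals $(d+1)\kappa-2\geq d\kappa$ whenever $\kappa\geq 2$, so the arithmetic does not close. The correct observation (and the one the paper uses) is that in this subcase the contribution from $\cG_{\pi(i)}\cup\cG_{\pi(j)}$ is zero: a vertex $k\in\cG_{\pi(i)}$ is $\Pi$-adjacent to $j$ only if $\pi(i)\sim_B\pi(j)$, which is excluded. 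Hence the bound is $(d-1)\kappa<d\kappa$, and the $2f$ term only appears in the adjacent subcase, where $2f+(d-2)\kappa\leq d\kappa-2<d\kappa$. With this correction your argument is complete and coincides with the paper's proof.
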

\begin{proof}
Define $\cK = (K,x,y) \in \cL(1,1)$ with $V(K) = \{0,1,2\}$, $0 \sim 1 \sim 2$ and $x_0=y_0=0$, $x_1=y_1=2$, as illustrated in Figure \ref{fig.very-simple-K}.
\begin{figure}[h]
    \centering
    \begin{tikzpicture}
        \filldraw (0,0) circle (1.5pt) node[anchor=south]{$x_0$} node[anchor=north]{$y_0$};
        \filldraw (1.4,0) circle (1.5pt);
        \filldraw (2.8,0) circle (1.5pt) node[anchor=south]{$x_1$} node[anchor=north]{$y_1$};
        \draw (0,0) -- (1.4,0);
        \draw (1.4,0) -- (2.8,0);
    \end{tikzpicture}
    \caption{The bi-labeled graph $\cK \in \cL(1,1)$}\label{fig.very-simple-K}
\end{figure}

Then $\cK \in \cL(1,1)$ and the corresponding element $T \in \Mor(1,1)$ is given by $T(i,j) = \#\{ k \in I \mid k \sim i, k \sim j\}$. When $\pi(i) = \pi(j)$, we have that $\kappa d \leq T(i,j) \leq \kappa(d+1)$.
When $\pi(i) \sim \pi(j)$, by our assumption on $B$, there are at most $d-2$ vertices $z \in V(B) \setminus \{\pi(i),\pi(j)\}$ that are connected to both $\pi(i)$ and $\pi(j)$. Thus, $T(i,j) \leq 2f + (d-2) \kappa < \kappa d$. When $\pi(i) \neq \pi(j)$ and $\pi(i) \not\sim \pi(j)$, there are at most $d-1$ vertices $z \in V(B)$ that are connected to both $\pi(i)$ and $\pi(j)$. Thus, $T(i,j) \leq \kappa (d-1) < \kappa d$.

Choosing a polynomial $P$ that takes the value $1$ on the integers in $[\kappa d, \kappa(d+1)]$ and that takes the value $0$ on the integers in $[0,\kappa d - 1]$, it follows that $S = P(T)$ belongs to $\Mor(1,1)$. By construction, $S(i,j) = 1$ if $\pi(i) = \pi(j)$ and $S(i,j) = 0$ otherwise.

When $\si \in \Aut \Pi$, we have that $S(\si(i),\si(j)) = S(i,j)$ for all $(i,j) \in I \times I$. So, whenever $\pi(i) = \pi(j)$, we must have that $\pi(\si(i)) = \pi(\si(j))$. This precisely says that $\sigma$ globally respects the partition of $\Pi(V)$ into the subsets $(V(\cG_v))_{v \in V(B)}$.
\end{proof}

In the following examples, we make use of the concrete quantum isomorphic, but nonisomorphic finite graphs $\cG$, $\cG'$ of \cite{AMRSSV16} mentioned above. We use the following observation: if for all $v \in V(B)$, we are given a quantum isomorphism (in the sense of Definition \ref{def.quantum-iso}) between finite graphs $\cG_v$ and $\cG'_v$, then the direct sum over $v \in V(B)$ of these magic unitaries, defines a quantum isomorphism between the product graphs $\Pi$ and $\Pi'$ constructed from $(\cG_v)_{v \in V(B)}$, resp.\ $(\cG'_v)_{v \in V(B)}$, over the base graph $B$.

\begin{example}\label{ex.compact-noncompact}
We construct a connected locally finite graph $\Pi$ such that $\Aut \Pi$ is compact while $\QAut \Pi$ is noncompact and quantum vertex transitive.

Take the base graph $B$ to be $V(B) = \Z$ and $n \sim m$ iff $|n-m| = 1$. Define $\Pi$ as the product graph given by $\cG_n = \cG$ for all $n \neq 0$ and $\cG_0 = \cG'$. By Lemma \ref{lem.product-graph}, the classical automorphism group $\Aut \Pi$ is compact. In particular, all classical orbits are finite. For the following reason, $\QAut \Pi$ is quantum vertex transitive and, in particular, noncompact by Proposition \ref{prop.criterion-compact}.

Define $\Pi'$ as the product graph given by $\cG_n = \cG$ for all $n \in \Z$. Since $\cG$ and $\cG'$ are quantum isomorphic, we get that $\Pi$ and $\Pi'$ are quantum isomorphic. By Theorem \ref{thm.quantum-iso-criteria}, it suffices to prove that $\QAut \Pi'$ is quantum vertex transitive. But this is immediate, because $\Pi'$ is vertex transitive.
\end{example}

To give more elaborate unimodular vs.\ nonunimodular examples, we make use of (variants of) the grandparent graph of \cite{Tro85} as a base graph $B$ (see e.g.\ \cite[Example 7.1]{LP16}).

\begin{example}\label{ex.compact-nonunimodular}
We construct a connected locally finite graph $\Pi$ such that $\Aut \Pi$ is compact while $\QAut \Pi$ is nonunimodular and quantum vertex transitive.

Realize the grandparent graph in the following way. Fix an integer $d \geq 3$ and let $B_0$ be the $d$-regular tree. Choose an infinite geodesic path $b_\infty = b_0 b_1 b_2 \cdots$ in $B_0$, which just means that we fix any infinite path without backtracking. This defines an orientation $\om$ on $E(B_0)$, where $\om(v,w) = 1$ if the edge from $v$ to $w$ points in the direction of $b_\infty$ and $\om(v,w) = -1$ otherwise. We define the graph $B$ in the following way: we keep the vertex set $V(B) = V(B_0)$ and we define the set of edges $E(B)$ by adding to $E(B_0)$ an edge between $v,w \in V(B_0)$ if there exists an $x \in V(B_0)$ such that $v \sim x$ and $x \sim w$ in $B_0$ and $\om(v,x) = \om(x,w)$. Note that every $v \in V(B_0)$ has exactly one neighbor $w \in V(B_0)$ with $\om(v,w) = 1$ and $d-1$ neighbors $w \in V(B_0)$ with $\om(v,w) = -1$. It then follows that $B$ is a $(d^2-d+2)$-regular graph.

Fix a base vertex $e \in V(B)$. Then define $\Pi$ as the product graph given by $\cG_v = \cG$ for all $v \in V(B) \setminus \{e\}$ and $\cG_e = \cG'$.

Since the group of automorphisms of $B$ fixing $e$ is compact, it follows from Lemma \ref{lem.product-graph} that $\Aut \Pi$ is compact. Defining $\Pi'$ as the product graph given by $\cG_v = \cG$ for all $v \in V(B)$, we find that $\Pi'$ is quantum isomorphic with $\Pi$. Since $\Pi'$ is vertex transitive, a fortiori $\Pi'$ is quantum vertex transitive. By Theorem \ref{thm.quantum-iso-criteria}, also $\Pi$ is quantum vertex transitive. In particular, by Proposition \ref{prop.criterion-compact}, $\QAut \Pi$ is noncompact.

Since $\Pi$ is quantum vertex transitive, to prove that $\QAut \Pi$ is nonunimodular, it suffices to prove that the unitary tensor category $\cC(\cP,\Pi)$ admits an irreducible object with different left and right dimension. Since $\Pi$ and $\Pi'$ are quantum isomorphic, by Theorem \ref{thm.quantum-iso-criteria}, it thus suffices to provide a single endomorphism $T$ of an object in $\cC(\cP,\Pi')$ such that the left and right trace of $T$ are different.

Denote $I = V(\Pi')$ and consider $\pi : I \to V(B)$ where $\pi(i) = v$ if $i \in \cG_v$. Denote by $\Mor(1,1)$ the linear span of all diagonal matrices $T^\cK_{\Pi'}$, where $\cK = (K,x_0x_1,x_0x_1) \in \cP_c(2,2)$. Recall that $\Mor(1,1)$ is a $*$-algebra and note that we can view $\Mor(1,1)$ as well as the linear span of all functions $(i,j) \mapsto (T^\cK_{\Pi'})_{ij}$, where $\cK \in \cP_c(1,1)$. The adjacency function $A(i,j) = 1$ for $i \sim_{\Pi'} j$ and $A(i,j) = 0$ for $i \not\sim_{\Pi'} j$ belongs to $\Mor(1,1)$. By Lemma \ref{lem.product-graph}, also the function $S(i,j) = 1$ if $\pi(i) = \pi(j)$ and $S(i,j) = 0$ if $\pi(i) \neq \pi(j)$ belongs to $\Mor(1,1)$. Thus $W := (1-S)A$ belongs to $\Mor(1,1)$. By construction, $W(i,j) = 1$ if $\pi(i) \sim_B \pi(j)$ and $W(i,j) = 0$ otherwise.

Next define $\cK = (K,x,y) \in \cP_c(1,1)$ by $V(K) = \{0,1,2,3\}$ and edges $0 \sim 1 \sim 3 \sim 2 \sim 0$ forming a square, together with the diagonal edge $1 \sim 2$, and $x_1 = 0$, $y_1 = 1$, as illustrated in Figure \ref{fig.magic-square}.
\begin{figure}[h]
    \centering
    \begin{tikzpicture}
        \filldraw (0,0) circle (1.5pt);
        \filldraw (0,1.4) circle (1.5pt) node[anchor=south]{$x_1$};
        \filldraw (1.4,1.4) circle (1.5pt) node[anchor=south]{$y_1$};
        \filldraw (1.4,0) circle (1.5pt);
        \draw (0,0) -- (0,1.4) -- (1.4,1.4) -- (1.4,0) -- (0,0);
        \draw (0,0) -- (1.4,1.4);
    \end{tikzpicture}
    \caption{The bi-labeled graph $\cK \in \cP_c(1,1)$}\label{fig.magic-square}
\end{figure}

Choosing any orientation on $K$ and assigning the function $W \in \Mor(1,1)$ to every positive edge of $K$, it follows from Lemma \ref{lem.more-functions} below that the function
$$I \times I \to \R : (i,j) \mapsto (T^\cK_B)(\pi(i),\pi(j)) \quad\text{belongs to $\Mor(1,1)$.}$$
Write $R = T^\cK_B$. Among the edges in $B$, we distinguish the short edges belonging to $E(B_0)$ and the long edges that were added when defining $B$. One then checks that
$$R(v,w) = \begin{cases} 0 &\quad\text{if $v \not\sim_B w$,}\\
2d-1 &\quad\text{if $(v,w)$ is a positive short edge in $B$,}\\
d(d-1)+1 &\quad\text{if $(v,w)$ is a negative short edge in $B$,}\\
d &\quad\text{if $(v,w)$ is a long edge in $B$.}\end{cases}$$
Since $d \geq 3$, all these values are distinct. Define $P : V(B) \to \{0,1\}$ by $P(v,w) = 1$ if $(v,w)$ is a positive short edge in $B$ and $P(v,w) = 0$ otherwise. Since $\Mor(1,1)$ is a $*$-algebra, it follows that $P' : (i,j) \mapsto P(\pi(i),\pi(j))$ belongs to $\Mor(1,1)$. For every $i$, there are precisely $\kappa$ elements $j$ such that $P'(i,j) = 1$. For every $j$, there are precisely $(d-1)\kappa$ elements $i$ such that $P'(i,j) = 1$. Therefore, the left trace of $P'$ equals $\kappa$ and its right trace equals $(d-1)\kappa$. It thus follows that $\QAut(\Pi')$ and $\QAut(\Pi)$ are nonunimodular.
\end{example}

\begin{example}\label{ex.nonunimodular-unimodular}
We include a final example of a connected locally finite graph $\Pi$ such that $\QAut \Pi$ is quantum vertex transitive and unimodular, while $\Aut \Pi$ is nonunimodular.

Let $d \geq 3$ be an integer and let $B$ be the $d$-regular tree. Fix an infinite geodesic path $b_\infty = b_0 b_1 \cdots$ in $B$. As in Example \ref{ex.compact-nonunimodular}, consider the associated orientation $\om : E(B) \to \{\pm 1\}$. We denote by $\Aut_\om(B)$ the group of orientation preserving automorphisms of $B$. Note that $\Aut_\om(B)$ is nonunimodular (see e.g.\ \cite[Section 8.2]{LP16}).

We associate to $b_\infty$ in a unique way a coloring $\theta : V(B) \to \Z/3\Z$ of the vertices such that $\theta(b_k) = k + 3 \Z$ for all $k \geq 0$, and such that for any infinite geodesic path $c_0 c_1 \cdots c_s b_n b_{n+1} \cdots$ that joins the path $b_\infty$, we have $\theta(c_k) = k-s-1+n + 3 \Z$. We denote by $\Aut_\theta(B)$ the group of automorphisms of $B$ that leave $\theta$ invariant. Note that $\Aut_\theta(B) \subset \Aut_\om(B)$. Also note that we have a well-defined continuous homomorphism $\zeta : \Aut_\om(B) \to \Z/3\Z : \theta(\si(v)) = \zeta(\si) + \theta(v)$ for all $\si \in \Aut_\om(B)$ and $v \in V(B)$. By construction, $\Aut_\theta(B)$ is the kernel of $\zeta$, so that also $\Aut_\theta(B)$ is nonunimodular.

Defining $\cG_0 = \cG \sqcup \cG$, $\cG_1 = \cG \sqcup \cG'$ and $\cG_2 = \cG' \sqcup \cG'$, we have three nonisomorphic, but quantum isomorphic finite graphs. We define $\Pi$ as the product graph of $\cG_{\theta(v)}$, $v \in V(B)$. Lemma \ref{lem.product-graph} provides a continuous group homomorphism $\psi: \Aut \Pi \to \Aut_\theta(B)$ with $\Ker \psi$ being compact. Every $\sigma \in \Aut_\theta(B)$ gives rise to a canonical $\si' \in \Aut \Pi$ with $\psi(\si') = \si$. So, $\Aut \Pi$ is the semidirect product of $\Aut_\theta(B)$ and the compact group $\Ker \psi$. We conclude that $\Aut \Pi$ is nonunimodular.

We prove that $\QAut \Pi$ is quantum vertex transitive and unimodular. Defining $\Pi'$ as the product graph of copies of $\cG \sqcup \cG$ for all $v \in V(B)$, we find that $\Pi$ and $\Pi'$ are quantum isomorphic. Since $\Pi'$ is ignoring the coloring $\theta$ and since the graph $\cG \sqcup \cG$ is vertex transitive, by Lemma \ref{lem.product-graph}, the graph $\Pi'$ is vertex transitive and its automorphism group is the semidirect product of $\Aut B$ and a compact group. So, $\Aut \Pi'$ is unimodular. By Proposition \ref{prop.unimodular}, $\QAut \Pi'$ is quantum vertex transitive and unimodular. Since $\Pi$ and $\Pi'$ are quantum isomorphic, it follows from Theorem \ref{thm.quantum-iso-criteria} that also $\QAut \Pi$ is quantum vertex transitive and unimodular.
\end{example}

For the following lemma, we fix any connected locally finite graph $\Pi$ with vertex set $I$ and we denote by $\Mor(1,1)$ the linear span of all the functions $T^\cK : I \times I \to \C$ defined by $\cK = (K,x_0x_1,x_0x_1) \in \cP_c(2,2)$, or equivalently by $\cK = (K,x_1,x_2) \in \cP_c(1,1)$.

\begin{lemma}\label{lem.more-functions}
Let $(K,x_1,x_2) \in \cP_c(1,1)$. Assume that $K$ has no loops and is oriented by $E(K) = E_+(K) \sqcup E_-(K)$. Assume that for every $e \in E_+(K)$, we are given a function $T_e \in \Mor(1,1)$. Then also the function
$$S : I \times I \to \C : S(i,j) = \sum_{\vphi : V(K) \to I : \vphi(x_1) = i, \vphi(x_2) = j} \; \prod_{(v,w) \in E_+(K)} \; T_{(v,w)}(\vphi(v),\vphi(w))$$
belongs to $\Mor(1,1)$.
\end{lemma}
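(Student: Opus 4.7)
The plan is, by multilinearity, to reduce to the case where each $T_e = T^{\cK_e}$ for some $\cK_e = (K_e, a_e, b_e) \in \cP_c(1,1)$, and then to construct a single $\cK' = (K', x_1, x_2) \in \cP_c(1,1)$ realizing $S$ as its homomorphism matrix. The reduction is justified because $\Mor(1,1)$ is by definition the linear span of matrices $T^{\cK_e}$ with $\cK_e \in \cP_c(1,1)$ and $E_+(K)$ is finite.

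The bi-labeled graph $\cK'$ is obtained from $K$ by an ``edge substitution'': let $V(K')$ be the quotient of the disjoint union $V(K) \sqcup \bigsqcup_{e \in E_+(K)} V(K_e)$ by the identifications $v \sim a_e$ and $w \sim b_e$ for each positive edge $e = (v,w)$, and let $E(K') = \bigsqcup_{e \in E_+(K)} E(K_e)$ under the same identifications, with the original edges of $K$ discarded. The labels of $K'$ are those inherited from $K$. A graph homomorphism $\Phi : K' \to \Pi$ with $\Phi(x_1) = i$ and $\Phi(x_2) = j$ decomposes uniquely as the pair consisting of $\vphi = \Phi|_{V(K)}$ (a function obeying the label constraint) together with, for each $e \in E_+(K)$, a restriction $\psi_e = \Phi|_{V(K_e)}$, which is a graph homomorphism $K_e \to \Pi$ sending $a_e$ to $\vphi(v)$ and $b_e$ to $\vphi(w)$. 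Counting these decompositions gives $T^{\cK'}(i,j) = S(i,j)$ directly. Connectedness of $K'$ follows from that of each $K_e$ and of $K$: any internal vertex of $K_e$ is joined in $K_e$ to $a_e, b_e \in V(K')$, and any two vertices of $V(K)$ are joined in $K$ by a path whose edges are realized as connected subgraphs of $K'$ via the corresponding substitutions.

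The main obstacle will be verifying that $(K', x_1, x_2)$ lies in $\cP(1,1)$. I plan to argue this geometrically: fix a planar embedding of $K$ with $x_1, x_2$ on the outer face (available by $\cK \in \cP(1,1)$), and for each positive edge $e = (v,w)$ replace the drawn edge by a planar embedding of $K_e$ (with $a_e, b_e$ on its outer face, available by $\cK_e \in \cP(1,1)$) placed inside a thin tubular neighborhood of $e$, with $a_e$ at $v$ and $b_e$ at $w$. The edges of $K_e$ at $a_e$ and $b_e$ lie in a contiguous angular sector around those labels (since the labels are on the outer face of $K_e$), so they fit cleanly into the cyclic order of edges around $v$ and $w$ already given by the embedding of $K$. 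The labels $x_1, x_2$ remain on the outer face because each substitution stays within a tubular neighborhood of an edge. A subtlety arises when some $\cK_e$ has $a_e = b_e$, forcing $v$ and $w$ to be identified in $K'$; this amounts to a planar edge contraction, which preserves both planarity and outer-face incidence. Hence $\cK' \in \cP_c(1,1)$ and $S = T^{\cK'} \in \Mor(1,1)$.
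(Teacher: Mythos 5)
Your proof is correct and follows essentially the same route as the paper: reduce by multilinearity to the case $T_e = T^{\cK_e}$ with $\cK_e \in \cP_c(1,1)$, and build $\cK' = (K',x_1,x_2)$ by substituting $K_e$ for each positive edge of $K$, identifying the two labels of $\cK_e$ with the endpoints, so that $T^{\cK'} = S$. The paper states the planarity, connectedness and counting claims without further justification, so your additional verifications (the tubular-neighborhood embedding, the $a_e=b_e$ contraction case, the decomposition of homomorphisms) are simply a more detailed write-up of the same argument.
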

\begin{proof}
By linearity, we may assume that we are given, for every $e \in E_+(K)$, a planar bi-labeled graph $\cK_e = (K_e,x_e,y_e) \in \cP_c(1,1)$ such that $T_e(i,j) = T^{\cK_e}_{ij}$. We define the planar graph $K'$ by replacing every edge $(v,w) \in E_+(K)$ by $K_e$, identifying $v = x_e$ and $w = y_e$. We put $\cK' = (K',x_1,x_2)$. Then $\cK' = \cP_c(1,1)$ and, by construction, the corresponding function $T^{\cK'}$ equals $S$.
\end{proof}

\section{\boldmath The unitary $2$-category $\cC(\cP,\Pi)$ as category of $\QAut \Pi$-equi\-variant $\ell^\infty(I)$-bimodules}

Consider a continuous action $G \actson I$ of a locally compact second countable group $G$ on a countable set $I$. As explained in the beginning of Section \ref{sec.2-category} and following \cite{AV16}, we can define the following unitary $2$-category. The $0$-cells are given by the set $\cE$ of orbits $(I_a)_{a \in \cE}$ for the action $G \actson I$. Then $\cC_{a-b}(G \actson I)$ is defined as the category of $G$-equivariant Hilbert $\ell^\infty(I_a)$-$\ell^\infty(I_b)$-bimodules of finite type, with the relative tensor product defined by \eqref{eq.relative-tensor-product-Hilbert}.

Given a connected locally finite graph $\Pi$ with vertex set $I$ and given a graph category $\cD$ of bi-labeled graphs containing the planar bi-labeled graphs, we defined in Definition \ref{def.my-unitary-2-category} a unitary $2$-category $\cC(\cD,\Pi)$. We prove below that when $\cD = \cG$, the graph category of all bi-labeled graphs, then $\cC(\cG,\Pi)$ is isomorphic with $\cC(\Aut \Pi \actson I)$. More generally, given a faithful, continuous, integrable action $\bG \actson I$ of a locally compact quantum group $\bG$ on a countable set $I$, we define the unitary $2$-category $\cC(\bG \actson I)$ of $\bG$-equivariant Hilbert $\ell^\infty(I)$-bimodules, by quantizing the definitions of \cite{AV16}. We then prove in Proposition \ref{prop.link-with-equivariant-bimodules} that when $\bG$ is the quantum automorphism group associated with $\Pi$ and $\cD$, as defined in Theorem \ref{thm.left-Haar}, then $\cC(\bG \actson I) \cong \cC(\cD,\Pi)$. In combination with Proposition \ref{prop.link-with-classical-case}, we then find in particular that $\cC(\Aut \Pi \actson I) \cong \cC(\cG,\Pi)$.

\subsection{\boldmath The unitary $2$-category of $\bG$-equivariant $\ell^\infty(I)$-bimodules}\label{sec.2-category-equivariant-bimodules}

Let $\bG$ be a locally compact quantum group, in the sense of \cite{KV00}, defined by a pair $(M,\Delta)$ where $M$ is a von Neumann algebra and $\Delta : M \to M \ovt M$. Assume that $M$ is generated by the entries of a magic unitary $U \in M \ovt B(\ell^2(I))$ such that $(\Delta \ot \id)(U) = U_{13} U_{23}$. Recall that $U = (u_{ij})_{i,j \in I}$ is called a magic unitary of the rows and columns of $U$ are self-adjoint projections that sum up to $1$ in $M$. Then,
\begin{align*}
& \al : \ell^\infty(I) \to M \ovt \ell^\infty(I) : \al(p_i) = \sum_{j \in I} (u_{ij} \ot p_j) \quad\text{and}\\
& \be : \ell^\infty(I) \to M \ovt \ell^\infty(I) : \be(p_i) = \sum_{j \in I} (u_{ji} \ot p_j)
\end{align*}
are coactions of $(M,\Delta)$, resp.\ $(M,\Delta\op)$, on $\ell^\infty(I)$.

Denote by $S$ the antipode of $\bG$. Since $S(u_{ij}) = u_{ji}$, we find that $S^2 = \id$ and that $S$ is a normal $*$-anti-automorphism of $M$. We further assume that the left Haar weight $\vphi$ and the right Haar weight $\psi$ of $\bG$ are tracial and that $\vphi(u_{ij}) < +\infty$ and $\psi(u_{ij}) < +\infty$ for all $i,j \in I$. Note that these conditions precisely mean that $\al$ and $\be$ are integrable in the sense of \cite[Definition 1.4]{Vae00}.

Note that all assumptions that we made are satisfied by the quantum automorphism groups that we constructed in Section \ref{sec.quantum-aut-group}.

\begin{definition}
We call a pair $(\cH,V)$ a $\bG$-equivariant Hilbert $\ell^\infty(I)$-bimodule if $\cH$ is a Hilbert $\ell^\infty(I)$-bimodule and $V \in M \ovt B(\cH)$ is a unitary corepresentation of $(M,\Delta)$ such that
$$V (1 \ot \lambda(F)) V^* = (\id \ot \lambda)\be(F) \quad\text{and}\quad V^*(1 \ot \rho(F))V = (\id \ot \rho)\al(F) \quad\text{for all $F \in \ell^\infty(I)$,}$$
where $\lambda : \ell^\infty(I) \to B(\cH)$, resp.\ $\rho : \ell^\infty(I) \to B(\cH)$, denote the left, resp.\ right, module action.

We say that $(\cH,V)$ is of finite type if $p_i \cdot \cH$ and $\cH \cdot p_i$ are finite dimensional for all $i \in I$.
\end{definition}

Thanks to the following lemma, we can define the tensor product of two $\bG$-equivariant Hilbert $\ell^\infty(I)$-bimodules.

\begin{lemma}
Let $(\cH,X)$ and $(\cK,Y)$ be $\bG$-equivariant Hilbert $\ell^\infty(I)$-bimodules. Denote by $P$ the orthogonal projection of $\cH \ot \cK$ onto the closed subspace $\cH \ot_I \cK$. Then, $X_{12} Y_{13} (1 \ot P) = (1 \ot P) X_{12} Y_{13}$. Writing $Z = X_{12} Y_{13} (1 \ot P)$, we get that $(\cH \ot_I \cK,Z)$ is a $\bG$-equivariant Hilbert $\ell^\infty(I)$-bimodule.
\end{lemma}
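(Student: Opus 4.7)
The plan is to reduce everything to two covariance identities for $X$ and $Y$ individually, chain them together to establish the commutation $X_{12}Y_{13}(1\ot P)=(1\ot P)X_{12}Y_{13}$, and then observe that the corepresentation property and bimodule equivariance of $Z$ follow almost automatically from those of $X$ and $Y$. Throughout I denote by $\lambda_\cH,\rho_\cH$ (resp.\ $\lambda_\cK,\rho_\cK$) the left and right $\ell^\infty(I)$-actions on $\cH$ (resp.\ $\cK$), so that $P=\sum_{i\in I}\rho_\cH(p_i)\ot\lambda_\cK(p_i)$ strongly.

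For the commutation: using $\be(p_i)=\sum_j u_{ji}\ot p_j$, the $Y$-covariance rearranges to $Y(1\ot\lambda_\cK(p_i))=\sum_j(u_{ji}\ot\lambda_\cK(p_j))Y$. Dually, the $X$-covariance together with $\al(p_j)=\sum_k u_{jk}\ot p_k$ yields $X^*(1\ot\rho_\cH(p_j))X=\sum_k u_{jk}\ot\rho_\cH(p_k)$; multiplying on the right by $1\ot\rho_\cH(p_i)$ gives the key identity
\[X(u_{ji}\ot\rho_\cH(p_i))=(1\ot\rho_\cH(p_j))\,X\,(1\ot\rho_\cH(p_i)).\]
Chaining these on $M\ovt B(\cH)\ovt B(\cK)$, the element $1\ot\rho_\cH(p_i)\ot 1$ commutes with $Y_{13}$ (disjoint legs) and $1\ot 1\ot\lambda_\cK(p_j)$ commutes with $X_{12}$, so
\[X_{12}Y_{13}(1\ot\rho_\cH(p_i)\ot\lambda_\cK(p_i))=\sum_j(1\ot\rho_\cH(p_j)\ot\lambda_\cK(p_j))\,X_{12}(1\ot\rho_\cH(p_i)\ot 1)\,Y_{13}.\]
Summing over $i$ and using $\sum_i\rho_\cH(p_i)=1$ strongly collapses the $\rho_\cH(p_i)$ factor and produces $(1\ot P)X_{12}Y_{13}$, as required.

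Once this is established, $Z=X_{12}Y_{13}(1\ot P)$ defines a unitary in $M\ovt B(\cH\ot_I\cK)$. That $X_{12}Y_{13}$ is itself a $\bG$-corepresentation on $\cH\ot\cK$ is immediate from the corepresentation properties of $X$ and $Y$ combined with the observation that $X_{23}$ and $Y_{14}$ act on disjoint legs and hence commute: $(\Delta\ot\id)(X_{12}Y_{13})=X_{13}X_{23}Y_{14}Y_{24}=X_{13}Y_{14}X_{23}Y_{24}=(X_{12}Y_{13})_{134}(X_{12}Y_{13})_{234}$; the corepresentation property for $Z$ then follows by restriction to the invariant subspace. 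The bimodule structure on $\cH\ot_I\cK$ is inherited as $\lambda_\cH(F)\ot 1$ on the left and $1\ot\rho_\cK(F)$ on the right; since $Y_{13}$ commutes with $1\ot\lambda_\cH(F)\ot 1$ and $X_{12}$ commutes with $1\ot 1\ot\rho_\cK(F)$, the required bimodule covariance of $Z$ is read off directly from the individual equivariance conditions for $X$ and $Y$.

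The one non-trivial step is extracting the key identity $X(u_{ji}\ot\rho_\cH(p_i))=(1\ot\rho_\cH(p_j))X(1\ot\rho_\cH(p_i))$ from the bare $\rho_\cH$-covariance condition, since this packages how the right $\ell^\infty(I)$-action on $\cH$ interacts with the coaction on $M$; once that identity is isolated, everything else is careful bookkeeping of leg positions.
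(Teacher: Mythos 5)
Your proof is correct and follows essentially the same route as the paper: the paper's one-line computation also moves $P=\sum_i\rho_\cH(p_i)\ot\lambda_\cK(p_i)$ through $Y_{13}$ using the $\be$-covariance of $Y$ and then through $X_{12}$ using the $\al$-covariance of $X$, passing through exactly your middle expression $\sum_{i,j}X_{12}(u_{ji}\ot\rho_\cH(p_i)\ot\lambda_\cK(p_j))Y_{13}$. Your additional verification that $Z$ is a corepresentation and that the bimodule covariance holds (which the paper leaves implicit) is also correct.
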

\begin{proof}
Note that $P = \sum_{i \in I} (\rho_\cH(p_i) \ot \lambda_\cK(p_i))$. Thus,
$$X_{12} Y_{13} (1 \ot P) = \sum_{i,j \in I} X_{12} (u_{ji} \ot \rho_\cH(p_i) \ot \lambda_\cK(p_j)) Y_{13} = (1 \ot P) X_{12} Y_{13} \; .$$
\end{proof}

The space of morphisms from  $(\cH,X)$ to $(\cK,Y)$ is defined as the set of all bounded $\ell^\infty(I)$-bimodular operators $T : \cH \to \cK$ satisfying $Y(1 \ot T) = (1 \ot T)X$. The identity object is defined as the Hilbert space $\ell^2(I)$ with the obvious $\ell^\infty(I)$-bimodule structure and unitary corepresentation $U \in M \ovt B(\ell^2(I))$. One checks that
$$\xi \mapsto \sum_{i \in I} \xi \cdot p_i \ot e_i$$
defines a unitary equivalence between $(\cH,X)$ and the tensor product $(\cH,X) \ot_I (\ell^2(I),U)$. A similar result holds for $(\ell^2(I),U) \ot_I (\cH,X)$, so that $(\ell^2(I),U)$ indeed is an identity object. We have thus defined the C$^*$-tensor category of $\bG$-equivariant $\ell^\infty(I)$-bimodules.

Note however that the identity object $(\ell^2(I),U)$ need not be irreducible. We thus introduce the following notation, which by Corollary \ref{cor.about-rel-E}, is compatible with the notation that we used for quantum automorphism groups. Note that for actions of compact quantum groups, this equivalence relation $\approx$ was already defined in \cite[Definition 4.1]{DCKSS16}.

\begin{lemma}\label{lem.approx-eq-rel}
The relation on $I$ defined by $i \approx j$ if $u_{ij} \neq 0$ is an equivalence relation. Given a subset $W \subset I$, we have $\al(1_W) = 1 \ot 1_W$ iff $\be(1_W) = 1 \ot 1_W$ iff $W$ is invariant under $\approx$.
\end{lemma}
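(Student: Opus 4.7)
The claim splits cleanly into two parts: that $\approx$ is an equivalence relation, and the characterization of $\approx$-invariant subsets via the coactions $\al$ and $\be$. My plan is to handle the three equivalence-relation properties using the corepresentation identity for $\Delta$ and the injectivity of the unitary antipode, and then to unfold the coaction formulas against the minimal projections $p_j$ to read off the invariance condition.

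For the equivalence relation, I would argue as follows. Symmetry follows from the identity $S(u_{ij}) = u_{ji}$ and the fact that $S$ is a normal $*$-anti-automorphism (hence injective), so $u_{ij} = 0$ iff $u_{ji} = 0$. For transitivity, the key tool is the corepresentation identity $(\Delta \otimes \id)(U) = U_{13} U_{23}$, which gives the $\sigma$-weakly convergent expansion
$$\Delta(u_{ik}) = \sum_{j \in I} u_{ij} \ot u_{jk}.$$
Since for fixed $i$ the $(u_{ij})_{j \in I}$ are mutually orthogonal projections (as rows of a magic unitary), the terms $u_{ij} \ot u_{jk}$ are mutually orthogonal projections in $M \ovt M$; consequently the partial sums increase $\sigma$-weakly to $\Delta(u_{ik})$. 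If $u_{ij_0} \neq 0$ and $u_{j_0 k} \neq 0$ for some $j_0$, then $u_{ij_0} \ot u_{j_0 k}$ is a nonzero projection dominated by $\Delta(u_{ik})$, forcing $u_{ik} \neq 0$. Reflexivity is then a formal consequence: since $\sum_{j \in I} u_{ij} = 1 \neq 0$ for each $i$, there is some $j_0$ with $u_{i j_0} \neq 0$; by symmetry $u_{j_0 i} \neq 0$; by transitivity $u_{ii} \neq 0$.

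For the subset statement, I would multiply the equality $\al(1_W) = 1 \ot 1_W$ on the right by $1 \ot p_j$ for each $j \in I$. Using $\al(1_W) = \sum_{i \in W} \sum_{j \in I} u_{ij} \ot p_j$ and $1 \ot p_j = \sum_{i \in I} u_{ij} \ot p_j$, this reduces to the family of scalar identities $\sum_{i \in W} u_{ij} = 1_W(j) \cdot 1$ for all $j \in I$. Combined with $\sum_{i \in I} u_{ij} = 1$, this is equivalent to
$$u_{ij} = 0 \quad \text{whenever exactly one of } i, j \text{ lies in } W,$$
which is precisely the assertion that $W$ is a union of $\approx$-equivalence classes. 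The same computation applied to $\be(p_i) = \sum_j u_{ji} \ot p_j$ yields the transposed condition ``$u_{ji} = 0$ whenever exactly one of $i,j$ lies in $W$'', which is identical to the previous one after relabeling; so $\al(1_W) = 1 \ot 1_W$ and $\be(1_W) = 1 \ot 1_W$ are visibly the same condition.

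The only potentially subtle point is the $\sigma$-weak convergence of the infinite sums, which needs to be invoked when turning ``$\Delta(u_{ik})$ dominates the nonzero projection $u_{ij_0} \ot u_{j_0 k}$'' into the strict statement $\Delta(u_{ik}) \neq 0$; this is handled by the standard fact that a $\sigma$-weakly convergent sum of mutually orthogonal projections dominates each of its summands, so no genuine difficulty arises.
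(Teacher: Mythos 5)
Your proof is correct and takes essentially the same route as the paper: symmetry via $S(u_{ij})=u_{ji}$, transitivity via the expansion $\Delta(u_{ik})=\sum_{j}u_{ij}\ot u_{jk}$ (the paper isolates a single term by multiplying with $u_{ij_0}\ot 1$, you instead use that the sum of mutually orthogonal projections dominates each summand), and the coaction statement by evaluating against the minimal projections $p_j$ and using that rows and columns of $U$ sum to $1$. The only minor deviation is reflexivity, which the paper proves by contradiction from $u_{ii}=0$ using the antipode and the column sum, while you deduce it formally from symmetry, transitivity and the row sum; both arguments are valid.
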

\begin{proof}
Since $S(u_{ij}) = u_{ji}$, the relation $\approx$ is symmetric. We have $(u_{ik} \ot 1)\Delta(u_{ij}) = u_{ik} \ot u_{kj}$ for all $i,j,k \in I$. Assume that $i \in I$ and $u_{ii} = 0$. It then follows that $u_{ik} \ot u_{ki} = 0$ for all $k \in I$. Since $u_{ik} = S(u_{ki})$, this means that $u_{ki} = 0$ for all $k \in I$. Summing over $k$, we find the contradiction that $1 = 0$. So the relation $\approx$ is reflexive. Finally, if $i \approx k$ and $k \approx j$, then $(u_{ik} \ot 1)\Delta(u_{ij}) = u_{ik} \ot u_{kj}$ is nonzero. So $u_{ij} \neq 0$ and $i \approx j$.

Given $F \in \ell^\infty(I)$, we have that $\al(F) = 1 \ot F$ iff $\be(F) = 1 \ot F$ iff $F(i) = F(j)$ for all $i,j \in I$ with $u_{ij} \neq 0$.
\end{proof}

We denote by $(I_a)_{a \in \cE}$ the set of equivalence classes for $\approx$. We denote by $1_a \in \ell^\infty(I)$ the indicator function of $I_a$. Note that by Lemma \ref{lem.approx-eq-rel}, we have that $U(1 \ot 1_a) = (1 \ot 1_a)U$.

\begin{definition}\label{def.unitary-2-category-of-equivariant-bimodules}
We denote by $\cC(\bG \actson I)$ the unitary $2$-category with $0$-cells $\cE$ and with $1$-cells $\cC_{a-b}(\bG \actson I)$ given by the $\bG$-equivariant Hilbert $\ell^\infty(I_a)$-$\ell^\infty(I_b)$-bimodules of finite type.
\end{definition}

We denote the identity object in $\cC_{a-a}(\bG \actson I)$ by $\eps_a = (\ell^2(I_a),U (1 \ot 1_a))$. To check that Definition \ref{def.unitary-2-category-of-equivariant-bimodules} indeed provides a unitary $2$-category, the only nontrivial point is that every $(\cH,V) \in \cC_{a-b}(\bG \actson I)$ admits a conjugate. We consider the $\ell^\infty(I_b)$-$\ell^\infty(I_a)$-bimodule $\overline{\cH}$. Since $S^2 = \id$, we have the conjugate unitary corepresentation $\overline{V} \in M \ovt B(\overline{\cH})$ satisfying $(\id \ot \om_{\overline{\xi},\overline{\eta}})(\overline{V}) = (\id \ot \om_{\eta,\xi})(V^*)$ for all $\xi,\eta \in \cH$. Denoting by $\onb(\cK)$ any choice of orthonormal basis of a Hilbert space, one checks that the maps
\begin{align*}
& s : \ell^2(I_a) \to \cH \ot_{I_b} \overline{\cH} : s(e_i) = \sum_{j \in I_b, \xi \in \onb(p_i \cdot \cH \cdot p_j)} (\xi \ot \overline{\xi}) \quad\text{and}\\
& t : \ell^2(I_a) \to \overline{\cH} \ot_{I_b} \cH : s(e_i) = \sum_{j \in I_b, \xi \in \onb(p_j \cdot \cH \cdot p_i)} (\overline{\xi} \ot \xi)
\end{align*}
solve the conjugate equations for $(\cH,V)$ and its conjugate $(\overline{\cH},\overline{V})$.

In the theory of compact quantum groups, one obtains intertwiners between unitary representations by integration over the compact quantum group. In particular, for irreducible representations, this leads to the Peter-Weyl orthogonality relations. The following lemma provides a similar result in the context of $\bG$-equivariant $\ell^\infty(I)$-bimodules. It will be a key tool in Section \ref{sec.isomorphism-with-equiv-cat}.

\begin{lemma}\label{lem.integrate-intertwiner}
Let $(\cH,X)$ and $(\cK,Y)$ be $\bG$-equivariant $\ell^\infty(I)$-bimodules. Fix $a \in \cE$ and $e \in I_a$. Denote by $\cN \subset B(\cH,\cK)$ the space of bounded $\ell^\infty(I)$-bimodular operators from $\cH$ to $\cK$.
\begin{enumlist}
\item There is a $C > 0$ such that $\vphi(u_{ie}) = C$ for all $i \in I_a$.
\item For every $T_0 \in \cN$, there is a unique morphism $\Phi(T_0)$ from $(\cH,X)$ to $(\cK,Y)$ satisfying
\begin{equation}\label{eq.formula-intertwiner}
\lambda_\cK(p_i) \Phi(T_0) = C^{-1} \, (\vphi \ot \id)\bigl((u_{ie} \ot 1)Y (1 \ot \lambda_\cK(p_e) T_0) X^*\bigr) \quad\text{for all $i \in I$.}
\end{equation}
\item If $T_0$ is a morphism from $(\cH,X)$ to $(\cK,Y)$ such that $T_0 = \lambda_\cK(1_a) T_0$, then $\Phi(T_0) = T_0$.
\end{enumlist}
\end{lemma}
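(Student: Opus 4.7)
For part~(1), the plan is to apply left-invariance of $\vphi$ to $\Delta(u_{ie}) = \sum_k u_{ik} \ot u_{ke}$: since $(\id \ot \vphi)\Delta(u_{ie}) = \vphi(u_{ie}) \cdot 1$ and the $(u_{ik})_{k \in I}$ are mutually orthogonal projections summing strongly to $1$, comparing coefficients yields $\vphi(u_{ke}) = \vphi(u_{ie})$ whenever $u_{ik} \neq 0$, i.e.\ whenever $i \approx k$. Hence $C := \vphi(u_{ie})$ depends only on the class $a \in \cE$; positivity is clear because $u_{ee} \neq 0$ (as $e \approx e$ by Lemma \ref{lem.approx-eq-rel}) and $\vphi$ is faithful.

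For part~(2), I will define $\Phi(T_0) := C^{-1}(\vphi \ot \id)(W)$ with $W := Y(1 \ot \lambda_\cK(p_e) T_0) X^* \in M \ovt B(\cH,\cK)$, and verify that this is a well-defined bounded operator by slicing against vector states, using that $\vphi(u_{ie}) = C < \infty$. The covariance $Y(1 \ot \lambda_\cK(p_e)) = (\id \ot \lambda_\cK)\be(p_e) \cdot Y$ together with the bimodularity of $T_0$ yields the two factorisations $W = (\id \ot \lambda_\cK)\be(p_e) \cdot Z = Z \cdot (\id \ot \lambda_\cH)\be(p_e)$, where $Z := Y(1 \ot T_0) X^*$. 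Multiplying by $(u_{ie} \ot 1)$ and using $u_{ie} u_{ke} = \delta_{i,k} u_{ie}$ recovers the displayed formula $\lambda_\cK(p_i)\Phi(T_0) = C^{-1}(\vphi \ot \id)((u_{ie} \ot 1) W)$.

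The main task is to show that $\Phi(T_0)$ is a morphism. Equivariance $Y(1 \ot \Phi(T_0)) X^* = 1 \ot \Phi(T_0)$ follows by pulling $Y_{13}$ and $X_{13}^*$ inside the slice, recognising $(\Delta \ot \id)(W) = Y_{13} Y_{23}(1 \ot 1 \ot \lambda_\cK(p_e) T_0) X_{23}^* X_{13}^*$, and invoking left-invariance $(\id \ot \vphi)\Delta(m) = \vphi(m) \cdot 1$. For $\rho$-bimodularity, the covariance identities $(1 \ot \rho_\cK(F)) Y = Y(\id \ot \rho_\cK)\al(F)$ and $(\id \ot \rho_\cH)\al(F) X^* = X^*(1 \ot \rho_\cH(F))$ transport $\rho_\cK(F)$ through $W$, while the $\rho$-bimodularity of $T_0$ allows $(\id \ot \rho_\cK)\al(F)$ to commute past $(1 \ot \lambda_\cK(p_e) T_0)$ and convert into $(\id \ot \rho_\cH)\al(F)$. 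The most delicate step will be $\lambda$-bimodularity: the covariances of $Y$ and $X^*$ with $\lambda$ only pull $\lambda$ through one of them cleanly, so a direct transport argument fails. The trick I intend to use is the tracial property of $\vphi$: starting from the two factorisations of $W$, one computes $(u_{ie} \ot 1) W = (u_{ie} \ot \lambda_\cK(p_i)) Z$ and $W(u_{ie} \ot 1) = Z(u_{ie} \ot \lambda_\cH(p_i))$, and the identity $\vphi(u_{ie} \cdot m) = \vphi(m \cdot u_{ie})$ equates the two $\vphi$-slices, yielding $\lambda_\cK(p_i)\Phi(T_0) = \Phi(T_0)\lambda_\cH(p_i)$.

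Uniqueness in part~(2) is immediate, since any morphism $T$ whose components $\lambda_\cK(p_i) T$ are prescribed for all $i$ is recovered as $T = \sum_i \lambda_\cK(p_i) T$. Part~(3) is a direct computation: when $T_0$ is itself a morphism, $Y(1 \ot T_0) X^* = 1 \ot T_0$, so $W = (\id \ot \lambda_\cK)\be(p_e) \cdot (1 \ot T_0) = \sum_k (u_{ke} \ot \lambda_\cK(p_k) T_0)$; integrating with $C^{-1}(\vphi \ot \id)$ collapses, by part~(1), to $C^{-1} \cdot C \cdot \lambda_\cK(1_a) T_0 = T_0$ using the hypothesis $\lambda_\cK(1_a) T_0 = T_0$.
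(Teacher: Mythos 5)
Your proposal is correct and follows essentially the same route as the paper: left invariance applied to $\Delta(u_{ie})$ for part~(1), the two factorisations of $W$ through $(\id\ot\lambda)\be(p_e)$ combined with traciality of $\vphi$ for the $\lambda$-bimodularity, the covariance relations for the $\rho$-bimodularity, the identity $(\Delta\ot\id)(W)=Y_{13}Y_{23}(1\ot 1\ot\lambda_\cK(p_e)T_0)X_{23}^*X_{13}^*$ plus left invariance for equivariance, and the same collapse for part~(3). The only real difference is organisational: the paper defines $\Phi(T_0)$ componentwise by the displayed formula and first proves $T_i=\lambda_\cK(p_i)\,T_i\,\lambda_\cH(p_i)$ with $\|T_i\|\leq\|T_0\|$ before summing, which is precisely what makes a global slice $C^{-1}(\vphi\ot\id)(W)$ legitimate as a bounded operator (orthogonality of ranges alone does not suffice), so you should run your traciality step before, rather than after, declaring $\Phi(T_0)$ well defined.
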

\begin{proof}
1.\ For all $i,k \in I$, we have $(u_{ik} \ot 1)\Delta(u_{ie}) = u_{ik} \ot u_{ke}$. By left invariance of $\vphi$, we conclude that $\vphi(u_{ie}) = \vphi(u_{ke})$ whenever $i \approx k$. So we find $C > 0$ such that $\vphi(u_{ie}) = C$ for all $i \in I_a$.

2.\ Fix $T_0 \in \cN$. Since $(u_{ie} \ot 1) (\id \ot \lambda_\cK)\be(p_e) = u_{ie} \ot \lambda_\cK(p_i)$, the right hand side of \eqref{eq.formula-intertwiner} defines an element $T_i \in B(\cH,p_i \cdot \cK)$. We also have that $(\id \ot \lambda_\cH)\be(p_e) (u_{ie} \ot 1) = u_{ie} \ot \lambda_\cH(p_i)$, that $u_{ie}^2 = u_{ie}$ and that $\vphi$ is a trace. Therefore,
\begin{align*}
C \, T_i & = (\vphi \ot \id)\bigl( (u_{ie} \ot 1)Y (1 \ot \lambda_{\cK}(p_e) T_0\lambda_{\cH}(p_e)) X^*\bigr)\\
& = (\vphi \ot \id)\bigl( (u_{ie} \ot 1)Y (1 \ot \lambda_{\cK}(p_e) T_0) X^* (\id \ot \lambda_\cH)\be(p_e) (u_{ie} \ot 1)\bigr) \\
& = (\vphi \ot \id)\bigl( (u_{ie} \ot 1)Y (1 \ot \lambda_{\cK}(p_e) T_0) X^* (u_{ie} \ot \lambda_{\cH}(p_i))\bigr) = C\, T_i \, \lambda_\cH(p_i) \; .
\end{align*}
Since moreover $\|T_i\| \leq \|T_0\|$ for all $i \in I$, the operator $T = \sum_{i \in I} T_i$ is a well-defined bounded operator satisfying $\lambda_\cK(F) T = T \lambda_\cH(F)$ for all $F \in \ell^\infty(I)$.

Also note that
$$C \, \rho_\cK(F) \, T_i = (\vphi \ot \id)\bigl((u_{ie} \ot 1)Y (\id \ot \rho_\cK)\al(F) (1 \ot \lambda_\cK(p_e) T_0) X^*\bigr) = C \, T_i \, \rho_\cH(F)$$
so that $\rho_\cK(F) \, T = T \, \rho_\cH(F)$ for all $F \in \ell^\infty(I)$.

We finally need to prove that $(1 \ot T) X = Y (1 \ot T)$. By the left invariance of $\vphi$, we get for all $i,k \in I$,
\begin{align*}
C \, u_{ik} \ot T_i & = (\id \ot \vphi \ot \id)\bigl((u_{ik} \ot 1 \ot 1) (\Delta \ot \id)\bigl((u_{ie} \ot 1)Y (1 \ot \lambda_{\cK}(p_e) T_0) X^*\bigr)\bigr) \\
& = (\id \ot \vphi \ot \id)\bigl( (u_{ik} \ot u_{ke} \ot 1) Y_{13} Y_{23} (1 \ot 1 \ot \lambda_{\cK}(p_e) T_0) X^*_{23} X^*_{13} \bigr) \\
& = (u_{ik} \ot 1) Y \bigl( 1 \ot (\vphi \ot \id)\bigl((u_{ke} \ot 1) Y (1 \ot \lambda_{\cK}(p_e) T_0) X^* \bigr)\bigr) X^* \\
& = C \, (u_{ik} \ot 1) Y ( 1 \ot \lambda_\cK(p_k) T ) X^* = C \, (u_{ik} \ot \lambda_\cK(p_i)) Y (1 \ot T) X^* \; .
\end{align*}
First summing over $k$ and then summing over $i$, we find that $1 \ot T = Y (1 \ot T) X^*$.

3.\ If $T_0$ is a morphism from $(\cH,X)$ to $(\cK,Y)$, we find that $\lambda_\cK(p_i) \Phi(T_0) = C^{-1} \, \vphi(u_{ie}) \, \lambda_\cK(p_i) T_0$ for all $i \in I$. Since $u_{ie} = 0$ if $i \not\in I_a$, we conclude that $\Phi(T_0) = \lambda_\cK(1_a) T_0$.
\end{proof}

\subsection{\boldmath Isomorphism between $\cC(\bG \actson I)$ and $\cC(\cD,\Pi)$}\label{sec.isomorphism-with-equiv-cat}

We fix a connected locally finite graph $\Pi$ with vertex set $I$ and a graph category $\cD$ containing all planar bi-labeled graphs. We denote by $\bG$ the associated quantum automorphism group defined in Theorem \ref{thm.left-Haar}, with von Neumann algebra realization $(M,\Delta)$, generated by the entries of a magic unitary $U = (u_{ij})_{i,j \in I}$.

\begin{proposition}\label{prop.link-with-equivariant-bimodules}
The unitary $2$-categories $\cC(\cD,\Pi)$ and $\cC(\bG \actson I)$ defined in \ref{def.my-unitary-2-category} and \ref{def.unitary-2-category-of-equivariant-bimodules} are equivalent.
\end{proposition}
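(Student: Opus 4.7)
The plan is to construct a $*$-tensor 2-functor $F : \cC(\cD,\Pi) \to \cC(\bG \actson I)$ and show it is an equivalence. First I would observe that the two categories share the same set of 0-cells $\cE$: the equivalence relation \eqref{eq.equiv-rel-on-I} coincides with the relation $u_{ij} \neq 0$ by Corollary \ref{cor.about-rel-E}, and by Lemma \ref{lem.approx-eq-rel} the latter parametrizes the $\bG$-quantum orbits. The main ingredient for $F$ on 1-cells is the unitary corepresentation $V^{(n+1)} \in M \ovt B(\ell^2(I^{n+1}))$ with matrix entries
$$V^{(n+1)}_{ij} = u_{i_0 j_0} u_{i_1 j_1} \cdots u_{i_n j_n}, \quad i,j \in I^{n+1}.$$
Unitarity, the corepresentation identity $(\Delta \ot \id)(V^{(n+1)}) = V^{(n+1)}_{13}V^{(n+1)}_{23}$, and equivariance with respect to the natural bimodule structure $(F \cdot \xi \cdot F')(i) = F(i_0)\xi(i)F'(i_n)$ are all routine consequences of the magic unitary relations and of $\Delta(u_{ij}) = \sum_k u_{ik} \ot u_{kj}$. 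The key observation is that the defining relations of $\cA$ give exactly the intertwinings needed: for every $\cK \in \cL(n,m) \subset \cD_1(n+1,m+1)$ we have $V^{(n+1)}(1 \ot T^\cK) = (1 \ot T^\cK) V^{(m+1)}$. Hence every $T \in \Mor_{a-b}(n,m)$ is $\bG$-equivariant, and in particular every self-adjoint projection $P \in \Mor_{a-b}(n,n)$ commutes with $V^{(n+1)}$. I then set $F(P) = (P\ell^2(I^{n+1}), V^{(n+1)}(1 \ot P))$, which is a finite-type equivariant bimodule by Lemma \ref{lem.crucial-technical-Mor-lemma}.3, and let $F$ act on 2-cells by restriction.

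Next I would verify that $F$ is a tensor 2-functor. The unit $1_a \in \Mor_{a-a}(0,0)$ maps to the unit $(\ell^2(I_a), U(1 \ot 1_a))$ of $\cC_{a-a}(\bG \actson I)$. The natural embedding $\ell^2(I^{n+1}) \ot_{I_b} \ell^2(I^{m+1}) \hookrightarrow \ell^2(I^{n+m+1})$ intertwines $V^{(n+1)} \cdot V^{(m+1)}$ with $V^{(n+m+1)}$, since the two copies of $u_{\cdot,\cdot}$ appearing at the glued $I_b$-index collapse via the magic relation $u_{ik} u_{il} = \delta_{kl} u_{ik}$. Conjugates also match: by $S(u_{ij}) = u_{ji}$ and Lemma \ref{lem.crucial-technical-Mor-lemma}.7, the conjugate of $F(P)$ is canonically isomorphic to $F(\widetilde P)$. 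Faithfulness of $F$ on 2-cells is immediate since $T = PTQ$ for $T \in P\Mor_{a-b}(n,m)Q$, so $T$ is determined by its restriction to $Q\ell^2(I^{m+1})$.

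The main obstacle is establishing fullness. Given an equivariant bimodular map $T : Q\ell^2(I^{m+1}) \to P\ell^2(I^{n+1})$, I would apply Lemma \ref{lem.integrate-intertwiner} to realize $T$ as $\Phi(T_0)$ for any bimodular extension $T_0$. The hard part is identifying the Haar-averaged operator $\Phi(T_0)$ with an element of $P\Mor_{a-b}(n,m)Q$. The strategy is to translate the averaging integral via the corner isomorphism $\Theta_e : \cA \to \cB_e$ of Lemma \ref{lem.iso-A-B} into a computation with the functional $\vphi$ on the $*$-algebra $\cB$ from Definition \ref{def.functional-vphi}; the block decomposition $\cB = \bigoplus_{a,b,\al \in \Irr_{a-b}} \cB_\al$ together with the Peter-Weyl formulas of Proposition \ref{prop.vphi-positive-faithful} then display $\Phi(T_0)$ as the canonical projection of $T_0$ onto the relevant isotypic components, which by construction lie in $P\Mor_{a-b}(n,m)Q$. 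Once fullness is established, essential surjectivity follows by a Tannaka-type argument: the edge bimodules $F(P_{1,a,b})$ of Remark \ref{rem.path-approach-to-2-category} generate $\cC(\bG \actson I)$ under relative tensor product, subobjects and direct sums, because their matrix coefficients generate $M$. Every irreducible object $\cH \in \cC_{a-b}(\bG \actson I)$ is therefore a direct summand of some iterated tensor product of edge bimodules, which equals $F(P_{n,a,b})$ for suitable $n$; by fullness the corresponding projection lifts to an element of $\Mor_{a-b}(n,n)$, placing $\cH$ in the image of $F$.
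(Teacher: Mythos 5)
Your construction of the functor, the identification of $0$-cells, and the faithfulness argument all coincide with the paper's proof, and your fullness plan is in substance the paper's as well: average a bimodular operator with the Haar weight via Lemma \ref{lem.integrate-intertwiner} and evaluate the resulting matrix coefficients through $\Theta_e$ and the explicit formula for $\vphi$ on $\cB$. Be aware, though, that the phrase ``display $\Phi(T_0)$ as the canonical projection onto the relevant isotypic components, which by construction lie in $P\Mor_{a-b}(n,m)Q$'' hides the actual work: $\Mor_{a-b}(n,m)$ is only an algebraic span, so averaging a general bounded bimodular operator does not visibly land there. The paper first computes $\Phi$ on finitely supported (rank-one) bimodular operators $E_{sik,sjk}$, where the sums in \eqref{eq.formula-left-Haar} are finite and the result is exhibited as an explicit element of $\Mor_{a-b}(n,m)$, and then passes to a general $T_0 \in P\,\cM(n,m)\,Q$ by a $\sigma$-weak approximation, using that $P\Mor_{a-b}(n,m)Q$ is finite dimensional and hence $\sigma$-weakly closed, together with $\Phi(T_0)=T_0$. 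Some such two-step argument is needed; the isotypic-projection language by itself does not deliver membership in the algebraic morphism space.

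The genuine gap is in essential surjectivity. You assert that the edge bimodules generate $\cC(\bG \actson I)$ under tensor products, subobjects and direct sums ``because their matrix coefficients generate $M$''. That inference is a Tannaka--Krein/Peter--Weyl fact for \emph{compact} quantum groups acting on their own $L^2$-space; here $\bG$ is in general noncompact, there is no Haar state, and $L^2(\bG)$ does not decompose into finite-type equivariant $\ell^\infty(I)$-bimodules, so no off-the-shelf generation theorem applies. This is exactly the point the paper has to prove by hand: given any $(\cH,X) \in \cC_{a-b}(\bG \actson I)$, one applies Lemma \ref{lem.integrate-intertwiner} with target $(\cH^{(n)},U^{(n)})$ for all $n$ and all bounded bimodular $T_0$; if every averaged operator $\Phi(T_0)$ vanished, then $\vphi\bigl(U_{n-1}(s,l)\,(\id \ot \om_{\xi,\eta})(X^*)\bigr) = 0$ for all $s,l,\xi,\eta$, and density of the span of the $U_{n-1}(s,l)$ in $M$ together with faithfulness of the Haar weight forces $X=0$, a contradiction. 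Combined with fullness and a projection in $\Mor_{a-b}(n,n)$ compressing the resulting nonzero morphism, this yields essential surjectivity. So your outline is salvageable with the same lemma you already use for fullness, but the ``matrix coefficients generate $M$'' step must be replaced by this averaging-plus-faithfulness argument rather than cited as a known Tannaka-type fact.
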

\begin{proof}
We denote by $(\cA,\Delta)$ the algebraic quantum group underlying $\bG$.
For every $n \geq 0$, define the $\bG$-equivariant $\ell^\infty(I)$-bimodule $(\cH^{(n)},U^{(n)})$ by $\cH^{(n)} = \ell^2(I^{n+1})$ and $U^{(n)} = U_{12} U_{13} \cdots U_{1,n+2}$. By the defining relations for $\cA$, we have that $U^{(n)} (1 \ot T^\cK) = (1 \ot T^\cK) U^{(m)}$ for all $\cK \in \cD_c(n,m)$. We also have that $U(1 \ot 1_a) = (1 \ot 1_a) U$ for all $a \in \cE$. Thus, every $T \in \Mor_{a-b}(n,m)$ defines a morphism from $(\cH^{(m)},U^{(m)})$ to $(\cH^{(n)},U^{(n)})$. In particular, every orthogonal projection $P \in \Mor_{a-b}(n,n)$ defines an object in $\cC_{a-b}(\bG \actson I)$.

To conclude the proof of the proposition, we thus only have to prove the following two statements. We denote by $\cM(n,m)$ the space of all bounded, $\bG$-equivariant, $\ell^\infty(I)$-bimodular operators from $(\cH^{(m)},U^{(m)})$ to $(\cH^{(n)},U^{(n)})$.

\begin{enumlist}
\item For all orthogonal projections $P \in \Mor_{a-b}(n,n)$ and $Q \in \Mor_{a-b}(m,m)$, we have that
\begin{equation}\label{eq.other-inclusion}
P \, \cM(n,m) \, Q \subset \Mor_{a-b}(n,m) \; .
\end{equation}

\item For every $\bG$-equivariant $\ell^\infty(I)$-bimodule $(\cH,X)$, there exists an $n \geq 0$, $a,b \in \cE$, an orthogonal projection $P \in \Mor_{a-b}(n,n)$ and a morphism $T$ from $(\cH,X)$ to $(\cH^{(n)},U^{(n)})$ such that $P T \neq 0$.
\end{enumlist}

Proof of 1.\ For $i \in I^{n+1}$, we again denote by $(i)$ the canonical basis vector in $\ell^2(I^{n+1})$. For all $i \in I^{n+1}$ and $j \in I^{m+1}$, we denote by $E_{ij} \in B(\ell^2(I^{m+1}),\ell^2(I^{n+1}))$ the obvious rank $1$ operator. Fix a base vertex $e \in I_a$ and denote by $\vphi_e$ the corresponding Haar functional defined in \ref{def.left-Haar}. We use the notation of Lemma \ref{lem.integrate-intertwiner} and start by proving that
\begin{equation}\label{eq.nice-on-dense}
\Phi(E_{s i k,s j k}) \in \Mor_{a-b}(n,m) \quad\text{for all $s \in I_a$, $k \in I_b$, $i \in I^{n-1}$ and $j \in I^{m-1}$.}
\end{equation}
If $s \neq e$, then $\Phi(E_{s i k,s j k})=0$ by definition. So we may assume that $s = e$. Whenever $r \in I^{n+1}$ and $l \in I^{m+1}$ with $r_0 = l_0 \in I_a$ and $r_n = l_m \in I_b$, the matrix entry $(\Phi(E_{e i k,e j k}))_{rl}$ is given by
\begin{align*}
\vphi_e\bigl(u_{r_0 e} U_{n+1}(r,eik) U_{m+1}(l,ejk)^*\bigr) & = \vphi_e\bigl(u_{r_0 e} U_{n+1}(r,eik) U_{m+1}(l_m \cdots l_0,k j_{m-1} \cdots j_1 e)\bigr) \\
& = \vphi_e\bigl(U_{n+m+1}(r l_{m-1} \cdots l_1 r_0, e i k j_{m-1} \cdots j_1 e)\bigr) \; .
\end{align*}
By \eqref{eq.formula-left-Haar}, this last expression equals
\begin{equation}\label{eq.stapje}
\sum_{V \in \ibf_{a-a}(n+m,\eps_a)} \langle V(r_0) , (r l_{m-1} \cdots l_1 r_0) \rangle \, \langle (e i k j_{m-1} \cdots j_1 e), V(e) \rangle \; .
\end{equation}
Note that since $e,i,k,j$ are fixed, there are only finitely many nonzero terms in the above sum. Also, by Lemma \ref{lem.crucial-technical-Mor-lemma}, for every $V \in \Mor_{a-a}(n+m,\eps_a)$, there is a unique $Z_V \in \Mor_{a-b}(n,m)$ satisfying
$$\langle V(r_0) , (r l_{m-1} \cdots l_1 r_0)\rangle = \langle Z_V(r_0 l_1 \cdots l_{m-1} r_n), (r) \rangle$$
for all $r \in I^{n+1}$ with $r_n \in I_b$, $l_1,\ldots,l_{m-1} \in I$. But then the expression in \eqref{eq.stapje} equals the $rl$ matrix entry of the finite sum
$$\sum_{V \in \ibf_{a-a}(n+m,\eps_a)}  \langle (e i k j_{m-1} \cdots j_1 e), V(e) \rangle \; Z_V \; .$$
We have thus proven \eqref{eq.nice-on-dense}.

To prove \eqref{eq.other-inclusion}, since $P \, \Mor_{a-b}(n,m) \, Q$ is a finite dimensional vector space, it suffices to prove that any $T_0 \in P \, \cM(n,m) \, Q$ belongs to the $\sigma$-weak closure of $P \, \Mor_{a-b}(n,m) \, Q$. Denote by $\cN_0$ the linear span of the rank $1$ operators $E_{s i k,s j k}$ with $s \in I_a$, $k \in I_b$, $i \in I^{n-1}$ and $j \in I^{m-1}$. Choose a sequence $T_l \in \cN_0$ such that $T_l \to T_0$ $\sigma$-weakly. Then, $\Phi(T_l) \to \Phi(T_0)$ $\sigma$-weakly. By Lemma \ref{lem.integrate-intertwiner}, we have $\Phi(T_0) = T_0$. So, $P \, \Phi(T_l) \, Q \to T_0$ $\sigma$-weakly. By \eqref{eq.nice-on-dense}, all $P \, \Phi(T_l) \, Q$ belong to $P \, \Mor_{a-b}(n,m) \, Q$. This concludes the proof of 1.

Proof of 2.\ We apply Lemma \ref{lem.integrate-intertwiner} to $(\cK,Y) = (\cH^{(n)},U^{(n)})$ and $T_0 \in B(\cH,\cK)$ an arbitrary bounded $\ell^\infty(I)$-bimodular operator. If for all choices of $n \geq 1$, $T$ and $e \in I$, the expression in \eqref{eq.formula-intertwiner} equals zero for all $i \in I$, it follows that
\begin{align*}
\vphi\bigl(u_{ie} \, U_{n+1}(isr,klt) \, a \bigr) = 0 \quad &\text{when $a = (\id \ot \om_{\xi,\eta})(X^*)$, for all $i,e,k,r,t \in I$, $s,l \in I^{n-1}$,}\\
& \text{$n \geq 1$, $\xi \in \cH$, $\eta \in p_k \cdot \cH \cdot p_t$.}
\end{align*}
Taking $k = e$, so that $u_{ie} \, U_{n+1}(isr,klt) = U_{n+1}(isr,elt)$, summing over $i,r \in I$ and only considering $n \geq 2$, we find that
$$\vphi\bigl( U_{n-1}(s,l) \, (\id \ot \om_{\xi,\eta})(X^*) \bigr) = 0 \quad\text{for all $n \geq 1$, $s,l \in I^n$ and $\xi,\eta \in \cH$.}$$
Because the elements $U_{n-1}(s,l)$ are dense in $M$ and $\vphi$ is faithful, it follows that $X = 0$, which is absurd. So, we find $n \geq 0$ and a nonzero morphism $T$ from $(\cH,X)$ to $(\cH^{(n)},U^{(n)})$. Pick $a,b \in \cE$ such that $(1_a \ot 1^{\ot (n-1)} \ot 1_b) T$ is still nonzero. Since $\Mor_{a-b}(n,n)$ contains an increasing sequence of orthogonal projections converging strongly to $1$, we can take an orthogonal projection $P \in \Mor_{a-b}(n,n)$ such that $P T \neq 0$.
\end{proof}

Recall from Section \ref{sec.bi-labeled-graphs} that $\cG$ denotes the category of all bi-labeled graphs and $\cP$ the category of planar bi-labeled graphs. Given a connected locally finite graph $\Pi$, the associated unitary $2$-categories $\cC(\cP,\Pi)$ and $\cC(\cG,\Pi)$ were introduced in Definition \ref{def.my-unitary-2-category}.

\begin{corollary}\label{cor.characterize-no-quantum-symmetry}
Let $\Pi$ be a connected locally finite graph. Then $\Pi$ has no quantum symmetry, meaning that $\QAut \Pi = \Aut \Pi$, if and only if the natural functor $\cC(\cP,\Pi) \to \cC(\cG,\Pi)$ is an equivalence of $2$-categories.
\end{corollary}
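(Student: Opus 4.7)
The approach is to use Propositions \ref{prop.link-with-equivariant-bimodules} and \ref{prop.link-with-classical-case} to identify the natural functor $F : \cC(\cP,\Pi) \to \cC(\cG,\Pi)$ with the restriction functor along the closed quantum subgroup inclusion $\Aut \Pi \subseteq \QAut \Pi$, acting between the respective $2$-categories of equivariant $\ell^\infty(I)$-bimodules of finite type. The forward implication is then immediate: when $\QAut \Pi = \Aut \Pi$, the restriction is trivial and $F$ becomes (up to the identifications above) the identity functor, hence an equivalence.

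For the converse, assume $F$ is an equivalence. The plan is to conclude that $\cB_\cP = \cB_\cG$ as $*$-algebras (cf.\ Section \ref{sec.construction-B}), which by Lemma \ref{lem.iso-A-B} forces $\cA_\cP = \cA_\cG$ and hence $\QAut \Pi = \Aut \Pi$. First I would verify that the quantum orbit partitions agree, $\cE_\cP = \cE_\cG$: the identity $1$-cell $\eps_a^\cP \in \cC_{a-a}(\cP,\Pi)$ is sent by $F$ to $\ell^2(I_a)$ viewed as an $\Aut \Pi$-equivariant $\ell^\infty(I)$-bimodule, which decomposes as $\bigoplus_{b \in B_a}\eps_b^\cG$ over the $\Aut \Pi$-orbit refinement $B_a \subseteq \cE_\cG$ of the $\QAut \Pi$-orbit $I_a$. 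Since the $\eps_b^\cG$ are irreducible (their endomorphism algebras are $\C$ by transitivity) and an equivalence of $2$-categories sends identity $1$-cells to irreducible identity $1$-cells, this forces $|B_a|=1$.

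The main step is to show, for every connected $\cK = (K,x,y)$ belonging to $\cL(n,m)$ from the category $\cG$ and every $a,b \in \cE_\cP = \cE_\cG$, that the $(a,b)$-component $T^\cK_{(a,b)} := (1_a \ot 1^{\ot (n-1)} \ot 1_b) T^\cK (1_a \ot 1^{\ot (m-1)} \ot 1_b)$ belongs to $\Mor^\cP(n,m)$. Take the finite-type projections $P = Q(a,b,n,\lambda) \in \Mor^\cP_{a-b}(n,n)$ and $Q = Q(a,b,m,\mu) \in \Mor^\cP_{a-b}(m,m)$ constructed in the proof of Lemma \ref{lem.ibf}, chosen with $\lambda, \mu \geq \operatorname{diam}(K)$. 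Since graph homomorphisms do not increase distances, any $\vphi : V(K) \to I$ contributing to $T^\cK$ satisfies $d_\Pi(\vphi(x_{k-1}),\vphi(x_k)) \leq d_K(x_{k-1},x_k) \leq \operatorname{diam}(K)$, and similarly for $y$. Consequently $P T^\cK Q = T^\cK_{(a,b)}$, and by full faithfulness of $F$ at the pair of objects $(P,Q)$, which yields $P \Mor^\cP(n,m) Q = P \Mor^\cG(n,m) Q$, this component lies in $\Mor^\cP(n,m)$.

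The main obstacle is that a single $T^\cK$ typically has nonzero entries over infinitely many orbit pairs $(a,b)$, whereas $\Mor^\cP(n,m)$ is only a linear span and does not absorb infinite sums. The resolution is to work at the level of the defining ideals rather than globally on $\Mor^\cD$: each generator $(F_n T^\cK - T^\cK F_m)_{ij}$ of $\cI_{\cG,\cB}$, for a fixed pair of indices $(i,j)$, is a finite expression, and by the bimodularity $T^\cK_{kl}=0$ unless $k_0=l_0$ and $k_n = l_m$, only the unique $(a,b)$-component of $T^\cK$ with $(i_0,i_n) \in I_a \times I_b$ actually enters the two sums. Hence $(F_n T^\cK - T^\cK F_m)_{ij} = (F_n T^\cK_{(a,b)} - T^\cK_{(a,b)} F_m)_{ij}$, which lies in $\cI_{\cP,\cB}$ by the previous paragraph. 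For disconnected $\cK \in \cL_1 \setminus \cL$, the corresponding relation factors as a product of connected-component relations (using the multiplicativity of $F$) and therefore also lies in $\cI_{\cP,\cB}$. This gives $\cI_{\cG,\cB} \subseteq \cI_{\cP,\cB}$, hence $\cB_\cP = \cB_\cG$, completing the argument.
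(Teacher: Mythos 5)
Your overall architecture (forward direction via Proposition \ref{prop.link-with-equivariant-bimodules} and Proposition \ref{prop.link-with-classical-case}; converse by forcing $\cI_\cG \subseteq \cI_\cP$, hence $\cB_\cP = \cB_\cG$ and $\cA_\cP = \cA_\cG$) is more ambitious than the paper's, which only extracts a single relation: the paper takes the one disconnected bi-labeled graph $\cK \in \cG(4,4)$ with four isolated vertices and $y$ a transposition of $x$, composes it with a connected diagonal $\cK'$ having $T^{\cK'}_{jj}>0$ so that $\cK \circ \cK'$ is connected, uses $\Mor^\cG_{a-b}(3,3) = \Mor^\cP_{a-b}(3,3)$, divides by $T^{\cK'}_{jj}$, and concludes that $\cA_\cP$ is abelian, hence equal to $\cO(\Aut\Pi)$. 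Your treatment of the connected generators is sound: the ball projections $Q(a,b,n,\lambda)$ from the proof of Lemma \ref{lem.ibf} with $\lambda \geq \operatorname{diam}(K)$ do satisfy $P\,T^\cK\,Q = (1_a \ot 1^{\ot(n-1)}\ot 1_b)T^\cK(1_a\ot 1^{\ot(m-1)}\ot 1_b)$, and full faithfulness at $(P,Q)$ then places this block in $\Mor^\cP_{a-b}(n,m)$. (A small imprecision there: in $(F_n T^\cK)_{ij}$ the block that enters is the one determined by the classes of $(j_0,j_m)$, while in $(T^\cK F_m)_{ij}$ it is the one determined by $(i_0,i_n)$; when these pairs are not $\approx$-equivalent, both entries already lie in $\cI_\cP$ by Lemma \ref{lem.ideal-UB}.2, so this is easily patched.)

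The genuine gap is your last step, the disconnected generators of $\cI_\cG$. The claim that for $\cK \in \cL_1 \setminus \cL$ ``the corresponding relation factors as a product of connected-component relations'' is false in general: the product in $\cB$ only concatenates labels in order, so such a factorization works only when the label positions of the different components of $K$ form consecutive blocks (i.e.\ $\cK$ is a relative tensor product of its components). It fails exactly when the labels interleave, and the prototypical interleaved example is the swap graph above, whose relation $F_3(i_0i_1i_2i_3,j_0j_1j_2j_3)=F_3(i_0i_2i_1i_3,j_0j_2j_1j_3)$ encodes commutativity of $\cA_\cP$ via Lemma \ref{lem.iso-A-B} and is therefore the entire content of the converse; your argument as written never derives it. The gap is repairable, but by the composition trick rather than by multiplicativity: for a disconnected $\cK \in \cL_1^{\cG}(n,m)$ and fixed $(i,j)$, choose connected diagonal $\cK_1 \in \cL(n,n)$, $\cK_2 \in \cL(m,m)$ in $\cP$ with $T^{\cK_1}_{ii}>0$, $T^{\cK_2}_{jj}>0$ (as in Lemma \ref{lem.further-relations}); then $\cK_1 \circ \cK \circ \cK_2$ is connected, your connected case applies to it, and dividing the $(i,j)$ entry by $T^{\cK_1}_{ii}T^{\cK_2}_{jj}$ yields the relation for $\cK$ modulo $\cI_\cP$. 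With that substitution your proof closes; without it, the key relation is not obtained.
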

\begin{proof}
If $\QAut \Pi = \Aut \Pi$, then it follows from Proposition \ref{prop.link-with-equivariant-bimodules} that the natural functor $\cC(\cP,\Pi) \to \cC(\cG,\Pi)$ is an isomorphism of $2$-categories.

Conversely, assume that this functor is an isomorphism of $2$-categories. This means that the equivalence relations $\approx$ defined by $\cP$, resp.\ $\cG$, on $I$ are the same and that $\Mor^\cP_{a-b}(n,m) = \Mor^\cG_{a-b}(n,m)$ for all $a,b \in \cE$ and $n,m \geq 0$. Define $\cK = (K,x,y) \in \cG(4,4)$ by $V(K) = \{0,1,2,3\}$, $E(K) = \emptyset$, $x_i = i$ for all $i \in \{0,1,2,3\}$ and $y_0 = 0$, $y_1 = 2$, $y_2 = 1$, $y_3 = 3$, as illustrated in Figure \ref{fig.twist-graph}.
\begin{figure}[h]
    \centering
    \begin{tikzpicture}
        \filldraw (0,0) circle (1.5pt) node[anchor=south]{$x_0$} node[anchor=north]{$y_0$};
        \filldraw (1.4,0) circle (1.5pt) node[anchor=south]{$x_1$} node[anchor=north]{$y_2$};
        \filldraw (2.8,0) circle (1.5pt) node[anchor=south]{$x_2$} node[anchor=north]{$y_1$};
        \filldraw (4.2,0) circle (1.5pt) node[anchor=south]{$x_3$} node[anchor=north]{$y_3$};
    \end{tikzpicture}
    \caption{The bi-labeled graph $\cK \in \cG(4,4)$}\label{fig.twist-graph}
\end{figure}

Fix $i,j \in I^4$. We claim that the equality $(F_3 T^\cK)_{ij} = (T^\cK F_3)_{ij}$ holds in the $*$-algebra $\cB$ associated with $\cP$. Take the unique $a, b \in \cE$ such that $j_0 \in I_a$ and $j_3 \in I_b$. As in the proof of Lemma \ref{lem.ibf}, we can take a connected $\cK' = (K,x,x) \in \cG(4,4)$ such that $T^{\cK'}$ is a diagonal matrix and $T^{\cK'}_{jj} > 0$. Then, $\cK \circ \cK'$ is connected, so that
$$T^{\cK \circ \cK'} \, (1_a \ot 1 \ot 1 \ot 1_b) \;\;\text{and}\;\;  T^{\cK'} \, (1_a \ot 1 \ot 1 \ot 1_b) \;\;\text{belong to}\;\; \Mor^\cG_{a-b}(3,3) = \Mor^\cP_{a-b}(3,3) \; .$$
By Lemma \ref{lem.ideal-UB}.4, the following equalities hold in the space of matrices over $\cB$.
\begin{align*}
F_3 \, T^{\cK} \, T^{\cK'} \, (1_a \ot 1 \ot 1 \ot 1_b) &= F_3 \, T^{\cK \circ \cK'} \, (1_a \ot 1 \ot 1 \ot 1_b) = T^{\cK \circ \cK'} \, (1_a \ot 1 \ot 1 \ot 1_b) \, F_3
\\ & = T^\cK \, T^{\cK'} \, (1_a \ot 1 \ot 1 \ot 1_b) \, F_3 = T^\cK \, F_3 \, T^{\cK'} \, (1_a \ot 1 \ot 1 \ot 1_b) \; .
\end{align*}
Taking the $ij$ entry and dividing by $T^{\cK'}_{jj}$, the claim is proven. It follows that
$$F_3(i_0 i_1 i_2 i_3, j_0 j_1 j_2 j_3) = F_3(i_0 i_2 i_1 i_3,j_0 j_2 j_1 j_3) \quad\text{in $\cB$, for all $i,j \in I^4$.}$$
By Lemma \ref{lem.iso-A-B}, the $*$-algebra $\cA$ associated with $\cP$ is abelian. Hence, $\QAut \Pi = \Aut \Pi$.
\end{proof}

\begin{corollary}\label{cor.characterize-trivial-planar-quantization}
Let $\Gamma$ be a countable group with a finite symmetric generating set $S = S^{-1} \subset \Gamma$. Denote by $\Pi$ the associated Cayley graph and let $\bGhat$ be the planar quantization defined in \ref{def.planar-quantization-discrete-group} with canonical surjective Hopf $*$-algebra homomorphism $\pi : \cO(\bG) \to \C[\Gamma]$.

Then $\pi$ is an isomorphism if and only if $\QAut \Pi = \Gamma$, i.e.\ $\Pi$ has no quantum symmetry and $\Aut \Pi = \Gamma$.
\end{corollary}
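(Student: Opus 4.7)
The plan is to interpret both sides through the unitary tensor category $\cC(\cP,\Pi)$. By Theorem \ref{thm.identify-categories}, $\Rep \bG \cong \cC(\cP,\Pi)$, and by Proposition \ref{prop.link-with-equivariant-bimodules}, $\cC(\cP,\Pi) \cong \cC(\QAut \Pi \actson \Gamma)$. The key observation is that $\pi$ being an isomorphism is equivalent to $\cC(\cP,\Pi)$ being a pointed unitary tensor category, and this pointedness is the categorical manifestation of the simply transitive action $\Gamma \actson \Gamma$.

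For $(\Leftarrow)$, if $\QAut \Pi = \Gamma$, then Proposition \ref{prop.link-with-equivariant-bimodules} identifies $\cC(\cP,\Pi)$ with $\cC(\Gamma \actson \Gamma)$. Since $\Gamma$ acts simply transitively on itself, this category is patently pointed: its simple $1$-cells are the one-dimensional bimodules $\ell^2(W_s)$ with $W_s = \{(g,gs) : g \in \Gamma\}$ indexed by $s \in \Gamma$, and $\otimes_\Gamma$ implements group multiplication. Hence $\Rep \bG$ is pointed, which forces $\bG = \widehat{\Lambda}$ for some discrete group $\Lambda$ (namely the group of characters of $\bG$), so that $\cO(\bG) = \C[\Lambda]$ and $\pi : \C[\Lambda] \to \C[\Gamma]$ is induced by a surjective group homomorphism $\Lambda \twoheadrightarrow \Gamma$. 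Applying the quantization property of $\widehat{\bG}$ (Definition \ref{def.quantization}, established in Proposition \ref{prop.fiber-functor-compact-quantum}) to the factorization $\cO(\bG) \xrightarrow{\id} \C[\Lambda] \xrightarrow{\pi} \C[\Gamma]$ forces $\pi$ to be an isomorphism.

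For $(\Rightarrow)$, assume $\pi$ is an isomorphism, so $\cC(\cP,\Pi) \cong \Rep \widehat{\Gamma}$ is pointed with simples $(\alpha_g)_{g \in \Gamma}$. I first show that $\Aut \Pi = \Gamma$. Under the fiber functor of Proposition \ref{prop.fiber-functor-compact-quantum}, $P_1 \mapsto \ell^2(S) = \bigoplus_{s \in S} \chi_s$, so $P_1 \cong \bigoplus_{s \in S} \alpha_s$ in $\cC(\cP,\Pi)$. Via the equivalence with $\cC(\QAut \Pi \actson \Gamma)$, each $\alpha_s$ corresponds to a categorically one-dimensional $\QAut \Pi$-equivariant sub-bimodule of $\ell^2(E(\Pi)) \subseteq \ell^2(\Gamma \times \Gamma)$. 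Any such sub-bimodule is of the form $\ell^2(W)$ for some $W \subseteq E(\Pi)$; since $\Gamma \subseteq \QAut \Pi$, $W$ must be a union of diagonal $\Gamma$-orbits on $E(\Pi)$, which are precisely the sets $W_s$ for $s \in S$, and one-dimensionality on each side isolates $W = W_s$ for a unique $s$. Hence $1_{W_s} \in \Mor^{\cP}(1,1) \subseteq \Mor^{\cG}(1,1)$, and since $\Mor^{\cG}(1,1)$ is precisely the space of $\Aut \Pi$-invariant functions on $\Gamma \times \Gamma$, every $\sigma \in \Aut \Pi$ commutes with right translation by each $s \in S$. Because $S$ generates $\Gamma$, $\sigma(g) = \sigma(e) \cdot g$ for all $g$, so $\sigma$ is left translation by $\sigma(e) \in \Gamma$; thus $\Aut \Pi \subseteq \Gamma$, giving $\Aut \Pi = \Gamma$.

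To conclude the absence of quantum symmetry, I apply Corollary \ref{cor.characterize-no-quantum-symmetry}: using $\Aut \Pi = \Gamma$, the target $\cC(\cG,\Pi) \cong \cC(\Gamma \actson \Gamma)$ is pointed with simples indexed by $\Gamma$ exactly as $\cC(\cP,\Pi)$ is, and the natural tensor functor $\cC(\cP,\Pi) \to \cC(\cG,\Pi)$ sends the simple $\alpha_g$ to its $\Gamma$-labelled counterpart while preserving the group tensor structure, hence is an equivalence. Therefore $\QAut \Pi = \Aut \Pi = \Gamma$. The main obstacle is the concrete identification of the one-dimensional $\QAut \Pi$-equivariant sub-bimodules of $\ell^2(E(\Pi))$ as precisely the $\ell^2(W_s)$, $s \in S$; once this geometric content is extracted from the abstract pointedness of $\cC(\cP,\Pi)$, everything else reduces to invoking the quantization property and Corollary \ref{cor.characterize-no-quantum-symmetry}.
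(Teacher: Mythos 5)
Your proposal is correct in substance, but it follows a genuinely different route from the paper. The paper never leaves the concrete morphism spaces: it sandwiches $\Mor^\cP(n,m)\subset\Mor^\cG(n,m)\subset\Mor^\Gamma(n,m)$, observes that $\pi$ is an isomorphism precisely when the outer two coincide, handles $\Mor^\cP=\Mor^\cG$ by Corollary \ref{cor.characterize-no-quantum-symmetry}, and proves that $\Mor^\cG=\Mor^\Gamma$ iff $\Aut\Pi=\Gamma$ by averaging an arbitrary operator localized at the base vertex over the $\Gamma$-translates, which shows the vertex stabilizer of $\Aut\Pi$ acts trivially. You instead argue categorically: for ($\Leftarrow$) you use pointedness of $\cC(\Gamma\actson\Gamma)$ to force $\cO(\bG)$ to be a group algebra and then invoke the quantization property from Proposition \ref{prop.fiber-functor-compact-quantum}, avoiding the Tannakian identification of ``$\pi$ iso'' with equality of intertwiner spaces that the paper uses implicitly; for ($\Rightarrow$) you extract the orbit projections $1_{W_s}$ from the decomposition of the edge object and recover $\Aut\Pi=\Gamma$ by the classical edge-colouring rigidity of Cayley graphs, before feeding the resulting equivalence of pointed categories into Corollary \ref{cor.characterize-no-quantum-symmetry}. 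The paper's argument is more self-contained; yours makes the structural reason (pointedness) visible.

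A few places should be tightened. Your claim that $\Mor^\cG(1,1)$ is \emph{precisely} the $\Aut\Pi$-invariant functions is not accurate (it is a span of graph-counting matrices with support constraints); only the inclusion into the invariant functions holds, which is all you use, and in fact $1_{W_s}\in\Mor^\cP(1,1)$ already gives invariance since every element of $\Mor(n,m)$ is $\Aut\Pi$-equivariant. The identification of the irreducible summands of $P_1$ with the $\ell^2(W_s)$ is easiest by noting that subobjects of $1_E$ are minimal subprojections inside $\Mor^\cP(1,1)\subset\ell^\infty(I\times I)$, hence of the form $1_W$ with $W$ a union of $\Gamma$-orbits; then either the count of $|S|$ pairwise orthogonal summands or $d_\ell=d_r=1$ (preserved under the monoidal equivalence) pins down $W=W_s$. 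Finally, ``hence is an equivalence'' needs the routine semisimplicity bookkeeping: $F(\alpha_s)=1_{W_s}$ is irreducible in $\cC(\cG,\Pi)$, multiplicativity gives a bijection on irreducibles, and since the functor is an inclusion of concrete operator spaces, matching multiplicities yields full faithfulness, from which the equality of morphism spaces used in the proof of Corollary \ref{cor.characterize-no-quantum-symmetry} is recovered via the truncation projections $Q(a,b,n,\lambda)\in\Mor^\cP(n,n)$. None of these is a gap in the strategy, but they are the steps a referee would ask you to write out.
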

\begin{proof}
For every $n,m \geq 0$, denote by $\cM(n,m)$ the space of all bounded $\ell^\infty(\Gamma)$-bimodular operators $T$ from $\ell^2(\Gamma^{m+1})$ to $\ell^2(\Gamma^{n+1})$ with the following property: for every $i \in \Gamma$, there exist only finitely many $j \in \Gamma^n$ and $k \in \Gamma^m$ such that $T_{ij,ik} \neq 0$ or $T_{ji,ki} \neq 0$. Define $\Mor^\Gamma(n,m)$ as the set of all $T \in \cM(n,m)$ that are $\Gamma$-equivariant. We thus get
$$\Mor^\cP(n,m) \subset \Mor^\cG(n,m) \subset \Mor^\Gamma(n,m)$$
for all $n,m \geq 0$. The $*$-homomorphism $\pi$ is an isomorphism if and only if $\Mor^\cP(n,m) = \Mor^\Gamma(n,m)$ for all $n,m \geq 0$.

By Corollary \ref{cor.characterize-no-quantum-symmetry}, we have $\Mor^\cP(n,m) = \Mor^\cG(n,m)$ for all $n,m \geq 0$ if and only if $\QAut \Pi = \Aut \Pi$. We thus only have to prove that $\Mor^\cG(n,m) = \Mor^\Gamma(n,m)$ for all $n,m \geq 0$ if and only if $\Aut \Pi = \Gamma$. Write $G = \Aut \Pi$. By Proposition \ref{prop.link-with-equivariant-bimodules} and Proposition \ref{prop.link-with-classical-case}, we have that $\Mor^\cG(n,m)$ is equal to the space $\Mor^G(n,m)$ of $G$-equivariant elements in $\cM(n,m)$. So if $G = \Gamma$, we have $\Mor^\cG(n,m) = \Mor^\Gamma(n,m)$.

Conversely, assume that $\Mor^G(n,m) = \Mor^\Gamma(n,m)$ for all $n,m \geq 0$. Define $K = \{\si \in G \mid \si(e) = e\}$. To conclude that $G = \Gamma$, we need to prove that $K = \{\id\}$. For this it suffices to prove that every operator $T_0 \in \lambda(p_e) \rho(p_e) \cM(n,m)$ is $K$-equivariant. Denote by $\theta_n : \Gamma \to \cU(\ell^2(\Gamma^{n+1}))$ the diagonal translation action. Given $T_0 \in \lambda(p_e) \rho(p_e) \cM(n,m)$, we get that
$$T := \sum_{g \in \Gamma} \theta_n(g) T_0 \theta_m(g)^*$$
belongs to $\Mor^\Gamma(n,m)$. So, $T \in \Mor^G(n,m)$. In particular, $T_0 = \lambda(p_e) \rho(p_e) T$ is $K$-equivariant.
\end{proof}

\end{document}